\documentclass{amsart}
\usepackage[utf8]{inputenc}
\usepackage[english]{babel}
\usepackage[T1]{fontenc}

\usepackage{amsmath}
\usepackage{amsaddr}
\usepackage{amsfonts}
\usepackage{amssymb}
\usepackage{amsthm} 

\usepackage{hyperref} 
\usepackage{stmaryrd} 
\usepackage{yhmath} 
\usepackage{tikz}
\usetikzlibrary{patterns} 
\usepackage{subcaption}
\usepackage{accents} 
\usepackage{xcolor} 
\usepackage{dsfont} 

\usepackage{enumitem} 

\usepackage[left=2cm,right=2cm,top=2cm,bottom=2cm]{geometry}
\usepackage{verbatim}

\DeclareMathOperator{\N}{\mathbb{N}}

\DeclareMathOperator{\R}{\mathbb{R}}

\DeclareMathOperator{\1}{\mathds{1}}

\DeclareMathOperator{\sign}{\mathrm{sgn}} 
\DeclareMathOperator{\arcosh}{\mathrm{arcosh}}

\DeclareFontFamily{U}{mathx}{}
\DeclareFontShape{U}{mathx}{m}{n}{<-> mathx10}{}
\DeclareSymbolFont{mathx}{U}{mathx}{m}{n}
\DeclareMathAccent{\widehat}{0}{mathx}{"70}
\DeclareMathAccent{\widecheck}{0}{mathx}{"71}

\usepackage[backend=biber,style=numeric]{biblatex}
\usepackage{csquotes}
\numberwithin{equation}{section}

\makeatletter
\def\blfootnote{\xdef\@thefnmark{}\@footnotetext}
\makeatother

\theoremstyle{plain}
\newtheorem{theorem}{Theorem}[section]
\newtheorem{remark}[theorem]{Remark}
\newtheorem{lemma}[theorem]{Lemma}
\newtheorem{proposition}[theorem]{Proposition}
\newtheorem{corollary}[theorem]{Corollary}

\title[Vlasov-Maxwell point charge system: spherically symmetric case]{The Vlasov-Maxwell point charge system : study of the spherically symmetric case}

\author{Emile Breton}

\address{Univ Rennes, CNRS, IRMAR - UMR 6625, F-35000 Rennes, France \\ Email address : emile.breton@univ-rennes.fr}

\date{\today}

\begin{document}

\keywords{Relativistic Vlasov-Maxwell system, point charge, asymptotic properties, modified scattering, linear scattering, small data solutions}

\subjclass[2020]{Primary: 35Q83 ; Secondary: 35B40 }

\begin{abstract}
    We consider the Vlasov-Maxwell system with a repulsive point charge. Under a spherical symmetry condition, this reduces to the relativistic Vlasov-Poisson system with a point charge. For this system, the linear characteristics solve the equations of motion for the relativistic Kepler problem. We define action-angle variables for the linearized equation, and then study the nonlinear system in these coordinates. Within this framework, we prove global existence of classical solutions for small initial data and establish their modified scattering behavior. The proof makes use of the ideas developed in \cite{Pausader_Widmayer_2021}.
\end{abstract}

\blfootnote{This work was conducted within the the France 2030 program, Centre Henri Lebesgue ANR-11-LABX-0020-01}

\maketitle

\tableofcontents
\section{Introduction}
\subsection{Physical context}

The Vlasov-Maxwell system describes the evolution of a plasma, which is a collection of charged particles, interacting through an electromagnetic field. The addition of a point charge introduces a singularity in the system, as the point charge generates its own electromagnetic field $(E_{pc},B_{pc})$ that interacts with the continuous density. Mathematically, we consider a continuous density $f$ of particles of mass $m$ and charge $e=1$, an electromagnetic field $(E,B)$, and a point charge with position and velocity $(\xi(t),\eta(t))$, mass $m_q$, and charge $q>0$. We also assume that the point charge does not self-interact (see Remark~\ref{remark_singular_fields} below). 
Finally, we consider a formal solution $(\widetilde{f},E+E_{pc},B+B_{pc})$ of the Vlasov-Maxwell system whose density is given by  $\widetilde{f}=f+q\delta(x-\xi(t))\otimes\delta(v-\eta(t))$. Under these assumptions, $(f,\xi,\eta,E,B)$ satisfies the following system in $\R\times\R^3_x\times\R^3_v$, called the Vlasov-Maxwell point charge system:

\begin{equation}
        \tag{VMpc}
        \label{VMpc}
        \begin{array}{c}
        \displaystyle \partial_t f+\widehat{v}\cdot\nabla_x f +\Big(E+\frac{1}{c}\widehat{v}\times B\Big)\cdot\nabla_v f+q\Big(E_{pc}+\frac{1}{c}\widehat{\eta}(t)\times B_{pc}\Big)\cdot\nabla_v f=0, \vspace{4pt}\\
        \begin{array}{ll}
             \displaystyle\partial_tE=c\nabla\times B -4\pi j,&\quad\nabla\cdot E=4\pi\rho,\vspace{2pt}\\
            \displaystyle\partial_t B=-c\nabla\times E,& \quad \nabla\cdot B=0.
        \end{array}\vspace{5pt}\\
        \left\{\begin{array}{l}
            \displaystyle\dot\xi(t)=\widehat{\eta}(t),  \\
             \displaystyle \dot\eta(t)=q\Big(E(t,\xi(t))+\frac{1}{c}\widehat{\eta}(t)\times B(t,\xi(t))\Big),\\
              \displaystyle(\xi(0),\eta(0))=(\xi_0,\eta_0),
        \end{array}\right.
    	\end{array}
    \end{equation}
    \begin{equation*}
    \end{equation*}
    with initial data $(f_0,E_0,B_0)$ satisfying the constraints 
    \begin{equation*}
        \nabla\cdot E_0=4\pi\int_{\R^3_v}f_0(x,v)\mathrm{d}v,\qquad\nabla\cdot B_0=0.
    \end{equation*}
Here, $\widehat v:= c^2\frac{v}{v^0}$ denotes the relativistic speed, and $v^0:=c^2\sqrt{m^2+\frac{|v|^2}{c^2}}$ represents the kinetic energy. Similarly, we define, with a slight abuse of notation, $\widehat{\eta}:=\frac{\eta}{\sqrt{m_q^2+\frac{|\eta|^2}{c^2}}}$. Finally, $\rho$ and $j$ denote, respectively, the charge and current densities, and are defined by
\begin{equation*}
    \rho(t,x):=\int_{\R^3_v} f(t,x,v)\mathrm{d}v,\qquad j(t,x):=\int_{\R^3_v} \widehat{v}f(t,x,v)\mathrm{d}v.
\end{equation*}

The main difference with the Vlasov-Maxwell system is the coupling between the point charge and the continuous density. More precisely, the Lorentz force generated by the fields $(E,B)$ acts on the particle $(\xi(t),\eta(t))$ that, in return, generates an electromagnetic field $(E_{pc},B_{pc})$. These fields are obtained by solving the Maxwell equations, with source terms
\begin{align*}
    \rho_{pc}(t,x)&=\int_{\R^3_v}\delta(x-\xi(t))\otimes\delta(v-\eta(t))\mathrm{d}v=\delta(x-\xi(t)),\\
    j_{pc}(t,x)&=\int_{\R^3_v} \frac{v}{\sqrt{m_q^2+\frac{|v|^2}{c^2}}}\delta(x-\xi(t))\otimes\delta(v-\eta(t))\mathrm{d}v=\widehat\eta(t)\delta(x-\xi(t)).
\end{align*}
In that case, the fields derive from Liénard-Wichert potentials (see \cite{Griffiths_intro_EM} on this matter) and are sometimes called Liénard-Wichert fields. Their explicit expression is given by
\begin{equation}
    \label{equation_def_field_pc}
    \begin{array}{l}
         \displaystyle E_{pc}(t,x)=\frac{|r|}{\left(|r|-\frac{1}{c}r\cdot\widehat\eta(t_r)\right)^3}\left[\left(1-\frac{|\widehat \eta(t_r)|^2}{c^2}\right)\left(\frac{r}{|r|}-\frac{\widehat  \eta(t_r)}{c}\right)+\frac{1}{c^2}r\times\left(\left(\frac{r}{|r|}-\frac{\widehat \eta(t_r)}{c}\right)\times (\widehat\eta)'(t_r)\right)\right],  \vspace{5pt}\\
         \displaystyle B_{pc}(t,x)=-\frac{1}{c}\frac{r}{|r|}\times E_{pc}(t,x),
    \end{array}
\end{equation}
where $r=x-\xi(t_r)$ and $t_r$ is called the retarded time. Physically, we consider $t_r$ to take into account the causality and the finite speed of wave propagation. Indeed, electromagnetic waves travel at the speed of light $c$ and hence when the wave reaches a point in spacetime, it was produced earlier, at said retarded time. Mathematically, for a fixed $(t,x)$, $t_r(t,x)$ is implicitly defined as the unique solution to 
\begin{equation*}
    t_r=t-\frac{1}{c}|x-\xi(t_r)|.
\end{equation*}
\begin{remark}
    For $t_r(t,x)$ to be well-defined, we need to ensure that the increasing function $g:\tau\mapsto \tau-t+\frac{1}{c}|x-\xi(\tau)|$ is nonpositive at at least one point. For instance, this will hold when the point charge does not move. However, in the general case, it is possible that even for $\tau\rightarrow -\infty$, $g(\tau)$ remains positive. Using the results of \cite{pausaderStabilityPointCharge2024}, where the analogous problem is studied for the Vlasov-Poisson system, we expect that 
    \begin{equation*}
        \eta(t)\xrightarrow[t\rightarrow\pm\infty]{}\eta_{\pm\infty}.
    \end{equation*}
    In that context, $\eta$ is bounded and hence $t_r$ is well-defined.
\end{remark}
\begin{remark}
    In \eqref{equation_def_field_pc}, the expression for $E_{pc}$ involves two terms. The first behaves like $|r|^{-2}$, does not depend on the acceleration, and corresponds to the Coulomb interaction. The second behaves like $|r|^{-1}$ and becomes dominant for large distances. Physically, it is related to electromagnetic radiation caused by the acceleration of the charged particle.
\end{remark}
\begin{remark}
    \label{remark_singular_fields}
    One can notice that $t_r(t,\xi(t))=t$, which makes the fields singular when evaluated at $(t,\xi(t))$. Physically, this means that the particle at time $t$ is affected only by the field it generates at that exact moment. Consequently, it is physically reasonable to assume that the point charge does not experience its own field. In this setting, one may view the point charge $\delta(x-\xi(t))\otimes\delta(v-\eta(t))$ as a formal solution of the Vlasov-Maxwell system.
\end{remark}
\begin{remark}
    In the non-relativistic limit $c\rightarrow + \infty$, the Vlasov-Maxwell system formally reduces to the Vlasov-Poisson point charge system
    \begin{equation}
        \tag{VPpc}
        \label{equation_vlasov_poisson_point_charge}
        \begin{array}{c}
            \displaystyle \partial_t f +v\cdot\nabla_xf +E\cdot\nabla_vf+ q \frac{x-\xi(t)}{|x-\xi(t)|^3}\cdot\nabla_v f=0,\vspace{5pt}\\
            \displaystyle E(t,x)=\int_{\R^3_y}\frac{x-y}{|x-y|^3}\rho(t,y)\mathrm{d}y,\qquad \rho(t,x)=\int_{\R^3_v}f(t,x,v)\mathrm{d}v,\vspace{5pt}\\
            \displaystyle \dot{\xi}(t)=\eta,\qquad \dot{\eta}(t)=q E(t,\xi(t)).
        \end{array}
    \end{equation}
    We refer to \cite{Pausader_Widmayer_2021,pausaderStabilityPointCharge2024,chaturvediLinearNonlinearPhase2026,kepka_widmayer_2025} for the study of this system. In what follows, we normalize the speed of light to $c=1$.
\end{remark}
We now consider the spherically symmetric case, that is,
\begin{equation*}
    f_0(x,v)=f_0(r,u,\ell),\qquad E_0(x)=E_0(r),\qquad B_0(x)=0, \qquad (\xi_0,\eta_0)=(0,0),
\end{equation*}
where $r$ is the spatial radius, $u$ is the radial momentum, and $\ell$ is the square of the angular momentum. They are defined by
\begin{equation*}
    r=|x|,\qquad u=\frac{v\cdot x}{|x|},\qquad \ell=|x\times v|^2.
\end{equation*}

Under the assumption of spherical symmetry, the magnetic field $B$ vanishes, and the electric field $E$ becomes
\begin{equation*}
    E(t,r):=-\partial_r\Psi(t,r)=\frac{M(t,r)}{r^2},
\end{equation*}
where $M$ is the enclosed mass and $\Psi$ is the potential, defined by
\begin{equation*}
    M(t,r):=4\pi^2\int_0^r\int_0^\infty\int_{\R} f(t,s,u,\ell)\mathrm{d}u\mathrm{d}\ell\mathrm{d}s,\qquad \Psi(t,r):=-4\pi^2\int_0^\infty\int_0^\infty\int_{\R} \frac{f(t,s,u,\ell)}{\max(r,s)}\mathrm{d}u\mathrm{d}\ell\mathrm{d}s.
\end{equation*}
Hence, due to the uniqueness of the solution to the Cauchy problem, the point charge does not move, and we have $(\xi(t),\eta(t))=0$. Moreover, under this symmetry assumption, the Vlasov-Maxwell system becomes the relativistic Vlasov-Poisson system \cite{Horst_1990}. Finally, for simplicity, we assume that the point charge has mass $m_q=1$ and charge $q=1$. In this setting, the system \eqref{VMpc} becomes the relativistic Vlasov-Poisson point charge system with spherical symmetry. It is given by
\begin{equation}
\tag{RVPpc}
    \label{equation_VM_pc_radial_case}
    \partial_tf+\frac{u}{\sqrt{u^2+m^2+\frac{\ell}{r^2}}}\partial_rf+\left(\frac{\ell}{r^3\sqrt{u^2+m^2+\frac{\ell}{r^2}}}+\frac{M(t,r)+1}{r^2}\right)\partial_u f=0.
\end{equation}

\begin{remark}
    \label{remark_t_positive}
    For $t_r$ and $(E_{pc},B_{pc})$ to be well-defined, we normally require the time $t$ to be considered on $\R$. As stated above, this is due to the finite speed of propagation of electromagnetic waves. However, in this case, since the point charge does not move, the field $E_{pc}$ is constant. This, in particular, holds for $t\rightarrow -\infty$, meaning that the field behaves as if the propagation speed is infinite. In that case, we can consider equation \eqref{equation_VM_pc_radial_case} for nonnegative times only. 
\end{remark}

\subsection{Previous results}

To the best of our knowledge, this is the first time the Vlasov-Maxwell point charge system is introduced and studied.\\
In the absence of a continuous density $f$, when there are multiple point charges, the system becomes the Klimontovich equation \cite{klimontovich_1967} (also see \cite{Griffiths_intro_EM} for a derivation of the microscopic fields). This system is normally ill-posed due to the singular nature of $(E_{pc}(t,\xi(t)),B_{pc}(t,\xi(t)))$ \cite{kiessling_2012} (see, for instance, \cite{elskens_Kiessling_2020} for discussions on the subject). As said in Remark \ref{remark_singular_fields}, we avoid this singularity by assuming that there is no self-interaction for the point charge. \\

In the absence of a point charge, we recover the Vlasov--Maxwell system. While this system has been extensively studied, the global existence of classical solutions for large initial data remains an open problem, though various continuation criteria have been proved (see, e.g., \cite{Luk_Strain_14}). In 3D, the analysis of small data solutions was initiated by Glassey and Strauss \cite{Glassey_Strauss_1987}. For compactly supported data, they proved the existence of a global solution and decay estimates on the fields. The compactness assumption on the initial data was recently relaxed using vector field methods \cite{Bigorgne_sharp_2020}, as well as by combining these techniques with the Fourier method \cite{Wang_2022}. The smallness hypothesis on the initial fields was also removed in \cite{wei_yang_2021}, where they proved the global existence of solutions for arbitrarily large initial fields. More recent results have focused on establishing the scattering behavior of solutions. In particular, for small data solutions, a modified scattering dynamic was exhibited for compactly supported data \cite{pankavich_ben-artzi_2025,breton_modified_2026}, and without the compactness assumption \cite{bigorgne_modified_2025}. It was also proved in \cite{Breton_2025_absence_linear} that linear scattering is a non-generic phenomenon.  Finally, a wave operator and a scattering map were obtained \cite{bigorgne_ScatteringMap_2023}.

When the magnetic field vanishes, the Vlasov-Maxwell system becomes the relativistic Vlasov-Poisson equation. This holds for solutions with spherically symmetric data \cite{Horst_1990}. In this context, the asymptotic behavior of small data solutions was recently derived \cite{pankavich_2021}. \\

The Vlasov-Maxwell point charge system shares similarities with the Vlasov-Poisson point charge system, or plasma-charge model, first established and studied by Caprino and Marchioro in \cite{caprino_marchioro_2010}. Note that, formally, the Vlasov-Poisson point charge system is the non-relativistic limit of the Vlasov-Maxwell point charge system.  This system arises when the solution to the Vlasov-Poisson system is composed of a continuous density and point charges, modeled by Dirac masses. In the absence of a continuous density, due to the singularity of the Coulomb law, we still need to assume that there is no self-interaction for the point charges. In that case, they satisfy a Newtonian N-body problem. In the absence of point charges, we recover the Vlasov-Poisson system. For this equation, results similar to those for the Vlasov-Maxwell system hold. For small data, modified scattering \cite{choiModifiedScatteringVlasov2016, ionescuAsymptoticBehaviorSolutions2022, pankavichAsymptoticDynamicsDispersive2022} and absence of linear scattering \cite{choiAsymptoticBehaviorNonlinear2011} have been proved. Furthermore, a scattering map was also derived in \cite{flynnScatteringMapVlasov2023}. Finally, \cite{bigorgneHomeomorphicModifiedWave2026} showed that the wave operator is a homeomorphism, and that the scattering states and initial data lie in the same Banach space.\\
For the full Vlasov-Poisson point charge system, following the works of Caprino and Marchioro, the existence of global solutions was then obtained for the attractive case in 2D \cite{caprinoAttractivePlasmaChargeSystem2012} and for the 3D repulsive case \cite{marchioroCauchyProblem3D2011}. Both results, however, require the initial plasma density to be supported away from the point charge. In 3D, the existence of global weak solutions was also proved using Lions and Perthame theory \cite{desvillettesPolynomialPropagationMoments2015,wuPolynomialPropagationMoments2021}.  
Note that recent results also exist in convex domains, with a study of boundary effects \cite{wuPlasmachargeModelConvex2024,wuPlasmaChargeModelBoundary2026}.\\

Recently, with a single species and one point charge \eqref{equation_vlasov_poisson_point_charge}, a new method was introduced. It exploits the Hamiltonian structure and integrability of the linearized equation. In this case, the characteristics of the linearized equation solve a Kepler ODE. The key idea is to introduce action-angle variables via a symplectic diffeomorphism that, in particular, straightens the linear dynamics. More precisely, by composing a solution to the linearized equation with this diffeomorphism, one recovers a solution to the free transport equation. This leads to a new nonlinear problem expressed in action-angle variables, which provides a more appropriate framework for the analysis. This proof strategy was first introduced by Pausader and Widmayer in \cite{Pausader_Widmayer_2021}, where they studied a restrictive radial case. In that context, they proved the existence of a global solution that exhibits a modified scattering dynamic. Pausader, Widmayer and Yang later extended this result to the general case \cite{pausaderStabilityPointCharge2024}. Action-angle variables were also used to study the attractive case. When a static point charge is located at the origin, which occurs in the spherically symmetric setting, its effect is modeled by an external potential. In this setting, there are trapped trajectories that stay in a bounded subset of the phase space and remain away from the origin, preventing any dispersion result. In \cite{chaturvediLinearNonlinearPhase2026}, Chatuverdi and Luk considered small initial data supported on trapped trajectories and proved a long time nonlinear stability and phase mixing result in spherical symmetry. In another setting, small data solutions were shown to be global by Kepka and Widmayer \cite{kepka_widmayer_2025}. More precisely, they studied solutions with initial data supported on hyperbolic trajectories, and proved that they exhibit a modified scattering dynamic.\\

In this paper, we adapt these tools to the relativistic setting. 

\subsection{Main result}
We begin by stating our main theorem. The result we prove is in fact more precise and better stated in action-angle variables (see Theorem \ref{main_theorem_action_angle} below). 

\begin{theorem}
    \label{main_theorem}
    Let $\mathcal{W}$ be a set of spherically symmetric functions defined by 
    \begin{equation}
        \mathcal{W}:=\{f\in L^1(\R^*_+\times\R\times \R_+)\,|\, \partial_rf\in L^1,\,\partial_u f\in L^1\}.
    \end{equation}
    There exists $\varepsilon^* > 0$ such that for any $0 < \varepsilon_0 < \varepsilon^*$ and any $f_0 \in \mathcal{W}$ satisfying
    \begin{equation}
        \|(r^{-65}+r^{35}+|u|^{35}+\ell^{45})(f_0+|\partial_r f_0|+|\partial_uf_0|)\|_{L^1}\leq \varepsilon_0,
    \end{equation}
    there exists a global solution $f \in C^1_t L^1 \cap C^0_t \mathcal{W}$ to \eqref{equation_VM_pc_radial_case}, arising from the initial data $f(t=0,\cdot)=f_0$. Moreover, this solution exhibits a modified scattering dynamic. Namely, there exist $\mathcal{R}, \mathcal{U}$ and $f_\infty \in L^1$ such that
    \begin{equation*}
    f(t,\mathcal{R}(t,r,u,\ell),\mathcal{U}(t,r,u,\ell),\ell) \xrightarrow[t \to +\infty]{L^1(\R^*_+\times\R\times \R_+)} f_\infty(r,u,\ell).
    \end{equation*}
\end{theorem}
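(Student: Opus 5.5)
The proof follows the strategy of \cite{Pausader_Widmayer_2021}, adapted to the relativistic Kepler Hamiltonian. I would start from the linearized Hamiltonian
\[
H(r,u,\ell)=\sqrt{u^2+m^2+\tfrac{\ell}{r^2}}+\tfrac{1}{r},
\]
whose characteristics are exactly those of \eqref{equation_VM_pc_radial_case} with $M\equiv 0$. Since $\ell$ is conserved and the potential is repulsive, every radial trajectory is hyperbolic: it comes in from $r=\infty$, reaches a unique periapsis $r_{\min}(H,\ell)$, and escapes to infinity. This gives a symplectic change of variables $(r,u)\mapsto(\theta,a)$ at fixed $\ell$, with $a$ an action (for instance the asymptotic momentum $a=\sqrt{H^2-m^2}$, or $H$ itself) and $\theta$ the conjugate time-like angle, normalized so that $\theta=0$ at periapsis. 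In these variables the linear flow is $\dot\theta=\partial_a H$, $\dot a=0$.

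To construct the map, I would write $\theta$ through a generating function $S(r,a,\ell)=\int_{r_{\min}}^r u(s;H,\ell)\,ds$. The relativistic relation $u^2=(H-1/s)^2-m^2-\ell/s^2$ still permits explicit or semi-explicit integration. Here I would need asymptotic expansions of $\theta$ as $r\to\infty$ (roughly $\theta\approx r/\widehat a+$ a logarithmic correction) and near periapsis. I would also need bounds on the Jacobian, $|\partial_{r,u}(\theta,a)|$, in terms of weights in $r,u,\ell$. The large powers $r^{-65}$, $r^{35}$, $|u|^{35}$ and $\ell^{45}$ in the hypothesis are presumably what is needed to transfer the $L^1$-type norms of $f_0$ and its derivatives into weighted norms of $g_0=f_0\circ(\text{action-angle map})^{-1}$.

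Next, I would set $\gamma(t,\theta,a,\ell)=f(t,\cdot)$ evaluated at $(\theta+t\,\partial_aH,a,\ell)$. This $\gamma$ satisfies a purely nonlinear equation of Poisson-bracket type,
\[
\partial_t\gamma=\{\Psi(t,\cdot)\circ\Phi_t,\gamma\},
\]
where $\Psi$ is the potential generated by $M$. I would bootstrap on two quantities: moment/derivative norms of $\gamma$ in $L^1$ weighted by $\langle a\rangle^N$, $\ell^N$ and $\langle a\rangle^{-N}$, allowed to grow slowly in $t$; and decay of the field, $|M(t,r)|\lesssim\varepsilon$ with $\partial_r\Psi=O(\varepsilon\,t^{-2})$ away from the trapped region. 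The decay comes from dispersion: $r\approx t\,\widehat a$ for large $t$, so the density spreads over radii of size $t$.

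For the asymptotics, I would expand the potential along trajectories as
\[
\Psi(t,r)\approx-\frac{M_\infty(\text{outgoing }a)}{t\,\widehat a}+O(t^{-2+}),
\]
where $M_\infty(a)$ is the asymptotic mass of particles with smaller asymptotic velocity. This is the standard $1/t$ long-range term. It produces a logarithmic phase correction $\theta\mapsto\theta+\log t\cdot\partial_a\Phi_\infty(a)$, and this defines $\mathcal R,\mathcal U$ by composing the inverse action-angle map with the corrected angle. After removing the logarithmic phase, the remaining terms are integrable in time. This yields the convergence of $\gamma$ in $L^1$, and hence the stated modified scattering. Local well-posedness and the continuation criterion in $C^1_tL^1\cap C^0_t\mathcal W$ are standard and close the global existence.

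The main obstacle will be the analysis of the action-angle map near periapsis and at small $a$ and small $\ell$, together with the derivative bounds it requires. The bracket $\{\Psi\circ\Phi_t,\gamma\}$ involves $\partial_a$ of the angle, which grows like $t$ and must be compensated by the $t^{-2}$ decay of the force. Near $r=r_{\min}$ the coordinates degenerate like $\sqrt{r-r_{\min}}$. I would try first to work with $(\theta,a)$ derivatives instead of $(r,u)$ derivatives, and to prove the relativistic analogue of the Kepler-problem estimates of \cite{Pausader_Widmayer_2021} by comparing with the non-relativistic case in the regime $a\ll1$. I expect the heavy weights in the hypothesis to be consumed in this step.
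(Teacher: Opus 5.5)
Your outline has the same architecture as the paper's proof: a generating function with $a^0=\mathcal{H}_0$, $\gamma$ obtained by composing with $\theta+t\widehat a$, a weighted $L^1$ bootstrap, a $1/t$ field producing a $\log t$ phase, and Jacobian bounds to pass between $f_0$ and $\gamma_0$. However, the bootstrap as you specify it would not close.

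\textbf{Missing weights.} The decay of $M(t,r)/r^k$ and $\rho(t,r)/r^k$ comes from splitting into the bulk $\mathcal{B}_t$, where $\widetilde R\simeq t\widehat a$, and its complement. The complement is small only through $\1_{\mathcal{B}_t^c}\lesssim t^{-k}(a^{3k}+a^{-4k}+|\theta|^k(1+a^{-k})+\ell^{2k})$. You therefore need singular weights $a^{-N}$ for slow particles; the weight $\langle a\rangle^{-N}\leq 1$ controls nothing. Above all you need $|\theta|$-moments, which your list omits. Without them, the mass sitting near periapsis at time $t$ (i.e. $\theta\approx -t\widehat a$) has no decay rate.

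\textbf{Why a hierarchy is needed.} Once $|\theta|^d$ weights are present, the commutator term $d|\theta|^{d-1}\partial_a\widetilde\Psi$ decays only like $t^{-1}$. The available bound $t|\partial_a\widetilde\Psi|\lesssim a^{-3}+a^3+\ell^2+|\theta|$ (up to moments of $\gamma$) also shifts the weights. This is why the paper uses the family $\mathcal{I}=\{|\theta|^{9-k}a^{-3i_1+3i_2}\ell^{i_3}\}$ and lets the $|\theta|^d$-weighted derivative norms grow like $t^{d\delta}$, resp. $t^{(d+1)\delta}$.

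\textbf{Asymmetric weights on the derivatives.} $\partial_t\gamma_a$ contains $\gamma_\theta\,\partial_a^2\widetilde\Psi$, and one only has $t|\partial_a^2\widetilde\Psi|\lesssim 1+a^{-2}$. So controlling $w\gamma_a$ requires controlling $(1+a^{-2})w\gamma_\theta$, and with the same finite weight family on both derivatives the top weight never closes. The paper weights $\gamma_\theta$ by $(a+a^{-1})\omega$ and $\gamma_a$ by $a\omega$. The reverse coupling is harmless because $t^2\|(1+a^{-2})\partial_\theta^2\widetilde\Psi\|_\infty\lesssim\mathcal{M}$.

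\textbf{Pointwise bounds on $\rho$.} The second-derivative bounds on $\widetilde\Psi$ require pointwise bounds on $\rho$, which your plan never addresses. Where $\partial_a\widetilde R\gtrsim tm^2/(a^0)^3$ (including the whole bulk), the paper integrates by parts in $a$. On the rest, it splits $(\theta,a)$-space into $\mathcal{R}_0,\mathcal{R}_1,\mathcal{R}_2$ so that $\partial_a\widetilde R$ or $\partial_\theta\widetilde R$ is bounded below on $\{\widetilde R=r\}$. The traces of $\gamma$ are then bounded by $\|\partial_a\gamma\|_{L^1}$ or $\|\partial_\theta\gamma\|_{L^1}$. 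This is where the $\sqrt{r-r_{\min}}$ degeneracy you anticipate is actually handled.

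\textbf{The asymptotic potential.} Your expansion $\Psi(t,r)\approx -M_\infty(a)/(t\widehat a)+O(t^{-2+})$ misidentifies the leading term. It drops the contribution $4\pi^2\iiint_{s>r}f/s$ of the mass outside radius $r$, which at $r\simeq t\widehat a$ is also of size $\varepsilon/t$. Taking $\Phi_\infty=-M_\infty/\widehat a$ from your expansion would put a spurious term involving $M_\infty'(a)/\widehat a$ into the phase, and the paper only knows $\mathcal{E}_\infty\in L^\infty$. With the full asymptotic potential these terms cancel, and $\partial_a\Phi_\infty$ reduces, up to sign convention, to $\frac{m^2}{a^0a^2}\mathcal{E}_\infty(a)$, which is the paper's correction.

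The paper sidesteps the potential and expands the field directly:
$\partial_a\widetilde\Psi=-\partial_a\widetilde R\,M(t,\widetilde R)/\widetilde R^2=-\frac1t\frac{m^2}{a^0a^2}\mathcal{E}_\infty(a)+O(\varepsilon_1t^{-6/5})$ on the restricted bulk $\mathcal{B}_t^*$. It uses $|\widetilde R-t\widehat a|\lesssim t^{1/2}\log t$ there and $|\mathcal{E}(t,a)-\mathcal{E}_\infty(a)|\lesssim\varepsilon_1^2t^{-1/4}$, and treats $(\mathcal{B}_t^*)^c$ with moments.

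Finally, there is no trapped region in this repulsive problem; the role you give it is played by $\mathcal{B}_t^c$.
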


\begin{remark}
    Here, the modified characteristics $(\mathcal{R},\mathcal{U})$ asymptotically satisfy
    \begin{align*}
        \mathcal{R}(t,r,u,\ell)=&~t\frac{\sqrt{\left(\sqrt{m^2+u^2+\frac{\ell}{r^2}}+\frac{1}{r}\right)^2-m^2}}{\sqrt{m^2+u^2+\frac{\ell}{r^2}}+\frac{1}{r}}+\frac{m^2}{\left(\left(\sqrt{m^2+u^2+\frac{\ell}{r^2}}+\frac{1}{r}\right)^2-m^2\right)\left(\sqrt{m^2+u^2+\frac{\ell}{r^2}}+\frac{1}{r}\right)}\log(t)\\
        &-\frac{m^2\mathcal{E}_\infty\left(\sqrt{\left(\sqrt{m^2+u^2+\frac{\ell}{r^2}}+\frac{1}{r}\right)^2-m^2}\right)}{\left(\left(\sqrt{m^2+u^2+\frac{\ell}{r^2}}+\frac{1}{r}\right)^2-m^2\right)\left(\sqrt{m^2+u^2+\frac{\ell}{r^2}}+\frac{1}{r}\right)}\log(t)+s(t,r,u,\ell),\\
        |\mathcal{U}(t,r,u,\ell)|=&~\sqrt{\left(\sqrt{m^2+u^2+\frac{\ell}{r^2}}+\frac{1}{r}\right)^2-m^2}-\frac{1}{t}\frac{\left(\sqrt{m^2+u^2+\frac{\ell}{r^2}}+\frac{1}{r}\right)^2}{\left(\sqrt{m^2+u^2+\frac{\ell}{r^2}}+\frac{1}{r}\right)^2-m^2}+d(t,r,u,\ell),
    \end{align*}
    where $\mathcal{E}_\infty$ is an asymptotic field, defined in Proposition \ref{proposition_asymp_E_psi} below. Moreover, for any fixed $(r,u,\ell)$, $s$ and $d$ satisfy
    \begin{equation*}
        s(\cdot,r,u,\ell)=O_{t\rightarrow+\infty}(1),\qquad d(\cdot,r,u,\ell)=O_{t\rightarrow +\infty}\left(t^{-2}\log(t)\right).
    \end{equation*}
    Although these asymptotic expansions remain slightly convoluted, they give the general asymptotic behavior of the characteristics. More precisely, in the expression for $\mathcal{R}$, the first two terms correspond to the linear characteristics, whereas the term containing $\mathcal{E}_\infty$ arises purely from the nonlinearity. The exact and simpler expression for the correction is given in action-angle variables in \eqref{equation_exact_modified_scattering_f}.
\end{remark}



\subsection{Ideas of the proof}
\label{section_idea_proof}

We first consider the linearized system 
\begin{equation}
    \label{equation_linearized_system_idea}
    \partial_tf+\frac{u}{\sqrt{u^2+m^2+\frac{\ell}{r^2}}}\partial_rf+\left(\frac{\ell}{r^3\sqrt{u^2+m^2+\frac{\ell}{r^2}}}+\frac{1}{r^2}\right)\partial_u f=0.
\end{equation}
Here, the characteristics are solutions to the equations of motion appearing in the relativistic Kepler problem, also called classical relativistic two body equation (see, for instance, \cite{onemSolutionsClassicalRelativistic1998}). Moreover, $\ell$ is a constant of the motion. Hence, if we view $\ell$ as a parameter, \eqref{equation_linearized_system_idea} is a $1+1$ Hamiltonian system and can be rewritten
\begin{equation*}
    \partial_t f=\{\mathcal{H}_0,f\},
\end{equation*}
where $\mathcal{H}_0(r,u,\ell):=\sqrt{u^2+m^2+\frac{\ell}{r^2}}+\frac{1}{r}$ is the Hamiltonian of the system, and the Poisson bracket is defined by 
\begin{equation*}
    \{f,g\}:=\partial_r f\partial_ug-\partial_uf\partial_r g.
\end{equation*}
Since the study of \eqref{equation_linearized_system_idea} is more difficult than the free relativistic transport equation, we exploit its structure to introduce action-angle variables. More precisely, we define a new set of coordinates $(\theta,a)\in \R\times\R^*_+$, called action-angle variables. They are defined via a symplectomorphism $(\Theta,\mathcal{A})$  that maps $(r,u)$ onto $(\theta,a)$, and with inverse $(R,U)$. For $g(t,\theta,a,\ell):=f(t,R(\theta,a),U(\theta,a),\ell)$, and $\mathcal{H}_0=a^0=\sqrt{m^2+a^2}$, we have
\begin{equation}
    \partial_t g =\{\mathcal{H}_0,g\}_{\theta,a}=-\widehat{a}\partial_\theta g,
\end{equation}
where $\{f,g\}_{\theta,a}:=\partial_\theta f\partial_ag-\partial_\theta g\partial_a f$. This equation can be solved directly by composing with the linear flow $\theta+t\widehat{a}$. In fact, if $f$ is a solution to the linearized system \eqref{equation_linearized_system_idea}, and
\begin{equation}
    \label{equation_def_gamma_idea}
    \gamma(t,\theta,a,\ell):=f(t,R(\theta+t\widehat{a},a),U(\theta+t\widehat{a},a),\ell),
\end{equation}
then $\partial_t\gamma=0$. Since the study of the linearized system is better adapted to action-angle variables, we also rewrite the nonlinear system in these coordinates. Let $f$ be a solution to the nonlinear system \eqref{equation_VM_pc_radial_case} and $\gamma$ be defined as in \eqref{equation_def_gamma_idea}. Then $\gamma$ solves the nonlinear problem given by 
\begin{equation}
    \label{equation_non_linear_aa_idea}
    \partial_t\gamma=\{\widetilde{\Psi},\gamma\}_{\theta,a},
\end{equation}
where $\Psi$ is the potential, $\widetilde{R}(\theta,a):=R(\theta+t\widehat{a},a)$, and 
\begin{equation}
    \Psi(t,r)=-4\pi^2\iiint \frac{1}{\max(r,\widetilde{R}(\vartheta,\alpha))}\gamma(t,\vartheta,\alpha,\ell)\mathrm{d\vartheta}\mathrm{d}\alpha\mathrm{d}\ell,\qquad \widetilde{\Psi}(t,\theta,a):=\Psi(t,\widetilde{R}(\theta,a)).
\end{equation}
Note that the integral is written using action-angle variables. One can do the same with $M$ and its derivative $\rho$, and find
\begin{align}
    \label{equation_def_M_idea} M(t,r)&=4\pi^2\iiint \1_{\{\widetilde{R}(\theta,a)\leq r\}} \gamma(t,\theta,a,\ell)\mathrm{d}\theta\mathrm{d}a\mathrm{d}\ell,\\
    \label{equation_def_rho_idea}\rho(t,r)&:=\partial_r M(t,r)=4\pi^2\iiint \delta\big(\widetilde{R}(\theta,a)-r\big) \gamma(t,\theta,a,\ell)\mathrm{d}\theta\mathrm{d}a\mathrm{d}\ell.
\end{align}
Also note that the derivatives of $\widetilde{\Psi}$ satisfy, for any $\alpha,\beta\in\{\theta,a\}$,
\begin{equation}
    \label{equation_def_derivatives_psi_idea}
    \partial_\alpha\widetilde{\Psi}=-\partial_\alpha\widetilde{R}\frac{M(t,\widetilde{R})}{\widetilde{R}^2},\qquad \partial_\alpha\partial_\beta\widetilde{\Psi}=-\frac{M(t,\widetilde{R})}{\widetilde{R}^2}\left(\partial_\alpha\partial_\beta \widetilde{R}-2\frac{\partial_\alpha\widetilde{R}\partial_\beta\widetilde{R}}{\widetilde{R}}\right)-\rho(t,\widetilde{R})\frac{\partial_\alpha\widetilde{R}\partial_\beta\widetilde{R}}{\widetilde{R}^2}.
\end{equation}

Now that we have defined these quantities in action-angle variables, let us study the nonlinear problem \eqref{equation_non_linear_aa_idea}. One of the key arguments on which the proof is based is the introduction of the \textbf{bulk} $\mathcal{B}_t$. This subset of $\R^*_+\times\R\times\R_+$ is defined by
\begin{equation}
    \label{equation_def_bulk_idea}
    \mathcal{B}_t:=\left\{(\theta,a,\ell)\in\R\times\R^*_+\times\R_+\,|\, \widehat{a}\geq t^{-\frac{1}{4}},\quad a\leq t^\frac{1}{3},\quad |\theta|\leq \frac{t\widehat{a}}{2},\quad \ell \leq t^\frac{1}{2}\right\}.
\end{equation}
Note that for large times, $(\theta,a,\ell)\in\mathcal{B}_t$. Moreover, in $\mathcal{B}_t^c$, 
\begin{equation}
    \label{equation_complement_bulk_idea}
    \1_{\mathcal{B}_t^c}\lesssim t^{-k}(a^{3k} +a^{-4k}+|\theta|^k(1+a^{-k}) +\ell^{2k}).
\end{equation}
Hence, for sufficiently localized solutions, we can recover any time decay in $\mathcal{B}_t^c$. In the bulk, we can also derive $\widetilde{R}(\theta,a)\sim t\widehat{a}$, giving us a rough estimate of the asymptotic behavior of $\widetilde{R}$. In fact, in a subset of the bulk, we show that 
\begin{equation*}
    |\widetilde{R}(\theta,a)-t\widehat{a}|=O_{t\rightarrow +\infty}(t^\frac{1}{2}\log(t)).
\end{equation*}
In particular, this measures the rate at which the particles travel to infinity. With the introduction of $\mathcal{B}_t$, we can readily estimate the enclosed mass, defined in \eqref{equation_def_M_idea}. To do so, we divide the integral between $\mathcal{B}_t$ and $\mathcal{B}_t^c$. In the bulk, we use $\widetilde{R}(\theta,a)\sim t\widehat{a}$ to recover the decay in $t$, and in $\mathcal{B}_t^c$ we use \eqref{equation_complement_bulk_idea} to recover any decay in $t$. A similar argument can be used for the charge density $\rho$. We obtain, for any $k>0$,
\begin{equation*}
    \frac{M(t,r)}{r^k}\lesssim t^{-k} \mathcal{M},\qquad \frac{|\rho(t,r)|}{r^k}\lesssim t^{-1-k}\mathcal{M},
\end{equation*}
where 
\begin{equation*}
    \mathcal{M}:=\|(a^{24}+a^{-24}+|\theta|^{8}+\ell^{16})(\gamma+|\partial_\theta \gamma|+|\partial_a \gamma|)\|_{L^1},
\end{equation*}
is constant in the linearized setting. By \eqref{equation_def_derivatives_psi_idea}, we can then use these estimates to derive 
\begin{equation}
    \label{equation_estimate_derivatives_psi_idea}
    |a^{-1}\partial_\theta\widetilde{\Psi}(\theta,a)|\lesssim t^{-\frac{3}{2}}\mathcal{M},\qquad |\partial_a\widetilde{\Psi}(\theta,a)|\lesssim t^{-1}(a^3+a^{-3}+|\theta|+\ell^2)\mathcal{M},
\end{equation}
and similar estimates for the second order derivatives (see Proposition \ref{proposition_estimates_psi_second_derivatives} below). By commuting \eqref{equation_non_linear_aa_idea} with the derivatives and using the previous estimates, we propagate the moments of $\gamma$ and its derivatives (see Propositions \ref{proposition_bootstrap_gamma}--\ref{proposition_bootstrap_gamma_derivatives}) and obtain that the small data solutions to \eqref{equation_non_linear_aa_idea} are global. \\
We can thus investigate further the asymptotic behavior of such solutions and, in particular, try to prove a scattering statement for $\gamma$. Note that in the definition \eqref{equation_def_gamma_idea} of $\gamma$, we already composed by the linear flow. Hence, to prove a scattering statement, we may directly study the integrability of $\partial_t\gamma$. Recall that
\begin{equation*}
    \partial_t\gamma=\partial_\theta\widetilde{\Psi}\gamma_a-\partial_a\widetilde{\Psi}\gamma_\theta.
\end{equation*}
From \eqref{equation_estimate_derivatives_psi_idea}, we merely obtain $|\partial_t\gamma|\lesssim t^{-1}\mathcal{M}$. Moreover, the lack of decay is concentrated in the electric field $\partial_a\widetilde{\Psi}$. We then derive the leading-order term in the asymptotic expansion of $\partial_a\widetilde{\Psi}$, and find that it is independent of $\theta$ and $\ell$. More precisely
\begin{equation*}
    \partial_a\widetilde{\Psi}=-\frac{1}{t}\frac{m^2}{a^0a^2}\mathcal{E}_\infty(a)+O(t^{-\frac{6}{5}}),\qquad \mathcal{E}_\infty(a):=\lim_{t\rightarrow+\infty} 4\pi^2\iiint\1_{\{\alpha\leq a\}}\gamma(t,\theta,\alpha,\ell)\mathrm{d}\theta\mathrm{d}\alpha\mathrm{d}\ell.
\end{equation*}
This allows us to define our main theorem in action-angle variables.
\begin{theorem}
    \label{main_theorem_action_angle}
    There exists $\varepsilon>0$ such that, for all $0<\varepsilon_0<\varepsilon$, if 
        \begin{equation}
            \label{equation_smallness_condition_idea}
            \|(a^{30}+a^{-30}+|\theta|^{10}+\ell^{20})(\gamma+|\partial_a\gamma|+|\partial_\theta\gamma|)\|_{L^1_{\theta,a,\ell}}\leq \varepsilon_0,
        \end{equation}
        then, there exists a global solution $\gamma\in C^1_tL^1_{\theta,a,\ell}\cap C^0_tW^{1,1}_{\theta,a}$ with initial data $\gamma_0$. Moreover, there exists $\mathcal{E}_\infty\in L^\infty_a$, and $\gamma_\infty\in L^1_{\theta,a,\ell}$ such that
        \begin{equation}
            \label{equation_scattering_gamma_idea}
            \gamma\left(t,\theta-\log(t)\frac{m^2}{a^0a^2}\mathcal{E}_\infty(a),a,\ell\right)\xrightarrow[t\rightarrow +\infty]{L^1_{\theta,a,\ell}} \gamma_\infty(\theta,a,\ell).
        \end{equation}
\end{theorem}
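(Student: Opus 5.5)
The proof is a bootstrap argument on the nonlinear equation \eqref{equation_non_linear_aa_idea}, followed by an asymptotic analysis of $\partial_a\widetilde{\Psi}$.

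\textbf{Step 1: local theory and bootstrap setup.} Local well-posedness in $C^1_tL^1\cap C^0_tW^{1,1}$ follows from the classical theory for \eqref{equation_VM_pc_radial_case}. The initial data is supported away from $r=0$ thanks to the weight $a^{-30}$; near the origin the repulsive charge is harmless. For the global statement I would assume, on $[0,T]$, the bootstrap bound
\begin{equation*}
\mathcal{M}(t)\le 2\varepsilon_0\langle t\rangle^{\delta}
\end{equation*}
for a small $\delta>0$. Here $\mathcal{M}$ is the weighted norm of $\gamma,\partial_\theta\gamma,\partial_a\gamma$ with the weights of \eqref{equation_smallness_condition_idea}.

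\textbf{Step 2: field bounds and closing the bootstrap.} Splitting phase space into $\mathcal{B}_t$ and $\mathcal{B}_t^c$ as in Section~\ref{section_idea_proof} gives bounds on $M$, $\rho$ and the derivatives of $\widetilde{\Psi}$. These are \eqref{equation_estimate_derivatives_psi_idea} together with the second-order analogues.
\begin{itemize}
\item \emph{Weighted norm of $\gamma$.} Since $\partial_t\gamma=\{\widetilde{\Psi},\gamma\}$ is a transport by a divergence-free vector field, $\|w\gamma\|_{L^1}$ changes only through $\{\widetilde\Psi,w\}$. The $\theta$-weight is moved by $\partial_a\widetilde{\Psi}$, which costs $t^{-1}$ times higher moments, and the $a$-weight is moved by $\partial_\theta\widetilde\Psi$, which costs $t^{-3/2}$.
\item \emph{Derivatives.} Commuting with $\partial_\theta,\partial_a$ produces terms with $\partial^2\widetilde{\Psi}$ times $\nabla\gamma$.
\end{itemize}
The danger is the $t^{-1}$ terms, which give logarithmic or $t^\delta$ losses. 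To absorb them I would use a hierarchy of weights: lower moments are propagated uniformly, and higher moments are allowed to grow like $t^{\delta}$. This gives $\mathcal{M}(t)\le \varepsilon_0+C\varepsilon_0^2\langle t\rangle^\delta$, which closes the bootstrap for small $\varepsilon_0$.

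\textbf{Step 3: modified scattering.} Expand
\begin{equation*}
\partial_a\widetilde{\Psi}=-\frac{\partial_a\widetilde R\, M(t,\widetilde R)}{\widetilde R^2}.
\end{equation*}
In the bulk, $\widetilde R=t\widehat a+O(t^{1/2}\log t)$ and $\partial_a\widetilde R=t\partial_a\widehat a+O(\cdot)=t\,m^2/(a^0)^3+\dots$. Moreover $M(t,\widetilde R(\theta,a))$ should equal $4\pi^2\iiint \1_{\{\alpha\le a\}}\gamma\,\mathrm{d}\vartheta\,\mathrm{d}\alpha\,\mathrm{d}\ell$ up to $O(t^{-\kappa})$, because $\widetilde R$ is asymptotically monotone in $a$ and the $\theta$ and $\ell$ spreads are lower order. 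This gives the leading term $-t^{-1}\frac{m^2}{a^0a^2}\mathcal{E}(t,a)$.

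Next, $\partial_t\mathcal{E}(t,a)$ is a flux across $\{\alpha=a\}$ driven by $\partial_\theta\widetilde\Psi$, which is $O(t^{-3/2})$. It is therefore integrable, so $\mathcal{E}(t,\cdot)\to\mathcal{E}_\infty$ at rate $t^{-1/2}$. Setting
\begin{equation*}
\sigma(t,\theta,a,\ell)=\gamma\bigl(t,\theta-\log t\,\tfrac{m^2}{a^0a^2}\mathcal{E}_\infty(a),a,\ell\bigr),
\end{equation*}
the time derivative $\partial_t\sigma$ is the remainder $O(t^{-6/5})$ times $\partial_\theta\gamma$, plus $\partial_\theta\widetilde\Psi\,\partial_a\gamma$. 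The latter involves $\partial_a$ hitting the shift, which brings a factor $\log t$, but it is still integrable against $t^{-3/2}$. Hence $\partial_t\sigma$ is integrable in $L^1$, and $\sigma$ is Cauchy, which yields $\gamma_\infty$.

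\textbf{Main obstacle.} I expect the hardest step to be this asymptotic expansion of $\partial_a\widetilde{\Psi}$ with an integrable remainder. It requires precise control of $\widetilde R-t\widehat a$ and $\partial_a\widetilde R$ from the relativistic Kepler action-angle construction. It also requires replacing $\1_{\{\widetilde R(\vartheta,\alpha)\le\widetilde R(\theta,a)\}}$ by $\1_{\{\alpha\le a\}}$, with an error controlled by $\|\partial_a\gamma\|$ on a strip of width $t^{-1/2}\log t$.
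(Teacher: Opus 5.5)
Your Step 3 is essentially the paper's argument. The genuine gap is in Step 2, in propagating the derivative norms.

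Commuting with $\partial_a$ produces the term $\partial_\theta\gamma\,\partial_a^2\widetilde\Psi$. The available control of $\partial_a^2\widetilde\Psi$ (Proposition~\ref{proposition_estimates_psi_second_derivatives}) is not just $O(t^{-1})$ but
\[
t\,|\partial_a^2\widetilde\Psi|\lesssim (1+a^{-2})\Big(\|(a^{-6}+a^6)\gamma\|_{L^1}+\|(a^{-3}+a^3)\partial_a\gamma\|_{L^1}+t^{-1/2}\mathcal{M}\Big),
\]
which is singular as $a\to0$. The singularity comes from $\partial_a^2\widetilde R/\widetilde R$ and $(\partial_a\widetilde R)^2/\widetilde R^2$: there $\partial_aR$ and $\partial_a^2R$ grow like negative powers of $a$, while one only has $\widetilde R\gtrsim a^{-2}$.

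Your bootstrap norm puts the same weight $a^{30}+a^{-30}+|\theta|^{10}+\ell^{20}$ on $\partial_\theta\gamma$ and on $\partial_a\gamma$. Then $\frac{\mathrm{d}}{\mathrm{d}t}\|a^{-30}\partial_a\gamma\|_{L^1}$ requires $\|a^{-32}\partial_\theta\gamma\|_{L^1}$, which the norm does not contain. Enlarging the norm symmetrically only shifts the requirement by another factor $a^{-2}$, so the scheme never closes.

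The missing idea (from Pausader--Widmayer, used in the paper) is to weight the two derivatives asymmetrically: $\omega^{(1)}=(a+a^{-1})\omega$ on $\partial_\theta\gamma$ and $\omega^{(2)}=a\omega$ on $\partial_a\gamma$. This is combined with the compensating gain $t^2(1+a^{-2})|\partial_\theta^2\widetilde\Psi|\lesssim\mathcal{M}$ in the $\partial_\theta\gamma$ equation. Since $(1+a^{-2})\omega^{(2)}=\omega^{(1)}$ and $(1+a^{-2})^{-1}\omega^{(1)}=\omega^{(2)}$, the cross terms $\omega^{(2)}\partial_\theta\gamma\,\partial_a^2\widetilde\Psi$ and $\omega^{(1)}\partial_a\gamma\,\partial_\theta^2\widetilde\Psi$ are then bounded by the very norms being propagated.

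The growth bookkeeping must also distinguish the two derivatives, not only low versus high moments. The forcing of $\partial_a\gamma$ by $\partial_\theta\gamma$ is $t^{-1}$ times a $\partial_a\gamma$-norm times a $\partial_\theta\gamma$-norm. If both grow like $t^{\delta}$, as your single-rate assumption $\mathcal{M}(t)\le 2\varepsilon_0\langle t\rangle^\delta$ allows, the time integral is of size $\varepsilon_0^2t^{2\delta}/\delta$. So the claimed improvement $\mathcal{M}(t)\le\varepsilon_0+C\varepsilon_0^2\langle t\rangle^\delta$ does not follow.

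What saves the argument is that $\partial_\theta\gamma$ is driven by $\partial_a\gamma$ only through integrable coefficients: $\partial_\theta^2\widetilde\Psi=O(t^{-2})$ and $\partial_\theta\partial_a\widetilde\Psi=O(t^{-3/2})$. Hence its norms can be shown to grow one step slower. The paper propagates $\|\omega^{(1)}_{d,n,q}\partial_\theta\gamma\|_{L^1}\lesssim t^{d\delta}$ and $\|\omega^{(2)}_{d,n,q}\partial_a\gamma\|_{L^1}\lesssim t^{(d+1)\delta}$, graded by the $\theta$-degree $d$. This grading is what absorbs the $t^{-1}$ loss from $\partial_a\widetilde\Psi$ acting on $|\theta|^d$. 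The scheme closes once $\varepsilon_0,\varepsilon_1$ are small relative to $\delta^2$ and $C\varepsilon_0\le\delta$.

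A minor point on Step 1: the weight $a^{-30}$ does not localize away from $r=0$. Small $a$ corresponds to large $r$, since $R\ge(a^0+\sqrt{m^2+\ell a^2})/a^2$. The region near $r=0$ is large $a$, and it is controlled by $a^{30}$ only in weighted $L^1$, not through support.
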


Finally, we can return to the initial nonlinear problem \eqref{equation_VM_pc_radial_case}. We consider $f_0$ such that 
\begin{equation*}
    \gamma_0:=f_0(R(\theta,a),U(\theta,a),\ell),
\end{equation*}
satisfies the smallness condition \eqref{equation_smallness_condition_idea}. Due to Theorem \ref{main_theorem_action_angle}, there exists a global solution $\gamma$ to \eqref{equation_non_linear_aa_idea}. We then define
\begin{equation*}
    f(t,r,u,\ell):=\gamma(t,\Theta(r,u)-t\widehat{\mathcal{A}}(r,u),\mathcal{A}(r,u),\ell).
\end{equation*}
Consequently, $f$ is a global solution to \eqref{equation_VM_pc_radial_case} with initial data $f_0$. Finally, by \eqref{equation_scattering_gamma_idea}, $f$ exhibits a modified scattering dynamic. 

\subsection{Structure of the paper}

We begin by studying the linearized system. In Section \ref{section_generating_function}, we explain how to find the action-angle variables and provide estimates on the generating functions. This lets us define the action-angle variables in Section \ref{section_action_angle} and show primary estimates on the radius $R$. We then define new quantities in action-angle coordinates and introduce the bulk $\mathcal{B}_t$ in Section \ref{section_composing_linear_flow}. In Section \ref{section_preliminary_lemma} we state an essential lemma to estimate $\rho$. In Section \ref{section_potential_derivatives}, we give key upper bounds for $M,\rho$ and the derivatives of $\widetilde{\Psi}$. In Section \ref{section_non_linear_problem}, we tackle the nonlinear problem. We propagate moments for $\gamma$ and its derivatives and prove that the solutions are global. Section \ref{section_asymptotics} investigates the asymptotic behavior of the solutions. Here we determine the exact asymptotic behavior of $\partial_a\widetilde{\Psi}$ and prove the modified scattering statement. This proves Theorem \ref{main_theorem_action_angle}. We finally go back to the initial problem \eqref{equation_VM_pc_radial_case}, and prove Theorem \ref{main_theorem} in Section \ref{section_going_back}.


\section{Study of the linearized system}

Let $f$ be a solution to the linearized system

\begin{equation}
    \label{equation_RVPpc_linearized_2}
    \partial_tf+\frac{u}{\sqrt{u^2+m^2+\frac{\ell}{r^2}}}\partial_rf+\left(\frac{\ell}{r^3\sqrt{u^2+m^2+\frac{\ell}{r^2}}}+\frac{1}{r^2}\right)\partial_u f=0.
\end{equation}

As stated in Section \ref{section_idea_proof}, our goal is to straighten the characteristics. Here, these characteristics satisfy the equations of motion of the relativistic Kepler problem \cite{onemSolutionsClassicalRelativistic1998}. We introduce new coordinates, called action-angle variables, such that, in these new coordinates, the system becomes the free relativistic transport equation 
\begin{equation}
\label{equation_free_relat_transp}
    \partial_tg +\widehat{a}\partial_\theta g=0.
\end{equation}

\subsection{The generating function}
\label{section_generating_function}
\subsubsection{How to find the action-angle variables}
We begin by formally explaining how to obtain the action-angle variables. One of the ways to find the diffeomorphism that leads to such coordinates is to define a generating function $S(a,r)$ such that $\partial_rS(a,r)=u$. Here, this expression needs to be understood in the following sense. First, note that $\ell$ is a constant of the motion. Hence, if we view $\ell$ as a parameter, we know that \eqref{equation_RVPpc_linearized_2} is a $1+1$ Hamiltonian system with $\mathcal{H}_0(r,u):=\sqrt{u^2+m^2+\frac{\ell}{r^2}}+\frac{1}{r^2}$. As such, it rewrites 
\begin{equation*}
    \partial_t f=\{\mathcal{H}_0,f\},
\end{equation*}
where the Poisson bracket is defined by 
\begin{equation*}
    \{f,g\}:=\partial_r f\partial_ug-\partial_uf\partial_r g.
\end{equation*}
One of the key ideas is to preserve the Hamiltonian structure of the equation in the action-angle variables $(\theta,a)$, which are to be defined later. We introduce the new Poisson bracket 
\begin{equation*}
    \{f,g\}_{\theta,a}:=\partial_\theta f\partial_a g-\partial_\theta g\partial_a f.
\end{equation*}
Hence, since we want \eqref{equation_free_relat_transp} to be equivalent to
\begin{equation*}
    \partial_t g= \{\mathcal{H}_0,g\}_{\theta,a},
\end{equation*}
we are inclined to define the action variable $a\in\R^*_+$ as $a^0=\mathcal{H}_0(r,u)$ (recall that $\partial_a (a^0)=\widehat{a})$. By doing so, we can then express $u$ as 
\begin{equation}
    \label{equation_first_definition_u}
    u=\pm\sqrt{\left(a^0-\frac{1}{r}\right)^2-m^2-\frac{\ell}{r^2}}.
\end{equation}
We can thus define $S(a,r)$ by integrating in $r$ the term on the right-hand side of \eqref{equation_first_definition_u}, so that it automatically satisfies $\partial_r S(a,r)=u$. Then, we define the angle variable by $\theta:=\partial_aS(a,r)$. Since, by definition, $0=\mathrm{d}(\mathrm{d}S)=\mathrm{d}(\theta\mathrm{d}a+u\mathrm{d}r)$, this directly implies $\mathrm{d}\theta\wedge\mathrm{d}a=\mathrm{d}r\wedge\mathrm{d}u$ and thus $(r,u)\mapsto(\theta,a)$ preserves the volume form.

\begin{remark}
    If we do not impose any volume-preserving condition on the action-angle variables, it may be easier to find such variables. Nevertheless, it remains a difficult task to find action-angle variables without a clear guideline. However, introducing the generating function provides a robust framework to obtain volume preserving action-angle variables. In fact, this method can be applied in a more general setting, and we refer to \cite[Section 2.1.1]{pausaderStabilityPointCharge2024} for more details.
\end{remark}

We now define the exact expression for the generating function. First, note that the quantity on the right-hand side of \eqref{equation_first_definition_u} is only well-defined for $r\in(-\infty,r_-(a,\ell)]\cup[r_+(a,\ell),\infty)$ with 
\begin{equation*}
    r_-(a,\ell):=\frac{a^0-\sqrt{m^2+\ell a^2}}{a^2},\qquad r_+(a,\ell):=\frac{a^0+\sqrt{m^2+\ell a^2}}{a^2}.
\end{equation*}
Consequently, when defining $S(a,r)$, we have to choose between integrating \eqref{equation_first_definition_u} on $[r,r_-(a,\ell)]$ or $[r_+(a,\ell),r]$. However, $r_-(a,\ell)=\frac{1-\ell}{a^0+\sqrt{m^2+\ell a^2}}\leq 0$ when $\ell\geq 1$ and $r_-(a,\ell)\leq \frac{1-\ell}{2m}$ otherwise. Since we want to consider $r\in(0,\infty)$, we integrate on $[r_+(a,\ell),r]$ and define
\begin{equation*}
    S(a,r):=\int_{r_+(a,\ell)}^r\sqrt{\left(a^0-\frac{1}{s}\right)^2-m^2-\frac{\ell}{s^2}}\mathrm{d}s.
\end{equation*}

As stated above, in order to compute the angle variable $\theta$, we need to differentiate $S$ with respect to the action variable $a$. We derive
\begin{equation*}
    \partial_aS(a,r)=\int_{r_+(a,\ell)}^r \frac{a}{a^0}\frac{a^0-\frac{1}{s}}{\sqrt{\left(a^0-\frac{1}{s}\right)^2-m^2-\frac{\ell}{s^2}}}\mathrm{d}s=pG_a\left(\frac{r}{p}-\kappa\right),
\end{equation*}
where 
\begin{equation}
    \label{equation_def_G_a}
    G_a(x):=\sqrt{x^2-1}+\left(\kappa-\frac{1}{a^0p}\right)\arcosh(x),
\end{equation}
and 
\begin{equation}
    \label{equation_def_p_kappa}
    p(a,\ell):=\frac{\sqrt{m^2+\ell a^2}}{a^2},\qquad \kappa(a,\ell):=\frac{a^0}{a^2p}=\frac{a^0}{\sqrt{m^2+\ell a^2}}.
\end{equation}

\begin{remark}
    Here we make use of the notations introduced by Kepka and Widmayer in \cite{kepka_widmayer_2025}. Note that, unlike in their article, $G_a$ does not depend on $a$ solely through $\kappa$.
\end{remark}

Before giving the exact expressions of the action-angle variables, let us give some properties on $G_a$ and its inverse.

\subsubsection{Properties of $G_a$ and its inverse}
First, note that for any fixed $a>0$,  $G_a: (1,+\infty)\rightarrow (0,+\infty)$ is an increasing bijective function, with inverse $H_a:=G_a^{-1}$. We begin by giving estimates and asymptotic properties of $G_a$ and $H_a$. 
\begin{proposition}
    \label{proposition_asymptotic_expansion_G_H}
    Let $G_a$ be the function defined in \eqref{equation_def_G_a} and $H_a$ its inverse. The following estimates hold 
    \begin{equation}
        \label{equation_estimates_G_a_H_a}
        \begin{array}{r}
             \displaystyle x-1\leq G_a(x)\leq2x,\quad 1\leq x<+\infty,  \\
            \displaystyle \frac{x}{2}\leq H_a(x)\leq x+1,\quad 0\leq x<+\infty.
        \end{array}
    \end{equation}
    The derivatives of $G_a$ and $H_a$ satisfy
    \begin{equation}
        \label{equation_derivatives_G_a_H_a}
        \begin{array}{ll}
            \displaystyle  G_a'(x)=\frac{x+\kappa-\frac{1}{a^0p}}{\sqrt{x^2-1}}, & \qquad\displaystyle\partial_a G_a(x)=\partial_a\left(\kappa-\frac{1}{a^0p}\right)\arcosh(x),\vspace{5pt}\\
            \displaystyle  H_a'(x)=\frac{\sqrt{H_a(x)^2-1}}{H_a(x)+\kappa-\frac{1}{a^0p}}, & \displaystyle\qquad\partial_a H_a(x)=-\partial_a\left(\kappa-\frac{1}{a^0p}\right)\arcosh(H_a(x))H_a'(x).
            \end{array}
    \end{equation}
    Moreover, $H_a$ and $G_a$ satisfy the following asymptotic expansions
    \begin{equation}
    \label{equation_DL_G_H_infinity}
    \begin{array}{l}
        \displaystyle G_a(y)=y+\left(\kappa-\frac{1}{a^0p}\right)\log(2y) +O_{y\rightarrow +\infty}\left(\frac{1}{y}\right) ,\vspace{4pt}\\
        \displaystyle H_a(x)=x-\left(\kappa-\frac{1}{a^0p}\right)\log(2x)+\left(\kappa-\frac{1}{a^0p}\right)^2\frac{\log(2x)}{2x}+O_{x\rightarrow +\infty}\left(\frac{1}{x}\right),
        \end{array}
    \end{equation}
    \begin{equation*}
        G_a(1+h)=\sqrt{2}\left(1+\kappa-\frac{1}{a^0p}\right)\sqrt{h}+O_{h\rightarrow 0}\left(h^\frac{3}{2}\right),\qquad H_a(h)=1+\left(\frac{a^0\sqrt{m^2+\ell a^2}}{m^2+a^0\sqrt{m^2+\ell a^2}}\right)^2\frac{h^2}{2}+O_{h\rightarrow0}(h^4).
    \end{equation*}
    Here, the remainder terms are all uniform in $a,\ell$.
\end{proposition}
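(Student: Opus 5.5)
The whole statement rests on one observation: $G_a$ depends on $(a,\ell)$ only through the single constant
\begin{equation*}
c:=\kappa-\frac{1}{a^0p}=\frac{a^0}{\sqrt{m^2+\ell a^2}}-\frac{a^2}{a^0\sqrt{m^2+\ell a^2}}=\frac{(a^0)^2-a^2}{a^0\sqrt{m^2+\ell a^2}}=\frac{m^2}{a^0\sqrt{m^2+\ell a^2}}.
\end{equation*}
Since $a^0\geq m$ and $\sqrt{m^2+\ell a^2}\geq m$, we have $c\in(0,1]$. So I will study the one-parameter family $G^{(c)}(x)=\sqrt{x^2-1}+c\,\arcosh(x)$ with $c\in(0,1]$. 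Every estimate below is proved with constants depending only on an upper bound for $c$, and this is what makes the remainders uniform in $a,\ell$.

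\emph{The bounds \eqref{equation_estimates_G_a_H_a}.} For $x\geq1$ we have $\sqrt{x^2-1}\geq x-1$, which is equivalent to $x\geq1$, and $c\,\arcosh x\geq0$; hence $G_a(x)\geq x-1$. For the upper bound, use $\sqrt{x^2-1}\leq x$ and $\arcosh x=\log(x+\sqrt{x^2-1})\leq\log(2x)\leq x$, so $G_a(x)\leq 2x$ because $c\leq1$. Since $G_a$ is increasing and bijective, applying these bounds at $x=H_a(y)$ gives $y\geq H_a(y)-1$ and $y\leq 2H_a(y)$, which are the bounds on $H_a$.

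\emph{The derivatives \eqref{equation_derivatives_G_a_H_a}.} Differentiating in $x$ gives $G_a'(x)=\frac{x}{\sqrt{x^2-1}}+\frac{c}{\sqrt{x^2-1}}=\frac{x+c}{\sqrt{x^2-1}}$. Differentiating in $a$, only $c$ depends on $a$, so $\partial_aG_a(x)=\partial_a c\,\arcosh(x)$. For $H_a$, differentiate the identity $G_a(H_a(x))=x$ in $x$ to get $H_a'=1/G_a'(H_a)$. Differentiating the same identity in $a$ gives $\partial_aG_a(H_a)+G_a'(H_a)\,\partial_aH_a=0$, which is the stated formula for $\partial_aH_a$.

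\emph{Expansion at infinity.} Use $\sqrt{y^2-1}=y+O(1/y)$ and $\arcosh y=\log(2y)+O(1/y^2)$; since $c\leq1$, this gives the expansion of $G_a$ uniformly. For $H_a$, I will run a fixed-point or bootstrap argument. Set $X=H_a(x)$; by the first step, $x/2\leq X\leq x+1$. The identity $x=X+c\log(2X)+O(1/X)$ then gives first $X=x+O(\log x)$. Plugging this back,
\begin{equation*}
\log(2X)=\log(2x)+\log\Big(1-\frac{c\log(2x)}{x}+O(x^{-1})\Big)=\log(2x)-\frac{c\log(2x)}{x}+O(x^{-1}),
\end{equation*}
and substituting into $X=x-c\log(2X)+O(1/X)$ gives the second-order correction in $\log(2x)/x$ with an $O(1/x)$ remainder. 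I expect this step to be the main obstacle. The bookkeeping of the $\log/x$ terms is delicate: I must check the precise coefficient of $\log(2x)/x$ against the statement, and make sure the $O(\log^2 x/x^2)$ terms are absorbed uniformly in $c$, which holds because $c\leq1$.

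\emph{Expansion near $x=1$.} Write $\sqrt{(1+h)^2-1}=\sqrt{2h}\sqrt{1+h/2}$ and $\arcosh(1+h)=\sqrt{2h}\,(1+O(h))$. Both are $\sqrt h$ times an analytic function of $h$, which gives $G_a(1+h)=\sqrt2(1+c)\sqrt h+O(h^{3/2})$. It follows that $G_a(1+h)^2$ is analytic in $h$, with leading term $2(1+c)^2h$. Inverting this analytic relation gives $h$ as an analytic function of $y^2$, where $y=G_a(1+h)$, with leading term $y^2/(2(1+c)^2)$. Finally, $1/(1+c)=\frac{a^0\sqrt{m^2+\ell a^2}}{m^2+a^0\sqrt{m^2+\ell a^2}}$, which matches the stated coefficient. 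The remainder is a power series in the variable argument (called $h$ in the statement, i.e. $y$ here) whose coefficients are polynomial in $c$, so it is uniform for $c\in(0,1]$.
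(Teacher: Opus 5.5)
Your argument follows the paper's proof step by step. It uses the same two-sided bound $x-1\le\sqrt{x^2-1}+c\arcosh x\le x+\log(2x)\le 2x$ with $c=\kappa-\frac{1}{a^0p}=\frac{m^2}{a^0\sqrt{m^2+\ell a^2}}\in(0,1)$. It gets the inverse bounds by evaluating at $x=H_a(y)$, and the derivative formulas by direct and implicit differentiation. It expands $H_a$ at infinity by substituting $X=H_a(x)$ back into the expansion of $G_a$, and near $1$ by inverting $G_a(1+h)=\sqrt2(1+c)\sqrt h+O(h^{3/2})$. These parts are correct. One small correction in the last step: the coefficients of the inverted series are rational in $c$, with powers of $1+c$ in the denominators, not polynomial. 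They and the radius of convergence are still uniformly controlled for $c\in[0,1]$, so your uniformity claim stands.

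The real problem is the step you flagged and left open, and it cannot be closed as stated. Your own display already fixes the coefficient. Inserting $\log(2X)=\log(2x)-c\frac{\log(2x)}{x}+O(x^{-1})$ into $X=x-c\log(2X)+O(X^{-1})$ gives
\[
H_a(x)=x-c\log(2x)+c^2\,\frac{\log(2x)}{x}+O\Big(\frac{1}{x}\Big)\qquad (x\to+\infty),
\]
uniformly in $c\in(0,1]$. The coefficient of $\frac{\log(2x)}{x}$ is therefore $c^2$, not the $c^2/2$ in the statement. Since $c>0$ for every $(a,\ell)$ and $x\cdot\frac{\log(2x)}{2x}\to\infty$, the stated expansion is false for every $(a,\ell)$, so no argument can prove it as written. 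The paper's proof has the same slip: it writes the argument of the logarithm as $1-c\frac{\log(2H_a(x))}{2x}+O(x^{-2})$, whereas in fact $\frac{H_a(x)}{x}=1-c\frac{\log(2H_a(x))}{x}+O(x^{-2})$. You should finish the computation and state the corrected coefficient $c^2$ rather than trying to match the $\frac12$. The correction does not affect the rest of the paper: Lemma \ref{lemma_difference_R_tilde_ta} only needs $H_a(x)=x-c\log(2x)+O(1)$ there, since the term in question is bounded by $t^{1/4}\log t$ whatever its coefficient.
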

\begin{proof}
    Estimates~\eqref{equation_estimates_G_a_H_a} hold since $0\leq\kappa-\frac{1}{a^0p}=\frac{m^2}{a^0\sqrt{m^2+\ell a^2}}\leq 1$ and
    \begin{equation*}
         x-1\leq \sqrt{x^2-1}+\left(\kappa-\frac{1}{a^0p}\right)\arcosh(x)\leq x+\log(2x)\leq 2x.
    \end{equation*}
    The estimate of $H_a$ is obtained directly by considering $x=H_a(y)$. The expression for the derivatives follow from direct computations. For the asymptotic expansion, we first have 
    \begin{align*}
        G_a(y)&=y+\left(\kappa-\frac{1}{a^0p}\right)\log(2y)+\left(\sqrt{y^2-1}-y\right)+\left(\kappa-\frac{1}{a^0p}\right)(\arcosh(2y)-\log(2y))\\
        &= y+\left(\kappa-\frac{1}{a^0p}\right)\log(2y) +O_{y\rightarrow +\infty}\left(\frac{1}{y}\right). 
    \end{align*}
    Then, for $y=H_a(x)$, we derive
    \begin{align*}
        H_a(x)&=x-\left(\kappa-\frac{1}{a^0p}\right)\log(2H_a(x))+O_{x\rightarrow +\infty}\left(\frac{1}{H_a(x)}\right)\\
        &=x-\left(\kappa-\frac{1}{a^0p}\right)\log(2x)-\left(\kappa-\frac{1}{a^0p}\right)\log\left(1-\left(\kappa-\frac{1}{a^0p}\right)\frac{\log(2H_a(x))}{2x}+O_{x\rightarrow +\infty}\left(\frac{1}{x^2}\right)\right)+O_{x\rightarrow +\infty}\left(\frac{1}{x}\right)\\
        &=x-\left(\kappa-\frac{1}{a^0p}\right)\log(2x)+\left(\kappa-\frac{1}{a^0p}\right)^2\frac{\log(2x)}{2x}+O_{x\rightarrow +\infty}\left(\frac{1}{x}\right).
    \end{align*}
    For $h\rightarrow 0$, we compute
    \begin{align*}
        G_a(1+h)&=\sqrt{h^2+2h}+\left(\kappa-\frac{1}{a^0p}\right)\log\left(1+\sqrt{h^2+2h}+h\right)\\
        &=\sqrt{2h}+\left(\kappa-\frac{1}{a^0p}\right)\left(\sqrt{h^2+2h}+h-\frac{(\sqrt{h^2+2h}+h)^2}{2}+O_{h\rightarrow 0}(h^\frac{3}{2})\right)+O_{h\rightarrow 0}(h^\frac{3}{2})\\
        &=\sqrt{2}\left(1+\kappa-\frac{1}{a^0p}\right)\sqrt{h}+O_{h\rightarrow 0}(h^\frac{3}{2}).
    \end{align*}
    Finally, to obtain the asymptotic behavior of $H_a(h)$ as $h$ goes to $0$, we exploit the equation on $G_a(1+h)$ to derive 
    \begin{equation*}
        h=\sqrt{2}\left(1+\kappa-\frac{1}{a^0p}\right)\sqrt{H_a(h)-1}+O_{h\rightarrow 0}((H_a(h)-1)^\frac{3}{2}),
    \end{equation*}
    so that
    \begin{align*}
        H_a(h)&=1+\left(1+\kappa-\frac{1}{a^0p}\right)^{-2}\frac{h^2}{2}+O_{h\rightarrow0}\left((H_a(h)-1)^3\right)+hO_{h\rightarrow0}\left((H_a(h)-1)^{\frac{3}{2}}\right)\\
        &=1+\left(\frac{a^0\sqrt{m^2+\ell a^2}}{m^2+a^0\sqrt{m^2+\ell a^2}}\right)^2\frac{h^2}{2}+O_{h\rightarrow0}(h^4).
    \end{align*}
\end{proof}
\subsection{The action-angle variables}
\label{section_action_angle}
\subsubsection{Defining the action-angle}
We begin by defining the action-angle variables $(\theta,a)\in\R\times\R^*_+$. As explained at the beginning of the section, we can express them in terms of $(r,u)$ using the following functions
\begin{equation}
    \label{equation_definition_action_angle_A_Theta}
    \mathcal{A}(r,u):=\sqrt{\left(\sqrt{u^2+m^2+\frac{\ell}{r^2}}+\frac{1}{r}\right)^2-m^2},\qquad \Theta(r,u):=\frac{u}{|u|}p(\mathcal{A},\ell)G_\mathcal{A}\left(\frac{r}{p(\mathcal{A},\ell)}-\kappa(\mathcal{A},\ell)\right).
\end{equation}
We can then define the inverse functions by 
\begin{equation}
    \label{equation_definition_action_angle_R_U}
    R(\theta,a):=pH_a\left(\frac{|\theta|}{p}\right)+\kappa p,\qquad U(\theta,a):=\frac{\theta}{|\theta|}\sqrt{\left(a^0-\frac{1}{R(\theta,a)}\right)^2-m^2-\frac{\ell}{R(\theta,a)^2}}.
\end{equation}

\begin{remark}
    Here we defined our action-angle variables $(\mathcal{A},\Theta)$ for $c=1$. For $c\neq 1$, a similar formula can be obtained. Moreover, if we consider the non-relativistic limit $c\rightarrow +\infty$, we derive the following expressions
    \begin{equation}
        \begin{split}
            \mathcal{A}(r,u)&=\sqrt{\frac{2m}{r}+v^2},\\
            \Theta (r,u)&=\frac{u}{|u|}\frac{1}{\mathcal{A}^2}\sqrt{\left(r\mathcal{A}^2-m^2\right)^2-\sqrt{m^2+\ell \mathcal{A}^2}}+\frac{u}{|u|}\frac{m}{\mathcal{A}^2}\log\left(\sqrt{\frac{(r\mathcal{A}^2-m)^2}{m^2+\ell \mathcal{A}^2}-1}+\sqrt{\frac{r\mathcal{A}^2-m}{m^2+\ell \mathcal{A}^2}}\right).
        \end{split}
    \end{equation}
    These action-angle variables are consistent with the ones obtained the Vlasov-Poisson point charge system \eqref{equation_vlasov_poisson_point_charge}. Indeed, if we assume $m=1,\ell=0$, and $q=2$ we recover the same expressions as in \cite[eq. (2.4)]{Pausader_Widmayer_2021}
\end{remark}
\begin{proposition}
    The map $(\Theta,\mathcal{A}):\R^*_+\times\R\rightarrow\R\times\R^*_+$ defines a $C^1$ diffeomorphism with inverse $(R,U)$. Moreover, it preserves the volume form, i.e.
    \begin{equation*}
        \mathrm{d}\Theta\wedge\mathrm{d}\mathcal{A}=\mathrm{d}r\wedge\mathrm{d}u.
    \end{equation*}
\end{proposition}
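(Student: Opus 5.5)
The plan is to verify directly that the two maps are mutually inverse and $C^1$, and then to check the volume form through the generating function $S$, rather than by computing the Jacobian by hand. Throughout, $\ell\geq 0$ is a fixed parameter.

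\emph{Step 1: $(\mathcal{A},\Theta)$ is well defined.} For $(r,u)\in\R^*_+\times\R$ we have
\[
\mathcal{H}_0=\sqrt{u^2+m^2+\tfrac{\ell}{r^2}}+\tfrac1r>m,
\]
so $\mathcal{A}>0$. The identity $u^2=(a^0-\frac1r)^2-m^2-\frac{\ell}{r^2}$ with $a=\mathcal{A}$ holds by construction, and it forces $r\geq r_+(a,\ell)$, since $r_-\leq 0$ or $r_-<$ the admissible range, as shown above. Using $r_+=p+\kappa p$, this is equivalent to $\frac rp-\kappa\geq 1$. Hence $G_{\mathcal A}\big(\frac rp-\kappa\big)$ is defined and $\Theta\in\R$. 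The sign convention is that $\Theta$ carries the sign of $u$, and at $u=0$ we have $r=r_+$ and $\Theta=0$.

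\emph{Step 2: the two maps are inverse.} Composing, $|\Theta|/p=G_a\big(\frac rp-\kappa\big)$, so $pH_a(|\Theta|/p)+\kappa p=r$. The formula for $U$ then recovers $|u|$ from the quadratic relation, and the sign is recovered from $\sign\Theta=\sign u$. Conversely, starting from $(\theta,a)$ we set $r=R$. The definition of $U$ gives $\mathcal{H}_0(R,U)=a^0$, because $a^0-\frac1R>0$ for $R\geq r_+$. Hence $\mathcal{A}(R,U)=a$, and then $\Theta(R,U)=\theta$ because $G_a$ and $H_a$ are inverse. Both maps are bijective onto the stated sets.

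\emph{Step 3: regularity.} $\mathcal{A}$ is smooth. $\Theta$ is smooth on $\{u\neq0\}$, because $G_a$ is smooth on $(1,\infty)$ and depends smoothly on $a$ through $p$, $\kappa$, $a^0$. The main obstacle is the turning set $u=0$, i.e.\ $\theta=0$, where $\sqrt{x^2-1}$ and $\arcosh$ are singular.

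There I would write $x-1=\frac{r-r_+}{p}$ and note the relation
\[
u^2=\frac{(r-r_-)(r-r_+)}{r^2}\,a^2.
\]
This gives $\sqrt{x-1}\sim c\,|u|$ smoothly, so $\frac{u}{|u|}\sqrt{h}$ with $h=x-1$ is a smooth odd function of $u$. By the expansion $G_a(1+h)=\sqrt2(1+\kappa-\frac1{a^0p})\sqrt h+O(h^{3/2})$ of Proposition~\ref{proposition_asymptotic_expansion_G_H}, $\Theta$ is then smooth near $u=0$ in the form $u\cdot\phi(r,u^2)$. Symmetrically, the expansion of $H_a$ near $0$, which is even in $h$ with an $h^2$ leading term, shows that $R$ is $C^1$ in $\theta$ across $0$ with $\partial_\theta R=0$ there. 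For $U$, the term under the square root vanishes to second order in $\theta$, so $\frac{\theta}{|\theta|}\sqrt{\cdot}$ is $C^1$.

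\emph{Step 4: the Jacobian.} Once this is settled, the Jacobian is nonzero because the volume form is preserved, so by the inverse function theorem both maps are $C^1$ diffeomorphisms. For the volume form, on $\{u>0\}$ (and similarly on $u<0$, with $S$ replaced by $-S$) we have $\theta=\partial_aS(a,r)$ and $u=\partial_rS(a,r)$. Then
\[
\mathrm{d}(\theta\,\mathrm{d}a+u\,\mathrm{d}r)=\mathrm{d}^2S=0
\]
gives $\mathrm{d}\theta\wedge\mathrm{d}a=\mathrm{d}r\wedge\mathrm{d}u$. Equivalently, $\partial_r\Theta\,\partial_u\mathcal A-\partial_u\Theta\,\partial_r\mathcal A=1$, which can be checked by the chain rule using $\partial_u\mathcal{A}=u/(a\,\cdot)$ type identities. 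This identity extends to $u=0$ by continuity of the $C^1$ derivatives.
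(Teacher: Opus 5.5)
Your proposal is correct and follows the same route as the paper: inversion through $G_a\circ H_a=\mathrm{id}$, regularity from Proposition~\ref{proposition_asymptotic_expansion_G_H}, and volume preservation from $\mathrm{d}(\theta\,\mathrm{d}a+u\,\mathrm{d}r)=\mathrm{d}^2S=0$; it is more careful than the paper at the turning set $u=0$, where $(a,r)$ fail to be coordinates and you rightly extend the Jacobian identity by continuity. Two points need tightening: $r\geq r_+$ comes from $a^0-\frac1r=\sqrt{u^2+m^2+\ell/r^2}>0$ together with $r_-<\frac{1}{a^0}$ (the earlier bound on $r_-$ does not exclude $r\in(0,r_-]$ when $\ell<1$), and $C^1$ regularity at $u=0$ rests on the factorization $G_a(1+h)=\sqrt{h}\,g_a(h)$ with $g_a$ smooth up to $h=0$ (via $\arcosh(1+h)=2\,\mathrm{arsinh}\sqrt{h/2}$), not merely on the $O(h^{3/2})$ bound on the remainder.
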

\begin{proof}
    It follows from direct computations that $(R,U)$ is the inverse of $(\Theta,\mathcal{A})$. Similarly, by Proposition \ref{proposition_asymptotic_expansion_G_H}, we derive that the functions are $C^1$. Finally, by construction, they preserve the volume form.
\end{proof}

As expected, these variables allow us to simplify the linear problem \eqref{equation_RVPpc_linearized_2}.
\begin{proposition}
    Let $f$ be a solution to the linearized problem \eqref{equation_RVPpc_linearized_2} and 
    \begin{equation}
        \label{equation_def_solution_g_linear_aa}
        g(t,\theta,a,\ell):=f(t,R(\theta,a),U(\theta,a),\ell).
    \end{equation}
    Then $g$ is a solution to the free relativistic transport equation \eqref{equation_free_relat_transp}. Conversely, if we consider a solution $g$  to \eqref{equation_free_relat_transp} and 
    \begin{equation*}
        f(t,r,u,\ell):=g(t,\Theta(r,u),\mathcal{A}(r,u),\ell),
    \end{equation*}
    then $f$ is a solution to the linearized equation \eqref{equation_RVPpc_linearized_2}.
\end{proposition}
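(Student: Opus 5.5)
The plan is to use that $(\Theta,\mathcal{A})$ is a symplectomorphism, so Poisson brackets are preserved, and that the linearized equation \eqref{equation_RVPpc_linearized_2} reads $\partial_t f=\{\mathcal{H}_0,f\}$ with $\mathcal{H}_0(r,u)=\sqrt{u^2+m^2+\frac{\ell}{r^2}}+\frac{1}{r}$, with $\ell$ treated as a fixed parameter. First I will check directly that $\partial_u\mathcal{H}_0=\frac{u}{\sqrt{u^2+m^2+\ell/r^2}}$ and $-\partial_r\mathcal{H}_0=\frac{\ell}{r^3\sqrt{u^2+m^2+\ell/r^2}}+\frac{1}{r^2}$. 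Hence \eqref{equation_RVPpc_linearized_2} is exactly $\partial_t f=\partial_r\mathcal{H}_0\partial_u f-\partial_u\mathcal{H}_0\partial_r f=-\{f,\mathcal{H}_0\}$. The sign convention only needs to be tracked once. By the definition \eqref{equation_definition_action_angle_A_Theta}, $\mathcal{H}_0(R(\theta,a),U(\theta,a))=a^0=\sqrt{m^2+a^2}$, since $\mathcal{A}^2+m^2=\mathcal{H}_0^2$ and $\mathcal{H}_0>0$.

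Next I transfer the bracket. For $C^1$ functions $F,G$ of $(r,u)$, set $\tilde F=F\circ(R,U)$ and $\tilde G=G\circ(R,U)$. The chain rule gives $\{\tilde F,\tilde G\}_{\theta,a}=\det\frac{\partial(R,U)}{\partial(\theta,a)}\cdot(\{F,G\}\circ(R,U))$. Since the previous proposition gives $\mathrm{d}\Theta\wedge\mathrm{d}\mathcal{A}=\mathrm{d}r\wedge\mathrm{d}u$, this Jacobian equals $1$ (orientation included), so the bracket is preserved exactly. Applying this with $F=\mathcal{H}_0$, $G=f(t,\cdot)$, and noting that $\partial_t g=(\partial_t f)\circ(R,U)$ because the change of variables is time independent, we get
\begin{equation*}
\partial_t g=\{\mathcal{H}_0,g\}_{\theta,a}=\partial_\theta(a^0)\partial_a g-\partial_\theta g\,\partial_a(a^0)=-\widehat{a}\,\partial_\theta g,
\end{equation*}
which is \eqref{equation_free_relat_transp}. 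For the converse, compose with the inverse map $(\Theta,\mathcal{A})$, which is also volume preserving, and run the same computation backwards.

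The main obstacle is regularity and sign bookkeeping, because $\Theta$ carries the factor $\frac{u}{|u|}$ and $R$ depends on $|\theta|$. I will verify $C^1$ smoothness across $\{u=0\}\leftrightarrow\{\theta=0\}$ using the expansion $H_a(h)=1+ch^2/2+O(h^4)$ from Proposition~\ref{proposition_asymptotic_expansion_G_H}, which makes $R$ even and $C^1$ (indeed $C^2$) in $\theta$. I will also check that $U$ is $C^1$ through $\theta=0$: $U$ vanishes like $\theta$ there, since $(a^0-1/R)^2-m^2-\ell/R^2$ vanishes linearly in $R-r_+$, which is $O(\theta^2)$.

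The sign of the Jacobian I will confirm on $u>0$ from the generating-function identity $\mathrm{d}\theta\wedge\mathrm{d}a=\mathrm{d}r\wedge\mathrm{d}u$, and on $u<0$ by the symmetry $(r,u)\mapsto(r,-u)$, $(\theta,a)\mapsto(-\theta,a)$, which flips both forms. The identity then holds on the open half-planes and extends to $\theta=0$ by continuity. Alternatively, one can bypass the bracket formalism by differentiating $\Theta$ along the characteristics. From $\theta=\partial_a S$ one gets $\frac{d}{dt}\Theta=\partial_r\partial_a S\,\dot r=\frac{\partial_a u}{\partial_r\partial_a S}\cdots$, and the cleaner route is still the bracket argument, which yields $\dot\Theta=\widehat{\mathcal{A}}$, $\dot{\mathcal{A}}=0$. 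Either way $g$ is constant along $\theta\mapsto\theta+t\widehat a$.
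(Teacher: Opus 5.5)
Your proposal is correct and follows the same route as the paper. The paper writes no separate proof; it relies on Section 2.1.1 and the preceding volume-preservation proposition: write the linearized equation as $\partial_t f=\{\mathcal{H}_0,f\}$, transfer the bracket through the Jacobian-one map $(R,U)$, use $\mathcal{H}_0\circ(R,U)=a^0$ to get $\partial_t g=-\widehat{a}\,\partial_\theta g$, and obtain the converse via the inverse map.

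The only slip is in your discarded alternative at the end, where $\frac{\partial_a u}{\partial_r\partial_a S}$ is garbled. It should read $\dot\Theta=\partial_r\partial_a S\,\dot r=\partial_a u\,\dot r=\frac{\widehat{a}\,(a^0-1/r)}{u}\cdot\frac{u}{a^0-1/r}=\widehat{a}$ on $\{u>0\}$, and likewise on $\{u<0\}$. This is in fact an equally clean direct check away from the turning points.
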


\subsubsection{Estimates on the radius}
Let us now give some estimates on the derivatives of the radius $R$.
\begin{proposition}
    \label{proposition_writing_derivatives_R}
    We have the following expressions for $R$ and its derivatives.
    \begin{align*}
        \partial_\theta R(\theta,a)=&~\sign(\theta)p\frac{\sqrt{\left(\frac{R}{p}-\kappa\right)^2-1}}{R-\frac{1}{a^0}}=\sign(\theta)\frac{a^0}{a}\frac{\sqrt{\left(a^0-\frac{1}{R}\right)^2-m^2-\frac{\ell}{R^2}}}{a^0-\frac{1}{R}},\\
        \partial_aR(\theta,a)=&~\partial_ap\left(\frac{R}{p}-\kappa-\frac{\theta}{p}\partial_\theta R\right)-p\partial_a\left(\kappa-\frac{1}{a^0p}\right)\arcosh\left(\frac{R}{p}-\kappa\right)\sign(\theta)\partial_\theta R+\partial_a(\kappa p),\\
        \partial^2_{\theta}R(\theta,a)=&~\frac{1}{a^2\left(R-\frac{1}{a^0}\right)^3}\left(\ell+m^2\frac{R}{a^0}\right),\\
        \partial_a\partial_\theta R(\theta,a)=&~\partial^2_\theta R\left(-\frac{\theta}{p}\partial_a p-p\partial_a\left(\kappa-\frac{1}{a^0p}\right)\arcosh\left(\frac{R}{p}-\kappa\right)\sign(\theta)\right)-\partial_a\left(\kappa-\frac{1}{a^0p}\right)\frac{p}{R-\frac{1}{a^0}}\sign(\theta)\partial_\theta R,\\
        \partial^2_aR(\theta,a)=&~\partial^2_ap\left(\frac{R}{p}-\kappa-\frac{\theta}{p}\partial_\theta R\right)+\theta(\partial_ap)\partial_a \left(\kappa-\frac{1}{a^0p}\right)\frac{\partial_\theta R}{R-\frac{1}{a^0}}+\partial^2_a(\kappa p)\\
        &-\left(2(\partial_ap)\partial_a\left(\kappa-\frac{1}{a^0p}\right)+p\partial_a^2\left(\kappa-\frac{1}{a^0p}\right)-\frac{p^2}{R-\frac{1}{a^0}}\left(\partial_a\left(\kappa-\frac{1}{a^0p}\right)\right)^2\right)\arcosh\left(\frac{R}{p}-\kappa\right)\frac{\theta}{|\theta|}\partial_\theta R\\
        &-\left(\partial_a p\frac{\theta}{p}+p\partial_a\left(\kappa-\frac{1}{a^0p}\right)\arcosh\left(\frac{R}{p}-\kappa\right)\frac{\theta}{|\theta|}\right)\partial_a\partial_\theta R.
\end{align*}
    \end{proposition}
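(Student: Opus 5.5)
The plan is to differentiate the implicit relation defining $R$ rather than the explicit formula. By \eqref{equation_definition_action_angle_R_U} and the fact that $H_a$ inverts $G_a$, we have
\begin{equation*}
    G_a\left(\frac{R(\theta,a)}{p}-\kappa\right)=\frac{|\theta|}{p}.
\end{equation*}
Equivalently, writing $x:=\frac{R}{p}-\kappa$, we have $R=pH_a(|\theta|/p)+\kappa p$. All derivatives will follow from the chain rule together with the formulas \eqref{equation_derivatives_G_a_H_a} for $H_a'$ and $\partial_aH_a$ from Proposition \ref{proposition_asymptotic_expansion_G_H}.

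\textbf{First derivatives.} Differentiating in $\theta$ gives $\partial_\theta R=\sign(\theta)H_a'(|\theta|/p)=\sign(\theta)\frac{\sqrt{x^2-1}}{x+\kappa-\frac{1}{a^0p}}$. Since $x+\kappa-\frac{1}{a^0p}=\frac{1}{p}\left(R-\frac{1}{a^0}\right)$, this is the first form. For the second form, I will check the algebraic identity
\begin{equation*}
    p^2\left(\left(\tfrac{R}{p}-\kappa\right)^2-1\right)=\frac{(a^0)^2}{a^2}\left(\left(a^0-\tfrac1R\right)^2-m^2-\tfrac{\ell}{R^2}\right)\frac{R^2}{(a^0)^2}\cdot\frac{a^2}{\ldots}.
\end{equation*}
More simply, I expand $(R-\kappa p)^2-p^2=R^2-2\frac{a^0}{a^2}R+\frac{(a^0)^2-m^2-\ell a^2}{a^4}$, using $\kappa p=\frac{a^0}{a^2}$ and $p^2=\frac{m^2+\ell a^2}{a^4}$. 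Since $(a^0)^2-m^2=a^2$, this equals $\frac{R^2}{a^2}\left((a^0-\frac1R)^2-m^2-\frac{\ell}{R^2}\right)$. Dividing by $R-\frac1{a^0}=\frac{R}{a^0}(a^0-\frac1R)$ then yields the second expression.

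For $\partial_aR$, I differentiate $R=pH_a(|\theta|/p)+\kappa p$ in $a$. Three terms appear:
\begin{itemize}
    \item $\partial_ap\,H_a=\partial_ap\,(R/p-\kappa)$;
    \item $pH_a'\cdot(-|\theta|\partial_ap/p^2)$, which, using $H_a'=\sign(\theta)\partial_\theta R$, becomes $-\partial_ap\frac{\theta}{p}\partial_\theta R$;
    \item $p\,\partial_aH_a$, which by \eqref{equation_derivatives_G_a_H_a} is $-p\,\partial_a(\kappa-\frac1{a^0p})\arcosh(x)H_a'$.
\end{itemize}
Adding $\partial_a(\kappa p)$ gives the stated formula.

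\textbf{Second derivatives.} For $\partial_\theta^2R$ I would differentiate the second form of $\partial_\theta R$. It is cleaner to write $\partial_\theta R=\sign(\theta)\frac{\sqrt{Q(R)}}{a(R-1/a^0)}$ with $Q(R)=(R-\kappa p)^2-p^2$. The derivative is then $\frac{\partial_\theta R\,\sign(\theta)}{a}\cdot\frac{\frac12Q'(R)(R-\frac1{a^0})-Q(R)}{\sqrt Q\,(R-\frac1{a^0})^2}$, and the factor $\sqrt Q$ cancels against the one in $\partial_\theta R$. It remains to simplify $(R-\kappa p)(R-\frac1{a^0})-Q$ into $\frac{1}{a^2}(\ell+m^2R/a^0)$. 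This is a short polynomial identity: expanding, the $R^2$ terms cancel, and the coefficients reduce using $(a^0)^2=m^2+a^2$. This is the step I expect to be most error-prone.

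For $\partial_a\partial_\theta R$, I differentiate $\partial_\theta R=\sign(\theta)H_a'(|\theta|/p)$ in $a$. The chain rule through the argument $|\theta|/p$ gives $H_a''\cdot(-|\theta|\partial_ap/p^2)$. The explicit $a$-dependence is obtained by differentiating the identity $H_a'=\frac{\sqrt{H_a^2-1}}{H_a+\kappa-1/(a^0p)}$ in $a$, using $\partial_aH_a$; this produces an $H_a''\cdot(\ldots\arcosh)$ term plus $-\partial_a(\kappa-\frac1{a^0p})\frac{H_a'}{H_a+\kappa-1/(a^0p)}$. Identifying $H_a''/p=\partial_\theta^2R$ gives the stated form.

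Finally, $\partial_a^2R$ comes from differentiating the $\partial_aR$ formula once more in $a$ by the product rule. I will substitute $\partial_a(R/p-\kappa-\frac{\theta}{p}\partial_\theta R)$ and $\partial_a\arcosh(x)=\frac{\partial_a x}{\sqrt{x^2-1}}$, and simplify using the first-derivative identities. This is lengthy but mechanical bookkeeping, and the terms will be grouped as displayed.
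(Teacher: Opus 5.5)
Your plan follows the paper's proof. You apply the chain rule to $R=pH_a(|\theta|/p)+\kappa p$ with the formulas for $H_a'$ and $\partial_aH_a$, differentiate the closed form of $\partial_\theta R$ to get $\partial_\theta^2R$, and apply the product rule to $\partial_aR$ to get $\partial_a^2R$. For the mixed derivative you differentiate $\partial_\theta R$ in $a$ rather than $\partial_aR$ in $\theta$, which is equivalent. Two steps, however, do not go through as written.

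\textbf{The $\partial_\theta^2R$ step.} You insert a spurious factor $\frac1a$. With $Q(R)=(R-\kappa p)^2-p^2$, your own first-derivative computation gives $\sqrt{Q}=\frac{R}{a}\sqrt{(a^0-\frac1R)^2-m^2-\frac{\ell}{R^2}}$, hence $\partial_\theta R=\sign(\theta)\sqrt{Q}/(R-\frac1{a^0})$ with no $\frac1a$. If you keep $\frac1a$ both in $\partial_\theta R$ and in its derivative, and then use your (correct) identity $(R-\kappa p)(R-\frac1{a^0})-Q=\frac1{a^2}(\ell+m^2\frac{R}{a^0})$, you get $\partial_\theta^2R$ with an extra factor $a^{-2}$. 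Without the $\frac1a$ one gets $\partial_\theta^2R=\big((R-\kappa p)(R-\frac1{a^0})-Q\big)/(R-\frac1{a^0})^3$, and the identity then gives the claim.

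\textbf{The mixed derivative.} You assert that your computation ``gives the stated form''; this is where the argument fails. Carrying the outer $\sign(\theta)$ through, the last term is $-\sign(\theta)\,\partial_a(\kappa-\frac1{a^0p})\frac{H_a'}{H_a+\kappa-\frac1{a^0p}}$. Since $H_a'(|\theta|/p)=\sign(\theta)\partial_\theta R$ and $H_a+\kappa-\frac1{a^0p}=\frac1p(R-\frac1{a^0})$, this equals $-\partial_a(\kappa-\frac1{a^0p})\frac{p}{R-\frac1{a^0}}\partial_\theta R$. There is no $\sign(\theta)$, so for $\theta<0$ it disagrees with the displayed formula.

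The displayed version cannot hold. $R$ depends on $\theta$ only through $|\theta|$, so $\partial_a\partial_\theta R$ is odd in $\theta$, and so is the first displayed group $\partial_\theta^2R\,(\cdots)$. The last displayed term, however, contains $\sign(\theta)\partial_\theta R=|\partial_\theta R|$, so it is even. It is also nonzero, because $\partial_a(\kappa-\frac1{a^0p})<0$.

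The paper's route (differentiating $\partial_aR$ in $\theta$) yields the same corrected term. So you should prove and record the formula without that $\sign(\theta)$, rather than claim agreement. The discrepancy is harmless downstream:
\begin{itemize}
\item Propositions \ref{proposition_estimates_R} and \ref{proposition_derivatives_R_tilde} only use absolute values.
\item The $\partial_a^2R$ formula, in which $\partial_a\partial_\theta R$ appears only as a symbol, is correct as stated and follows from your product-rule plan.
\end{itemize}
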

    \begin{proof}
        First of all, notice that, by definition of $R$, we have
        \begin{equation}
            \label{equation_proof_equality_R_theta_H_a_prime}
            \partial_\theta R(\theta,a)=\sign(\theta)H_a'\left(\frac{|\theta|}{p}\right).
        \end{equation}
        Moreover, by \eqref{equation_derivatives_G_a_H_a}, we derive
        \begin{equation*}
            H_a'\left(\frac{|\theta|}{p}\right)=p\frac{\sqrt{\left(\frac{R}{p}-\kappa\right)^2-1}}{R-\frac{1}{a^0}}=\frac{a^0}{a^0-\frac{1}{R}}\sqrt{\left(1-\frac{\kappa p}{R}\right)^2-\frac{p^2}{R^2}}.
        \end{equation*}
        Since $\kappa p=\frac{a^0}{a^2}$ and $p=\frac{\sqrt{m^2+\ell a^2}}{a^2}$, it follows that
        \begin{equation*}
            H_a'\left(\frac{|\theta|}{p}\right)= \frac{a^0}{a^0-\frac{1}{R}}\sqrt{1-\frac{2a^0}{Ra^2}+\frac{(a^0)^2}{R^2 a^4}-\frac{m^2+\ell a^2}{R^2 a^4}}=\frac{a^0}{a}\frac{\sqrt{\left(a^0-\frac{1}{R}\right)^2-m^2-\frac{\ell}{R^2}}}{a^0-\frac{1}{R}}.
        \end{equation*}
        This grants us the expression of $\partial_\theta R$. Then, using \eqref{equation_derivatives_G_a_H_a} and \eqref{equation_proof_equality_R_theta_H_a_prime}, we derive
        \begin{align*}
    \partial_a R(\theta,a)&=\partial_apH_a\left(\frac{|\theta|}{p}\right)-\frac{\partial_a p}{p}|\theta|H_a'\left(\frac{|\theta|}{p}\right)+p\partial_a H_a\left(\frac{|\theta|}{p}\right)+\partial_a(\kappa p)\\
            &=\partial_ap\left(\frac{R}{p}-\kappa-\frac{\theta}{p}\partial_\theta R\right)-p\partial_a\left(\kappa-\frac{1}{a^0p}\right)\arcosh\left(\frac{R}{p}-\kappa\right)\sign(\theta)\partial_\theta R+\partial_a(\kappa p).
        \end{align*}
        Next, we recall the first expression of $\partial_\theta R$ from Proposition \ref{proposition_writing_derivatives_R} and differentiate it with respect to $\theta$. We obtain
        \begin{align*}
            \partial_\theta^2 R(\theta,a)&=\sign(\theta)\frac{p}{R-\frac{1}{a^0}}\frac{\partial_\theta R}{p}\frac{\frac{R}{p}-\kappa}{\sqrt{\left(\frac{R}{p}-\kappa\right)^2-1}}-\sign(\theta)p\partial_\theta R\frac{\sqrt{\left(\frac{R}{p}-\kappa\right)^2-1}}{\left(R-\frac{1}{a^0}\right)^2}\\
            &=\frac{p|\partial_\theta R| }{\left(R-\frac{1}{a^0}\right)^2\sqrt{\left(\frac{R}{p}-\kappa\right)^2-1}}\left(\left(\frac{R}{p}-\kappa\right)\left(\frac{R}{p}-\frac{1}{a^0 p}\right)-\left(\frac{R}{p}-\kappa\right)^2+1\right)
        \end{align*}
        Recalling the expression of $\partial_\theta R$, we derive
        \begin{align*}
            \partial_\theta^2 R(\theta,a)&=\frac{1}{\left(R-\frac{1}{a^0}\right)^3}\left(\left(\kappa p-\frac{1}{a^0}\right)R+ p^2+(\kappa p)^2+\frac{\kappa p}{a^0}\right).
        \end{align*}
        The result then follows from the expressions of $\kappa$ and $p$ in \eqref{equation_def_p_kappa}. Then, by definition of $R$ and using \eqref{equation_derivatives_G_a_H_a}, we have
        \begin{equation*}
            \partial_\theta\left(\frac{R}{p}-\kappa\right)=\frac{\partial_\theta R}{p},\qquad \partial_a\left(\frac{R}{p}-\kappa\right)=-\frac{\partial_a p}{p^2}\theta\partial_\theta R -\partial_a\left(\kappa-\frac{1}{a^0p}\right)\arcosh\left(\frac{R}{p}-\kappa\right)\sign(\theta)\partial_\theta R.
        \end{equation*}
        The expression of $\partial_a\partial_\theta R$ then follows from differentiating the expression of $\partial_a R$ with respect to $\theta$. Finally, the expression of $\partial_a^2R$ is obtained by differentiating the expression of $\partial_aR$ with respect to $a$, and using the previous equation.
    \end{proof}
    Before proving the estimates on $R$ and its derivatives, let us state a lemma regarding the derivatives of several quantities depending on $\kappa$ and $p$ defined in \eqref{equation_def_p_kappa}.
\begin{lemma}
    \label{lemma_derivatives_of_p_kappa}
    We have 
    \begin{equation}
        \label{equation_derivatives_kappa_p_etc}
        \begin{array}{c}
            \displaystyle \partial_a p =\frac{-\ell a^2-2m^2}{a^3\sqrt{m^2+\ell a^2}}=\frac{\ell}{pa^3}-\frac{2p}{a}, \qquad \partial_a(\kappa p)=-\frac{a^2+2m^2}{a^0a^3}  \vspace{5pt}\\
             \displaystyle \partial_a\left(\kappa p-\frac{1}{a^0}\right)=-\frac{m^2}{(a^0)^3a}-\frac{2m^2}{a^0a^3} ,\qquad \partial_a\left(\kappa -\frac{1}{a^0p}\right)=-\frac{\partial_a p}{p^2}\frac{m^2}{a^0a^2}-\frac{m^2}{p}\left(\frac{1}{a(a^0)^3}+\frac{2}{a^0a^3}\right).
        \end{array}
    \end{equation}
    For the second derivatives, we obtain 
    \begin{equation*}
        \begin{array}{c}
            \displaystyle \partial_a^2p=-\frac{\ell^2}{p^3a^6}-\frac{3\ell}{pa^4}+\frac{6p}{a^2},\quad  \partial_a^2(\kappa p)=\frac{2a^4+9m^2a^2+6m^4}{(a^0)^3a^4},\quad \partial_a^2\left(\kappa p-\frac{1}{a^0}\right)=\frac{12m^2a^4+15m^4a^2+6m^6}{(a^0)^5a^4},\vspace{5pt}\\
            \displaystyle \partial_a^2\left(\kappa -\frac{1}{a^0p}\right)=-\frac{\partial_a^2p}{p^2}\frac{m^2}{a^0a^2}+2\frac{(\partial_a p)^2}{p^3}\frac{m^2}{a^0a^2}+\frac{2m^2\partial_a p}{aa^0p^2}\left(\frac{2}{a^2}+\frac{1}{(a^0)^2}\right)+\frac{m^2}{a^0p}\left(\frac{1}{a^2(a^0)^2}+\frac{3}{(a^0)^4}+\frac{2}{(a^0)^2a^2}+\frac{6}{a^4}\right)
        \end{array}
    \end{equation*}
    Finally, the following estimates hold
    \begin{equation}
        \label{equation_estimates_derivatives_kappa_p}
        \begin{array}{c}
            \displaystyle |\partial_ap|\lesssim \frac{p}{a},\qquad |\partial_a(\kappa p)|\lesssim \frac{a^0}{a^3},\qquad \left|\partial_a\left(\kappa-\frac{1}{a^0p}\right)\right|\lesssim \frac{1}{p}\frac{1}{a^0a^3},\qquad \left|\partial_a\left(\kappa p-\frac{1}{a^0}\right)\right|\lesssim \frac{1}{a^0a^3},\vspace{5pt}\\
            \displaystyle|\partial_a^2p|\lesssim\frac{p}{a^2},\qquad |\partial_a^2(\kappa p)|\lesssim \frac{a^0}{a^4},\qquad\left|\partial_a^2\left(\kappa-\frac{1}{a^0p}\right)\right|\lesssim \frac{1}{p}\frac{1}{a^0a^4},\qquad \left|\partial_a^2\left(\kappa p-\frac{1}{a^0}\right)\right|\lesssim \frac{1}{a^0a^4}.
        \end{array}
    \end{equation}
\end{lemma}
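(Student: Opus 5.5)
The plan is to compute everything by hand from the definitions $a^0=\sqrt{m^2+a^2}$, $\partial_a a^0=\widehat a=a/a^0$, $p=\sqrt{m^2+\ell a^2}\,a^{-2}$, and $\kappa p=a^0a^{-2}$. We then read off the estimates from the explicit formulas, using only elementary inequalities such as $a\le a^0$, $m\le a^0$ and $m^2\le m^2+\ell a^2$.

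\textbf{First derivatives.} Differentiating $p$ gives
\[
\partial_a p=\frac{\ell a}{a^2\sqrt{m^2+\ell a^2}}-\frac{2\sqrt{m^2+\ell a^2}}{a^3}=\frac{-\ell a^2-2m^2}{a^3\sqrt{m^2+\ell a^2}}.
\]
Since $pa^3=a\sqrt{m^2+\ell a^2}$, this is the same as $\frac{\ell}{pa^3}-\frac{2p}{a}$. Next,
\[
\partial_a(a^0a^{-2})=\frac{1}{a^0a}-\frac{2a^0}{a^3}=\frac{a^2-2(a^0)^2}{a^0a^3}=-\frac{a^2+2m^2}{a^0a^3}.
\]
Adding $\partial_a(1/a^0)^{-}$-type terms, namely $-\partial_a(1/a^0)=a/(a^0)^3$, yields $\partial_a(\kappa p-\frac1{a^0})$ after simplifying with $(a^0)^2=m^2+a^2$. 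For the last first-order identity, write
\[
\kappa-\frac{1}{a^0p}=\frac{1}{p}\Big(\kappa p-\frac{1}{a^0}\Big)=\frac{1}{p}\,\frac{m^2}{a^0a^2}.
\]
The product rule then gives exactly the displayed formula.

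\textbf{Second derivatives.} These come from differentiating the first-derivative formulas once more. For $p$, differentiate $\frac{\ell}{pa^3}-\frac{2p}{a}$ and substitute the expression for $\partial_a p$. This gives
\[
-\frac{\ell\,\partial_ap}{p^2a^3}-\frac{3\ell}{pa^4}-\frac{2\partial_ap}{a}+\frac{2p}{a^2},
\]
which collapses to the stated form. For $\kappa p$ and $\kappa p-1/a^0$, the expressions are explicit rational functions of $a$ and $a^0$, so we clear denominators to $(a^0)^3a^4$ or $(a^0)^5a^4$. For $\kappa-\frac{1}{a^0p}=p^{-1}q$ with $q=\frac{m^2}{a^0a^2}$, we use
\[
\partial_a^2\big(p^{-1}q\big)=-\frac{\partial_a^2p}{p^2}\,q+\frac{2(\partial_ap)^2}{p^3}\,q-\frac{2\partial_ap}{p^2}\,\partial_aq+\frac{1}{p}\,\partial_a^2 q,
\]
together with the explicit values of $\partial_aq$ and $\partial_a^2q$. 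This is pure bookkeeping.

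\textbf{Estimates.} We take them in turn.
\begin{itemize}
\item From $|\partial_ap|\le\frac{\ell a^2+2m^2}{a^3\sqrt{m^2+\ell a^2}}\le\frac{2\sqrt{m^2+\ell a^2}}{a^3}=\frac{2p}{a}$, we get $|\partial_a p|\lesssim p/a$.
\item $|\partial_a(\kappa p)|\le\frac{2(a^0)^2}{a^0a^3}=\frac{2a^0}{a^3}$.
\item $\big|\partial_a(\kappa p-\frac1{a^0})\big|\lesssim\frac{m^2}{a^0a}\big(\frac{1}{(a^0)^2}+\frac{1}{a^2}\big)\lesssim\frac{1}{a^0a^3}$, using $m\le a^0$ and treating $m$ as a fixed constant.
\item For $\kappa-\frac1{a^0p}$, combine the bound on $\partial_a p$ with $q\lesssim\frac{1}{a^0a^2}$ to get $\lesssim\frac{1}{p}\frac{1}{a^0a^3}$.
\end{itemize}

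At second order, the term $\frac{\ell^2}{p^3a^6}$ is the critical one. It is controlled via $\ell a^2\le p^2a^4$, which gives $\frac{\ell^2}{p^3a^6}\le\frac{p}{a^2}$; similarly $\frac{\ell}{pa^4}\le\frac{p}{a^2}$. Hence $|\partial_a^2p|\lesssim p/a^2$.

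The numerators $2a^4+9m^2a^2+6m^4$ and $12m^2a^4+15m^4a^2+6m^6$ are bounded by $C(a^0)^4$ and $C m^2(a^0)^4$ respectively. This gives $\frac{a^0}{a^4}$ and $\frac{1}{a^0a^4}$.

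For the last quantity, each term of the product-rule expansion is a product of bounds already established. For example,
\[
\frac{(\partial_ap)^2}{p^3}\,\frac{m^2}{a^0a^2}\lesssim\frac{1}{p}\,\frac{1}{a^0a^4}.
\]

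\textbf{Main obstacle.} The only step needing care is keeping the $\ell$-dependence uniform, so that no bound picks up a power of $\ell$. This is handled throughout by always absorbing $\ell a^2$ into $m^2+\ell a^2=p^2a^4$.
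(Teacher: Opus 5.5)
Your proposal is correct and follows the same route as the paper: you differentiate directly using $\kappa p=\frac{a^0}{a^2}$ and $\kappa-\frac{1}{a^0p}=\frac{1}{p}\frac{m^2}{a^0a^2}$, then bound each explicit term by absorbing $\ell a^2$ into $m^2+\ell a^2=p^2a^4$. The paper writes that absorption as $\frac{\ell a}{m^2+\ell a^2}\le\frac1a$ and you write it as $\ell\le p^2a^2$, which is the same step; one small slip is that bounding $\partial_a\bigl(\kappa p-\frac{1}{a^0}\bigr)$ needs $a\le a^0$ (so that $(a^0)^{-2}\le a^{-2}$), not $m\le a^0$.
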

\begin{proof}
    We first recall that
    \begin{equation}
        \label{equation_kappa_times_p}
        \kappa p=\frac{a^0}{a^2},\qquad \kappa -\frac{1}{a^0 p}= \frac{1}{p}\frac{m^2}{a^0 a^2}.
    \end{equation}
    The expressions of the derivatives are then obtained by direct computations. For the estimates, we have
    \begin{equation*}
        |\partial_a p|\lesssim p \frac{\ell a}{(m^2+\ell a^2)}+\frac{p}{a}\lesssim\frac{p}{a}.
    \end{equation*}
    This implies
    \begin{equation*}
        \left|\partial_a\left(\kappa -\frac{1}{a^0p}\right)\right|\lesssim \frac{|\partial_a p|}{p^2}\frac{1}{a^0a^2}+\frac{1}{p}\frac{1}{a^0 a^3}\lesssim \frac{1}{p}\frac{1}{a^0 a^3}.
    \end{equation*}
    Similarly,
    \begin{equation*}
        |\partial_a^2p|\lesssim p\frac{\ell^2a^2}{(m^2+\ell a^2)^2}+p\frac{\ell}{m^2+\ell a^2}+\frac{p}{a^2}\lesssim \frac{p}{a^2}.
    \end{equation*}
    
    The other estimates follow from the estimates of $\partial_a p,\partial_a^2 p$ and similar computations.
\end{proof}
    \begin{proposition}
    \label{proposition_estimates_R}
    $R(\theta,a)$ has a lower bound given by 
    \begin{equation}
        \label{equation_lower_bound_R}
        R(\theta,a)a^2\geq a^0+\sqrt{m^2+\ell a^2},\qquad \frac{R(\theta,a)}{p}-\kappa\geq 1.
    \end{equation}
    Moreover, the following estimates hold for its derivatives. 
    \begin{equation}
        \label{equation_estimate_first_derivatives_R}
        0\leq\sign(\theta)\partial_\theta R(\theta,a)\leq 1,\qquad |\partial_a R(\theta,a)|\lesssim \left(\frac{1}{a}+\frac{\sqrt{\ell}}{a^2}+\frac{1}{a^3}\right)\log \left\langle \frac{R}{p}\right\rangle\lesssim \left(\frac{1}{a}+\frac{p}{a}\right)\log \left\langle \frac{R}{p}\right\rangle,
    \end{equation}
    \begin{equation}
        \label{equation_estimate_second_derivatives_theta_R}
        |\partial_\theta^2R|\lesssim \frac{1}{R-\frac{1}{a^0}}\lesssim \frac{a^0}{R},\qquad |\partial_a\partial_\theta R|\lesssim \frac{a^0}{Ra^3}+\frac{\sqrt{\ell}}{Ra^2}\lesssim \frac{1}{a},\qquad |\partial^2_aR|\lesssim \left(\frac{1}{a^2}+\frac{\sqrt{\ell}}{a^3}+\frac{1}{a^4}\right)\log\left\langle \frac{R}{p}\right\rangle
    \end{equation}

\end{proposition}
\begin{proof}
    By definition of $R$ in \eqref{equation_definition_action_angle_R_U}, since $H_a\geq 1$, we find
    \begin{equation*}
        R(\theta,a)\geq p+\kappa p=\frac{a^0+\sqrt{m^2+\ell a^2}}{a^2}.
    \end{equation*}
    This directly implies \eqref{equation_lower_bound_R}. We now turn to the estimates for the derivatives. For $\partial_\theta R$, by Proposition \ref{proposition_writing_derivatives_R}, we have 
    \begin{align}
        \label{equation_derivative_theta_R_lower_1_proof}
        \sign(\theta)\partial_\theta R=\sqrt{\frac{(a^0)^2(a^0-\frac{1}{R})^2-(a^0)^2\frac{\ell}{R^2}-(a^0)^2m^2}{a^2(a^0-\frac{1}{R})^2}}
        =\sqrt{1-\frac{\frac{\ell (a^0)^2}{R}+m^2\left(2a^0-\frac{1}{R}\right)}{Ra^2\left(a^0-\frac{1}{R}\right)^2}}.
    \end{align}
    Since $a^0-\frac{1}{R}>0$, we obtain the result. Recalling the expression for $\partial_a R$ in Proposition \ref{proposition_writing_derivatives_R} and $|\theta|=pG_a\left(\frac{R}{p}-\kappa\right)$, we obtain
    \begin{equation}
        \label{equation_derivative_a_R_other_expression_proof}
        \begin{array}{r@{}l}
            \displaystyle\partial_aR=&\displaystyle~\partial_ap\left(\frac{R}{p}-\kappa-\left(\sqrt{\left(\frac{R}{p}-\kappa\right)^2-1}+\left(\kappa-\frac{1}{a^0 p}\right)\arcosh\left(\frac{R}{p}-\kappa\right)\right)\sign(\theta)\partial_\theta R\right) \vspace{4pt}\\
        \displaystyle&\displaystyle-p\partial_a\left(\kappa-\frac{1}{a^0p}\right)\arcosh\left(\frac{R}{p}-\kappa\right)\sign(\theta)\partial_\theta R+\partial_a(\kappa p)\vspace{4pt}\\
        \displaystyle=&\displaystyle~\partial_ap\left(\frac{R}{p}-\kappa-\sqrt{\left(\frac{R}{p}-\kappa\right)^2-1}\frac{\theta}{|\theta|}\partial_\theta R\right)-\partial_a\left(\kappa p-\frac{1}{a^0}\right)\arcosh\left(\frac{R}{p}-\kappa\right)\frac{\theta}{|\theta|}\partial_\theta R+\partial_a(\kappa p)
\end{array}
    \end{equation}
    Then, using the expression for $\partial_\theta R$ from Proposition \ref{proposition_writing_derivatives_R}, note that
    \begin{equation}
        \label{equation_R_minus_theta_proof}
        \frac{R}{p}-\sqrt{\left(\frac{R}{p}-\kappa\right)^2-1}\sign(\theta)\partial_\theta R
        =\frac{1}{R-\frac{1}{a^0}}\left(\left(2\kappa-\frac{1}{a^0p}\right)R+p(1-\kappa^2)\right).
    \end{equation}
    Moreover, using \eqref{equation_lower_bound_R}, we obtain
    \begin{equation}
        \label{equation_lower_bound_R_a0_proof}
        R-\frac{1}{a^0}\geq \frac{m^2+a^0\sqrt{m^2+\ell a^2}}{a^0 a^2},\qquad a^0-\frac{1}{R}\geq \frac{m^2+a^0\sqrt{m^2+\ell a^2}}{a^0+\sqrt{m^2+\ell a^2}}\geq \frac{m}{2}.
    \end{equation}
    Hence, by \eqref{equation_estimates_derivatives_kappa_p} and the relation $\kappa p =\frac{a^0}{a^2}$, we derive
    \begin{equation*}
        |\partial_ap|\frac{a^0}{a^0-\frac{1}{R}}\left(2\kappa -\frac{1}{a^0p}\right)=\frac{|\partial_a p|}{p}\frac{(a^2+2m^2)}{a^2\left(a^0-\frac{1}{R}\right)}\lesssim \frac{1}{a}+\frac{1}{a^3}.
    \end{equation*}
    Similarly,
    \begin{equation*}
        |\partial_ap|\frac{p(1-\kappa^2)}{R-\frac{1}{a^0}}=\frac{|\partial_a p|}{p}\frac{|\ell-1|}{a^2}\frac{1}{R-\frac{1}{a^0}}\lesssim \frac{\sqrt{\ell}}{a^2}+\frac{1}{a}.
    \end{equation*}

    Finally, by \eqref{equation_derivatives_kappa_p_etc}, since $\frac{R}{p}-\kappa\geq 1$, we find
    \begin{equation*}
        \left|\partial_a\left(\kappa p-\frac{1}{a^0}\right)\arcosh\left(\frac{R}{p}-\kappa\right)\sign(\theta)\partial_\theta R\right|\lesssim \frac{1}{a^3}\log\left\langle\frac{R}{p}\right\rangle,
    \end{equation*}
    as well as
    \begin{equation*}
        \partial_a(\kappa p)\lesssim \frac{1}{a^2}+\frac{1}{a^3},
    \end{equation*}
    which concludes the estimatation of $\partial_aR$. 

    \textbf{\underline{Estimates on the second order derivatives}}\\
    We begin with $\partial^2_\theta R$, one has 
    \begin{equation*}
        \partial^2_{\theta}R(\theta,a)=\frac{1}{a^2\left(R-\frac{1}{a^0}\right)^3}\left(\ell+m^2\frac{R}{a^0}\right)=\frac{1}{R-\frac{1}{a^0}}\Bigg(\frac{\ell}{a^2\big(R-\frac{1}{a^0}\big)^2}+\frac{m^2}{a^2}\frac{1}{\left(a^0-\frac{1}{R}\right)}\frac{1}{R-\frac{1}{a^0}}\Bigg).
    \end{equation*}
    Thus, using \eqref{equation_lower_bound_R_a0_proof}, we obtain
    \begin{equation*}
        \frac{\ell}{a^2\big(R-\frac{1}{a^0}\big)^2}\lesssim \frac{\ell a^2 (a^0)^2}{\big(m^2+a^0\sqrt{m^2+\ell a^2}\big)^2}\lesssim 1,
    \end{equation*}
    as well as
    \begin{equation*}
        \frac{m^2}{a^2}\frac{1}{\left(a^0-\frac{1}{R}\right)}\frac{1}{R-\frac{1}{a^0}}\lesssim \frac{a^0}{m^2+a^0\sqrt{m^2+\ell a^2}}\lesssim 1,
    \end{equation*}
    which gives the estimate by applying \eqref{equation_lower_bound_R_a0_proof}. Then, for $\partial_a\partial_\theta R$, we have
    \begin{align*}
        \partial_a\partial_\theta R=&-\partial_a\left(\kappa-\frac{1}{a^0p}\right)\frac{p}{R-\frac{1}{a^0}}\sign(\theta)\partial_\theta R\\
        &+\frac{1}{a^2\left(R-\frac{1}{a^0}\right)^3}\left(\ell+m^2\frac{R}{a^0}\right)\left(-\frac{\theta}{p}\partial_a p-p\partial_a\left(\kappa-\frac{1}{a^0p}\right)\arcosh\left(\frac{R}{p}-\kappa\right)\sign(\theta)\right).
    \end{align*}
    By \eqref{equation_estimates_derivatives_kappa_p} and \eqref{equation_lower_bound_R_a0_proof}, since $|\partial_\theta R|\leq 1$, the first term satisfies
    \begin{equation*}
        \left|\partial_a\left(\kappa-\frac{1}{a^0p}\right)\frac{p}{R-\frac{1}{a^0}}\sign(\theta)\partial_\theta R\right|\lesssim \frac{1}{a^0 a^3}\frac{a^0}{R}\frac{1}{a^0-\frac{1}{R}}\lesssim  \frac{1}{Ra^3}.
    \end{equation*}
    For the second term, we study each part separately. First, since $|\theta|\lesssim p\sqrt{\left(\frac{R}{p}-\kappa\right)^2-1}$, we can use the expression for $\partial_\theta R$ from Proposition \ref{proposition_writing_derivatives_R}, \eqref{equation_estimates_derivatives_kappa_p}, and \eqref{equation_lower_bound_R_a0_proof} to find that
    \begin{equation*}
        |\partial_ap|\frac{|\theta|}{p}\frac{\ell}{a^2\left(R-\frac{1}{a^0}\right)^3}\lesssim \frac{1}{Ra^2} \frac{|\partial_a p|}{p}\frac{\ell a^0}{\left(a^0-\frac{1}{R}\right)\left(R-\frac{1}{a^0}\right)}|\partial_\theta R|\lesssim \frac{1}{R a}\frac{\ell (a^0)^2(a^0+\sqrt{m^2+\ell a^2})}{(m^2+a^0\sqrt{m^2+\ell a^2})^2}.
    \end{equation*}
    Hence,
    \begin{equation*}
        |\partial_ap|\frac{|\theta|}{p}\frac{\ell}{a^2\left(R-\frac{1}{a^0}\right)^3}\lesssim \frac{1}{Ra}\frac{\ell a^0}{m^2+\ell a^2}+\frac{1}{Ra}\frac{\ell}{\sqrt{m^2+\ell a^2}}\lesssim\frac{\sqrt{\ell}}{Ra^2}+\frac{a^0}{Ra^3}.
    \end{equation*}
    Similarly, since $a^0-\frac{1}{R}\gtrsim 1$,
    \begin{equation*}
        |\partial_ap|\frac{|\theta|}{p}\frac{R}{a^0}\frac{1}{a^2\left(R-\frac{1}{a^0}\right)^3}\lesssim \frac{1}{Ra^2} \frac{|\partial_a p|}{p}\frac{ a^0}{\left(a^0-\frac{1}{R}\right)^2}|\partial_\theta R|\lesssim \frac{a^0}{Ra^3}.
    \end{equation*}

    Moreover, since
    \begin{equation*}
        p\frac{1}{R-\frac{1}{a^0}}\arcosh\left(\frac{R}{p}-\kappa\right)\leq p\frac{\sqrt{\left(\frac{R}{p}-\kappa\right)^2-1}}{R-\frac{1}{a^0}}=|\partial_\theta R|\leq 1,
    \end{equation*}
    we obtain
    \begin{equation*}
        \frac{\ell+m^2\frac{R}{a^0}}{a^2\left(R-\frac{1}{a^0}\right)^3}p\left|\partial_a\left(\kappa-\frac{1}{a^0p}\right)\right|\arcosh\left(\frac{R}{p}-\kappa\right)\lesssim \frac{\ell+\frac{R}{a^0}}{a^2\left(R-\frac{1}{a^0}\right)^2}\left|\partial_a\left(\kappa-\frac{1}{a^0p}\right)\right|.
    \end{equation*}
    Then, by \eqref{equation_estimates_derivatives_kappa_p} and \eqref{equation_lower_bound_R_a0_proof}, since $pa^2=\sqrt{m^2+\ell a^2}$, we find
    \begin{equation*}
        \frac{\ell}{a^2\left(R-\frac{1}{a^0}\right)^2}\left|\partial_a\left(\kappa-\frac{1}{a^0p}\right)\right|\lesssim \frac{1}{R}\frac{\ell}{pa^5\left(R-\frac{1}{a^0}\right)\left(a^0-\frac{1}{R}\right)}\lesssim \frac{1}{Ra}\frac{\ell a^0}{\sqrt{m^2+\ell a^2}(m^2+a^0\sqrt{m^2+\ell a^2})}\lesssim \frac{1}{R a^3},
    \end{equation*}
    and similarly, we have
    \begin{equation*}
        \frac{R}{a^0a^2\left(R-\frac{1}{a^0}\right)^2}\left|\partial_a\left(\kappa-\frac{1}{a^0p}\right)\right|\lesssim \frac{1}{R}\frac{1}{pa^5\left(a^0-\frac{1}{R}\right)^2}\lesssim \frac{1}{R a^3}.
    \end{equation*}
    Hence,
    \begin{equation*}
        \frac{\ell+m^2\frac{R}{a^0}}{a^2\left(R-\frac{1}{a^0}\right)^3}p\left|\partial_a\left(\kappa-\frac{1}{a^0p}\right)\right|\arcosh\left(\frac{R}{p}-\kappa\right)\lesssim \frac{1}{R a^3}.
    \end{equation*}
    This completes the estimate for $\partial_a\partial_\theta R$. We finally prove the estimate for $\partial_a^2R$. First, let us recall the expression for $\partial_a^2R$ given in Proposition \ref{proposition_writing_derivatives_R} by 
    \begin{align*}
        \partial^2_aR(\theta,a)=\partial^2_a(\kappa p)+I_1+I_2+I_3+I_4,
    \end{align*}
    where 
    \begin{align*}
        I_1&:=\partial^2_ap\left(\frac{R}{p}-\kappa-\frac{\theta}{p}\partial_\theta R\right)=\partial_a^2p\left(\frac{R}{p}-\kappa-\sqrt{\left(\frac{R}{p}-\kappa\right)^2-1}\frac{\theta}{|\theta|}\partial_\theta R-\left(\kappa-\frac{1}{a^0p}\right)\arcosh\left(\frac{R}{p}-\kappa\right)\frac{\theta}{|\theta|}\partial_\theta R\right),\\
        I_2&:= \theta(\partial_ap)\partial_a \left(\kappa-\frac{1}{a^0p}\right)\frac{\partial_\theta R}{R-\frac{1}{a^0}},\\
        I_3&:=-\left(2(\partial_ap)\partial_a\left(\kappa-\frac{1}{a^0p}\right)+p\partial_a^2\left(\kappa-\frac{1}{a^0p}\right)-\frac{p^2}{R-\frac{1}{a^0}}\left(\partial_a\left(\kappa-\frac{1}{a^0p}\right)\right)^2\right)\arcosh\left(\frac{R}{p}-\kappa\right)\frac{\theta}{|\theta|}\partial_\theta R,\\
        I_4&:=-\left(\partial_a p\frac{\theta}{p}+p\partial_a\left(\kappa-\frac{1}{a^0p}\right)\arcosh\left(\frac{R}{p}-\kappa\right)\frac{\theta}{|\theta|}\right)\partial_a\partial_\theta R.
    \end{align*}
    We begin by giving an upper bound for $I_1$. In view of \eqref{equation_R_minus_theta_proof}, we are lead to estimate
    \begin{equation*}
        |\partial_a^2p|\frac{a^0}{a^0-\frac{1}{R}}\left(2\kappa-\frac{1}{a^0p}\right)=\frac{|\partial_a^2 p|}{p}\frac{(a^2+2m^2)}{a^2\left(a^0-\frac{1}{R}\right)}\lesssim \frac{1}{a^2}+\frac{1}{a^4},
    \end{equation*}
    and, by \eqref{equation_estimates_derivatives_kappa_p} and \eqref{equation_lower_bound_R_a0_proof},
    \begin{equation*}
        |\partial_a^2p|\frac{p(1-\kappa^2)}{R-\frac{1}{a^0}}=\frac{|\partial_a^2 p|}{p}\frac{|\ell-1|}{a^2}\frac{1}{R-\frac{1}{a^0}}\lesssim \frac{1+\ell}{a^2}\frac{a^0}{m^2+a^0\sqrt{m^2+\ell a^2}}\lesssim \frac{\sqrt{\ell}}{a^3}+\frac{1}{a^2}.
    \end{equation*}
    Moreover, by \eqref{equation_kappa_times_p} and \eqref{equation_estimates_derivatives_kappa_p}, since $\frac{R}{p}-\kappa\geq 1$, we obtain
    \begin{equation*}
        |\partial_a^2p|\left(\kappa-\frac{1}{a^0 p}\right)\arcosh\left(\frac{R}{p}\right)|\partial_\theta R|\lesssim \frac{1}{a^2}\left(\kappa p-\frac{1}{a^0}\right) \log\left\langle \frac{R}{p}\right\rangle\lesssim \frac{1}{a^4}\log\left\langle \frac{R}{p}\right\rangle.
    \end{equation*}
    Hence, by combining $|\partial_a^2p| \kappa \lesssim \frac{\kappa p}{a^2}\lesssim \frac{1}{a^3}+\frac{1}{a^4}$ with the previous estimates, we obtain 
    \begin{equation*}
         |I_1|\lesssim\left(\frac{1}{a^2}+\frac{\sqrt{\ell}}{a^3}+\frac{1}{a^4}\right)\log\left\langle \frac{R}{p}\right\rangle.
    \end{equation*}
    Then, since $|\theta|=pG_a\left(\frac{R}{p}-\kappa\right)\leq 2p\sqrt{\left(\frac{R}{p}-\kappa\right)^2-1}$, we find
    \begin{equation*}
        |I_2|\leq 2\left|\partial_a\left(\kappa-\frac{1}{a^0p}\right)\right||\partial_a p||\partial_\theta R|^2 \lesssim \frac{|\partial_a p|}{p}\frac{1}{a^3}\lesssim \frac{1}{a^4}.
    \end{equation*}
    For $I_4$, we recall the upper bounds for $\partial_a\partial_\theta R$ given in \eqref{equation_estimate_second_derivatives_theta_R}. Moreover, we have
    \begin{equation*}
        \frac{|\partial_ap|}{p}|\theta|\lesssim \frac{R}{a},
    \end{equation*}
    as well as
    \begin{equation*}
        p\left|\partial_a\left(\kappa-\frac{1}{a^0p}\right)\right|\arcosh\left(\frac{R}{p}-\kappa\right)\lesssim \frac{1}{a^3}\log\left\langle\frac{R}{p}\right\rangle.
    \end{equation*}
    This implies
    \begin{equation*}
        |I_4|\lesssim \frac{a^0}{a^4}+\frac{\sqrt{\ell}}{a^3}+\frac{1}{a^4}\log\left\langle\frac{R}{p}\right\rangle\lesssim \left(\frac{1}{a^2}+\frac{\sqrt{\ell}}{a^3}+\frac{1}{a^4}\right)\log\left\langle \frac{R}{p}\right\rangle.
    \end{equation*}
    Finally, we study $I_3$. Using \eqref{equation_estimates_derivatives_kappa_p} and \eqref{equation_lower_bound_R_a0_proof}, we find
    \begin{align*}
        |\partial_a p|\left|\partial_a\left(\kappa-\frac{1}{a^0p}\right)\right|&\lesssim \frac{1}{a^4},\\
        p\left|\partial_a^2\left(\kappa-\frac{1}{a^0p}\right)\right|&\lesssim \frac{1}{a^4},\\
         \frac{p^2}{R-\frac{1}{a^0}}\left|\partial_a\left(\kappa-\frac{1}{a^0p}\right)\right|^2&\lesssim \frac{1}{a^4},
    \end{align*}
    which grants us, as $|\partial_\theta R|\leq 1$,
    \begin{equation*}
        |I_3|\lesssim \frac{1}{a^4}|\partial_\theta R|\arcosh\left(\frac{R}{p}-\kappa\right)\lesssim \frac{1}{a^4}\log\left\langle\frac{R}{p}\right\rangle.
    \end{equation*}
    This proves the estimate for $\partial_a^2R$.
\end{proof}

\subsection{Composing with the linear flow}
\label{section_composing_linear_flow}

Recall that $f$ solves the linear problem \eqref{equation_RVPpc_linearized_2}. Since the action-angle variables have already been introduced, equation \eqref{equation_RVPpc_linearized_2} can be rewritten as the free relativistic transport equation \eqref{equation_free_relat_transp}. Consequently, we can compose $g$, defined in \eqref{equation_def_solution_g_linear_aa}, with the linear flow $\theta + t\widehat{a}$ and define
\begin{equation}
    \label{equation_def_gamma}
    \gamma(t,\theta,a,\ell):=f(t,R(\theta+t\widehat{a},a), U(\theta+t\widehat{a},a),\ell).
\end{equation}
By doing so, we directly obtain $\partial_t\gamma=0$. In the following, we will write
\begin{equation*}
    \widetilde{R}(\theta,a):=R(\theta+t\widehat{a},a).
\end{equation*}
Here, we omit the dependence on $t$ in our notation. Using these new functions, we can write, for instance,  
\begin{equation}
    M(t,r)=4\pi^2\int_0^r\int_0^\infty\int_{\R} f(t,s,u,\ell)\mathrm{d}u\mathrm{d}\ell\mathrm{d}s=4\pi^2\iiint \1_{\{\widetilde{R}(\theta,a)\leq r\}} \gamma(t,\theta,a,\ell)\mathrm{d}\theta\mathrm{d}a\mathrm{d}\ell,
\end{equation}
\begin{equation}
    \label{equation_definition_rho}
    \rho(t,r):=\partial_r M(t,r)=4\pi^2\iiint \delta\left(\widetilde{R}(\theta,a)-r\right) \gamma(t,\theta,a,\ell)\mathrm{d}\theta\mathrm{d}a\mathrm{d}\ell.
\end{equation}
Similarly, we know that the electric field $E(t,r)=\frac{M(t,r)}{r^2}$ derives from the potential $\Psi$. It rewrites
\begin{equation}
    \Psi(t,r)=-4\pi^2\iiint\frac{f(t,s,u,\ell)}{\max(r,s)}\mathrm{d}u\mathrm{d}\ell\mathrm{d}s=-4\pi^2\iiint \frac{1}{\max(r,\widetilde{R}(\theta,a))}\gamma(t,\theta,a,\ell)\mathrm{d\theta}\mathrm{d}a\mathrm{d}\ell.
\end{equation}
Finally, we also define 
\begin{equation}
    \widetilde{\Psi}(t,\theta,a):=\Psi(t,\widetilde{R}(\theta,a)).
\end{equation}
Consequently, we may focus entirely on the study of $\gamma$, and establish upper bounds for $M,\rho$ and the derivatives of $\widetilde{\Psi}$ in terms of $\gamma$.\\
In the following, since we focus on proving the global existence and large time behavior of solutions, we will consider the case of large times $t\geq T_0$. Note that $T_0$ will only depend on $m$ (see the proof of \eqref{equation_lower_bound_R_tilde_bulk} on this matter).
\subsubsection{The bulk}

In order to prove the estimates in the following sections, we introduce a subset $\mathcal{B}_t$, called the \textbf{bulk}. It is defined as
\begin{equation}
    \label{equation_def_bulk}
    \mathcal{B}_t:=\left\{(\theta,a,\ell)\in\R\times\R^*_+\times\R_+\,|\, \widehat{a}\geq t^{-\frac{1}{4}},\quad a\leq t^\frac{1}{3},\quad |\theta|\leq \frac{t\widehat{a}}{2},\quad \ell \leq t^\frac{1}{2}\right\},
\end{equation}
and represented in Figure \ref{figure_bulk}.

\begin{figure}[!h]
    \centering
    \centering
    \begin{tikzpicture}[thick,scale=1.5, every node/.style={scale=1}]

    
            \draw ({1/sqrt(sqrt(20)-1)},-0.2) node[left] {$\widehat{a}=t^{-\frac{1}{4}}$};
            \draw ({20^(1/3)},-0.2) node[right] {$a=t^\frac{1}{3}$};

            \draw plot[domain=(-2/sqrt(sqrt(20)-1))/sqrt(1+1/(sqrt(20)-1)):2/sqrt(sqrt(20)-1))/sqrt(1+1/(sqrt(20)-1)), very thick] ({1/sqrt(sqrt(20)-1)},\x);

            \draw plot[domain=(-2*20^(1/3)/sqrt(1+20^(2/3)):2*20^(1/3)/sqrt(1+20^(2/3)), very thick] ({20^(1/3)},\x);

            \fill [pattern=vertical lines, pattern color=gray]
            plot[domain=1/sqrt(sqrt(20)-1):20^(1/3)] (\x,{2*\x/(sqrt(1+\x^2)})--plot [domain=20^(1/3):1/sqrt(sqrt(20)-1)] (\x,{-2*\x/(sqrt(1+\x^2)});
            \draw (1.7,0.8) node [thick, scale =1.5] {$\mathcal{B}_t$};

            \draw[domain=1/sqrt(sqrt(20)-1):20^(1/3), very thick]   plot (\x,{-2*\x/(sqrt(1+\x^2)}) ;
            \draw ({20^(1/3)},-2) node[right] {$\theta=-t\widehat a$};

            \draw[domain=1/sqrt(sqrt(20)-1):20^(1/3), very thick]   plot (\x,{2*\x/(sqrt(1+\x^2)}) ;
            \draw ({20^(1/3)},2) node[right] {$\theta=t\widehat a$};

            \draw[->] (0,-2)--(0,2);
            \draw[->] (-0.2,0)--(3.5,0);
            \draw (3.6,0) node[right] {$a$};
            \draw (0,2.1) node[above] {$\theta$};
            
        \end{tikzpicture}
        \caption{Representation of the bulk $\mathcal{B}_t$}
        \label{figure_bulk}
\end{figure}
In this subset, we derive more precise estimates for $\widetilde{R}$, the most important being that $\widetilde{R}\sim t\widehat{a}$ in the bulk. Note that for any fixed $(\theta,a,\ell)$,$(\theta,a,\ell)\in\mathcal{B}_t$ for all $t\geq T_0$ sufficiently large. Conceptually, this means that for large times, the quantities will remain in the bulk and the dynamics happen in this subset. One can also remark that in the complement of the bulk,
\begin{equation}
    \1_{\mathcal{B}_t^c}\lesssim t^{-k}(a^{3k} +a^{-4k}+|\theta|^k(1+a^{-k}) +\ell^{2k}),
\end{equation}
so that any sufficiently localized function decays strongly in $\mathcal{B}_t^c$. In fact, in what follows, we will often split the integrals into regions inside and outside the bulk.

\subsubsection{Estimates on the radius composed by the linear flow}

\begin{proposition}
    \label{proposition_derivatives_R_tilde}
    The derivatives of $\widetilde{R}$ are given by
    \begin{equation}
        \label{equation_expression_derivative_R_tilde}
        \begin{array}{l}
                        \begin{array}{ll}
                \displaystyle\partial_\theta \widetilde{R}=(\partial_\theta R)(\theta+t\widehat a,a),&\qquad \partial_a \widetilde{R}=t\partial_a\widehat{a}(\partial_\theta R)(\theta+t\widehat a,a)+(\partial_a R)(\theta+t\widehat a,a),  \vspace{2pt} \\
            \displaystyle \partial^2_\theta\widetilde{R}=(\partial_\theta^2 R)(\theta+t\widehat a,a),&\qquad \partial_a\partial_\theta \widetilde{R}=t\partial_a\widehat{a}(\partial^2_\theta R)(\theta+t\widehat a,a)+(\partial_a\partial_\theta R)(\theta+t\widehat a,a),\vspace{2pt}
            \end{array}\\
            \displaystyle \partial_a^2\widetilde{R}=t\partial_a^2\widehat a(\partial_\theta R)(\theta+t\widehat a,a)+2t\partial_a\widehat{a}(\partial_a\partial_\theta R)(\theta+t\widehat a,a)+t^2(\partial_a \widehat a)^2(\partial^2_\theta R)(\theta+t\widehat a,a)+(\partial_a^2 R)(\theta+t\widehat a,a).
        \end{array}
    \end{equation}
    Moreover, we have the following estimates.
    \begin{equation}
        \label{equations_estimates_R_tilde_derivatives}
        \begin{array}{c}
            \displaystyle  |\partial_\theta\widetilde{R}|\leq 1,\qquad |\partial_a\widetilde{R}|\lesssim t\frac{1}{(a^0)^3}+\left(\frac{1}{a}+\frac{\sqrt{\ell}}{a^2}+\frac{1}{a^3}\right)\log\left\langle \frac{\widetilde{R}}{p}\right\rangle, \vspace{3pt}\\
            \displaystyle |\partial_\theta^2\widetilde{R}|\lesssim \frac{1}{\widetilde{R}-\frac{1}{a^0}}\lesssim \frac{a^0}{\widetilde{R}},\qquad |\partial_a\partial_\theta\widetilde{R}|\lesssim t\frac{1}{\widetilde{R}(a^0)^2}+\frac{a^0}{\widetilde{R}a^3}+\frac{\sqrt{\ell}}{\widetilde{R}a^2},\vspace{5pt}\\
            \displaystyle |\partial_a^2\widetilde{R}|\lesssim t\left[\frac{a}{(a^0)^5}+\frac{1}{(a^0)^3}\left(\frac{a^0}{\widetilde{R}a^3}+\frac{\sqrt{\ell}}{\widetilde{R}a^2}\right)\right]+t^2\frac{1}{\widetilde{R}(a^0)^5}+\left(\frac{1}{a^2}+\frac{\sqrt{\ell}}{a^3}+\frac{1}{a^4}\right)\log\left\langle \frac{\widetilde{R}}{p}\right\rangle.
        \end{array}
    \end{equation}
    Finally, inside the bulk $\mathcal{B}_t$ defined in \eqref{equation_def_bulk}, provided that $t \geq C(m)$ for a constant $C(m)$ depending only on $m$, the following holds
    \begin{equation}
        \label{equation_lower_bound_R_tilde_bulk}
        \frac{t\widehat a}{4}\leq |\widetilde{R}|\leq 2t\widehat a,\qquad \partial_a \widetilde{R}\geq \frac{3}{4}t\frac{m^2}{(a^0)^3}.
    \end{equation}
\end{proposition}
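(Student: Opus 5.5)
The derivative formulas \eqref{equation_expression_derivative_R_tilde} follow from the chain rule applied to $\widetilde R(\theta,a)=R(\theta+t\widehat a,a)$. The $a$-dependence enters both through the second slot and through the shift $t\widehat a$, whose derivative is $t\partial_a\widehat a$. Differentiating once more gives the second derivatives: the mixed term picks up $t\partial_a\widehat a\,\partial_\theta^2R$, and $\partial_a^2\widetilde R$ picks up $t\partial_a^2\widehat a\,\partial_\theta R$, $2t\partial_a\widehat a\,\partial_a\partial_\theta R$ and $t^2(\partial_a\widehat a)^2\partial_\theta^2R$.

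For the estimates \eqref{equations_estimates_R_tilde_derivatives}, I plug the bounds of Proposition \ref{proposition_estimates_R}, evaluated at $(\theta+t\widehat a,a)$, into these formulas, together with the explicit identities
\begin{equation*}
\partial_a\widehat a=\frac{m^2}{(a^0)^3},\qquad \partial_a^2\widehat a=-\frac{3m^2a}{(a^0)^5}.
\end{equation*}
This gives the following.
\begin{itemize}
\item $|\partial_\theta\widetilde R|\le 1$ comes directly from \eqref{equation_estimate_first_derivatives_R}.
\item $|\partial_a\widetilde R|\lesssim t(a^0)^{-3}+(\ldots)\log\langle \widetilde R/p\rangle$.
\item $|\partial_\theta^2\widetilde R|\lesssim (\widetilde R-1/a^0)^{-1}\lesssim a^0/\widetilde R$ comes from \eqref{equation_estimate_second_derivatives_theta_R}.
\item For the mixed derivative, use $t(a^0)^{-3}\cdot a^0/\widetilde R=t/(\widetilde R(a^0)^2)$ together with the bound on $\partial_a\partial_\theta R$.
\item For $\partial_a^2\widetilde R$, the four terms give $t\,a/(a^0)^5$, $t(a^0)^{-3}(a^0/(\widetilde R a^3)+\sqrt\ell/(\widetilde Ra^2))$, $t^2(a^0)^{-6}a^0/\widetilde R=t^2/(\widetilde R(a^0)^5)$, and the $\partial_a^2R$ term.
\end{itemize}
All of this is routine bookkeeping.

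The main point is \eqref{equation_lower_bound_R_tilde_bulk}. In the bulk, $|\theta|\le t\widehat a/2$, so $\sigma:=\theta+t\widehat a\in[t\widehat a/2,\,3t\widehat a/2]$; in particular $\sigma>0$ and $\sign(\sigma)=1$. By \eqref{equation_definition_action_angle_R_U} and \eqref{equation_estimates_G_a_H_a},
\begin{equation*}
\sigma/2+\kappa p\le R(\sigma,a)=pH_a(\sigma/p)+\kappa p\le \sigma+p+\kappa p.
\end{equation*}
The lower bound $\widetilde R\ge t\widehat a/4$ is then immediate. For the upper bound I need $p+\kappa p=(\sqrt{m^2+\ell a^2}+a^0)/a^2\le t\widehat a/2$ in the bulk. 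Since $\widehat a\ge t^{-1/4}$, we have $a\gtrsim_m t^{-1/4}$, and $\ell\le t^{1/2}$, so $p+\kappa p\lesssim a^{-2}+\sqrt\ell/a+a^0/a^2\lesssim t^{1/2}+t^{1/2}$. Meanwhile $t\widehat a\ge t^{3/4}$, so the bound holds for $t\ge C(m)$.

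For $\partial_a\widetilde R\ge \frac34 t m^2/(a^0)^3$, write $\partial_a\widetilde R=t\frac{m^2}{(a^0)^3}\partial_\theta R(\sigma,a)+\partial_aR(\sigma,a)$. I need two things.
\begin{itemize}
\item \emph{Lower bound on $\partial_\theta R$.} Using \eqref{equation_derivative_theta_R_lower_1_proof} and $a^0-1/R\gtrsim m$, we have $1-\partial_\theta R\lesssim (\ell (a^0)^2/R+a^0)/(Ra^2)$. With $R\gtrsim t\widehat a\ge t^{3/4}$, $a\gtrsim t^{-1/4}$, $a\le t^{1/3}$ and $\ell\le t^{1/2}$, this is $O(t^{-\delta})$ for some $\delta>0$. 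Hence $\partial_\theta R\ge 7/8$ for $t\ge C(m)$.
\item \emph{Smallness of $\partial_aR$.} By \eqref{equation_estimate_first_derivatives_R}, $|\partial_aR|\lesssim (a^{-1}+\sqrt\ell a^{-2}+a^{-3})\log\langle R/p\rangle$. I must check this is at most $\frac18 t m^2/(a^0)^3$. In the worst case $a\sim t^{-1/4}$, the bound is $\lesssim t^{3/4}\log t$, while the right side is $\sim t$. In the case $a\sim t^{1/3}$, the bound is $\lesssim t^{-1/3}\log t$, while the right side is $\sim t\cdot t^{-1}=1$. Intermediate $a$ interpolate.
\end{itemize}

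This exponent check is the main obstacle: the bulk parameters $1/4,1/3,1/2$ must be compatible with every error term, uniformly in $a$, including the $\sqrt\ell/a^2$ term, which gives $t^{1/4}t^{1/2}=t^{3/4}$ in the worst case. I would verify it by splitting into $a\le 1$ and $a\ge1$. Once it holds, fixing $T_0=C(m)$ large concludes.
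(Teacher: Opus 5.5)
Your proposal is correct and follows the paper's proof almost step for step. The shared steps are the chain rule, inserting the bounds of Proposition~\ref{proposition_estimates_R} with $\partial_a\widehat a=m^2/(a^0)^3$, using the $H_a$ bounds to get $t\widehat a/4\le\widetilde R\le 2t\widehat a$, and taking the lower bound on $\partial_\theta R$ from \eqref{equation_derivative_theta_R_lower_1_proof}.

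The only divergence is how you control $\partial_aR$ in the lower bound for $\partial_a\widetilde R$.

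\begin{itemize}
\item The paper works from \eqref{equation_derivative_a_R_other_expression_proof} and uses the signs of its terms. In particular the arcosh contribution has a favorable sign and is simply dropped. This gives the log-free one-sided bound $\frac{(a^0)^3}{tm^2}\partial_aR\ge -C(m)t^{-1/4}$.
\item You use the two-sided bound \eqref{equation_estimate_first_derivatives_R} directly, which gives smallness of order $C(m)t^{-1/4}\log t$.
\end{itemize}

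Your version is shorter and equally valid, as long as you state explicitly that $\log\langle\widetilde R/p\rangle\lesssim_m\log t$ on the bulk. This follows from $\widetilde R\le 2t$ and $p\ge m a^{-2}\ge m t^{-2/3}$. The paper's sign argument only buys a cleaner, log-free threshold $C(m)$.
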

\begin{proof}
    First, note that
    \begin{equation*}
        |\partial_a \widehat{a}|=\frac{m^2}{(a^0)^3},\qquad |\partial_a^2\widehat{a}|=3\frac{m^2 a}{(a^0)^5}.
    \end{equation*}
    Consequently, estimates \eqref{equations_estimates_R_tilde_derivatives} follow directly from \eqref{equation_expression_derivative_R_tilde} and Proposition \ref{proposition_estimates_R}. Then, by \eqref{equation_estimates_G_a_H_a}, for $(\theta,a,\ell)\in\mathcal{B}_t$, 
    \begin{equation*}
        \frac{|\theta+t\widehat a|}{2}\leq \widetilde{R}(\theta,a)\leq |\theta+t\widehat a|+p+\frac{a^0}{a^2}\leq \frac{3t\widehat{a}}{2}+p+\frac{a^0}{a^2}.
    \end{equation*}
    In the bulk $\mathcal{B}_t$, we have $\widehat a\geq t^{-\frac{1}{4}}$, $\sqrt{\ell}\leq t^{\frac{1}{4}}$ and $a\geq m t^{-\frac{1}{4}}$. Therefore, 
    \begin{equation*}
        p+\frac{a^0}{a^2}\leq t\widehat a\left(\frac{\sqrt{\ell}}{t\widehat a a}+\frac{m}{t\widehat a a^2}+\frac{1}{t(\widehat a)^2 a}\right)\leq t\widehat a\frac{3}{t^{\frac{1}{4}}m}.
    \end{equation*}
    For $t\geq \left(\frac{6}{m}\right)^4$, this implies $\frac{a^0}{a^2}+p\leq \frac{t\widehat a}{2}$. Since $|\theta|\leq \frac{1}{2}t\widehat a$, we obtain the first estimate of \eqref{equation_lower_bound_R_tilde_bulk}. We now focus on the lower bound of $\partial_a \widetilde{R}$. By \eqref{equation_expression_derivative_R_tilde}, we have
    \begin{equation*}
        \partial_a \widetilde{R}=t\frac{m^2}{(a^0)^3}\left((\partial_\theta R)(\theta+t\widehat a,a)+\frac{(a^0)^3}{tm^2}(\partial_a R)(\theta+t\widehat a,a)\right).
    \end{equation*}
    However, by Proposition \ref{proposition_estimates_R}, we know that $\partial_\theta R=O(1)$ and, in the bulk $\mathcal{B}_t$, $\frac{(a^0)^3}{t}\partial_aR=O(t^{-1/4}\log\langle Ra^2\rangle)$. Hence, formally, if $t$ is sufficiently large,  $\frac{(a^0)^3}{t}\partial_aR$ is dominated by $\partial_\theta R$ and we will obtain the result. More precisely, recall \eqref{equation_derivative_theta_R_lower_1_proof}, so that 
    \begin{align*}
        (\partial_\theta R)(\theta+t\widehat a,a)=\sqrt{1-\frac{(a^0)^2\frac{\ell}{\widetilde{R}^2}+2m^2\frac{a^0}{\widetilde{R}}-\frac{m^2}{\widetilde{R}^2}}{a^2(a^0-\frac{1}{\widetilde{R}})^2}}\geq 1-\frac{(a^0)^2\ell}{a^2\widetilde{R}^2(a^0-\frac{1}{\widetilde{R}})^2}-\frac{1}{\widetilde{R}}\frac{2m^2 a^0}{a^2(a^0-\frac{1}{\widetilde{R}})^2}.
    \end{align*}
    However, by \eqref{equation_lower_bound_R_a0_proof}
    \begin{equation*}
        \frac{(a^0)^2\ell}{a^2\widetilde{R}^2(a^0-\frac{1}{\widetilde{R}})^2}= \frac{\ell a^0}{\widetilde{R}a^2\left(a^0-\frac{1}{\widetilde{R}}\right)\left(\widetilde{R}-\frac{1}{a^0}\right)}\leq \frac{2}{m\widetilde{R}}\frac{\sqrt{\ell}a^0}{a}.
    \end{equation*}
    Similarly,
    \begin{equation*}
        \frac{1}{\widetilde{R}}\frac{2m^2 a^0}{a^2(a^0-\frac{1}{\widetilde{R}})^2}\leq \frac{8a^0}{\widetilde{R} a^2}.
    \end{equation*}
    Since $\widehat{a}\geq t^{-\frac{1}{4}},\,a\geq mt^{-\frac{1}{4}},\, \sqrt{\ell}\leq t^{\frac{1}{4}},\, \widetilde{R}\geq \frac{t\widehat{a}}{4}$ in the bulk, this implies 
    \begin{equation*}
        (\partial_\theta R)(\theta+t\widehat a,a)\geq 1-\frac{2}{m\widetilde{R}}\frac{\sqrt{\ell}a^0}{a}-\frac{8a^0}{\widetilde{R} a^2}\geq 1-\frac{8}{mt^{1/4}}-\frac{32}{mt^{1/4}}\geq 1-\frac{40}{mt^{1/4}}.
    \end{equation*}
    We now turn to $\partial_aR$. Recalling the expressions of $\partial_a R$ in \eqref{equation_derivative_a_R_other_expression_proof} and $\partial_\theta R$ in Proposition \ref{proposition_writing_derivatives_R}, we have
    \begin{align*}
        (\partial_aR)(\theta+t\widehat a,a)=&~~\partial_ap\left(\frac{\widetilde{R}}{p}-\kappa-\sqrt{\left(\frac{\widetilde{R}}{p}-\kappa\right)^2-1}\sign(\theta+t\widehat a)\partial_\theta R\right)\\
        &-\partial_a\left(\kappa p-\frac{1}{a^0}\right)\arcosh\left(\frac{\widetilde{R}}{p}-\kappa\right)\sign(\theta+t\widehat a)\partial_\theta R+\partial_a(\kappa p)\\
        =&-\kappa\partial_ap+\frac{\partial_a p}{\widetilde{R}-\frac{1}{a^0}}\left(\left(2\kappa-\frac{1}{a^0p}\right)\widetilde{R}+p(1-\kappa^2)\right)\\
        &-\partial_a\left(\kappa p-\frac{1}{a^0}\right)\arcosh\left(\frac{\widetilde{R}}{p}-\kappa\right)\sign(\theta+t\widehat a)\partial_\theta R+\partial_a(\kappa p).
    \end{align*}
    However, by Lemma \ref{lemma_derivatives_of_p_kappa}, we know that $\kappa\partial_ap<0$ and $\partial_a\left(\kappa p-\frac{1}{a^0}\right)<0$. Similarly, $(\partial_ap)p(1-\kappa^2)\geq p\partial_ap$ and $\frac{\partial_ap}{a^0p}< 0$. This implies
    \begin{equation*}
        (\partial_aR)(\theta+t\widehat a,a)\geq \frac{\partial_a p}{\widetilde{R}-\frac{1}{a^0}}\left(2\kappa \widetilde{R}+p\right)+\partial_a(\kappa p).
    \end{equation*}
    Moreover, by \eqref{equation_estimates_derivatives_kappa_p} and \eqref{equation_lower_bound_R_a0_proof}
    \begin{equation*}
        \frac{p\partial_ap}{\widetilde{R}-\frac{1}{a^0}}\geq -\frac{a^0}{a^3}\frac{\ell a^2+2m^2}{m^2+a^0\sqrt{m^2+\ell a^2}}\geq -2\frac{\sqrt{\ell a^2+m^2}}{a^3}\geq -2\frac{\sqrt{\ell}}{a^2}-2\frac{m}{a^3}.
    \end{equation*}
    Since $\frac{\partial_a p}{p}\geq -\frac{2}{a}$ and $\kappa=\frac{a^0}{p a^2}$, using \eqref{equation_lower_bound_R_a0_proof} we also derive  
    \begin{equation*}
        \frac{2\kappa\partial_a p}{1-\frac{1}{a^0\widetilde{R}}}=\frac{2(a^0)^2}{a^2}\frac{\partial_a p}{p}\frac{1}{a^0-\frac{1}{\widetilde{R}}}\geq -\frac{8}{m}\frac{(a^0)^2}{a^3}\geq -\frac{8}{ma}-\frac{8m}{a^3}
    \end{equation*}
    Hence, since $\widehat{a}\geq t^{-\frac{1}{4}},\, t^{\frac{1}{3}}\geq a\geq mt^{-\frac{1}{4}},\, \sqrt{\ell}\leq t^{\frac{1}{4}}$ in the bulk $\mathcal{B}_t$, we find for $t\geq \left(\frac{6}{m}\right)^4$
    \begin{align*}
        \frac{(a^0)^3}{tm^2}(\partial_a R)(\theta+t\widehat a,a)\geq&-2\frac{(a^0)^3\sqrt{\ell}}{tm^2a^2}-2\frac{1}{tm(\widehat a)^3}-\frac{8}{tm^3}\frac{(a^0)^3}{a}-\frac{8}{tm^3(\widehat{a})^3}\\
        \geq& -8\frac{a\sqrt{\ell}}{tm^2}-8\frac{m\sqrt{\ell}}{ta^2}- 2\frac{1}{tm(\widehat a)^3}-8\frac{1}{tm^3(\widehat a)^3}-32\frac{a^2}{tm^3}-32\frac{1}{tma^2}\\
        \geq & -\frac{8}{m^2}t^{-5/12}-\frac{8}{m}t^{-1/4}-\frac{2}{m}t^{-1/4}-\frac{8}{m^3}t^{-1/4}-\frac{32}{m}t^{-1/3}-\frac{32}{m^3} t^{-1/2}\\
        \geq & -C(m)t^{-1/4}.
    \end{align*}
    We proved that 
    \begin{equation*}
        \partial_a \widetilde{R}\geq t\frac{m^2}{(a^0)^3}\left(1-C(m)t^{-\frac{1}{4}}\right),
    \end{equation*}
    this directly implies \eqref{equation_lower_bound_R_tilde_bulk}.
\end{proof}

\subsection{A preliminary lemma}
\label{section_preliminary_lemma}
Recall from \eqref{equation_definition_rho} that
\begin{align*}
    \rho(t,r)=4\pi^2\iiint\delta\left(\widetilde{R}(\theta,a)-r\right)\gamma(t,\theta,a,\ell)\mathrm{d}\theta\mathrm{d}a\mathrm{d}\ell.
\end{align*}
Hence, in order to show estimates on $\rho(t,r)$, we need to study the equation
\begin{equation}
    \label{equation_curve_R_r}
    \widetilde{R}(\theta,a)=r,
\end{equation}
for a fixed $(t,r,\ell)$. To do so, we divide the reduced action-angle phase space $\R\times\R_+^*$ into 3 domains $(\mathcal{R}_j)_{j=1,2,3}$, which can be seen in Figure \ref{figure_separation_domains} below. In each domain, we can exhibit a diffeomorphism with good properties. 

\begin{figure}[!h]
    \centering
    \begin{tikzpicture}[thick,scale=1, every node/.style={scale=1}]
            \draw[-] (1,-2.9)--(1,2.9);
            \fill [pattern=horizontal lines, pattern color=blue]
            (0,-2.9)--(1,-2.9)--(1,2.9)--(0,2.9)--cycle;
            \draw (0.5,1.5) node[ultra thick]{$\mathcal{R}_0$};
            
            \fill [pattern=north west lines, pattern color=red]
            (1,-2.9)--plot [domain=1:5] (\x,{-\x/(sqrt(1+\x^2)})--(5,-2.9);
            \draw (3,-2) node {$\mathcal{R}_1$};

            \fill [pattern=vertical lines, pattern color=green]
            (1,2.9)--plot [domain=1:5] (\x,{-\x/(sqrt(1+\x^2)})--(5,2.9);
            \draw (3,1) node {$\mathcal{R}_2$};

            \draw[domain=0:5, very thick]   plot (\x,{-\x/(sqrt(1+\x^2)}) ;
            \draw (5,-1) node[right] {$\theta=-t\widehat a$};

             \draw (1,-0.2) node[right] {$A$};
            \draw[->] (0,-3)--(0,3);
            \draw[->] (-0.2,0)--(5.1,0);
            \draw (5.1,0) node[right] {$a$};
            \draw (0,3.1) node[above] {$\theta$};
            
        \end{tikzpicture}
        \caption{Representation of the domains $\mathcal{R}_j$}
        \label{figure_separation_domains}
\end{figure}

\begin{lemma}
    \label{lemma_splitting_different_R_j}
    Let $(t,r,\ell)\in [T_0,\infty)\times\R^*_+\times\R_+$ be fixed and 
    \begin{equation}
        \label{equation_expression_hbar_A}
        A:=F^{-1}(r-\hbar),\qquad \hbar:=\frac{C}{t^4}\frac{r^2}{(r+1+\sqrt{\ell})^2},
    \end{equation}
    where $F^{-1}$ denotes the inverse function of $F(a):=\frac{a^0+\sqrt{m^2+\ell a^2}}{a^2}=\kappa p +p$ and $C>0$ is a constant depending only on $m$.
    We consider the subspaces 
    \begin{equation*}
        \begin{array}{c}
             \displaystyle\mathcal{R}_0:=\{(\theta,a)\in \R\times\R^*_+\,|\, a\leq A\}, \vspace{5pt}\\
              \displaystyle\mathcal{R}_1:=\{(\theta,a)\in \R\times\R^*_+\,|\, a> A,\,\theta< -t\widehat a\},\quad \mathcal{R}_2:=\{(\theta,a)\in \R\times\R^*_+\,|\, a> A,\,\theta> -t\widehat a\}.
        \end{array}
    \end{equation*}
    In $\mathcal{R}_0$, for any fixed $\theta\in\R$, there exists at most one $a:=\aleph(\theta;t,r,\ell)$ solution to \eqref{equation_curve_R_r}. Moreover, it satisfies 
    \begin{equation}
        \label{equation_R_lower_bound_in_R0}
        |\partial_a\widetilde{R}(\theta,\aleph)|\geq \frac{3m}{4\aleph^3}.
    \end{equation}
    In $\mathcal{R}_1$ (resp. $\mathcal{R}_2$) for each $a>A$, there exists a unique $\theta:=\theta_1(a;t,r,\ell)$ (resp. $\theta:=\theta_2(a;t,r,\ell)$) solution to \eqref{equation_curve_R_r}. Moreover, we have the following estimates
    \begin{equation}
        \label{equation_lower_bound_in_R_j}
        \frac{1}{|\partial_\theta\widetilde{R}(\tau_j,a)|}\lesssim t^3 \frac{a^2}{a^0}+t^2\left(a|\theta_j|+\sqrt{\ell}(1+a)+a^{-1}+a\right),\qquad j=1,2.
    \end{equation}
\end{lemma}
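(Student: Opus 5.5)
The proof rests on one structural fact. Since $H_a\geq 1$ with $H_a(0)=1$, the definition \eqref{equation_definition_action_angle_R_U} gives $\widetilde{R}(\theta,a)\geq F(a)=p+\kappa p$, with equality exactly when $\theta+t\widehat a=0$. I first check that $F$ is strictly decreasing on $\R^*_+$ and maps it onto $(0,\infty)$. Its derivative $F'=\partial_a p+\partial_a(\kappa p)$ is negative by Lemma \ref{lemma_derivatives_of_p_kappa}, with $|F'|\geq 2m/a^3$ up to lower order terms. Hence $A=F^{-1}(r-\hbar)$ is well defined. The plan is to treat $\mathcal{R}_1\cup\mathcal{R}_2$, where the equation \eqref{equation_curve_R_r} is solved in $\theta$, separately from $\mathcal{R}_0$, where it is solved in $a$.

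\textbf{Regions $\mathcal{R}_1,\mathcal{R}_2$.} Fix $a>A$. Then $F(a)<r-\hbar<r$. By Proposition \ref{proposition_estimates_R}, $\sign(\theta+t\widehat a)\partial_\theta\widetilde R\geq 0$, and by \eqref{equation_derivative_theta_R_lower_1_proof} it vanishes only at $\theta=-t\widehat a$. So $\theta\mapsto\widetilde R$ is strictly monotone on each half-line $\{\theta<-t\widehat a\}$ and $\{\theta>-t\widehat a\}$. It runs from $F(a)$ to $+\infty$, using $H_a(x)\geq x/2$ from \eqref{equation_estimates_G_a_H_a}. This gives existence and uniqueness of $\theta_1$ and $\theta_2$.

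For \eqref{equation_lower_bound_in_R_j}, I write $|\partial_\theta\widetilde R|=H_a'(|\theta_j+t\widehat a|/p)$ using \eqref{equation_derivatives_G_a_H_a}, so that $|\partial_\theta\widetilde R|=\sqrt{H^2-1}/(H+\kappa-\tfrac{1}{a^0p})$. The ratio $H/(H+\kappa-\tfrac{1}{a^0p})$ is bounded below: this uses $H\geq 1$ together with the exact value $\kappa-\tfrac{1}{a^0p}=\tfrac{m^2}{a^0\sqrt{m^2+\ell a^2}}\in[0,1]$ from \eqref{equation_kappa_times_p}, not just its upper bound. Hence $|\partial_\theta\widetilde R|\gtrsim \sqrt{H-1}/H$, where $H-1=(r-F(a))/p\geq \hbar/p$ and $H=(r-\kappa p)/p\leq r/p$. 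This gives
\[
\frac{1}{|\partial_\theta\widetilde R|}\lesssim \frac{r}{\sqrt{p\hbar}}\lesssim \frac{t^2(r+1+\sqrt{\ell})}{\sqrt p}.
\]
Two substitutions then finish this case. First, $1/\sqrt p\lesssim a/\sqrt{a^0}$ (using $pa^2\geq m$), or more crudely $1/\sqrt p\lesssim a$. Second, $r=\widetilde R(\theta_j,a)\leq |\theta_j+t\widehat a|+p+\kappa p\leq |\theta_j|+t\widehat a+\tfrac{\sqrt\ell}{a}+\tfrac{a^0+m}{a^2}$, by \eqref{equation_estimates_G_a_H_a}. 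Expanding the product produces the terms $t^3a^2/a^0$ (from $t\widehat a\cdot a$), $t^2a|\theta_j|$, $t^2\sqrt\ell(1+a)$ and $t^2(a+a^{-1})$. I expect only bookkeeping with powers of $a^0$ here.

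\textbf{Region $\mathcal{R}_0$ (main obstacle).} Fix $\theta$ and consider $a\mapsto \widetilde R(\theta,a)-r$ on $(0,A]$. It suffices to show that at every zero $a_*\leq A$ one has $\partial_a\widetilde R(\theta,a_*)\leq -\tfrac{3m}{4a_*^3}$. A $C^1$ function whose zeros are all strictly decreasing crossings has at most one zero, so this gives both uniqueness and \eqref{equation_R_lower_bound_in_R0}.

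At a zero, $F(a_*)\leq r$ because $\widetilde R\geq F$, and $F(a_*)\geq r-\hbar$ because $a_*\leq A$ and $F$ is decreasing. Hence $0\leq \widetilde R-F(a_*)\leq\hbar$, which means $H-1\leq \hbar/p$. By \eqref{equation_derivatives_G_a_H_a} this gives $|\partial_\theta R|\lesssim\sqrt{\hbar/p}$ and $\arcosh(\widetilde R/p-\kappa)\lesssim\sqrt{\hbar/p}$. Writing $\partial_a\widetilde R=t\partial_a\widehat a\,\partial_\theta R+\partial_aR$ and using the formula for $\partial_aR$ from Proposition \ref{proposition_writing_derivatives_R}, I would show that $\partial_aR=F'(a_*)+O\big((\tfrac{1}{a}+\tfrac{p}{a})\sqrt{\hbar/p}\big)$. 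Note that at $\theta+t\widehat a=0$ one has exactly $\partial_aR=\partial_ap+\partial_a(\kappa p)=F'$.

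The delicate part is making the errors $t\,\tfrac{m^2}{(a^0)^3}\sqrt{\hbar/p}$ and $(\tfrac1a+\tfrac pa)\sqrt{\hbar/p}$ at most $\tfrac14|F'|$, and making $|F'|\geq \tfrac{m}{a^3}$ hold with margin, uniformly in $r$ and $\ell$. This is exactly why $\hbar$ carries the factor $t^{-4}$ and the weight $r^2/(r+1+\sqrt\ell)^2$: since $r\approx F(a_*)\gtrsim \tfrac{1+\sqrt\ell}{a_*}$, the weight converts $\sqrt{\hbar}$ into powers of $a_*$ and $\ell$ that the decay of $F'$ in $a$ can absorb. The constant $C=C(m)$ is then chosen small, and $T_0$ large if needed, to close the inequality. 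I expect this uniform comparison, especially in the regimes $a_*\to\infty$ and $\ell\to\infty$, to require the most care.
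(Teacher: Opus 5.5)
Your treatment of $\mathcal{R}_1,\mathcal{R}_2$ matches the paper's: you bound $|\partial_\theta\widetilde R|=H_a'\gtrsim\sqrt{\hbar p}/r$, then use $p^{-1/2}\lesssim a$ and $r\lesssim|\theta_j|+t\widehat a+p+\kappa p$. In $\mathcal{R}_0$, reducing to ``every zero $a_*\le A$ is a strictly decreasing crossing'' is a cleaner route to uniqueness than the paper's, which reads ``at most one solution'' off the expansion of $G_a$ near $1$.

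The gap is in the quantitative step you defer, and the concrete form you commit to does not close. Your bound $\partial_aR=F'(a_*)+O\big((\frac1a+\frac pa)\sqrt{\hbar/p}\big)$ cannot be made $\le\frac14|F'(a_*)|$ uniformly. Take $\ell=0$ and $r\to0$. Then $a_*\sim 1/r\to\infty$, $p=m/a_*^2$ and $\hbar\sim Ct^{-4}a_*^{-2}$, so $\sqrt{\hbar/p}\sim\sqrt{C/m}\,t^{-2}$, while $|F'(a_*)|\sim a_*^{-2}$. The $\frac1{a_*}\sqrt{\hbar/p}$ part of your error is then $\sim\sqrt{C/m}\,t^{-2}a_*^{-1}$. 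This exceeds $|F'|$ by a factor $\sim\sqrt{C}\,t^{-2}a_*$, which no choice of $C(m)$ or $T_0$ controls. Bounds of the type \eqref{equation_estimate_first_derivatives_R} discard exactly the structure that is needed.

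What closes the argument, and what the paper does, is to keep the exact form of $\partial_a\widetilde R$. Set $x=|\theta+t\widehat a|/p$, $H=H_a(x)$ and $c=\kappa-\frac{1}{a^0p}\in[0,1]$, so that $H_a'=\sqrt{H^2-1}/(H+c)$. Then Proposition \ref{proposition_writing_derivatives_R} gives
\[
\partial_a\widetilde R=\frac{tm^2}{(a^0)^3}\sign(\theta+t\widehat a)H_a'(x)+\partial_ap\,\frac{1+cH}{H+c}+\Big|\partial_a\Big(\kappa p-\frac{1}{a^0}\Big)\Big|\arcosh(H)H_a'(x)+\partial_a(\kappa p).
\]
Each term is then handled as follows.
\begin{itemize}
\item The $\partial_ap$ term is nonpositive and can simply be dropped; this is why large $\ell$ is harmless.
\item The $\arcosh$ term has coefficient at most $\frac{3m^2}{a^0a^3}$, not $\frac1a$, multiplied by $\arcosh(H)H_a'\le 2(H-1)=2h/p$.
\item The first term is at most $\frac{tm^2}{(a^0)^3}\sqrt{2h/p}$.
\item The main term is $\partial_a(\kappa p)=-\frac{a^2+2m^2}{a^0a^3}\le-\frac{a^0}{a^3}$.
\end{itemize}
Every error is therefore a small multiple of $\frac{a^0}{a^3}$ as soon as $h/p\lesssim t^{-2}$. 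The weight in $\hbar$ guarantees this: at any zero, $r\le 2F(a_*)$ gives $\hbar/p\lesssim_m Ct^{-4}$, as you indicate. This yields $\partial_a\widetilde R(\theta,a_*)\le-\frac{3a_*^0}{4a_*^3}\le-\frac{3m}{4a_*^3}$ at every zero, and your crossing argument then completes the proof.
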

\begin{proof}
    First, recall that $\widetilde{R}$ satisfies
    \begin{equation*}
\widetilde{R}(\theta,a)\geq \frac{a^0+\sqrt{m^2+\ell a^2}}{a^2}=\kappa p +p =F(a).
    \end{equation*}
    Hence, for \eqref{equation_curve_R_r} to be satisfied, we need to ensure $r\geq F(a)$. Moreover, $F$ is continuous and decreasing, so we can define its inverse $F^{-1}$, which is also decreasing. This gives us a necessary condition on $a$ for \eqref{equation_curve_R_r}, given by 
    \begin{equation}
        a\geq F^{-1}(r).
    \end{equation}
    \underline{\textbf{Study of $\mathcal{R}_0$}}\\

    Let us begin by rewriting \eqref{equation_curve_R_r}. We have
    \begin{equation*}
        \widetilde{R}(\theta,a)=r\Leftrightarrow |\theta+t\widehat a|=pG_a\left(\frac{r}{p}-\kappa\right).
    \end{equation*}
    Let $x:=\frac{|\theta+t\widehat a|}{p}$ and $h:=r-F(a)$ be such that $a=F^{-1}(r-h)$. Since $F(a)=\kappa p +p$, the above equation rewrites 
    \begin{align*}
        x&=G_a\left(1+\frac{h}{p}\right).
    \end{align*}
    Then, using the asymptotic expansion of $G_a$ given in Proposition \ref{proposition_asymptotic_expansion_G_H}, we obtain
    \begin{equation}
        \label{equation_rewritten_domain_R_0}
        x-\sqrt{2}\left(1+\frac{m^2}{a^0\sqrt{m^2+\ell a^2}}\right)\sqrt{\frac{h}{p}}+O_{h\rightarrow0}\left(\frac{h}{p}\right)^{3/2}=0.
    \end{equation}
    Hence, for $\frac{h}{p}$ small enough, say $\frac{h}{p}\leq c$ where $c>0$ is a small constant, we know that there exists at most one solution to \eqref{equation_rewritten_domain_R_0}. We will denote it by $\aleph(\theta;t,r,\ell)$ and consider $a=\aleph(\theta;t,r,\ell)$. In that case, $x=\frac{|\theta+t\widehat a|}{p}$ satisfies $0\leq x\leq 4\sqrt{\frac{h}{p}}$. Now we recall that 
    \begin{equation*}
        \partial_a\widetilde{R}=t\frac{m^2}{(a^0)^3}(\partial_\theta R)(\theta+t\widehat a,a)+(\partial_a R)(\theta+t\widehat a,a),
    \end{equation*}
    with
    \begin{equation*}
        (\partial_\theta R)(\theta+t\widehat a,a)=\sign(\theta+t\widehat a)H_a'\left(\frac{|\theta+t\widehat a|}{p}\right)=\sign(\theta+t\widehat a)H_a'(x).
    \end{equation*}
    Similarly, recall that 
    \begin{equation*}
        \frac{\widetilde{R}(\theta,a)}{p}-\kappa= H_a(x),\qquad \frac{|\theta+t\widehat{a}|}{p}=G_a(H_a(x))=\sqrt{H_a(x)^2-1}+\left(\kappa-\frac{1}{a^0 p}\right)\arcosh (H_a(x)).
    \end{equation*}
    Recalling the expression of $\partial_aR$ from Proposition \ref{proposition_writing_derivatives_R}, we use \eqref{equation_proof_equality_R_theta_H_a_prime} and Lemma \ref{lemma_derivatives_of_p_kappa} to derive
    \begin{align*}
        (\partial_aR)(\theta+t\widehat a,a)=&~\partial_a p \left(H_a(x)-\frac{|\theta+t\widehat{a}|}{p}H_a'(x)\right)-p\partial_a\left(\kappa-\frac{1}{a^0 p}\right)\arcosh(H_a(x))H_a'(x)+\partial_a(\kappa p)\\
        =&-\left(\partial_a p \sqrt{H_a(x)^2-1}+\partial_a \left(\kappa p-\frac{1}{a^0}\right)\arcosh(H_a(x))\right)H_a'(x)+\partial_a p H_a(x)+\partial_a(\kappa p)\\
        =&\left[\frac{\ell a^2+2m^2 }{a^3\sqrt{m^2+\ell a^2}}\sqrt{H_a^2\left(x\right)-1}+\frac{m^2(3a^2+2m^2)}{(a^0)^3a^3}\arcosh\left(H_a\left(x\right)\right)\right]H_a'\left(x\right)\\
        &-\frac{\ell a^2+2m^2}{a^3\sqrt{m^2+\ell a^2}}H_a\left(x\right)-\frac{a^2+2m^2}{a^0a^3}.
\end{align*}
    Hence,
    \begin{align}
        \partial_a \widetilde{R}=&\left[t\frac{m^2}{(a^0)^3}\sign(\theta+t\widehat a)+\frac{m^2(3a^2+2m^2)}{(a^0)^3a^3}\arcosh\left(H_a(x)\right)\right]H_a'(x)\\
        &+\frac{\ell a^2+2m^2}{a^3\sqrt{m^2+\ell a^2}}\left(\sqrt{H_a^2(x)-1}H_a'(x)-H_a(x)\right)-\frac{a^2+2m^2}{a^0a^3}.\nonumber
    \end{align}
    Moreover, since $H_a(x)\leq x+1$, we have
    \begin{equation*}
        H_a'(x)=\frac{H_a(x)}{H_a(x)+\kappa-\frac{1}{a^0p}}\sqrt{1-\frac{1}{H_a^2(x)}}\leq \sqrt{x}\frac{\sqrt{2+x}}{1+x}\leq \sqrt{2x},\qquad \arcosh(H_a(x))\leq \sqrt{H_a(x)^2-1}\leq \sqrt{x}\sqrt{2+x}.
    \end{equation*}
    Consequently, if we choose $x$ small enough, the positive terms in the above equations will be dominated by the negative terms, e.g. $-\frac{a^2+2m^2}{a^0a^3}$. More precisely, we find
    \begin{align*}
        \partial_a\widetilde{R}&\leq \left[\frac{tm^2}{a^3}+\frac{4m}{a^3}\arcosh(H_a(x))\right]H_a'(x)+\frac{\ell a^2+2m}{ma^3}\left(\sqrt{H_a^2(x)-1}H_a'(x)-H_a(x)\right)-\frac{m}{a^3}\\
        &\leq \frac{m}{a^3}\left(3\sqrt{x}\sqrt{2+x}+tm\right)\sqrt{2x}+\frac{\ell a^2 +2m}{ma^3}H_a(x)\left(\sqrt{2x}-1\right)-\frac{m}{a^3}.
    \end{align*}
    Hence, if $x$ satisfies 
    \begin{equation}
        \label{equation_x_needs_to_satisfy}
        (3\sqrt{x}\sqrt{2+x}+tm)\sqrt{2x}\leq \frac{1}{4},\qquad x\leq \frac{1}{2},
    \end{equation}
    we obtain the desired estimate \eqref{equation_R_lower_bound_in_R0}. Note that \eqref{equation_x_needs_to_satisfy} holds for 
    \begin{equation*}
        x\leq \frac{1}{32}\frac{1}{(2+tm)^2}.
    \end{equation*}
    Consequently, since $x\leq 4\sqrt{\frac{h}{p}}$, \eqref{equation_x_needs_to_satisfy} is satisfied for 
    \begin{equation*}
        \frac{h}{p}\leq \frac{1}{2^{14}}\frac{1}{(2+tm)^4}.
    \end{equation*}
    However, as $p\geq \frac{m}{a^2}$ and $a=F^{-1}(r-h)$,
    \begin{equation*}
        \frac{h}{p}\leq \frac{(F^{-1}(r-h))^2h}{m},
    \end{equation*}
    and, since $F(a)=\frac{a^0+\sqrt{m^2+\ell a^2}}{a^2}$, for all $0<\varepsilon\leq 1$,
    \begin{align*}
        \frac{(F^{-1}(r-h))^2h}{m}\leq \varepsilon & \Longleftrightarrow r\geq h +\frac{h}{\varepsilon m}\left(\sqrt{m^2+\frac{\varepsilon m}{h}}+\sqrt{m^2+\ell\frac{\varepsilon m}{h}}\right)\\
        &\Longleftrightarrow r\geq \frac{1}{m}\sqrt{\frac{hm}{\varepsilon}}\left(\varepsilon\sqrt{\frac{hm}{\varepsilon}}+\sqrt{1+\frac{hm}{\varepsilon}}+\sqrt{\ell+\frac{hm}{\varepsilon}}\right).
    \end{align*}
    Hence, if $\varepsilon\leq 1$ and $h$ satisfies 
    \begin{equation*}
        h\leq \frac{m\varepsilon r^2}{(3mr+1+\sqrt{\ell})^2},
    \end{equation*}
    we obtain
    \begin{equation*}
        \varepsilon \sqrt{\frac{h m}{\varepsilon}}\leq mr,\qquad \sqrt{1+\frac{hm}{\varepsilon}}\leq \sqrt{1+m^2r^2}\leq 1+ mr,\qquad \sqrt{\ell+\frac{hm}{\varepsilon}}\leq \sqrt{\ell+m^2r^2}\leq \sqrt{\ell} + mr.
    \end{equation*}
    Consequently,
    \begin{align*}
        \frac{1}{m}\sqrt{\frac{hm}{\varepsilon}}\left(\varepsilon\sqrt{\frac{hm}{\varepsilon}}+\sqrt{1+\frac{hm}{\varepsilon}}+\sqrt{\ell+\frac{hm}{\varepsilon}}\right)&\leq \frac{r}{3mr +1+\sqrt{\ell}}(3mr+1+\sqrt{\ell})\leq r,
    \end{align*}
    and then
    \begin{equation*}
        \frac{h}{p}\leq \varepsilon.
    \end{equation*}
    In conclusion, if $h$ satisfies
    \begin{equation*}
        h\leq \frac{C}{t^4}\frac{r^2}{(r+1+\sqrt{\ell})^2}=:\hbar,
    \end{equation*}
    we obtain \eqref{equation_R_lower_bound_in_R0}. Here, $C$ is a small constant depending only on $m$ and $c$ defined above.

    \underline{\textbf{Study of $\mathcal{R}_1$ and $\mathcal{R}_2$}}\\
    
    In the following, let $j=1,2$. We know that \eqref{equation_curve_R_r} is equivalent to
    \begin{equation*}
        |\theta+t\widehat a|=pG_a\left(\frac{r}{p}-\kappa\right).
    \end{equation*}
    By definition of $\mathcal{R}_1$ and $\mathcal{R}_2$, in each domain there is a unique solution $\theta_j$ to \eqref{equation_curve_R_r} given by
    \begin{equation*}
        \theta_1:=-t\widehat a -pG_a\left(\frac{r}{p}-\kappa\right),\qquad \theta_2:=-t\widehat a +pG_a\left(\frac{r}{p}-\kappa\right).
    \end{equation*}
    Then, since $r=\widetilde{R}(\theta_j,a)$ and $F(a)=\kappa p +p$, by Proposition \ref{proposition_asymptotic_expansion_G_H}, we have
    \begin{equation*}
        \partial_\theta \widetilde{R}(\theta_j,a)=\frac{\sqrt{\left(\frac{r}{p}-\kappa\right)^2-1}}{\frac{r}{p}-\frac{1}{a^0 p}}=\frac{\sqrt{r-F(a)}\sqrt{r-\kappa p +p}}{r- \frac{1}{a^0}}.
    \end{equation*}
    However, in $\mathcal{R}_j$, we know that $a>F^{-1}(r-\hbar)$, so that $r-F(a)\geq \hbar$. Recalling \eqref{equation_expression_hbar_A} along with the lower bound $\widetilde{R}(\theta_j,a)-\kappa p\geq p$, we derive 
    \begin{equation*}
        \frac{1}{|\partial_\theta\widetilde{R}(\theta_j,a)|}\lesssim \frac{r}{\sqrt{\hbar p}}\lesssim t^2a\big(r+1+\sqrt{\ell}\big).
    \end{equation*}
    However, since $H_a(x)\leq x+1$, we find
    \begin{equation*}
        r=\widetilde{R}(\theta_j,a)=pH_a\left(\frac{|\theta_j+t\widehat{a}|}{p}\right)+\kappa p \lesssim |\theta_j|+t\widehat a+a^{-2}+\sqrt{\ell}a^{-1}.
    \end{equation*}
    This gives us the result \eqref{equation_lower_bound_in_R_j}.
\end{proof}

\subsection{The potential and its derivatives}
\label{section_potential_derivatives}

Let us begin by computing the derivatives of the potential $\widetilde{\Psi}$. For any $\alpha,\beta\in\{\theta,a\}$, we have
\begin{equation}
    \label{equation_def_derivatives_psi}
    \partial_\alpha\widetilde{\Psi}=-\partial_\alpha\widetilde{R}\frac{M(t,\widetilde{R})}{\widetilde{R}^2},\qquad \partial_\alpha\partial_\beta\widetilde{\Psi}=-\frac{M(t,\widetilde{R})}{\widetilde{R}^2}\left(\partial_\alpha\partial_\beta \widetilde{R}-2\frac{\partial_\alpha\widetilde{R}\partial_\beta\widetilde{R}}{\widetilde{R}}\right)-\rho(t,\widetilde{R})\frac{\partial_\alpha\widetilde{R}\partial_\beta\widetilde{R}}{\widetilde{R}^2}.
\end{equation}
In order to give proper estimates of these derivatives, we first derive upper bounds for $M$ and $\rho$.

\subsubsection{Estimates for the enclosed mass and the charge density}
\begin{lemma}
    \label{lemma_estimate_total_mass}
    Let $M_I, M_O$ be defined as 
    \begin{equation*}
        M_I(t,r):=4\pi^2\iiint \1_{\mathcal{B}_t}\1_{\{\widetilde{R}(\theta,a)\leq r\}}\gamma(t,\theta,a,\ell)\mathrm{d}\theta\mathrm{d}a\mathrm{d}\ell,\quad M_O(t,r):=4\pi^2\iiint \1_{\mathcal{B}_t^c}\1_{\{\widetilde{R}(\theta,a)\leq r\}}\gamma(t,\theta,a,\ell)\mathrm{d}\theta\mathrm{d}a\mathrm{d}\ell.
    \end{equation*}
    For all $t\geq T_0, r>0$, and  $k,\sigma \geq 0$, the enclosed mass $M$ satisfies $M(t,r)=M_I(t,r)+M_O(t,r)$ and 
    \begin{equation}
    \notag
    \begin{array}{rl}
        \displaystyle \frac{M_I(t,r)}{r^k} \lesssim & t^{-k} \left(\|a^{-k}\gamma\|_{L^1} +\| \gamma\|_{L^1} \right),\vspace{3pt}\\
         \displaystyle \frac{M_O(t,r)}{r^k} \lesssim & t^{-k-\sigma}\left(\|a^{-2k-4\sigma}\gamma\|_{L^1}+\|a^{4k+3\sigma}\gamma\|_{L^1}+\|a^{k}\ell^{2k+2\sigma}\gamma\|_{L^1}+\||\theta|^{k+\sigma}(a^{k-\sigma}+a^k)\gamma\|_{L^1}\right).
    \end{array}
    \end{equation}
    In the case $r=\widetilde{R}(\theta,a)$, we can obtain a more specific estimate.
    \begin{equation}
    \notag
    \begin{array}{r@{}l}
        \displaystyle \frac{M(t,\widetilde{R})}{{\widetilde{R}}^k(\widetilde{R}-\frac{1}{a^0})} \lesssim & ~t^{-k} \left(\|a^{-k}\gamma\|_{L^1} +\| a^2\gamma\|_{L^1} \right)\vspace{3pt}\\
         \displaystyle & +~ t^{-k-\sigma}\left(\|(a^{-2k-4\sigma}+a^{4k+3\sigma+2})\gamma\|_{L^1}+\|\ell^{2k+2\sigma}(a^k+a^{k+2})\gamma\|_{L^1}+\||\theta|^{k+\sigma}(a^{k-\sigma}+a^{k+2})\gamma\|_{L^1}\right).
    \end{array}
    \end{equation}
\end{lemma}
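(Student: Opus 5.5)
The decomposition $M=M_I+M_O$ is immediate from $1=\1_{\mathcal{B}_t}+\1_{\mathcal{B}_t^c}$. The plan is to convert the constraint $\widetilde{R}(\theta,a)\leq r$ into a pointwise bound $\1_{\{\widetilde{R}\leq r\}}\leq (r/\widetilde{R})^k$. This holds for any $k\geq0$. After the conversion, the only task is a lower bound on $\widetilde{R}$, either in terms of $t$ (inside the bulk) or not at all (outside the bulk).

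\textbf{Inner part.} On $\mathcal{B}_t$, with $t\geq T_0$, \eqref{equation_lower_bound_R_tilde_bulk} gives $\widetilde{R}\geq t\widehat a/4$. Hence
\begin{equation*}
\frac{\1_{\mathcal{B}_t}\1_{\{\widetilde{R}\leq r\}}}{r^k}\leq \1_{\mathcal{B}_t}\widetilde{R}^{-k}\lesssim t^{-k}\widehat a^{-k}=t^{-k}\Big(\frac{a^0}{a}\Big)^k\lesssim t^{-k}(1+a^{-k}).
\end{equation*}
Integrating against $\gamma\geq0$ then yields the bound on $M_I/r^k$.

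\textbf{Outer part.} Here I use only \eqref{equation_lower_bound_R}, namely $\widetilde{R}\geq F(a)\geq a^{-2}(a^0+\sqrt{\ell}a)$. This gives $\widetilde{R}^{-k}\lesssim a^{2k}(a^0)^{-k}\lesssim \min(a^{2k},a^{k})$. I then multiply by the complement estimate
\begin{equation*}
\1_{\mathcal{B}_t^c}\lesssim t^{-(k+\sigma)}\big(a^{3(k+\sigma)}+a^{-4(k+\sigma)}+|\theta|^{k+\sigma}(1+a^{-(k+\sigma)})+\ell^{2(k+\sigma)}\big),
\end{equation*}
which is \eqref{equation_complement_bulk_idea} with exponent $k+\sigma$. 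In each product term I choose whichever of $a^{2k}$ or $a^{k}$ (or their combination with $a^0$) produces the powers listed in the statement. For instance, $a^{-4(k+\sigma)}\cdot a^{2k}=a^{-2k-4\sigma}$, and $a^{3(k+\sigma)}\cdot a^{k}=a^{4k+3\sigma}$ after bounding $(a^0)^{-k}\leq$ const.

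\textbf{Refined estimate at $r=\widetilde{R}$.} The extra factor $(\widetilde{R}-1/a^0)^{-1}$ is handled by \eqref{equation_lower_bound_R_a0_proof}, which gives $(\widetilde{R}-1/a^0)^{-1}\lesssim a^0a^2/(m^2+a^0\sqrt{m^2+\ell a^2})\lesssim a^2$. The point is that this factor depends on the evaluation point $(\theta,a)$ and not on the integration variables $(\vartheta,\alpha)$. So I cannot simply absorb $a^2$ into the norm. Instead I split according to where the evaluation point lies. If it lies in the bulk, then $\widetilde{R}(\theta,a)\gtrsim t\widehat a$ and $a\leq t^{1/3}$, so $\widetilde{R}-1/a^0\gtrsim \widetilde{R}$. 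In that case the factor $1/(\widetilde{R}-1/a^0)$ merely contributes an extra $\widetilde{R}^{-1}$, and I rerun the previous argument with $k+1$ before discarding the surplus decay. If it lies outside the bulk, I bound $(\widetilde{R}-1/a^0)^{-1}$ by comparing with the integration variable: on $\{\widetilde{R}(\vartheta,\alpha)\leq\widetilde{R}(\theta,a)\}$ one has $\widetilde{R}(\theta,a)\geq F(\alpha)$, so $(\widetilde{R}-1/a^0)^{-1}\lesssim \widetilde{R}^{-1}$ when $\widetilde{R}\gtrsim 1$, and the factor is transferred to $\alpha^2$ otherwise. This transfer explains the $a^{2}$ and $a^{k+2}$ weights inside the norms of the refined estimate.

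The main obstacle is this transfer step, namely making the case distinction clean enough that the factor always lands on the integration variable. The remaining difficulty is bookkeeping of the exponents. I expect the exponents to be generous rather than sharp, so crude bounds such as $a^0\lesssim 1+a$ will suffice throughout.
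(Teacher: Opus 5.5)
Your bounds for $M_I$ and $M_O$ are correct and follow the paper's argument. You use $r^{-k}\le\widetilde R^{-k}$ on the domain of integration, then $\widetilde R\ge t\widehat a/4$ in the bulk, and outside it $\widetilde R^{-k}\lesssim\min(a^{2k},a^k)$ times the complement bound.

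The gap is in the refined estimate at $r=\widetilde R(\theta,a)$, exactly at the step you call the main obstacle. Your case split on the evaluation point only settles the easy regime. Whenever $\widetilde R(\theta,a)\ge 2/m$, in particular in the bulk, one has $\widetilde R-1/a^0\ge\widetilde R/2$, so the factor is $O(1)$ and the $k$-estimate applies as it stands. Do not literally rerun the $M_O$ bound with $k+1$: it produces moments $\ell^{2k+2}$ and $|\theta|^{k+1}$, which exceed the allowed $\ell^{2k+2\sigma}$ and $|\theta|^{k+\sigma}$ when $\sigma<1$.

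In the remaining regime $\widetilde R(\theta,a)<2/m$, the factor can be as large as $a^2/m$ with $a$ unbounded. The only information you invoke, $F(\alpha)\le\widetilde R(\theta,a)$, yields
$(\widetilde R(\theta,a)-1/a^0)^{-1}\lesssim a^0/\widetilde R(\theta,a)\le a^0\alpha^2/\alpha^0$.
This bounds the factor by the integration variable only if $a^0\lesssim\alpha^0$, which nothing in your argument provides. So ``transferred to $\alpha^2$'' is not justified as stated.

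The missing idea is to split the \emph{integration} variables, not the evaluation point, according to $\alpha^0\le a^0/2$ or $\alpha^0\ge a^0/2$.

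In the first case, $\widetilde R(\theta,a)\ge\widetilde R(\vartheta,\alpha)\ge\alpha^0/\alpha^2\ge 1/\alpha^0\ge 2/a^0$. Hence $(\widetilde R(\theta,a)-1/a^0)^{-1}\le 2\alpha^0$.

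In the second case, \eqref{equation_lower_bound_R_a0_proof} gives
$(\widetilde R(\theta,a)-1/a^0)^{-1}\lesssim a^0/\widetilde R(\theta,a)\le 2\alpha^0/\widetilde R(\vartheta,\alpha)\le 2\alpha^2$.

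Thus on the whole set $\{\widetilde R(\vartheta,\alpha)\le\widetilde R(\theta,a)\}$ the factor is $\lesssim 1+\alpha^2$, uniformly in $(\theta,a)$. No case distinction on the evaluation point is needed. Your $M_I$/$M_O$ argument, applied with the weight $(1+\alpha^2)\widetilde R(\vartheta,\alpha)^{-k}$, then gives exactly the $a^2$, $a^{k+2}$ and $a^{4k+3\sigma+2}$ weights of the statement.
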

\begin{proof}
    We first study $M_I$. We know that inside the bulk $\frac{t\widehat a}{2}\leq\widetilde{R}\leq 2t\widehat a$. Consequently, we obtain
    \begin{align*}
        M_I(t,r)r^{-k}&\lesssim \iiint \1_{\mathcal{B}_t}\widetilde{R}(\theta,a)^{-k}\gamma(t,\theta,a,\ell) \mathrm{d}\theta\mathrm{d}a\mathrm{d}\ell\lesssim t^{-k}\iiint a^{-k}(a^0)^k\gamma(t,\theta,a,\ell) \mathrm{d}\theta\mathrm{d}a\mathrm{d}\ell\\
        &\lesssim t^{-k}(\|a^{-k}\gamma\|_{L^1}+\|\gamma\|_{L^1}).
    \end{align*}
    To estimate $M_O$, we observe from \eqref{equation_lower_bound_R} that, for any $k, \sigma\geq 0$,
    \begin{align}
        \label{equation_R_tilde_in_complement_bulk_proof}
        \widetilde{R}^{-k}\1_{\mathcal{B}_t^c}&\lesssim a^{2k}(a^0)^{-k}\1_{\{t\leq (a^{-1}a^0)^{4}\}}+a^k\1_{\{t\leq a^{3}\}}+a^k\1_{\{t\leq\ell^2\}}+a^{2k}(a^0)^{-k}\1_{\{t\leq 2|\theta|a^{-1}a^0\}}\\
       \notag &\lesssim t^{-k-\sigma}\left(a^{-2k-4\sigma}(a^0)^{3k+4\sigma}+a^{4k+3\sigma}+a^k\ell^{2k+2\sigma}+a^{k-\sigma}(a^0)^\sigma|\theta|^{k+\sigma}\right)\\
       \notag &\lesssim t^{-k-\sigma}\left(a^{-2k-4\sigma} +a^{4k+3\sigma}+\ell^{2k+2\sigma}a^k+|\theta|^{k+\sigma}(a^{k-\sigma}+a^k)\right).
    \end{align}
    Hence, 
    \begin{align*}
        M_O(t,r)r^{-k}&\lesssim \iiint\1_{\{\widetilde{R}(\theta,a)\leq r\}}\widetilde{R}^{-k}\1_{\mathcal{B}_t^c}\gamma(t,\theta,a,\ell)\mathrm{d}\theta\mathrm{d}a\mathrm{d}\ell\\
        &\lesssim  t^{-k-\sigma}\iiint\left(a^{-2k-4\sigma} +a^{4k+3\sigma}+\ell^{2k+2\sigma}a^k+|\theta|^{k+\sigma}(a^{k-\sigma}+a^k)\right)\gamma(t,\theta,a,\ell)\mathrm{d}\theta\mathrm{d}a\mathrm{d}\ell\\
        &\lesssim  t^{-k-\sigma}\left(\|a^{-2k-4\sigma}\gamma\|_{L^1}+\|a^{4k+3\sigma}\gamma\|_{L^1}+\|a^{k}\ell^{2k+2\sigma}\gamma\|_{L^1}+\||\theta|^{k+\sigma}(a^{k-\sigma}+a^{k})\gamma\|_{L^1}\right).
    \end{align*}
    In order to derive the last estimate, we split the integral $M(t,\widetilde{R})$ into two parts by writing
    \begin{align*}
        \frac{M(t,\widetilde{R}(\theta,a))}{\widetilde{R}(\theta,a)-\frac{1}{a^0}}=&\frac{1}{\widetilde{R}(\theta,a)-\frac{1}{a^0}}\iiint\1_{\{\widetilde{R}(\theta,a)\geq\widetilde{R}(\vartheta,\alpha)\}}\1_{\{\alpha^0\leq\frac{a^0}{2}\}}\gamma(t,\vartheta,\alpha,\ell)\mathrm{d}\vartheta\mathrm{d}\alpha\mathrm{d}\ell\\
        &+\frac{1}{\widetilde{R}(\theta,a)-\frac{1}{a^0}}\iiint\1_{\{\widetilde{R}(\theta,a)\geq\widetilde{R}(\vartheta,\alpha)\}}\1_{\{\alpha^0\geq\frac{a^0}{2}\}}\gamma(t,\vartheta,\alpha,\ell)\mathrm{d}\vartheta\mathrm{d}\alpha\mathrm{d}\ell.
    \end{align*}
    In the first integral, since $\widetilde{R}(\vartheta,\alpha)\geq \frac{1}{\alpha^0}$, we have $\widetilde{R}(\theta,a)-\frac{1}{a^0}\geq  \frac{1}{2\alpha^0}> 0$. In the second integral, we use $\frac{1}{\widetilde{R}(\theta,a)-\frac{1}{a^0}}=\frac{a^0}{\widetilde{R}}\frac{1}{a^0-\frac{1}{\widetilde{R}(\theta,a)}}$ and \eqref{equation_lower_bound_R_a0_proof}. We hence have an upper bound given by
    \begin{equation*}
        \frac{M(t,\widetilde{R}(\theta,a))}{\widetilde{R}(\theta,a)-\frac{1}{a^0}}\lesssim (1+\widetilde{R}(\theta,a)^{-1})\iiint\alpha^0\1_{\{\widetilde{R}(\theta,a)\geq\widetilde{R}(\vartheta,\alpha)\}}\gamma(t,\vartheta,\alpha,\ell)\mathrm{d}\vartheta\mathrm{d}\alpha\mathrm{d}\ell.
    \end{equation*}
    Using the same arguments as in the estimate of $\frac{M(t,r)}{r^{-k}}$, we obtain the result.
\end{proof}
We can now give a similar Lemma for the charge density $\rho$.

\begin{lemma}
    \label{lemma_estimate_charge_density}
    There exist $\rho_I,\rho_O$ such that the charge density $\rho$ satisfies 
    \begin{equation*}
         \rho(t,r)=\rho_I(t,r)+\rho_O(t,r).
    \end{equation*}
    Moreover, consider $k,\sigma\geq 0$ such that
    \begin{equation}
        \label{equation_condition_k_sigma}
        k\leq 2,\qquad \sigma \leq 1,\qquad 1\leq k+\sigma \leq \frac{5}{2}.
    \end{equation}
    Then, for all $t\geq T_0, r>0$, the following estimates hold for $\rho_I$ and $\rho_O$
    \begin{equation}
    \label{equation_estimate_rho}
    \begin{array}{r@{}l}
        \displaystyle \frac{\rho_I(t,r)}{r^k} &\lesssim t^{-k-1}\big(\|(a^{-4}+a^6)\gamma\|_{L^1}+\|(a^{-2}+a^3)\partial_a\gamma\|_{L^1}\big),\vspace{3pt}\\
         \displaystyle \frac{\rho_O(t,r)}{r^k} &\lesssim t^{-k-\sigma-1}\mathcal{M},
    \end{array}
    \end{equation}
    where 
    \begin{equation}
        \label{equation_definition_mathcal_M}
        \mathcal{M}:=\|(a^{-24}+a^{24}+|\theta|^{8}+\ell^{16})(\gamma+|\gamma_a|+|\gamma_\theta|)\|_{L^1},
    \end{equation}
    and
    \begin{equation}
        \label{equation_writing_derivatives_gamma}
        \gamma_\theta :=\partial_\theta \gamma,\qquad \gamma_a:=\partial_a\gamma.
    \end{equation}
\end{lemma}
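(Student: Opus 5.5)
We define $\rho_I$ and $\rho_O$ by inserting $\1_{\mathcal{B}_t}$ and $\1_{\mathcal{B}_t^c}$ into \eqref{equation_definition_rho}. Since $\delta(\widetilde R-r)$ cannot be integrated against $\gamma$ without a change of variables, the main tool is Lemma \ref{lemma_splitting_different_R_j}. For fixed $(t,r,\ell)$ we split the $(\theta,a)$-integral over $\mathcal{R}_0,\mathcal{R}_1,\mathcal{R}_2$ and resolve the delta in each region. In $\mathcal{R}_0$ we integrate out $a$, which produces $\int \gamma(\theta,\aleph)/|\partial_a\widetilde R(\theta,\aleph)|\,\mathrm d\theta$. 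In $\mathcal{R}_j$ ($j=1,2$) we integrate out $\theta$, which gives $\int_{a>A}\gamma(\theta_j(a),a)/|\partial_\theta\widetilde R(\theta_j,a)|\,\mathrm da$.

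\textbf{Bulk part.} On $\mathcal{B}_t$ we are in the region $\theta>-t\widehat a$ with $\partial_a\widetilde R\geq \frac34 t m^2(a^0)^{-3}$ by \eqref{equation_lower_bound_R_tilde_bulk}. Here it is better not to use the bound \eqref{equation_lower_bound_in_R_j}, which grows in $t$. Instead, for each $\theta$ we change variables $a\mapsto\widetilde R$. Since $r\sim t\widehat a$ in the bulk, $r^{-k}\lesssim t^{-k}(a^0/a)^k$, and the Jacobian gives $1/|\partial_a\widetilde R|\lesssim t^{-1}(a^0)^3$. This leaves $\int \sup_a\big(\1_{\mathcal B_t}(a^0)^{3+k}a^{-k}\gamma(\theta,a,\ell)\big)\,\mathrm d\theta\,\mathrm d\ell$, which is controlled by the one-dimensional Sobolev embedding $W^{1,1}_a\subset L^\infty_a$. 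This is exactly why $\partial_a\gamma$ appears with weights $a^{-2}+a^3$, and why $\gamma$ carries the weights $a^{-4}+a^6$ (from differentiating the weight, with $k\le2$). The bulk cutoff in $a$ and $\theta$ creates boundary terms; we either use a smooth version of $\1_{\mathcal B_t}$ or note that the sup bound along each $\theta$-fiber does not need differentiation of the indicator. The result is the first line of \eqref{equation_estimate_rho}.

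\textbf{Outside the bulk.} Here we trade decay using \eqref{equation_complement_bulk_idea} to the power $k+\sigma+1$ together with the lower bound $\widetilde R\ge F(a)$ to control $r^{-k}$ as in \eqref{equation_R_tilde_in_complement_bulk_proof}. The Jacobians are then the bad factors.
\begin{itemize}
\item In $\mathcal R_0$: $1/|\partial_a\widetilde R|\lesssim \aleph^3$ by \eqref{equation_R_lower_bound_in_R0}, which costs only an $a^3$ moment, and again a sup in $a$ along $\theta$-fibers is bounded by $\|\cdot\|_{W^{1,1}_a}$.
\item In $\mathcal R_j$: \eqref{equation_lower_bound_in_R_j} costs up to $t^3$ times polynomial weights in $a,a^{-1},|\theta|,\sqrt\ell$. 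We control $\sup_\theta$ of the integrand by $W^{1,1}_\theta$, which brings in $\gamma_\theta$, and then integrate in $a,\ell$.
\end{itemize}
Removing the $t^3$ loss while still gaining $t^{-k-\sigma-1}$ requires a total power of about $k+\sigma+4\le 6.5$ extracted from $\1_{\mathcal B_t^c}$. Via \eqref{equation_complement_bulk_idea} this needs moments like $a^{-4\cdot 6.5}$, $a^{3\cdot6.5}$, $|\theta|^{6.5}$ and $\ell^{13}$. Combined with the Jacobian weights, these fit within $\mathcal M$ in \eqref{equation_definition_mathcal_M}, which explains the constraints \eqref{equation_condition_k_sigma}.

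\textbf{Main obstacle.} The delicate step is the $\mathcal R_j$ contribution, both outside the bulk and in the thin part of the bulk near $\theta\approx -t\widehat a$. There the Jacobian blows up like $t^3$, and the threshold $A$ depends on $r$ through $\hbar$. We will have to check that the weights $r^{-k}$ and the bound on $r$ in terms of $|\theta_j|+t\widehat a+a^{-2}+\sqrt\ell a^{-1}$ close with the available moments. We would first attempt a crude bound that absorbs all polynomial factors into $\mathcal M$, and adjust the choice of exponents only if that fails.
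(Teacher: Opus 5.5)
Your argument is correct. It differs from the paper's mainly in how $\rho$ is decomposed.

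\textbf{The paper's route.} The paper does not cut at the bulk first. It introduces the smooth cutoff $\chi^1=\chi\big(\tfrac{(a^0)^3}{tm^2}\partial_a\widetilde R\big)$, with $\chi^2=1-\chi^1$. On the $\chi^1$ piece it writes $\delta(\widetilde R-r)=-\partial_a\1_{\{\widetilde R\le r\}}/\partial_a\widetilde R$ and integrates by parts in $a$, which gives $\rho^1=4\pi^2\iiint\partial_a\big(\chi^1\gamma/\partial_a\widetilde R\big)\1_{\{\widetilde R\le r\}}\,\mathrm{d}\theta\,\mathrm{d}a\,\mathrm{d}\ell$. Only after this does it insert $\1_{\mathcal B_t}$ and $\1_{\mathcal B_t^c}$, so the bulk indicator is never differentiated. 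This yields $\rho_I:=\rho^1_I$ plus an extra term $\rho^1_O$. The $\chi^2$ piece vanishes on $\mathcal B_t$ by \eqref{equation_lower_bound_R_tilde_bulk}. Off the bulk it is handled with Lemma \ref{lemma_splitting_different_R_j}, using only $|\chi^2|\le 1$. So your outside-the-bulk step is exactly the paper's $\rho^2_O$ estimate with the cutoff dropped.

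\textbf{What each route buys.} Your coarea change of variables with $W^{1,1}_a\subset L^\infty_a$ replaces the paper's integration by parts. It needs no bounds on $\partial_a^2\widetilde R$ and produces no $\rho^1_O$ term. It also gives a better bulk bound, $t^{-k-1}\big(\|(a^{-k}+a^3)\gamma_a\|_{L^1}+\|(a^{-k-1}+a^2)\gamma\|_{L^1}\big)$, valid for every $k\ge 0$. In particular, the $a^{-4}+a^6$ weights in the statement come from the $\partial_a^2\widetilde R$ term in the paper's integration by parts, not from differentiating your weight, so your explanation of them is off. The paper's integrated formula has the advantage of requiring no root counting. On your side you should state explicitly why there is at most one root in the bulk: for fixed $(\theta,\ell)$, the constraints $\widehat a\ge t^{-1/4}$, $a\le t^{1/3}$, $|\theta|\le t\widehat a/2$ are each monotone in $a$. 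Hence the fiber of $\mathcal B_t$ is an interval on which $\partial_a\widetilde R>0$.

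\textbf{Two small corrections.}
\begin{enumerate}
\item There is no ``thin part of the bulk near $\theta\approx -t\widehat a$.'' On $\mathcal B_t$ one has $\theta+t\widehat a\ge t\widehat a/2$, so your bulk step is complete without any $\mathcal R_j$ input.
\item In the $\mathcal R_j$ bookkeeping, applying \eqref{equation_complement_bulk_idea} to the full power $k+\sigma+4$ puts you at the edge: the weight $a^{-26}$ is only brought back to $a^{-24}$ by the factor $a^2/a^0$. It is cleaner to use \eqref{equation_R_tilde_in_complement_bulk_proof} with $\sigma$ replaced by $\sigma+4$ (respectively $\sigma+3$ for the $t^2$ terms). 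Combined with the factor $a^2/a^0$ in \eqref{equation_lower_bound_in_R_j}, the extreme pure weights are then $a^{-21}$ and $a^{45/2}$. Mixed weights such as $\ell^{13}a^3$ or $|\theta|^{13/2}(a^{-3}+a^{3})$ are absorbed into $\mathcal M$ by Young's inequality, so the estimate closes.
\end{enumerate}
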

\begin{remark}
    For simplicity, we restrict ourselves to a subset of indices $k, \sigma$. However, in the proof of Lemma \ref{lemma_estimate_charge_density} we show that similar estimates hold for any $k,\sigma\geq 0$, with the right-hand side of \eqref{equation_estimate_rho} containing moments of $\gamma,\gamma_\theta,\gamma_a$ that depend on $k,\sigma$. Notice that if $k+\sigma < 1$, one obtains moments of $\gamma$ with negative exponents in $|\theta|$ and $\ell$. Therefore, in condition \eqref{equation_condition_k_sigma}, we enforce $k+\sigma \geq 1$ to prevent such issues.

\end{remark}
\begin{proof}
    Let $k,\sigma\geq 0$ be such that \eqref{equation_condition_k_sigma} holds. We begin by introducing $\chi$ a smooth non-negative function that vanishes on $\{x\leq \frac{1}{4}\}$ and is equal to $1$ on $\{x\geq \frac{1}{2}\}$. Let $\chi^1,\chi^2$ be defined as
    \begin{equation*}
        \chi^1(t,\theta,a,\ell):=\chi\left(\frac{(a^0)^3}{tm^2}\partial_a\widetilde{R}(\theta,a)\right),\qquad \chi^2(t,\theta,a,\ell):=1-\chi_1(t,\theta,a,\ell).
    \end{equation*}
    Then, for $j=1,2$ let
    \begin{equation*}
        M^j(t,r):=4\pi^2\iiint \1_{\{\widetilde{R}(\theta,a)\leq r\}}\chi^j(t,\theta,a,\ell)\gamma(t,\theta,a,\ell)\mathrm{d}\theta\mathrm{d}a\mathrm{d}\ell, 
    \end{equation*}
    and 
    \begin{equation*}
        \rho^j(t,r):=\partial_rM^j(t,r)=4\pi^2\iiint \delta(\widetilde{R}(\theta,a)- r)\chi^j(t,\theta,a,\ell)\gamma(t,\theta,a,\ell)\mathrm{d}\theta\mathrm{d}a\mathrm{d}\ell.
    \end{equation*}
    Finally, following the proof of Lemma \ref{lemma_estimate_total_mass}, we split the integral between the bulk and its complement. For instance, consider 
    \begin{equation*}
        \rho^2_I(t,r):=4\pi^2\iiint \delta(\widetilde{R}(\theta,a)- r)\1_{\mathcal{B}_t}\chi^2\gamma\mathrm{d}\theta\mathrm{d}a\mathrm{d}\ell,\qquad \rho_O^2(t,r):=4\pi^2\iiint \delta(\widetilde{R}(\theta,a)- r)\1_{\mathcal{B}_t^c}\chi^2\gamma\mathrm{d}\theta\mathrm{d}a\mathrm{d}\ell. 
    \end{equation*}
    We introduce this distinction to exploit the lower bound on $\partial_a\widetilde{R}$ given in \eqref{equation_lower_bound_R_tilde_bulk}. As a result, $\rho_I^2$ vanishes, leaving us with one less term to handle. Moreover, in the complement of the bulk, we can gain decay in $t$ at the expense of moments in $(\theta,a,\ell)$. For $\rho^1$, the decomposition requires an additional step. We first note that $\partial_a\left(\1_{\{\widetilde{R}(\theta,a)\leq r\}}\right) = -\partial_a\widetilde{R}(\theta,a)\delta(\widetilde{R}(\theta,a)- r)$, which yields
    \begin{equation*}
        \rho^1(t,r)=-4\pi^2\iiint \frac{1}{\partial_a\widetilde{R}(\theta,a)}\partial_a\left(\1_{\{\widetilde{R}(\theta,a)\leq r\}}\right)\chi^1\gamma\mathrm{d}\theta\mathrm{d}a\mathrm{d}\ell.
    \end{equation*}
    After integration by parts, we obtain
    \begin{align*}
         \rho^1(t,r)&=4\pi^2\iiint \partial_a\left(\frac{\chi^1\gamma}{\partial_a\widetilde{R}(\theta,a)}\right)\1_{\{\widetilde{R}(\theta,a)\leq r\}}\mathrm{d}\theta\mathrm{d}a\mathrm{d}\ell\\
         &=4\pi^2\iiint \partial_a\left(\frac{\chi^1\gamma}{\partial_a\widetilde{R}(\theta,a)}\right)\1_{\mathcal{B}_t}\1_{\{\widetilde{R}(\theta,a)\leq r\}}\mathrm{d}\theta\mathrm{d}a\mathrm{d}\ell+4\pi^2\iiint \partial_a\left(\frac{\chi^1\gamma}{\partial_a\widetilde{R}(\theta,a)}\right)\1_{\mathcal{B}_t^c}\1_{\{\widetilde{R}(\theta,a)\leq r\}}\mathrm{d}\theta\mathrm{d}a\mathrm{d}\ell\\
         &=:\rho^1_I(t,r)+\rho^1_O(t,r).
    \end{align*}
    We now detail the study of each term $\rho^j_I,\rho^j_O$.

    \textbf{\underline{Study of $\rho^1_I$}}\\
    
    We have
    \begin{align*}
        \rho^1_I(t,r)=&~4\pi^2\iiint \frac{\chi^1}{\partial_a \widetilde{R}}\partial_a\gamma \1_{\{\widetilde{R}(\theta,a)\leq r\}}\1_{\mathcal{B}_t}\mathrm{d}\theta\mathrm{d}a\mathrm{d}\ell+4\pi^2 \iiint \frac{3a a^0}{tm^2}\chi'\1_{\{\widetilde{R}(\theta,a)\leq r\}}\1_{\mathcal{B}_t}\gamma\mathrm{d}\theta\mathrm{d}a\mathrm{d}\ell\\
        &+4\pi^2\iiint   \left(\frac{(a^0)^3\chi'}{tm^2\partial_a \widetilde{R}} - \frac{\chi^1}{(\partial_a\widetilde{R})^2}\right) \partial_a^2\widetilde{R} \gamma \1_{\{\widetilde{R}(\theta,a)\leq r\}}\1_{\mathcal{B}_t}\mathrm{d}\theta\mathrm{d}a\mathrm{d}\ell\\
        =&:\rho^1_{I,1}(t,r)+\rho^1_{I,2}(t,r)+\rho^1_{I,3}(t,r).
    \end{align*}
    For the first term, by \eqref{equation_lower_bound_R_tilde_bulk}, we derive
    \begin{align*}
        r^{-k}\left|\rho^1_{I,1}(t,r)\right|&\lesssim \iiint t^{-1-k}(a^0)^3a^{-k}(a^0)^k|\partial_a\gamma| \mathrm{d}\theta\mathrm{d}a\mathrm{d}\ell\\
        &\lesssim t^{-1-k}\left(\|a^3\partial_a\gamma\|_{L^1}+\|a^{-k}\partial_a\gamma\|_{L^1}\right).
    \end{align*}
    For the second term, since $\widetilde{R}\gtrsim t\widehat{a}$ in $\mathcal{B}_t$, we obtain
    \begin{align*}
        r^{-k}|\rho^1_{I,2}(t,r)|\lesssim t^{-1}\iiint aa^0\widetilde{R}^{-k}\1_{\mathcal{B}_t}\mathrm{d}\theta \mathrm{d}a\mathrm{d}\ell\lesssim t^{-1-k}(\|a^{1-k}\gamma\|_{L^1}+\|a^{2}\gamma\|_{L^1}). 
    \end{align*}
    
    Finally, for the last term, the definition of $\chi^1$ yields
    \begin{equation*}
        \left|\frac{(a^0)^3\chi'}{tm^2\partial_a \widetilde{R}} - \frac{\chi^1}{(\partial_a\widetilde{R})^2}\right|\lesssim \frac{(a^0)^6}{t^2}.
    \end{equation*}
    Hence, using \eqref{equation_lower_bound_R} and the upper bound for $\partial_a^2\widetilde{R}$ from \eqref{equations_estimates_R_tilde_derivatives}, we find
    \begin{align}
        \label{equation_upperbound_chi_in_proof}
        \left|\frac{(a^0)^3\chi'}{tm^2\partial_a \widetilde{R}} - \frac{\chi^1}{(\partial_a\widetilde{R})^2}\right||\partial_a^2\widetilde{R}|\lesssim&~ \frac{(a^0)^6}{t^2}\left(t^2\frac{a}{\widetilde{R}(a^0)^4}+t\left[\frac{a}{(a^0)^5}+\frac{1}{(a^0)^3}\left(\frac{a^0}{\widetilde{R}a^3}+\frac{\sqrt{\ell}}{\widetilde{R}a^2}\right)\right]\right)\\
        \notag &+\frac{(a^0)^6}{t^2}\left(\frac{1}{a^2}+\frac{\sqrt{\ell}}{a^3}+\frac{1}{a^4}\right)\log\left\langle\frac{\widetilde{R}}{p}\right\rangle\\
        \notag \lesssim&~ \frac{a(a^0)^2}{\widetilde{R}}+\frac{1}{t}\left(aa^0+\frac{(a^0)^4}{\widetilde{R}a^3}+\frac{(a^0)^3\sqrt{\ell}}{\widetilde{R}a^2}\right)+\frac{(a^0)^6}{t^2}\left(\frac{1}{a^2}+\frac{\sqrt{\ell}}{a^3}+\frac{1}{a^4}\right)\log\left\langle\frac{\widetilde{R}}{p}\right\rangle\\
        \notag \lesssim&~ \frac{a(a^0)^2}{\widetilde{R}}+\frac{1}{t}\left(aa^0+\frac{(a^0)^3}{a}\right)+\frac{(a^0)^6}{t^2}\left(\frac{1}{a^2}+\frac{p}{a^2}\right)\log\left\langle\frac{\widetilde{R}}{p}\right\rangle,
    \end{align}
    where we used the lower bound \eqref{equation_lower_bound_R} to obtain the last inequality. Furthermore, in the bulk $\frac{1}{2}t\widehat a\leq\widetilde{R}\leq 2t\widehat a$. This gives a more precise estimate
    \begin{equation*}
        \left|\frac{(a^0)^3\chi'}{tm^2\partial_a \widetilde{R}} - \frac{\chi^1}{(\partial_a\widetilde{R})^2}\right||\partial_a^2\widetilde{R}|\lesssim \frac{(a^0)^5}{t}\left(\frac{1}{a}+a\right).
    \end{equation*}
    This implies 
    \begin{align*}
        r^{-k}\left|\rho^1_{I,3}(t,r)\right|&\lesssim t^{-1-k}\iiint\left(a^{-1}(a^0)^5+a(a^0)^5\right)(a^0)^ka^{-k}\gamma\mathrm{d}\theta\mathrm{d}a\mathrm{d}\ell\\
        &\lesssim t^{-1-k}\left(\|a^{-k-1}\gamma\|_{L^1}+\|a^6\gamma\|_{L^1}\right).
    \end{align*}
    
    Hence, since $k,\sigma$ satisfy \eqref{equation_condition_k_sigma}, we have
    \begin{equation*}
        |r^{-k}\rho^1_{I}(t,r)|\lesssim t^{-1-k}\left(\|(a^{-k-2}+a^{6})\gamma\|_{L^1}+\|(a^{-k}+a^3)\partial_a\gamma\|_{L^1}\right)\lesssim t^{-k-1}\|(a^{-4}+a^6)\gamma\|_{L^1}+\|(a^{-2}+a^3)\partial_a\gamma\|_{L^1}.
    \end{equation*}

    \textbf{\underline{Study of $\rho^1_O$}}\\

    Similarly, for $\rho^1_O$, we have
    \begin{align*}
        \rho^1_O(t,r)=&~4\pi^2\iiint \frac{\chi^1}{\partial_a \widetilde{R}}\partial_a\gamma \1_{\{\widetilde{R}(\theta,a)\leq r\}}\1_{\mathcal{B}_t^c}\mathrm{d}\theta\mathrm{d}a\mathrm{d}\ell+4\pi^2 \iiint \frac{3 aa^0\chi'}{tm^2}\1_{\{\widetilde{R}(\theta,a)\leq r\}}\1_{\mathcal{B}^c_t}\gamma\mathrm{d}\theta\mathrm{d}a\mathrm{d}\ell\\
        &+4\pi^2\iiint   \left(\frac{(a^0)^3\chi'}{tm^2\partial_a \widetilde{R}} - \frac{\chi^1}{(\partial_a\widetilde{R})^2}\right) \partial_a^2\widetilde{R} \gamma \1_{\{\widetilde{R}(\theta,a)\leq r\}}\1_{\mathcal{B}_t^c}\mathrm{d}\theta\mathrm{d}a\mathrm{d}\ell\\
        =&:\rho^1_{O,1}(t,r)+\rho^1_{O,2}(t,r)+\rho^1_{O,3}(t,r).
    \end{align*}
    By the definition of $\chi^1$, the integrands are supported where $\partial_a\widetilde{R}\gtrsim t(a^0)^{-3}$. Hence, using \eqref{equation_R_tilde_in_complement_bulk_proof}, we have
    \begin{align*}
        r^{-k}\left|\rho^1_{O,1}(t,r)\right|\lesssim &~ t^{-1}\iiint (a^0)^3\1_{\mathcal{B}_t^c}(\widetilde{R})^{-k}|\partial_a\gamma|\mathrm{d}\theta\mathrm{d}a\mathrm{d}\ell\\
        \lesssim&~ t^{-1-k-\sigma}\iiint \left(a^{-2k-4\sigma}+a^{4k+3\sigma+3}+\ell^{2k+2\sigma}(a^k+a^{k+3})+|\theta|^{k+\sigma}(a^{k-\sigma}+a^{k+3})\right)|\partial_a\gamma|\mathrm{d}\theta\mathrm{d}a\mathrm{d}\ell\\
        \lesssim &~t^{-1-k-\sigma}\left(\|(a^{-2k-4\sigma}+a^{4k+3\sigma+3})\partial_a\gamma\|_{L^1}+\|\ell^{2k+2\sigma}(a^k+a^{k+3})\partial_a\gamma\|_{L^1}\right)\\
        & + t^{-1-k-\sigma}\|(a^{k+3}+a^{k-\sigma})|\theta|^{k+\sigma}\partial_a\gamma\|_{L^1}.
    \end{align*}
    Consequently, using the condition \eqref{equation_condition_k_sigma} on $k,\sigma$ and Young's inequality,

    \begin{align*}
        r^{-k}\left|\rho^1_{O,1}(t,r)\right|\lesssim t^{-1-k-\sigma}\|(a^{-7}+a^{14}+\ell^6 (a^2+a^5)+|\theta|^3(a^{-1}+ a^5))\partial_a\gamma\|_{L^1}\lesssim t^{-1-k-\sigma}\mathcal{M}.
    \end{align*}

    Similarly,
    \begin{align*}
        r^{-k}|\rho^1_{O,2}(t,r)|\lesssim &~t^{-1}\iiint aa^0\1_{\mathcal{B}_t^c}(\widetilde{R})^{-k}\gamma\mathrm{d}\theta\mathrm{d}a\mathrm{d}\ell\\
        \lesssim &~t^{-1-k-\sigma}\left(\|(a^{-2k-4\sigma+1}+a^{4k+3\sigma+2})\gamma\|_{L^1}+\|(\ell^{2k+2\sigma}(a^{k+1}+a^{k+2})+(a^{k+2}+a^{k-\sigma+1})|\theta|^{k+\sigma})\gamma\|_{L^1}\right)\\
    \lesssim &~ t^{-1-k-\sigma}\|(a^{-7}+a^{13}+\ell^6 (a^3+a^4)+|\theta|^3(a^2+ a^4))\gamma\|_{L^1}\\
        \lesssim &~ t^{-1-k-\sigma}\mathcal{M}.
    \end{align*}
    For $\rho^1_{O,3}$, we begin by recalling that by the definition of $\widetilde{R}$ and \eqref{equation_estimates_G_a_H_a}, $\widetilde{R}\lesssim p +|\theta|+t\widehat a+\frac{a^0}{a^2}$. In particular, the following holds
    
    \begin{equation*}
        \left(\frac{1}{a^2}+\frac{p}{a^2}\right)\log\left\langle \frac{\widetilde{R}}{p} \right\rangle\lesssim \widetilde{R}+\frac{\widetilde{R}}{a^2}\lesssim t (1+a^{-2})+(a^{-1}+a^{-4})+ (1+a^{-2})|\theta|+(a^{-1}+a^{-3})\sqrt{\ell}. 
    \end{equation*}
    
    Applying \eqref{equation_lower_bound_R} and the above estimate to \eqref{equation_upperbound_chi_in_proof}, we find that
    \begin{align*}
        \left|\frac{(a^0)^3\chi'}{tm^2\partial_a \widetilde{R}} - \frac{\chi^1}{(\partial_a\widetilde{R})^2}\right||\partial_a^2\widetilde{R}|\lesssim &~ \frac{a(a^0)^2}{\widetilde{R}}+\frac{1}{t}\left(aa^0+\frac{(a^0)^3}{a}\right)+\frac{(a^0)^6}{t^2}\left(\frac{1}{a^2}+\frac{p}{a^2}\right)\log\left\langle\frac{\widetilde{R}}{p}\right\rangle\\
        \lesssim &~a^3 a^0+\frac{1}{t}\left(aa^0+\frac{(a^0)^3}{a}\right)\\
        &+\frac{(a^0)^6}{t^2}\left(t (1+a^{-2})+(a^{-1}+a^{-4})+ (1+a^{-2})|\theta|+(a^{-1}+a^{-3})\sqrt{\ell}\right)\\
        \lesssim &~a^3+a^4 +\frac{1}{t}(a^{-2}+a^6)+\frac{1}{t^2}\left(a^{-4}+a^5+|\theta|(a^{-2}+a^6)+\sqrt{\ell}(a^{-3}+a^5)\right).
    \end{align*}
    Then, by replacing $\sigma$ with $\sigma +1$ in \eqref{equation_R_tilde_in_complement_bulk_proof}, we derive
    \begin{equation*}
        (a^3+a^4)\widetilde{R}^{-k}\1_{\mathcal{B}_t^c}\lesssim \left(a^{-2k-4\sigma-1}+a^{4k+3\sigma+7}+\ell^{2k+2\sigma+2}(a^{k+3}+a^{k+4})+|\theta|^{k+\sigma+1}(a^{k-\sigma+2}+a^{k+4})\right).
    \end{equation*}
    Then, 
    \begin{equation*}
        \frac{1}{t}(a^{-2}+a^6)\widetilde{R}^{-k}\1_{\mathcal{B}_t^c}\lesssim t^{-k-\sigma-1}\left(a^{-2k-4\sigma-2}+a^{4k+3\sigma+6}+\ell^{2k+2\sigma}(a^{k-2}+a^{k+6})+|\theta|^{k+\sigma}(a^{k-\sigma-2}+a^{k+6)}\right).
    \end{equation*}
    Similarly, by replacing $\sigma$ with $\sigma-1$ in \eqref{equation_R_tilde_in_complement_bulk_proof},
    \begin{align*}
        t^{-2}\left(a^{-4}+a^5\right)\widetilde{R}^{-k}\1_{\mathcal{B}_t^c}\lesssim&~ t^{-k-\sigma-1}\left(a^{-2k-4\sigma}+a^{4k+3\sigma+2}+\ell^{2k+2\sigma-2}(a^{k-4}+a^{k+5})+|\theta|^{k+\sigma-1}(a^{k-\sigma-3}+a^{k+5})\right),\\[10pt]
        t^{-2}|\theta|(a^{-2}+a^6)\widetilde{R}^{-k}\1_{\mathcal{B}_t^c}\lesssim &~ t^{-k-\sigma-1}\big(|\theta|(a^{-2k-4\sigma+2}+a^{4k+3\sigma+3})+|\theta|^{k+\sigma}(a^{k-\sigma-1}+a^{k+6})\big)\\
        &+t^{-k-\sigma-1}\big(|\theta|\ell^{2k+2\sigma-2}(a^{k-2}+a^{k+6})\big),\\[10pt]
        t^{-2}\sqrt{\ell}(a^{-3}+a^5)\widetilde{R}^{-k}\1_{\mathcal{B}_t^c}\lesssim&~ t^{-k-\sigma-1}\big(\sqrt{\ell}(a^{-2k-4\sigma+1}+a^{4k+3\sigma+2})+\sqrt{\ell}|\theta|^{k+\sigma-1}(a^{k-\sigma-2}+a^{k+5})\big)\\
        & +t^{-k-\sigma-1}\big(\ell^{2k+2\sigma-\frac{3}{2}}(a^{k-3}+a^{k+5})\big).
    \end{align*}
    The last five inequalities yield
    \begin{align*}
        \left|\frac{(a^0)^3\chi'}{tm^2\partial_a \widetilde{R}} - \frac{\chi^1}{(\partial_a\widetilde{R})^2}\right|\frac{|\partial_a^2\widetilde{R}|}{\widetilde{R}^{k}}\1_{\mathcal{B}_t^c}t^{k+\sigma+1}\lesssim &~a^{-2k-4\sigma -2}+ a^{4k+3\sigma+7} + |\theta|(a^{-2k-4\sigma+2}+a^{4k+3\sigma+3})\\
        &+\sqrt{\ell}(a^{-2k-4\sigma +1}+a^{4k+3\sigma+2})+(|\theta|^{k+\sigma +1}+|\theta|^{k+\sigma-1})(a^{k-\sigma-3}+a^{k+6})\\
        & +\sqrt{\ell}|\theta|^{k+\sigma-1}(a^{k-\sigma-2}+a^{k+5})+ (\ell^{2k+2\sigma+2}+\ell^{2k+2\sigma-2})(a^{k-4}+a^{k+6})\\
        & +|\theta|\ell^{2k+2\sigma-2}(a^{k-2}+a^{k+6})
    \end{align*}
    Finally, we derive
    \begin{align*}
        |\rho^1_{O,3}(t,r)|r^{-k}t^{k+\sigma+1}\lesssim&~\|(a^{-2k-4\sigma -2}+ a^{4k+3\sigma+7})\gamma\|_{L^1}+\||\theta|(a^{-2k-4\sigma+2}+a^{4k+3\sigma+3})\gamma\|_{L^1}\\
        &+\|\sqrt{\ell}(a^{-2k-4\sigma +1}+a^{4k+3\sigma+2})\gamma\|_{L^1}+\|(|\theta|^{k+\sigma +1}+|\theta|^{k+\sigma-1})(a^{k-\sigma-3}+a^{k+6})\gamma\|_{L^1}\\
        &+\|\sqrt{\ell}|\theta|^{k+\sigma-1}(a^{k-\sigma-2}+a^{k+5})\gamma\|_{L^1}+ \|(\ell^{2k+2\sigma+2}+\ell^{2k+2\sigma-2})(a^{k-4}+a^{k+6})\gamma\|_{L^1} \\
        &+\||\theta|\ell^{2k+2\sigma-2}(a^{k-2}+a^{k+6})\gamma\|_{L^1}
    \end{align*}
    Consequently, since \eqref{equation_condition_k_sigma} holds, Young's inequality yields
    \begin{align*}
        |\rho^1_{O,3}(t,r)|r^{-k}t^{k+\sigma+1}\lesssim&~\|(a^{-10}+ a^{18})\gamma\|_{L^1}+\||\theta|(a^{-6}+a^{14})\gamma\|_{L^1}+\|\sqrt{\ell}(a^{-7}+a^{13})\gamma\|_{L^1}+\||\theta|^{4}(a^{-4}+a^{8})\gamma\|_{L^1}\\
        &+\|\sqrt{\ell}|\theta|^{2}(a^{-3}+a^{7})\gamma\|_{L^1}+ \|\ell^{8}(a^{-4}+a^{8})\gamma\|_{L^1} +\||\theta|\ell^{4}(a^{-2}+a^{6})\gamma\|_{L^1}\\
        \lesssim &~ t^{-k-\sigma-1}\mathcal{M}.
    \end{align*}

    \textbf{\underline{Study of $\rho^2_O$}}\\

    First, we split $\rho^2_{O}$ into three terms using Lemma \ref{lemma_splitting_different_R_j}. Let
    \begin{equation*}
        \rho^2_{O,j}(t,r)=4\pi^2\iiint \delta(\widetilde{R}(\theta,a)-r)\chi^2(t,\theta,a,\ell)\gamma(t,\theta,a,\ell)\1_{\mathcal{B}_t^c}\1_{\mathcal{R}_j}\mathrm{d}\theta\mathrm{d}a\mathrm{d}\ell,\qquad j=0,1,2.
    \end{equation*}
    For $\rho^2_{O,0}$, by Lemma \ref{lemma_splitting_different_R_j} and \eqref{equation_R_tilde_in_complement_bulk_proof}, we have
    \begin{align*}
        |r^{-k}\rho^2_{O,0}(t,r)|&=\left|\iint \chi^2\1_{\mathcal{R}_0}\1_{\mathcal{B}_t^c}\gamma(t,\theta,\aleph,\ell)\widetilde{R}^{-k}(\theta,\aleph)\frac{\mathrm{d}\theta\mathrm{d}\ell}{\partial_a\widetilde{R}(\theta,\aleph)}\right|\\
        &\lesssim t^{-k-\sigma-1}\iint \left(\aleph^{-2k-4\sigma-1}+\aleph^{4k+3\sigma+6}+\ell^{2k+2\sigma+2}\aleph^{k+3}+|\theta|^{k+\sigma+1}(\aleph^{k-\sigma+3}+\aleph^{k+3})\right)\gamma(\aleph)\mathrm{d}\theta\mathrm{d}\ell.
    \end{align*}
    Then, for $q>0$,\,$\aleph^q\gamma(t,\theta,\aleph,\ell)=\int^\aleph_0\partial_a(a^q\gamma(t,\theta,a,\ell))\mathrm{d}a$. Similarly, for $q\leq 0$, we also have $\aleph^q\gamma(t,\theta,\aleph,\ell)=-\int_\aleph^{+\infty}\partial_a(a^q\gamma(t,\theta,a,\ell))\mathrm{d}a$. Hence, for all $q\in\R$,
    \begin{equation*}
        |\aleph^q\gamma(t,\theta,\aleph,\ell)|\lesssim \int_{\R^*_+}|\partial_a(a^q\gamma(t,\theta,a,\ell))|\mathrm{d}a.
    \end{equation*}
    This implies
    \begin{align*}
        |r^{-k}\rho^2_{O,0}(t,r)|\lesssim&~ t^{-k-\sigma-1}\iiint \left(a^{-2k-4\sigma-1}+a^{4k+3\sigma+6}+\ell^{2k+2\sigma+2}a^{k+3}+|\theta|^{k+\sigma+1}(a^{k-\sigma+3}+a^{k+3})\right)|\partial_a\gamma|\mathrm{d}\theta\mathrm{d}a\mathrm{d}\ell\\
        &+t^{-k-\sigma-1}\iiint \left(a^{-2k-4\sigma-2}+a^{4k+3\sigma+5}+\ell^{2k+2\sigma+2}a^{k+2}+|\theta|^{k+\sigma+1}(a^{k-\sigma+2}+a^{k+2})\right)\gamma\mathrm{d}\theta\mathrm{d}a\mathrm{d}\ell\\
        \lesssim&~t^{-k-\sigma-1}\left(\|a^{-2k-4\sigma-2}\gamma\|_{L^1}+\|a^{4k+3\sigma+5}\gamma\|_{L^1}+\||\theta|^{k+\sigma+1}(a^{k-\sigma+3}+a^{k+3})\gamma\|_{L^1}\right)\\
        &+t^{-k-\sigma-1}\left(\|\ell^{2k+2\sigma+2}a^{k+2}\gamma\|_{L^1}\right)+t^{-k-\sigma-1}\left(\|a^{-2k-4\sigma-1}\partial_a\gamma\|_{L^1}+\|a^{4k+3\sigma+6}\partial_a\gamma\|_{L^1}\right)\\
        &+t^{-k-\sigma-1}\left(\|\ell^{2k+2\sigma+2}a^{k+3}\partial_a\gamma\|_{L^1}+\||\theta|^{k+\sigma+1}(a^{k-\sigma+3}+a^{k+3})\partial_a\gamma\|_{L^1}\right).
    \end{align*}
    We then use \eqref{equation_condition_k_sigma} and Young's inequality to find
    \begin{align*}
        |r^{-k}\rho^2_{O,0}(t,r)|\lesssim&~t^{-k-\sigma-1}\left(\|a^{-10}\gamma\|_{L^1}+\|a^{16}\gamma\|_{L^1}+\||\theta|^{4}a^5\gamma\|_{L^1}+\|\ell^{8}a^{4}\gamma\|_{L^1}\right)\\
        &+t^{-k-\sigma-1}\left(\|a^{-9}\partial_a\gamma\|_{L^1}+\|a^{17}\partial_a\gamma\|_{L^1}+\|\ell^{8}a^{5}\partial_a\gamma\|_{L^1}+\||\theta|^{4}a^5\partial_a\gamma\|_{L^1}\right)\\
        \lesssim &~t^{-k-\sigma-1}\mathcal{M}.
    \end{align*}
    Similarly, for $j=1,2$, by Lemma \ref{lemma_splitting_different_R_j} and replacing $\sigma$ in \eqref{equation_R_tilde_in_complement_bulk_proof} with $\sigma+4$ and $\sigma +3$ respectively, we have
    \begin{align*}
        |r^{-k}\rho^2_{O,j}(t,r)|=&\left|\iint \chi^2\1_{\mathcal{R}_j}\1_{\mathcal{B}_t^c}\gamma(t,\theta_j,a,\ell)\widetilde{R}^{-k}(\theta_j,a)\frac{\mathrm{d}a\mathrm{d}\ell}{\partial_\theta\widetilde{R}(\theta_j,a)}\right|\\
        \lesssim &~t^3\iint \frac{a^2}{a^0}\1_{\mathcal{R}_j}\1_{\mathcal{B}_t^c}\gamma(\theta_j)\widetilde{R}^{-k}\mathrm{d}a\mathrm{d}\ell +t^2 \iint\left(a|\theta_j|+\sqrt{\ell}(1+a)+a^{-1}+a\right)\1_{\mathcal{R}_j}\1_{\mathcal{B}_t^c}\gamma(\theta_j)\widetilde{R}^{-k}\mathrm{d}a\mathrm{d}\ell\\
        \lesssim  &~ t^{-k-\sigma-1}\iint \left(a^{-2k-4\sigma-15}+a^{4k+3\sigma+13}+\ell^{2k+2\sigma+8}a^{k+1}+|\theta_j|^{k+\sigma+4}(a^{k-\sigma-3}+a^{k+1})\right)\gamma(\theta_j)\mathrm{d}a\mathrm{d}\ell\\
        &+t^{-k-\sigma-1}\iint \left(a^{-2k-4\sigma-13}+a^{4k+3\sigma+10}+\ell^{2k+2\sigma+\frac{13}{2}}(a^{k-1} +a^{k+1})+\ell^{2k+2\sigma+6}|\theta_j|a^{k+1}\right)\gamma(\theta_j)\mathrm{d}a\mathrm{d}\ell\\
        &+t^{-k-\sigma-1}\iint \left(|\theta_j|^{k+\sigma+4}(a^{k-\sigma-4}+a^{k+1})+\sqrt{\ell}|\theta_j|^{k+\sigma+3}(a^{k-\sigma-3}+a^{k+1})\right)\gamma(\theta_j)\mathrm{d}a\mathrm{d}\ell\\
        &+t^{-k-\sigma-1}\iint \left(\sqrt{\ell}(a^{-2k-4\sigma-12}+a^{4k+3\sigma+10})+|\theta_j|(a^{-2k-4\sigma-11}+a^{4k+3\sigma+10})\right)\gamma(\theta_j)\mathrm{d}a\mathrm{d}\ell.
    \end{align*}
    Using Young's inequality to eliminate some terms, we find
    \begin{align*}
        |r^{-k}\rho^2_{O,j}(t,r)|\lesssim&~t^{-k-\sigma-1}\iint \left(a^{-2k-4\sigma-15}+a^{4k+3\sigma+14}+\ell^{2k+2\sigma+8}(a^{k-\sigma-3}+1+a^{k+1})\right)\gamma(\theta_j)\mathrm{d}a\mathrm{d}\ell\\
        &+t^{-k-\sigma-1}\iint |\theta_j|^{k+\sigma+4}(a^{k-\sigma-4}+1+a^{k+1})\gamma(\theta_j)\mathrm{d}a\mathrm{d}\ell.
    \end{align*}
    By similar arguments as in the estimate of $\rho_{O,0}^2$, for all $n\geq 0$, we have
    \begin{equation*}
        |\theta_j|\gamma(t,\theta_j,a,\ell)\lesssim \int_{\R} |\partial_\theta(|\theta|^n\gamma(t,\theta,a,\ell))|\mathrm{d}\theta.
    \end{equation*}
    We then derive
    \begin{align*}
        |r^{-k}\rho^2_{O,j}(t,r)|\lesssim&~t^{-k-\sigma-1}\iiint \left(a^{-2k-4\sigma-15}+a^{4k+3\sigma+14}+\ell^{2k+2\sigma+8}(a^{k-\sigma-3}+1+a^{k+1})\right)|\partial_\theta\gamma|\mathrm{d}a\mathrm{d}\ell\\
        &+t^{-k-\sigma-1}\iiint \left(|\theta|^{k+\sigma+4}(a^{k-\sigma-4}+1+a^{k+1})|\partial_\theta\gamma|+|\theta|^{k+\sigma+3}(a^{k-\sigma-4}+1+a^{k+1})\gamma\right)\mathrm{d}a\mathrm{d}\ell\\
        \lesssim &~t^{-k-\sigma-1}\left(\|a^{-2k-4\sigma-15}\partial_\theta\gamma\|_{L^1}+\|a^{4k+3\sigma+14} \partial_\theta\gamma\|_{L^1}+\|\ell^{2k+2\sigma+8}(a^{k-\sigma-4}+1+a^{k+1})\partial_\theta\gamma\|_{L^1}\right)\\
        &+t^{-k-\sigma-1}\left(\||\theta|^{k+\sigma+4}(a^{k-\sigma-4}+1+a^{k+1})\partial_\theta\gamma\|_{L^1}+\||\theta|^{k+\sigma+3}(a^{k-\sigma-4}+1+a^{k+1})\gamma\|_{L^1}\right).
    \end{align*}
    By similar arguments as before, this implies
    \begin{align*}
        |r^{-k}\rho^2_{O,j}(t,r)|\lesssim &~t^{-k-\sigma-1}\left(\|\big(a^{-23}+a^{25}+\ell^{14}(a^{-5}+a^{6})+|\theta|^{7}(a^{-5}+a^{3})\big)\partial_\theta\gamma\|_{L^1}+\||\theta|^{6}(a^{-5}+a^{3})\gamma\|_{L^1}\right)\\
        &+t^{-k-\sigma-1}\||\theta|^{k+\sigma+4}(a^{k-\sigma-4}+1+a^{k+1})\partial_\theta\gamma\|_{L^1}.
    \end{align*}
    To avoid having higher order moments, we study the last term of the right-hand more carefully. First, if $k-\sigma\geq 0$ then
    \begin{equation*}
        \||\theta|^{k+\sigma+4}(a^{k-\sigma-4}+1+a^{k+1})\partial_\theta\gamma\|_{L^1}\leq \||\theta|^{\frac{13}{2}}(a^{-4}+a^{3})\partial_\theta\gamma\|_{L^1}.
    \end{equation*}
    Conversely, if $k-\sigma\leq0$, then $k+\sigma\leq 2$ and we find
    \begin{equation*}
        \||\theta|^{k+\sigma+4}(a^{k-\sigma-4}+1+a^{k+1})\partial_\theta\gamma\|_{L^1}\leq \||\theta|^{6}(a^{-5}+a^{3})\partial_\theta\gamma\|_{L^1}.
    \end{equation*}
    In both cases, we can apply Young's inequality to finally derive
    \begin{equation*}
        |r^{-k}\rho^2_{O,j}(t,r)|\lesssim t^{-k-\sigma-1}\mathcal{M}.
    \end{equation*}
    We obtain the final estimate by writing $\rho_I:=\rho^1_I$, $\rho_O:=\rho^1_O+\rho^2_O$ and combining the previous estimates.
\end{proof}

\begin{remark}
    Lemmas \ref{lemma_estimate_total_mass}--\ref{lemma_estimate_charge_density} suggest that the estimates for the nonlinear solution to \eqref{equation_non_linear_aa_idea}  are consistent with the usual estimates obtained for the Vlasov-Maxwell system. Indeed, if $(f,E,B)$ is a small data solution to the Vlasov-Maxwell system, one finds that, inside the light cone,  the fields and the charge density $\int_{\R^3_v} f\mathrm{d}v$ satisfy
    \begin{equation*}
        |(E,B)|(t,x)\lesssim t^{-2},\qquad \int_{\R^3_v}f(t,x,v)\mathrm{d}v\lesssim t^{-3}.
    \end{equation*}
    This is comparable to
    \begin{equation*}
        |E(t,r)|=\frac{|M(t,r)|}{r^2}\lesssim t^{-2},\qquad \frac{|\rho(t,r)|}{r^2}\lesssim t^{-3},
    \end{equation*}
    where we omitted the moments of $\gamma$.
\end{remark}

\subsubsection{Estimates on the potential}

\begin{proposition}
    \label{proposition_estimates_psi_first_derivative}
    Let $t\geq T_0$. The first derivatives of $\widetilde{\Psi}$ satisfy
    \begin{equation}
        \label{equation_estimate_Psi_first_derivative}
        \begin{array}{r@{}l}
             \displaystyle t^\frac{3}{2}\|a^{-1}\partial_\theta\widetilde{\Psi}\|_{L^\infty(\R\times\R^*_+)} \lesssim& ~\|a^{-\frac{3}{2}}\gamma\|_{L^1}+\|\gamma\|_{L^1} +t^{-\frac{1}{4}}\left(\|a^{-4}\gamma\|_{L^1}+\|a^{\frac{27}{4}}\gamma\|_{L^1}+\|\ell^\frac{9}{2}\gamma\|_{L^1}+\||\theta|^\frac{9}{4}\gamma\|_{L^1}\right),\\[6pt]
             \displaystyle t|\partial_a\widetilde{\Psi}(\theta,a)|\lesssim& ~\|a^{-2}\gamma\|_{L^1}+\|\gamma\|_{L^1}(a^{-3}+a^3+\ell^2+|\theta|)\\[6pt]
             \displaystyle&~+~t^{-\frac{1}{4}}\left(\|a^{-5}\gamma\|_{L^1}+\|a^9\gamma\|_{L^1}+\||\theta|^3\gamma\|_{L^1}+\|\ell^6\gamma\|_{L^1}\right).
        \end{array}
    \end{equation}
\end{proposition}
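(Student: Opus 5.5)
The plan is to start from the formula \eqref{equation_def_derivatives_psi}, namely $\partial_\alpha\widetilde{\Psi}=-\partial_\alpha\widetilde{R}\,M(t,\widetilde{R})/\widetilde{R}^2$. Everything then reduces to combining the bounds on $\partial_\theta\widetilde{R}$ and $\partial_a\widetilde{R}$ from Proposition \ref{proposition_derivatives_R_tilde} with the mass estimates of Lemma \ref{lemma_estimate_total_mass}. In each case I would split $M=M_I+M_O$ and choose the exponents $k,\sigma$ so that the time powers match the statement. The remainder terms carry the factor $t^{-1/4}$, so I will take $\sigma=\frac14$ in the $M_O$ bounds.

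\textbf{The $\theta$ derivative.} Here $|\partial_\theta\widetilde{R}|\le 1$, so $|a^{-1}\partial_\theta\widetilde{\Psi}|\le a^{-1}M(t,\widetilde{R})/\widetilde{R}^2$. To trade the $a^{-1}$ for a power of $\widetilde{R}$, I use the lower bound \eqref{equation_lower_bound_R}: $\widetilde{R}\ge a^0/a^2\gtrsim a^{-2}$, hence $a^{-1}\lesssim \widetilde{R}^{1/2}$. This gives $|a^{-1}\partial_\theta\widetilde{\Psi}|\lesssim M(t,\widetilde{R})/\widetilde{R}^{3/2}$. Applying Lemma \ref{lemma_estimate_total_mass} with $r=\widetilde{R}$ and $k=\frac32$, the bulk part is bounded by $t^{-3/2}(\|a^{-3/2}\gamma\|_{L^1}+\|\gamma\|_{L^1})$. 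For the outer part with $\sigma=\frac14$, the moments are $a^{-2k-4\sigma}=a^{-4}$, $a^{4k+3\sigma}=a^{27/4}$, $\ell^{2k+2\sigma}a^{k}=\ell^{7/2}a^{3/2}$ and $|\theta|^{7/4}(a^{5/4}+a^{3/2})$. These are absorbed by Young's inequality into $a^{-4},a^{27/4},\ell^{9/2},|\theta|^{9/4}$. The exponents need not match the statement exactly; a small reshuffling of the Young splitting is acceptable.

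\textbf{The $a$ derivative.} From \eqref{equations_estimates_R_tilde_derivatives},
\begin{equation*}
|\partial_a\widetilde{R}|\lesssim t(a^0)^{-3}+\Big(\frac1a+\frac{\sqrt\ell}{a^2}+\frac1{a^3}\Big)\log\Big\langle\frac{\widetilde{R}}{p}\Big\rangle.
\end{equation*}
I treat the two pieces separately.

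For the $t(a^0)^{-3}$ piece I need $M/\widetilde{R}^2\lesssim t^{-2}$ with moment weights. Lemma \ref{lemma_estimate_total_mass} with $k=2$ gives $t^{-2}(\|a^{-2}\gamma\|+\|\gamma\|)$ in the bulk, and the outer part is handled with $\sigma=\frac14$. The resulting $t\cdot t^{-2}=t^{-1}$ is exactly the claimed rate.

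For the logarithmic piece I use $\log\langle \widetilde{R}/p\rangle\lesssim \widetilde{R}^{\delta}p^{-\delta}+1$, or more crudely, for the statement, simply $M\le\|\gamma\|_{L^1}$. Then $1/\widetilde{R}^2\lesssim a^4/(a^0)^2$ from \eqref{equation_lower_bound_R}, and I also use $\widetilde{R}\lesssim |\theta|+t\widehat a+p+a^0/a^2$. The point is to get a factor $t^{-1}$ times a weight depending on $(\theta,a,\ell)$. This explains why the statement contains $\|\gamma\|_{L^1}(a^{-3}+a^3+\ell^2+|\theta|)$ rather than a pure norm. I would split into the bulk and its complement:
\begin{itemize}
\item In the bulk, $\widetilde{R}\sim t\widehat a$, so the log piece times $\widetilde{R}^{-2}$ is $\lesssim t^{-1}$ times powers of $a$ and $\sqrt\ell$.
\item Outside the bulk, I use the pointwise bound $\1_{\mathcal{B}_t^c}\lesssim t^{-1}(a^3+a^{-4}+|\theta|(1+a^{-1})+\ell^2)$ together with $\log\langle\widetilde{R}/p\rangle/\widetilde{R}^2\lesssim a^2$ (from $\widetilde{R}\gtrsim a^{-2}$) and $M\le\|\gamma\|_{L^1}$.
\end{itemize}
This yields $t^{-1}\|\gamma\|_{L^1}$ times a polynomial weight in $(\theta,a,\ell)$. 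The exact exponents $a^{-3},a^3$ will need care when absorbing the $a^{-1},a^{-3},\sqrt\ell a^{-2}$ prefactors.

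\textbf{Main obstacle.} I expect the main difficulty to be this logarithmic term at small $a$ and outside the bulk. There the factor $a^{-3}\log\langle\widetilde{R}/p\rangle$ must be controlled by $\widetilde{R}^{-2}$ decay without losing more powers of $a$ than allowed. If the crude bound fails, I would fall back on the refined estimate $M(t,\widetilde{R})/(\widetilde{R}^k(\widetilde{R}-1/a^0))$ of Lemma \ref{lemma_estimate_total_mass}, using $\log x\lesssim x^{1/2}$ to absorb the logarithm into one power of $\widetilde{R}$.
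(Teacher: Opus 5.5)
\textbf{Gap: the logarithmic part of $\partial_a\widetilde{R}$.} Your treatment of $\partial_\theta\widetilde{\Psi}$ is the paper's argument, and so is your treatment of the $t(a^0)^{-3}$ part of $\partial_a\widetilde{R}$ (Lemma \ref{lemma_estimate_total_mass} with $k=\frac32,\sigma=\frac14$ and with $k=2,\sigma=\frac14$). The logarithmic part of $\partial_a\widetilde{R}$ is left open, and the route you sketch for it cannot give the stated right-hand side. Inside $\mathcal{B}_t$ your bound can be pushed through. Outside $\mathcal{B}_t$, you combine $M\le\|\gamma\|_{L^1}$, $\log\langle\widetilde{R}/p\rangle/\widetilde{R}^2\lesssim a^2$ and $\1_{\mathcal{B}_t^c}\lesssim t^{-1}(a^3+a^{-4}+|\theta|(1+a^{-1})+\ell^2)$ with the prefactor $a^{-1}+\sqrt{\ell}a^{-2}+a^{-3}$. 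This produces weights such as $a^4$, $a|\theta|$ and $\ell^{5/2}$, none of which is controlled by $a^{-3}+a^3+\ell^2+|\theta|$.

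For the $a^{-1}$ piece this cannot be fixed by bookkeeping. Take $\ell=0$, $t^{1/3}<a<t^{1/2}$ and $\theta=-t\widehat{a}$; this point lies outside $\mathcal{B}_t$, with $\widetilde{R}=F(a)\approx a^{-1}$ and $p=ma^{-2}$. There $a^{-1}\widetilde{R}^{-2}\log\langle\widetilde{R}/p\rangle\approx a\log a$, so your auxiliary bound $\lesssim a^2$ is also off by a logarithm. For $\gamma$ concentrated near $a\sim1$, the right-hand side of the statement divided by $t\|\gamma\|_{L^1}$ is only $\approx t^{-1}(1+a^3+|\theta|)\approx 1+t^{-1}a^3$. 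At $a=t^{2/5}$ this compares $t^{2/5}\log t$ with $t^{1/5}$. So any argument using only $M\le\|\gamma\|_{L^1}$ for this piece fails; you must use that $M(t,r)$ is small for small $r$. The precise weight also matters downstream. The hierarchy $\mathcal{I}$ in Proposition \ref{proposition_bootstrap_gamma_derivatives} closes only because $\partial_a\widetilde{\Psi}$ shifts weights by $a^{\pm3}$, $\ell^2$ and a single power of $|\theta|$, multiplied by $\|\gamma\|_{L^1}$.

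\textbf{The missing step.} Write the logarithmic coefficient as $\frac1a+\frac pa$, using $\frac{\sqrt{\ell}}{a^2}+\frac{1}{a^3}\lesssim\frac pa$ (since $pa^2=\sqrt{m^2+\ell a^2}$), and treat the two parts differently.
\begin{itemize}
\item For $\frac1a$: since $p\geq ma^{-2}$ and $\widetilde{R}a^2\gtrsim1$, you have $\frac1a\log\langle\widetilde{R}/p\rangle\lesssim\frac1a(\widetilde{R}a^2)^{1/2}=\widetilde{R}^{1/2}$. This piece is therefore $\lesssim M(t,\widetilde{R})/\widetilde{R}^{3/2}$, and Lemma \ref{lemma_estimate_total_mass} with $k=\frac32,\sigma=0$ gives $t^{-3/2}$ times moments of $\gamma$. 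This is where the smallness of $M$ near the origin enters.
\item For $\frac pa$: factor the term as $\frac{M}{\widetilde{R}a}\cdot\frac{p}{\widetilde{R}}\log\langle\widetilde{R}/p\rangle$ and use $\widetilde{R}\geq p$ to bound the second factor by a constant. Only now use $M\le4\pi^2\|\gamma\|_{L^1}$. In $\mathcal{B}_t$, $\frac{1}{\widetilde{R}a}\lesssim\frac{a^0}{ta^2}\lesssim t^{-1}(a^{-1}+a^{-2})$. In $\mathcal{B}_t^c$, the factors $\frac{1}{\widetilde{R}a}\le1$ and $\frac{1}{\widetilde{R}a^2}\lesssim1$ absorb the extra $a^{-1}$ in the indicator terms $a^{-4}$ and $|\theta|a^{-1}$. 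Together these give exactly $t^{-1}\|\gamma\|_{L^1}(a^{-3}+a^3+\ell^2+|\theta|)$.
\end{itemize}

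\textbf{Your fallback does not replace this.} Absorbing the logarithm via $\log x\lesssim x^{1/2}$ in the $a^{-3}$ and $\sqrt{\ell}a^{-2}$ pieces leaves a factor like $a^{-2}M/\widetilde{R}^{3/2}$. Its $a^{-2}$ can either be traded for $\widetilde{R}$, which costs decay (down to $t^{-1/2}$). Or it can be kept as a pointwise weight, but then it multiplies moment norms other than $\|\gamma\|_{L^1}$. The refined bound involving $\widetilde{R}-\frac{1}{a^0}$ plays no role here.
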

\begin{proof}
    First, since $\widetilde{R}a^2\gtrsim 1$ and $|\partial_\theta\widetilde{R}|\leq 1$, 
    \begin{align*}
        |a^{-1}\partial_\theta\widetilde{\Psi}|=\frac{1}{(\widetilde{R}a^2)^\frac{1}{2}}\frac{M(t,\widetilde{R})}{\widetilde{R}^\frac{3}{2}}|\partial_\theta\widetilde{R}|\lesssim\frac{M(t,\widetilde{R})}{\widetilde{R}^\frac{3}{2}}.
    \end{align*}
    Using Lemma \ref{lemma_estimate_total_mass} with $k=\frac{3}{2},\sigma=\frac{1}{4}$ and Young's inequality, we find
    \begin{align*}
        t^\frac{3}{2}|a^{-1}\partial_\theta\widetilde{\Psi}|&\lesssim \|a^{-\frac{3}{2}}\gamma\|_{L^1}+\|\gamma\|_{L^1} +t^{-\frac{1}{4}}\left(\|a^{-4}\gamma\|_{L^1}+\|a^{\frac{27}{4}}\gamma\|_{L^1}+\|a^\frac{3}{2}\ell^\frac{7}{2}\gamma\|_{L^1}+\||\theta|^\frac{7}{4}(a^\frac{3}{2}+a^\frac{5}{4})\gamma\|_{L^1}\right) \\
        &\lesssim\|a^{-\frac{3}{2}}\gamma\|_{L^1}+\|\gamma\|_{L^1} +t^{-\frac{1}{4}}\left(\|a^{-4}\gamma\|_{L^1}+\|a^{\frac{27}{4}}\gamma\|_{L^1}+\|\ell^\frac{9}{2}\gamma\|_{L^1}+\||\theta|^\frac{9}{4}\gamma\|_{L^1}\right).
    \end{align*}
    Then, using \eqref{equations_estimates_R_tilde_derivatives}, we have
    \begin{align*}
        |\partial_a\widetilde{\Psi}|=|\partial_a\widetilde{R}|\frac{M(t,\widetilde{R})}{\widetilde{R}^2}\lesssim \frac{M(t,\widetilde{R})}{\widetilde{R}^2}\left(t+\left(\frac{1}{a}+\frac{p}{a}\right)\log\left\langle\frac{\widetilde{R}}{p}\right\rangle\right)\lesssim t\frac{M(t,\widetilde{R})}{\widetilde{R}^2}+\frac{M(t,\widetilde{R})}{\widetilde{R}^\frac{3}{2}}+\frac{M(t,\widetilde{R})}{\widetilde{R}a}\frac{p}{\widetilde{R}}\log\left\langle\frac{\widetilde{R}}{p}\right\rangle.
    \end{align*}
    Using Lemma \ref{lemma_estimate_total_mass} with $k=2,\sigma=\frac{1}{4}$, we obtain
    \begin{align*}
        t\frac{M(t,\widetilde{R})}{\widetilde{R}^2}&\lesssim t^{-1}\left(\|a^{-2}\gamma\|_{L^1}+\|\gamma\|_{L^1}\right)+t^{-1-\frac{1}{4}}\left(\|a^{-5}\gamma\|_{L^1}+\|a^\frac{35}{4}\gamma\|_{L^1}+\|a^{2}\ell^{\frac{9}{2}}\gamma\|_{L^1}+\|a^{\frac{7}{4}}|\theta|^{\frac{9}{4}}\gamma\|_{L^1}+\|a^2|\theta|^{\frac{9}{4}}\gamma\|_{L^1}\right)\\
        & \lesssim t^{-1}\left(\|a^{-2}\gamma\|_{L^1}+\|\gamma\|_{L^1}\right)+t^{-1-\frac{1}{4}}\left(\|a^{-5}\gamma\|_{L^1}+\|a^\frac{35}{4}\gamma\|_{L^1}+\|\ell^{\frac{35}{6}}\gamma\|_{L^1}+\||\theta|^{\frac{35}{12}}\gamma\|_{L^1}\right).
    \end{align*}
    Since $\frac{1}{2}t\widehat a\leq\widetilde{R}\leq2t\widehat a$ in the bulk $\mathcal{B}_t$, we also derive
    \begin{align*}
        \frac{M(t,\widetilde{R})}{\widetilde{R}a}\frac{p}{\widetilde{R}}\log\left\langle\frac{\widetilde{R}}{p}\right\rangle\lesssim& ~\frac{M(t,\widetilde{R})}{\widetilde{R}a}\1_{\mathcal{B}_t}+\frac{M(t,\widetilde{R})}{\widetilde{R}a}\1_{\mathcal{B}_t^c}\\
        \lesssim&~t^{-1}M(t,\widetilde{R})(a^{-1}+a^{-2})+\frac{M(t,\widetilde{R})}{\widetilde{R}a}\left(\1_{\{t\leq (a^{-1}a^0)^{4}\}}+\1_{\{t\leq a^{3}\}}+\1_{\{t\leq\ell^2\}}+\1_{\{t\leq 2|\theta|a^{-1}a^0\}}\right)\\
        \lesssim&~t^{-1}M(t,\widetilde{R})(a^{-1}+a^{-2})+t^{-1}\frac{M(t,\widetilde{R})}{\widetilde{R}a}(a^{-4}+a^3+\ell^2+|\theta|(1+a^{-1})).
    \end{align*}
    Since $M(t,r)\leq 4\pi^2\|\gamma\|_{L^1}$ and $\widetilde{R}a^2\geq m,\, \widetilde{R}a\geq 1$, this implies
    \begin{equation*}
        \frac{M(t,\widetilde{R})}{\widetilde{R}a}\frac{p}{\widetilde{R}}\log\left\langle\frac{\widetilde{R}}{p}\right\rangle\lesssim  t^{-1}\|\gamma\|_{L^1} (a^{-3}+a^3+\ell^2+|\theta|).
    \end{equation*}
    Finally, by Lemma \ref{lemma_estimate_total_mass} with $k=\frac{3}{2},\sigma=0$, 
    \begin{equation*}
        \frac{M(t,\widetilde{R})}{\widetilde{R}^\frac{3}{2}}\lesssim t^{-\frac{3}{2}}\left(\|a^{-3}\gamma\|_{L^1}+\|a^6\gamma\|_{L^1}+\|\ell^4\gamma\|_{L^1}+\||\theta|^2\gamma\|_{L^1}\right).
    \end{equation*}
\end{proof}
We can then prove similar estimates for the second order derivatives.

\begin{proposition}
\label{proposition_estimates_psi_second_derivatives}
    Let $t\geq T_0$ and $\mathcal{M}$ be defined as in \eqref{equation_definition_mathcal_M}. The second order derivatives of $\widetilde{\Psi}$ satisfy
    \begin{equation}
        t^{\frac{3}{2}}\|\partial_\theta\partial_a\widetilde{\Psi}\|_\infty+ t^2\|(1+a^{-2})\partial_\theta^2 \widetilde{\Psi}\|_\infty\lesssim \mathcal{M},
    \end{equation}
    \begin{equation}
        t\left\|\frac{a^2}{1+a^2}\partial_a^2\widetilde{\Psi}\right\|_\infty\lesssim \|(a^{-6}+a^6)\gamma\|_{L^1}+\|(a^{-3}+a^3)\gamma_a\|_{L^1}+t^{-\frac{1}{2}}\mathcal{M}.
    \end{equation}
\end{proposition}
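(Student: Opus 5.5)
The plan is to start from the formula \eqref{equation_def_derivatives_psi},
\begin{equation*}
\partial_\alpha\partial_\beta\widetilde{\Psi}=-\frac{M(t,\widetilde{R})}{\widetilde{R}^2}\Big(\partial_\alpha\partial_\beta\widetilde{R}-2\frac{\partial_\alpha\widetilde{R}\,\partial_\beta\widetilde{R}}{\widetilde{R}}\Big)-\rho(t,\widetilde{R})\frac{\partial_\alpha\widetilde{R}\,\partial_\beta\widetilde{R}}{\widetilde{R}^2},
\end{equation*}
and bound each factor with Proposition \ref{proposition_derivatives_R_tilde}. Powers of $\widetilde{R}$ will be converted into time decay using Lemma \ref{lemma_estimate_total_mass} for $M$ and Lemma \ref{lemma_estimate_charge_density} for $\rho$, with admissible $(k,\sigma)$. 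All moments that appear will then be absorbed into $\mathcal{M}$ by Young's inequality. Throughout, the lower bound $\widetilde{R}a^2\gtrsim a^0$ from \eqref{equation_lower_bound_R} lets me trade powers of $a^{-1}$ for powers of $\widetilde{R}^{1/2}$.

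\textbf{The $\partial_\theta^2$ term.} Here $|\partial_\theta\widetilde{R}|\le1$ and $|\partial_\theta^2\widetilde{R}|\lesssim(\widetilde{R}-1/a^0)^{-1}$. This gives
\begin{equation*}
|\partial_\theta^2\widetilde{\Psi}|\lesssim \frac{M}{\widetilde{R}^2(\widetilde{R}-\frac{1}{a^0})}+\frac{M}{\widetilde{R}^3}+\frac{|\rho|}{\widetilde{R}^2}.
\end{equation*}
To include the weight $a^{-2}$, I use $a^{-2}\lesssim\widetilde{R}$ (since $a^{-2}\lesssim \widetilde{R}/a^0$ up to harmless factors). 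The first two pieces then need $M/\widetilde{R}^{k}$ or the refined estimate of Lemma \ref{lemma_estimate_total_mass} with $k=2$, giving $t^{-2}$. The $\rho$ piece uses Lemma \ref{lemma_estimate_charge_density} with $k=1$, $\sigma=0$ (admissible), giving $t^{-2}$ for $\rho_I$ and $\rho_O$.

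\textbf{The mixed term.} By \eqref{equations_estimates_R_tilde_derivatives}, $|\partial_a\partial_\theta\widetilde{R}|\lesssim t/(\widetilde{R}(a^0)^2)+\dots$ and $|\partial_a\widetilde{R}|\lesssim t+(\cdots)\log$. The dominant contributions are therefore $tM/\widetilde{R}^3$, which by Lemma \ref{lemma_estimate_total_mass} with $k=3$ decays like $t^{-2}$, and $t|\rho|/\widetilde{R}^2\cdot|\partial_\theta\widetilde{R}|$. I will handle the latter with $k=1$ and $\sigma=1/2$ for $\rho_O$. For $\rho_I$ I use $k=3/2$, giving $t\cdot t^{-5/2}$, after writing $\widetilde{R}^{-2}=\widetilde{R}^{-3/2}\widetilde{R}^{-1/2}$ and $\widetilde{R}^{-1/2}\lesssim a$. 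This is where $t^{3/2}$ (and not $t^2$) is lost. The logarithmic terms $(1/a+p/a)\log\langle\widetilde{R}/p\rangle$ are absorbed using $\log\langle x\rangle\lesssim x^{1/2}$ and extra $\widetilde{R}$-powers.

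\textbf{The $\partial_a^2$ term, the main obstacle.} Here $|\partial_a\widetilde{R}|^2\sim t^2$ and $|\partial_a^2\widetilde{R}|\lesssim t^2/(\widetilde{R}(a^0)^5)+t\,a/(a^0)^5+\dots$. The $M$-contributions are like $t^2M/\widetilde{R}^3$, which is $t^{-1}$ by the $k=3$ mass bound. The $\rho$-contribution $t^2\rho/\widetilde{R}^2$ requires the $\rho_I$ bound with $k=2$, giving exactly $t^{-1}$ with the moments $\|(a^{-4}+a^6)\gamma\|$ and $\|(a^{-2}+a^3)\gamma_a\|$. The weight $a^2/(1+a^2)$ compensates the $a^{-k}$ powers, and $(a^0)^{-6}$ from $(\partial_a\widetilde{R})^2$ keeps large $a$ under control, so that only $\|(a^{-6}+a^6)\gamma\|$ and $\|(a^{-3}+a^3)\gamma_a\|$ appear. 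The $\rho_O$ part uses $k=2$, $\sigma=1/2$, giving $t^{-3/2}\mathcal{M}$. Lower-order terms in $\partial_a\widetilde{R}$, and the bulk complement, go into $t^{-1/2}\mathcal{M}$.

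The delicate points are the bookkeeping of the exact $a$-weights in the leading $t^{-1}$ term, and the fact that $|\partial_a\widetilde{R}|$ is only bounded by $t/(a^0)^3$ plus logarithms, so products with the $\log$ terms must be split between the bulk and its complement.
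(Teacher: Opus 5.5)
Your route is the paper's: insert the bounds of Proposition~\ref{proposition_derivatives_R_tilde} into \eqref{equation_def_derivatives_psi}, then turn powers of $\widetilde{R}$ into decay with Lemmas~\ref{lemma_estimate_total_mass}--\ref{lemma_estimate_charge_density}. The stated bounds do follow this way. However, your accounting of which terms are leading is wrong in two places, and the first error concerns exactly what shapes the $\partial_a^2$ estimate.

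\textbf{The $\partial_a^2$ term.} The non-$t$ (logarithmic) parts of $\partial_a\widetilde{R}$ and $\partial_a^2\widetilde{R}$ do not go into $t^{-1/2}\mathcal{M}$. Using $p\log\langle\widetilde{R}/p\rangle\lesssim\widetilde{R}$ and $\log\langle\widetilde{R}a^2\rangle\lesssim\widetilde{R}a^2$, they give
\begin{equation*}
\frac{(\partial_a\widetilde{R})^2}{\widetilde{R}^2}\lesssim\frac{t^2}{\widetilde{R}^2}+\frac{1}{\widetilde{R}}+\frac{1}{a^2},\qquad \frac{|\partial_a^2\widetilde{R}|}{\widetilde{R}}\lesssim\frac{t}{a\widetilde{R}}+\frac{t^2}{\widetilde{R}^2}+1+\frac{1}{a^2}.
\end{equation*}
The $a^{-2}$ here is attained. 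At $\theta=-t\widehat{a}$ one has $\partial_\theta R=0$, so $\partial_a\widetilde{R}=F'(a)$ and $\widetilde{R}=F(a)$, and $(F'/F)^2$ is of size $a^{-2}$ for small $a$. The resulting contributions $(1+a^{-2})\big(M(t,\widetilde{R})/\widetilde{R}+|\rho(t,\widetilde{R})|\big)$ are therefore only $O(t^{-1})$, via $k=1$ in Lemma~\ref{lemma_estimate_total_mass} and $(k,\sigma)=(0,1)$ in Lemma~\ref{lemma_estimate_charge_density}, where $\rho_I$ gives $t^{-1}$. The same holds for $t\,a^{-1}M/\widetilde{R}^2$, coming from $t\,\partial_a\widehat{a}\,\partial_a\partial_\theta R$. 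These terms belong to the leading part. It is these terms, not the $t^2$-terms, that require the weight $a^2/(1+a^2)$. Their moments lie among those in the statement, so the bound survives once you move them there.

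\textbf{The mixed and $\partial_\theta^2$ terms.} In the mixed term the $t$-terms are actually $O(t^{-2})$; for instance $t|\rho|/\widetilde{R}^2$ can be treated with the admissible choice $(k,\sigma)=(2,0)$. Your route through $k=3/2$ with $\widetilde{R}^{-1/2}\lesssim a$, and through $k=1$ for $\rho_O$, leaves the unbounded factors $a$ and $\widetilde{R}^{-1}\lesssim a^2/a^0$. These are absorbed only by the factor $(a^0)^{-3}$ in $\partial_a\widetilde{R}$, which your bound $|\partial_a\widetilde{R}|\lesssim t+\dots$ drops. The $t^{-3/2}$ rate is not lost there. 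It is forced by the non-$t$ parts of $\partial_a\widetilde{R}$ and $\partial_a\partial_\theta\widetilde{R}$, which produce $M/\widetilde{R}^{3/2}$ and $|\rho|/\widetilde{R}^{1/2}$. For the latter, take $\sigma=1/2$ so that $k+\sigma\geq 1$. Likewise, in the $\partial_\theta^2$ bound the unweighted piece $|\rho|/\widetilde{R}^2$ needs $k=2$ rather than $k=1$, since $\widetilde{R}\geq F(a)\to 0$ as $a\to\infty$.
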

\begin{proof}
     We begin with the study of $\partial^2_\theta\widetilde{\Psi}$. By \eqref{equations_estimates_R_tilde_derivatives} and \eqref{equation_def_derivatives_psi}, since $\widetilde{R}a^2\geq a^0\gtrsim 1$, we have
     \begin{align*}
        |\partial_\theta^2\widetilde{\Psi}(\theta,a)|&\lesssim \frac{M(t,\widetilde{R})}{\widetilde{R}^3}+ \frac{M(t,\widetilde{R})}{\widetilde{R}^2}\frac{1}{\widetilde{R}-\frac{1}{a^0}}+ \frac{\rho(t,\widetilde{R})}{\widetilde{R}^2},\\
        |a^{-2}\partial_\theta^2\widetilde{\Psi}(\theta,a)|&\lesssim \frac{M(t,\widetilde{R})}{\widetilde{R}^3 a^2}+\frac{M(t,\widetilde{R})a^0}{\widetilde{R}^3a^2}+\frac{\rho(t,\widetilde{R})}{\widetilde{R}^2a^2}\lesssim \frac{M(t,\widetilde{R})}{\widetilde{R}^2} +\frac{\rho(t,\widetilde{R})}{\widetilde{R}}. 
    \end{align*}
    Then, we apply Lemma \ref{lemma_estimate_total_mass} with $k=2,\sigma=0$ and $k=3,\sigma=0$. We find
    \begin{equation*}
        \frac{M(t,\widetilde{R})}{\widetilde{R}^2}\lesssim t^{-2}\|(a^{-4}+a^{8}+|\theta|^4+\ell^8)\gamma\|_{L^1} \qquad \frac{M(t,\widetilde{R})}{\widetilde{R}^3}\lesssim t^{-3}\|(a^{-6}+a^{12}+|\theta|^4+\ell^8)\gamma\|_{L^1}.
    \end{equation*}
    Using the second estimate of $M$ in Lemma \ref{lemma_estimate_total_mass} with $k=2,\sigma=0$, we have
    \begin{equation*}
        \frac{M(t,\widetilde{R})}{\widetilde{R}^2}\frac{1}{\widetilde{R}-\frac{1}{a^0}}\lesssim t^{-2}\|(a^{-4}+a^{10}+\ell^8+|\theta|^4)\gamma\|_{L^1}.
    \end{equation*}
    For the two terms that depend on $\rho$, we apply Lemma \ref{lemma_estimate_charge_density} with $\kappa=1,\sigma=0$ and $\kappa=2,\sigma=0$.  We obtain
    \begin{equation*}
         \frac{\rho(t,\widetilde{R})}{\widetilde{R}}\lesssim t^{-2}\mathcal{M},\qquad\frac{\rho(t,\widetilde{R})}{\widetilde{R}^2}\lesssim t^{-3}\mathcal{M}.
    \end{equation*}
    The five previous estimates grant the upper bound of $|(1+a^{-2})\partial^2_\theta\widetilde{\Psi}|$. We then prove the upper bound on $\partial_a^2\widetilde{\Psi}$. By \eqref{equation_lower_bound_R} and \eqref{equations_estimates_R_tilde_derivatives}, we have
    \begin{align*}
        \frac{(\partial_a\widetilde{R})^2}{\widetilde{R}^2}\lesssim \frac{t^2}{\widetilde{R}^2}+\frac{1}{\widetilde{R}^2a^2}\log\langle\widetilde{R}a^2\rangle+\frac{p^2}{\widetilde{R}^2a^2}\log\left\langle \frac{\widetilde{R}}{p}\right\rangle\lesssim \frac{t^2}{\widetilde{R}^2}+\frac{1}{\widetilde{R}}+\frac{1}{a^2},
    \end{align*}
    as well as
    \begin{align*}
        \frac{|\partial_a^2\widetilde{R}|}{\widetilde{R}}\lesssim \frac{t}{a\widetilde{R}}+\frac{t^2}{\widetilde{R}^2}+\frac{1}{a^2\widetilde{R}}\log\langle \widetilde{R}a^2\rangle +\frac{p}{a^2\widetilde{R}}\log\left\langle \frac{\widetilde{R}}{p}\right\rangle \lesssim \frac{t}{a\widetilde{R}}+\frac{t^2}{\widetilde{R}^2}+1+\frac{1}{a^2}.
    \end{align*}
    With \eqref{equation_def_derivatives_psi}, this implies 
    \begin{equation*}
        |\partial_a^2\widetilde{\Psi}|\lesssim (1+a^{-2})\left[\frac{M(t,\widetilde{R})}{\widetilde{R}}\left(1+\frac{t}{\widetilde{R}}+\frac{t^2}{\widetilde{R}^2}\right)+\rho(t,\widetilde{R})\left(1+\frac{t}{\widetilde{R}}+\frac{t^2}{\widetilde{R}^2}\right)\right].
    \end{equation*}
    For the terms containing $M$, we apply Lemma \ref{lemma_estimate_total_mass} with $k=1,\sigma=\frac{1}{2}$,\,$k=2,\sigma=\frac{1}{2}$, and $k=3,\sigma=\frac{1}{2}$ respectively. We derive
    \begin{equation}
        \label{equation_proof_estimate_M_R3}
        \frac{M(t,\widetilde{R})}{\widetilde{R}}+t\frac{M(t,\widetilde{R})}{\widetilde{R}^2}+t^2\frac{M(t,\widetilde{R})}{\widetilde{R}^3}\lesssim t^{-1}\|(a^{-3}+1)\gamma\|_{L^1}+t^{-\frac{3}{2}}\|(a^{-8}+a^{24}+|\theta|^4+\ell^8)\gamma\|_{L^1}.
    \end{equation}
    Then, applying Lemma \ref{lemma_estimate_charge_density} with $k=0,\sigma=1$, $k=1,\sigma=\frac{1}{2}$, and $k=2,\sigma=\frac{1}{2}$, we obtain
    \begin{align*}
        |\rho(t,\widetilde{R})|+t\frac{|\rho(t,\widetilde{R})|}{\widetilde{R}}+t^2\frac{|\rho(t,\widetilde{R})|}{\widetilde{R}}\lesssim&~t^{-1}\left(\|(a^{-4}+a^6)\gamma\|_{L^1}+\|(a^{-2}+a^{3})\gamma_a\|_{L^1}\right)+t^{-\frac{3}{2}}\mathcal{M}.
    \end{align*}
    Together with the previous estimate, this yields the upper bound of $|\partial_a^2\widetilde{\Psi}|$. Finally, for $\partial_\theta\partial_a\widetilde{\Psi}$, by \eqref{equation_lower_bound_R} and \eqref{equations_estimates_R_tilde_derivatives}, we have
    \begin{equation*}
        \frac{|\partial_a\widetilde{R}|}{\widetilde{R}^2}\lesssim \frac{t}{\widetilde{R}^2}+\frac{1}{\widetilde{R}^\frac{3}{2}}+\frac{1}{\widetilde{R}^\frac{1}{2}},\qquad  \frac{|\partial_a\partial_\theta\widetilde{R}|}{\widetilde{R}}\lesssim \frac{t}{\widetilde{R}^2}+\frac{1}{\widetilde{R}^\frac{1}{2}}.
    \end{equation*}
    Hence, using \eqref{equation_def_derivatives_psi}, we find
    \begin{equation*}
        |\partial_\theta\partial_a\widetilde{\Psi}|\lesssim \frac{M(t,\widetilde{R})}{\widetilde{R}^\frac{3}{2}}+\frac{M(t,\widetilde{R})}{\widetilde{R}^\frac{5}{2}}+t\frac{M(t,\widetilde{R})}{\widetilde{R}^3}+\frac{\rho(t,\widetilde{R})}{\widetilde{R}^\frac{1}{2}}+\frac{\rho(t,\widetilde{R})}{\widetilde{R}^\frac{3}{2}}+t\frac{\rho(t,\widetilde{R})}{\widetilde{R}^2}.
    \end{equation*}
    Therefore, by Lemma \ref{lemma_estimate_total_mass} with $k=\frac{3}{2},\sigma=0$, $k=\frac{5}{2},\sigma=0$ and $k=3,\sigma=0$, we have
    \begin{equation*}
        \frac{M(t,\widetilde{R})}{\widetilde{R}^\frac{3}{2}}+\frac{M(t,\widetilde{R})}{\widetilde{R}^\frac{5}{2}}+t\frac{M(t,\widetilde{R})}{\widetilde{R}^3}\lesssim t^{-\frac{3}{2}}\|(a^{-8}+a^{24}+|\theta|^4+\ell^8)\gamma\|_{L^1}.
    \end{equation*}
    Similarly, by Lemma \ref{lemma_estimate_charge_density} with $k=\frac{1}{2},\sigma=0$, $k=\frac{3}{2},\sigma=0$ and $k=2,\sigma=0$, we have
    \begin{equation*}
        \frac{\rho(t,\widetilde{R})}{\widetilde{R}^\frac{1}{2}}+\frac{\rho(t,\widetilde{R})}{\widetilde{R}^\frac{3}{2}}+t\frac{\rho(t,\widetilde{R})}{\widetilde{R}^2}\lesssim t^{-\frac{3}{2}}\mathcal{M}.
    \end{equation*}
    This concludes the proof of the proposition.
\end{proof}


\section{Study of the nonlinear problem}
\label{section_non_linear_problem}
We recall that if $f$ is a solution to \eqref{equation_VM_pc_radial_case}, $\gamma(t,\theta,a,\ell)=f(R(\theta+t\widehat a,a),U(\theta+t\widehat a,a))$ is a solution to 
\begin{equation}
    \label{equation_non_linear_gamma}
    \partial_t\gamma=\{\widetilde{\Psi},\gamma\}_{\theta,a},
\end{equation}
where $\{g,h\}_{\theta,a}:=\partial_\theta g\partial_a h-\partial_ag\partial_\theta h$.
Similarly, finding a solution to \eqref{equation_non_linear_gamma} grants a solution to \eqref{equation_VM_pc_radial_case} given by $f(t,r,u,\ell)= \gamma(t,\Theta(r,u)-t\widehat{\mathcal{A}}(r,u),\mathcal{A}(r,u),\ell)$. Hence, we can focus on studying the equation satisfied by $\gamma$. To prove global existence, we will propagate moments on $\gamma$ and its derivatives. To this end, we begin by noting that the moments of $\gamma$ satisfy the nonlinear equations given by 

\begin{equation}
    \label{equation_non_linear_a_q_gamma}
    \partial_t(a^q\gamma)=\{\widetilde{\Psi},a^q\gamma\}_{\theta,a}-qa^{q-1}\gamma\partial_\theta\widetilde{\Psi},
\end{equation}
\begin{equation}
    \label{equation_non_linear_theta_q_gamma}
    \partial_t (|\theta|^q\gamma)=\{\widetilde{\Psi},|\theta|^q\gamma\}_{\theta,a}+q\frac{\theta}{|\theta|}|\theta|^{q-1}\partial_a\widetilde{\Psi}.
\end{equation}

\subsection{Propagation of moments}

We begin by propagating moments on $\gamma$ a solution to \eqref{equation_non_linear_gamma}.

\begin{proposition}
    \label{proposition_bootstrap_gamma}
    Let $q\geq \frac{5}{2},~T>T_0$ and assume 
    \begin{equation}
        \label{equation_moments_assumption_boostrap_initial}
        \|(a^{-3q}+a^{3q}+|\theta|^q+\ell^{2q})\gamma(T_0)\|_{L^1}\leq \varepsilon_0,
    \end{equation}
    and, for all $t\in[T_0,T]$,
    \begin{equation}
        \label{equation_moments_assumption_boostrap_t}
        \|(a^{-3q}+a^{3q}+|\theta|^q+\ell^{2q})\gamma(t)\|_{L^1}\leq \varepsilon_1\langle t\rangle^\delta,
    \end{equation}
    with $\delta<\frac{1}{4}$. Then, the following stronger bounds hold. For all $t\in[T_0,T]$,
    \begin{equation}
        \label{equation_upper_bound_moments_gamma_final}
        \|(a^{-3q}+a^{3q}+\ell^{2q})\gamma(t)\|_{L^1}\lesssim \varepsilon_0,\qquad \| |\theta|^q \gamma\|_{L^1}\lesssim (\varepsilon_0+\varepsilon_1)t^{C\varepsilon_0},
    \end{equation}
    where $C$ is a constant that depends only on $m$ and $T_0$.
\end{proposition}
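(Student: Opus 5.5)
The plan is to exploit the Hamiltonian structure of \eqref{equation_non_linear_gamma}. The vector field $(\partial_a\widetilde{\Psi},-\partial_\theta\widetilde{\Psi})$ is divergence free in $(\theta,a)$, and $\ell$ is only a parameter. Hence $|\gamma|$ solves the same transport equation, and for any weight $w(\theta,a,\ell)$ we have
\begin{equation*}
\frac{\mathrm{d}}{\mathrm{d}t}\iiint w|\gamma|\,\mathrm{d}\theta\mathrm{d}a\mathrm{d}\ell=\iiint \{\widetilde{\Psi},w\}_{\theta,a}|\gamma|\,\mathrm{d}\theta\mathrm{d}a\mathrm{d}\ell,
\end{equation*}
which is the integrated form of \eqref{equation_non_linear_a_q_gamma}--\eqref{equation_non_linear_theta_q_gamma}. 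In particular $\|\gamma(t)\|_{L^1}$ and $\|\ell^{2q}\gamma(t)\|_{L^1}$ are exactly conserved, so they are bounded by $\varepsilon_0$. The proof then splits into two steps: first the $a^{\pm3q}$ weights, then the $|\theta|^q$ weight. Throughout I will use the bootstrap assumption \eqref{equation_moments_assumption_boostrap_t} to bound the moments of $\gamma$ that appear on the right-hand sides of Proposition \ref{proposition_estimates_psi_first_derivative}.

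\textbf{Step 1: the weights $a^{\pm 3q}$.} With $w=a^{\pm3q}$ the identity above gives
\begin{equation*}
\Big|\frac{\mathrm{d}}{\mathrm{d}t}\|a^{\pm3q}\gamma\|_{L^1}\Big|\le 3q\,\|a^{-1}\partial_\theta\widetilde{\Psi}\|_{L^\infty}\|a^{\pm3q}\gamma\|_{L^1}.
\end{equation*}
By Proposition \ref{proposition_estimates_psi_first_derivative}, $\|a^{-1}\partial_\theta\widetilde{\Psi}\|_{L^\infty}\lesssim t^{-3/2}$ times a sum of moments. These moments are $a^{-3/2},a^{-4},a^{27/4}$, $\ell^{9/2}$ and $|\theta|^{9/4}$. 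Since $q\ge 5/2$ gives $3q\ge 15/2$, $2q\ge5$ and $q\ge 9/4$, each of them is controlled by interpolation by $\varepsilon_1\langle t\rangle^{\delta}$. The coefficient is therefore $O(\varepsilon_1 t^{-3/2+\delta})$, which is integrable on $[T_0,\infty)$. Grönwall then gives $\|a^{\pm3q}\gamma(t)\|_{L^1}\le \varepsilon_0 e^{C\varepsilon_1}\lesssim\varepsilon_0$.

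\textbf{Step 2: the weight $|\theta|^q$.} Here
\begin{equation*}
\frac{\mathrm{d}}{\mathrm{d}t}\||\theta|^q\gamma\|_{L^1}\le q\iiint|\theta|^{q-1}|\partial_a\widetilde{\Psi}||\gamma|,
\end{equation*}
and I insert the pointwise bound on $t|\partial_a\widetilde{\Psi}|$ from Proposition \ref{proposition_estimates_psi_first_derivative}. The contribution $t^{-1}\|\gamma\|_{L^1}|\theta|$ gives $q\,t^{-1}\varepsilon_0\||\theta|^q\gamma\|_{L^1}$, which is the source of the growth $t^{C\varepsilon_0}$. The contributions $t^{-1}(\|a^{-2}\gamma\|_{L^1}+\|\gamma\|_{L^1}(a^{-3}+a^3+\ell^2))|\theta|^{q-1}$ are handled by Young's inequality, $|\theta|^{q-1}X\le |\theta|^q+X^q$. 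This produces $|\theta|^q$ again, plus $a^{\pm3q}$ and $\ell^{2q}$ moments, which are already bounded by $\varepsilon_0$ after Step 1 and conservation. These terms give $t^{-1}\varepsilon_0(\||\theta|^q\gamma\|_{L^1}+\varepsilon_0)$. Grönwall then yields $\||\theta|^q\gamma\|_{L^1}\lesssim(\varepsilon_0+\varepsilon_1)t^{C\varepsilon_0}$, where the inhomogeneous term $\varepsilon_0^2\log t$ is absorbed into the power.

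\textbf{Main obstacle.} The remaining term is $t^{-5/4}(\|a^{-5}\gamma\|_{L^1}+\|a^9\gamma\|_{L^1}+\||\theta|^3\gamma\|_{L^1}+\|\ell^6\gamma\|_{L^1})$. For $q$ close to $5/2$, the moments $|\theta|^3$, $a^9$ and $\ell^6$ are not controlled by \eqref{equation_moments_assumption_boostrap_t}. I would first redo that part of the proof of Proposition \ref{proposition_estimates_psi_first_derivative}, using Lemma \ref{lemma_estimate_total_mass} with a smaller $\sigma>\delta$ instead of $\sigma=1/4$. The moments required then drop to roughly $|\theta|^{2+\sigma}$, $a^{8+3\sigma}$ and $\ell^{4+2\sigma}$, all within reach of $q\ge5/2$. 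Multiplying by $|\theta|^{q-1}$ and using Hölder/Young, this term is bounded by $\varepsilon_1^2 t^{-1-\sigma+2\delta}$ (or, if necessary, $\varepsilon_1 t^{-1-\sigma+\delta}$ after separating the factors), which is integrable when $\delta$ is small, and it contributes at most $\varepsilon_1$. If the exponent bookkeeping does not close, the fallback is to split the $|\theta|^{q-1}$ weight between the bulk and its complement, using \eqref{equation_complement_bulk_idea} to trade a power of $t$ for moments of order $\le q$.
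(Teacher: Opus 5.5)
Your Steps 1 and 2 follow the paper's proof. That proof uses conservation of the $\ell$-moments and Gr\"onwall for $a^{\pm 3q}$ with the integrable coefficient $\varepsilon_1 t^{-3/2+\delta}$. For $|\theta|^q$ it uses Young's inequality, $|\theta|^{q-1}a^{\pm 3}\le |\theta|^q+a^{\pm 3q}$ and $|\theta|^{q-1}\ell^2\le |\theta|^q+\ell^{2q}$, followed by Gr\"onwall, which is where the growth $t^{C\varepsilon_0}$ comes from.

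The paper disposes of the remaining $t^{-5/4}$ term simply by inserting \eqref{equation_moments_assumption_boostrap_t}. You are right that this needs $\|(a^{-5}+a^{9}+|\theta|^3+\ell^6)\gamma\|_{L^1}\lesssim \varepsilon_1 t^{\delta}$, i.e.\ $q\ge 3$, whereas $q\ge \frac{5}{2}$ only suffices for Step 1. This is harmless for the paper, since the proposition is only applied with $q=8$, in Proposition \ref{proposition_bootstrap_gamma_derivatives}.

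\textbf{The gap.} Your repair of that term does not work as claimed.
\begin{itemize}
\item Rerunning Lemma \ref{lemma_estimate_total_mass} for $tM(t,\widetilde R)/\widetilde R^2$ with $k=2$ and small $\sigma$ produces the weights $a^{-4-4\sigma}$, $a^{8+3\sigma}$, $a^{2}\ell^{4+2\sigma}$ and $(a^{2-\sigma}+a^{2})|\theta|^{2+\sigma}$.
\item The moment $a^{8+3\sigma}$ is not dominated by $a^{3q}=a^{15/2}$ when $q=\frac{5}{2}$.
\item Interpolating the mixed weights between $a^{3q}$, $\ell^{2q}$ and $|\theta|^{q}$ requires $\frac{2}{3q}+\frac{2+\sigma}{q}\le 1$, i.e.\ $q\ge \frac{8}{3}+\sigma$.
\item Integrability of $t^{-1-\sigma+\delta}$ forces $\sigma>\delta$, so your argument closes only for $q>\frac{8}{3}+\delta$.
\end{itemize}

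Lowering $\sigma$ cannot remove this obstruction. The base exponent $8$ comes from the crude bound $\widetilde R(\vartheta,\alpha)^{-2}\lesssim \alpha^{2}$ used on $\mathcal B_t^c$. For instance, on $\{\alpha\ge t^{1/3}\}$ this gives $\alpha^{2}(\alpha^3/t)^{2+\sigma}=\alpha^{8+3\sigma}t^{-2-\sigma}$. Your fallback of splitting the $|\theta|^{q-1}$ weight between the bulk and its complement is too vague to check.

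So for $\frac{5}{2}\le q\le \frac{8}{3}+\delta$ your proposal, like the paper's text, leaves this term uncontrolled. The honest fix is to assume $q\ge 3$. Then the $t^{-5/4}$ term is bounded directly by the bootstrap hypothesis, exactly as in the paper, and your detour is unnecessary.
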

\begin{proof}
    Let $q\geq \frac{5}{2}$.  The moments in $\ell$ are conserved in time, in particular
    \begin{equation*}
        \|\ell^{2q}\gamma(t)\|_{L^1}=\|\ell^{2q}\gamma(T_0)\|_{L^1}\leq \varepsilon_0.
    \end{equation*}
    Then, by \eqref{equation_non_linear_a_q_gamma}, we have
    \begin{align*}
        \frac{\mathrm{d}}{\mathrm{d}t}\|a^{3q}\gamma\|_{L^1}\leq \left| \iiint \{\widetilde{\Psi},a^{3q}\gamma\}_{\theta,a}\mathrm{d}\theta\mathrm{d}a\mathrm{d}\ell\right|+3q\left| \iiint a^{3q-1}\gamma\partial_\theta \widetilde{\Psi}\mathrm{d}\theta\mathrm{d}a\mathrm{d}\ell\right|.
    \end{align*}
    Since the integral of the Poisson bracket vanishes, using \eqref{equation_estimate_Psi_first_derivative}, we are left with
    \begin{align*}
        \frac{\mathrm{d}}{\mathrm{d}t}\|a^{3q}\gamma\|_{L^1}&\leq 3q\iiint a^{3q-1}\gamma|\partial_\theta\widetilde{\Psi}|\mathrm{d}\theta\mathrm{d}a\mathrm{d}\ell\\
        &\lesssim t^{-\frac{3}{2}}\left[\|a^{-\frac{3}{2}}\gamma\|_{L^1}+\|\gamma\|_{L^1} +t^{-\frac{1}{4}}\left(\|a^{-4}\gamma\|_{L^1}+\|a^{\frac{27}{4}}\gamma\|_{L^1}+\|\ell^\frac{9}{2}\gamma\|_{L^1}+\||\theta|^\frac{9}{4}\gamma\|_{L^1}\right)\right]\|a^{3q}\gamma\|_{L^1}.
    \end{align*}
    Then, using the assumptions \eqref{equation_moments_assumption_boostrap_initial}--\eqref{equation_moments_assumption_boostrap_t}, we obtain
    \begin{equation*}
        \frac{\mathrm{d}}{\mathrm{d}t}\|a^{3q}\gamma\|_{L^1}\lesssim  t^{-\frac{3}{2}+\delta}\|a^{3q}\gamma\|_{L^1},
    \end{equation*}
    which, by Gronwall's lemma, leads to 
    \begin{equation*}
        \|a^{3q}\gamma\|_{L^1}\lesssim  \varepsilon_0.
    \end{equation*}
    The same holds for $\|a^{-3q}\gamma\|_{L^1}$.  Finally, by \eqref{equation_non_linear_theta_q_gamma} and the upper bound on $\partial_a\widetilde{\Psi}$ in \eqref{equation_estimate_Psi_first_derivative}, we obtain
    \begin{align*}
        \frac{\mathrm{d}}{\mathrm{d}t}\||\theta|^q\gamma\|_{L^1}\lesssim&~t^{-1}\|a^{-2}\gamma\|_{L^1}\iiint|\theta|^{q-1}\gamma\mathrm{d}\theta\mathrm{d}a\mathrm{d}\ell+t^{-1}\|\gamma\|_{L^1}\iiint(a^{-3}+a^3+\ell^2+|\theta|)|\theta|^{q-1}\gamma\mathrm{d}\theta\mathrm{d}a\mathrm{d}\ell\\
             \displaystyle&+t^{-\frac{5}{4}}\left(\|a^{-5}\gamma\|_{L^1}+\|a^9\gamma\|_{L^1}+\||\theta|^3\gamma\|_{L^1}+\|\ell^6\gamma\|_{L^1}\right)\iiint |\theta|^{q-1}\gamma\mathrm{d}\theta\mathrm{d}a\mathrm{d}\ell.
    \end{align*}
    Using Young's inequality, assumptions \eqref{equation_moments_assumption_boostrap_initial}--\eqref{equation_moments_assumption_boostrap_t}, as well as the first upper bound in \eqref{equation_upper_bound_moments_gamma_final}, we derive 
    \begin{equation*}
        \frac{\mathrm{d}}{\mathrm{d}t}\||\theta|^q\gamma\|_{L^1}\lesssim \varepsilon_0^2 t^{-1}+\varepsilon_0\varepsilon_1 t^{-\frac{5}{4}+\delta}+\left(\varepsilon_0t^{-1}+\varepsilon_1t^{-\frac{5}{4}+\delta}\right)\||\theta|^q\gamma\|_{L^1},
    \end{equation*}
    which, by Gronwall's Lemma, proves the last estimate in \eqref{equation_upper_bound_moments_gamma_final}.
\end{proof}

\subsection{The bootstrap argument for the derivatives}

To prove the propagation of moments, one could try to directly consider, for instance, $\partial_t\|a^q\gamma_a\|_{L^1}$. However, doing so yields an upper bound containing an integral of $a^q\gamma_\theta\partial_a^2\widetilde{\Psi}$. Since $t\partial^2_a\widetilde{\Psi}$ is bounded by $1+a^{-2}$ multiplied by moments of $(\gamma,\gamma_\theta,\gamma_a)$, it will induce higher-order moments, preventing us from closing the bootstrap argument. \\
To circumvent this issue, we can follow the approach of \cite{Pausader_Widmayer_2021}. The general idea is to consider specific weight functions $\omega$ and define $\omega^{(1)}:=P_1(a)\omega,\,\omega^{(2)}:=P_2(a)\omega$. Here, $P_1$ and $P_2$ are defined to bypass the difficulty coming from the estimate of $\partial_a^2\widetilde{\Psi}$. More precisely, we require $P_1,P_2$ to satisfy
\begin{equation*}
    (1+a^{-2})P_2(a)\leq P_1(a),\qquad (1+a^{-2})^{-1}P_1(a)\leq P_2(a).
\end{equation*}
We can thus consider
\begin{equation*}
    P_1(a):=a+a^{-1},\qquad P_2(a):=a,
\end{equation*}
and the weights 
\begin{equation*}
    \omega_{d,n,q}:=|\theta|^d\ell^na^q,\quad \omega^{(1)}_{d,n,q}:=(a+a^{-1})\omega_{d,n,q},\quad \omega^{(2)}_{d,n,q}:=a\omega_{d,n,q}.
\end{equation*}
with $d,n\in\N$ and $q\in\R$. Let $\omega=\omega_{d,n,q}$. In that case, we find
\begin{equation}
    \label{equation_estimates_omega_a}(1+a^{-2})^{-1}\omega^{(1)}\lesssim \omega^{(2)},\quad (1+a^{-2})\omega^{(2)}\lesssim \omega^{(1)},\quad  |\partial_a \omega^{(1)}|\lesssim a^{-1} \omega^{(1)},\quad |\partial_a \omega^{(2)}|\lesssim a^{-1} \omega^{(2)},
\end{equation}
as well as the following nonlinear equations 
\begin{align}
    \label{equation_non_linear_omega_1}\partial_t(\omega^{(1)}|\gamma_\theta|)&=\{\widetilde{\Psi},\omega^{(1)}|\gamma_\theta|\}_{\theta,a}-|\gamma_\theta|\partial_a\omega^{(1)}\partial_\theta\widetilde{\Psi}+|\gamma_\theta|\partial_\theta\omega^{(1)}\partial_a\widetilde{\Psi}+\omega^{(1)}\frac{\gamma_\theta}{|\gamma_\theta|}\gamma_a\partial_\theta^2\widetilde{\Psi}-\omega^{(1)}\frac{\gamma_\theta}{|\gamma_\theta|}\gamma_\theta\partial_a\partial_\theta\widetilde{\Psi},\\
     \label{equation_non_linear_omega_2} \partial_t(\omega^{(2)}|\gamma_a|)&=\{\widetilde{\Psi},\omega^{(2)}|\gamma_a|\}_{\theta,a}-|\gamma_a|\partial_a\omega^{(2)}\partial_\theta\widetilde{\Psi}+|\gamma_a|\partial_\theta\omega^{(2)}\partial_a\widetilde{\Psi}+\omega^{(2)}\frac{\gamma_a}{|\gamma_a|}\gamma_a\partial_a\partial_\theta\widetilde{\Psi}-\omega^{(2)}\frac{\gamma_a}{|\gamma_a|}\gamma_\theta\partial_a^2\widetilde{\Psi}.
\end{align}
Here, we can see that $\partial_\theta\omega^{(1)}=d\frac{\theta}{|\theta|}\omega^{(1)}_{d-1,n,q}$. Moreover, recalling the estimate of $\partial_a\widetilde{\Psi}$ given in \eqref{equation_estimate_Psi_first_derivative}, we derive 
\begin{align*}
    |\gamma_\theta|\partial_\theta\omega^{(1)}|\partial_a\widetilde{\Psi}|\lesssim&~ t^{-1}\big(\|a^{-2}\gamma\|_{L^1}+t^{-\frac{1}{4}}\|(a^{-5}+a^9+|\theta|^3+\ell^6)\gamma\|_{L^1}\big)|\gamma_\theta|\omega^{(1)}_{d-1,n,q}\\
    &+t^{-1}\|\gamma\|_{L^1}(\omega^{(1)}_{d-1,n,q-3}+\omega^{(1)}_{d-1,n,q+3}+\omega^{(1)}_{d-1,n+2,q}+\omega^{(1)})|\gamma_\theta|.
\end{align*}
Therefore, to estimate $\gamma_\theta$ with the weight $\omega^{(1)}_{d,n,q}$, we need to already have estimates with the weights $\omega^{(1)}_{d-1,n,q-3}$, $\omega^{(1)}_{d-1,n,q+3}$, $\omega^{(1)}_{d-1,n+2,q}$ and $\omega^{(1)}_{d-1,n,q}$. For this purpose, we consider a set of weights $\mathcal{I}=\{\omega_{d,n,q}\}_{d,n,q}$ that need to satisfy 
\begin{equation*}
    \omega_{d,n,q}\in \mathcal{I}\Rightarrow \omega_{d-1,n,q-3},\,\omega_{d-1,n,q+3},\,\omega_{d-1,n+2,q},\,\omega_{d-1,n,q}\in \mathcal{I}, 
\end{equation*}
with $\omega_{-1,n,q}\equiv 0$. Then, we assume, as bootstrap assumption, that we have estimates on $\|\omega^{(1)}\gamma_\theta(t)\|_{L^1}$ and $\|\omega^{(2)}\gamma_a(t)\|_{L^1}$ for any $\omega\in\mathcal{I}$. Here, we will consider
\begin{equation}
    \mathcal{I}:=\{|\theta|^{9-k}a^{-3i_1+3i_2}\ell^{i_3}\,|\, i_1+i_2+i_3=k,\,\, i_1, i_2, i_3, k\in\llbracket 0,9\rrbracket\}.
\end{equation}
In particular, we ensure that 
\begin{equation}
    \mathcal{M}\leq \|(a^{-24}+a^{24}+|\theta|^{8}+\ell^{16})\gamma\|_{L^1}+\sum_{\omega\in \mathcal{I}}\left(\|\omega^{(1)}\gamma_\theta\|_{L^1}+\|\omega^{(2)}\gamma_a\|_{L^1}\right),
\end{equation}
with
\begin{equation}
    \label{equation_upperbound_moments_with_omega}
    \|(a^{-24}+a^{24}+|\theta|^{8}+\ell^{16})\gamma\|_{L^1}+\sum_{\omega\in \mathcal{I}}\left(\|\omega^{(1)}\gamma_\theta\|_{L^1}+\|\omega^{(2)}\gamma_a\|_{L^1}\right)\lesssim  \|(a^{-30}+a^{30}+|\theta|^{10}+\ell^{20})(\gamma+|\gamma_\theta|+|\gamma_a|)\|_{L^1}.
\end{equation}
\begin{proposition}
    \label{proposition_bootstrap_gamma_derivatives}
    Let $\delta \leq \frac{1}{44}$. Assume that for all $\omega\in \mathcal{I}$
    \begin{equation*}
        \|(a^{-24}+a^{24}+|\theta|^{8}+\ell^{16})\gamma(T_0)\|_{L^1}+\|\omega^{(1)}\gamma_\theta(T_0)\|_{L^1}+\|\omega^{(2)}\gamma_a(T_0)\|_{L^1}\leq \varepsilon_0,
    \end{equation*}
    and for all $t\in[T_0,T]$,
    \begin{equation}
        \label{equation_bootstrap_assumption_t}
        \|(a^{-24}+a^{24}+|\theta|^{8}+\ell^{16})\gamma(t)\|_{L^1}\leq \varepsilon_1t^\delta,\qquad \|\omega_{d,n,q}^{(1)}\gamma_\theta(t)\|_{L^1}+\|\omega_{d,n,q}^{(2)}\gamma_a(t)\|_{L^1}\leq \varepsilon_1 t^{(d+1)\delta}
\end{equation}
    If $C\varepsilon_0\leq \delta$, where $C$ is a constant that depends only on $m$ and $T_0$, then the stronger bounds hold 
    \begin{equation*}
        \|(a^{-24}+a^{24}+\ell^{16})\gamma(t)\|_{L^1}\lesssim \varepsilon_0,\qquad \| |\theta|^{8}\gamma(t)\|_{L^1}\lesssim (\varepsilon_0+\varepsilon_1)t^{C\varepsilon_0}, 
    \end{equation*}
    \begin{equation}
        \label{equation_estimate_derivative_bootstrap}
        \|\omega^{(1)}_{d,n,q}\gamma_\theta(t)\|\lesssim \varepsilon_0+\frac{\varepsilon_1(\varepsilon_0+\varepsilon_1)}{\delta} t^{d\delta},\quad \|\omega^{(2)}_{d,n,q}\gamma_a(t)\|\lesssim \varepsilon_0+\frac{\varepsilon_1(\varepsilon_0+\varepsilon_1)}{\delta^2}t^{(d+1)\delta}.
    \end{equation}
\end{proposition}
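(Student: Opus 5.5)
The moment bounds on $\gamma$ itself follow directly from Proposition \ref{proposition_bootstrap_gamma} applied with $q=8$. Its hypothesis \eqref{equation_moments_assumption_boostrap_t} is implied by the first bound in \eqref{equation_bootstrap_assumption_t}, since $\delta\le\frac1{44}<\frac14$, and its conclusion gives exactly $\|(a^{-24}+a^{24}+\ell^{16})\gamma\|_{L^1}\lesssim\varepsilon_0$ and $\||\theta|^8\gamma\|_{L^1}\lesssim(\varepsilon_0+\varepsilon_1)t^{C\varepsilon_0}$. Under the bootstrap assumptions this also gives $\mathcal M\lesssim\varepsilon_1 t^{10\delta}$, because the largest $d$ in $\mathcal I$ is $9$. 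The whole argument is then an induction on $d$, the power of $|\theta|$ in $\omega_{d,n,q}$. At each stage I integrate \eqref{equation_non_linear_omega_1} and \eqref{equation_non_linear_omega_2} over phase space. The Poisson bracket terms vanish, and each remaining source term is estimated with Propositions \ref{proposition_estimates_psi_first_derivative} and \ref{proposition_estimates_psi_second_derivatives}.

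\textbf{The $\gamma_\theta$ equation \eqref{equation_non_linear_omega_1}.}
\begin{itemize}
\item The term $|\gamma_\theta|\partial_a\omega^{(1)}\partial_\theta\widetilde\Psi$ is bounded via \eqref{equation_estimates_omega_a} and $|a^{-1}\partial_\theta\widetilde\Psi|\lesssim t^{-3/2}\varepsilon_1 t^{\delta}$. This is integrable in time.
\item The term $\omega^{(1)}|\gamma_\theta||\partial_a\partial_\theta\widetilde\Psi|$ is controlled by $t^{-3/2}\mathcal M$. It is also integrable, provided $10\delta<\frac12$.
\item The term $\omega^{(1)}|\gamma_a||\partial^2_\theta\widetilde\Psi|$ uses $t^{2}(1+a^{-2})|\partial_\theta^2\widetilde\Psi|\lesssim\mathcal M$ together with $(1+a^{-2})^{-1}\omega^{(1)}\lesssim\omega^{(2)}$. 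This gives $t^{-2+10\delta}\|\omega^{(2)}\gamma_a\|$, which is integrable.
\item The non-integrable term is $|\gamma_\theta|\partial_\theta\omega^{(1)}\partial_a\widetilde\Psi$. The computation displayed before the proposition bounds it by $t^{-1}(\varepsilon_0+\varepsilon_1)$ times norms $\|\omega'^{(1)}\gamma_\theta\|$ with $\omega'$ of order $d-1$. These lie in $\mathcal I$ by the closure property, and the bootstrap bounds them by $\varepsilon_1t^{d\delta}$. (The $t^{-5/4}$ piece is harmless.)
\end{itemize}
Integrating, $\int_{T_0}^t s^{-1+d\delta}\,ds\lesssim t^{d\delta}/(d\delta)$ for $d\ge1$. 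For $d=0$ this term is absent. This gives the first bound in \eqref{equation_estimate_derivative_bootstrap}.

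\textbf{The $\gamma_a$ equation \eqref{equation_non_linear_omega_2}.} The terms with $\partial_a\omega^{(2)}\partial_\theta\widetilde\Psi$, with $\partial_a\partial_\theta\widetilde\Psi$, and with $\partial_\theta\omega^{(2)}\partial_a\widetilde\Psi$ are treated exactly as above. The last of these yields $t^{-1}(\varepsilon_0+\varepsilon_1)\varepsilon_1 t^{d\delta}$ from the lower-$d$ weights.

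The new term, and the main obstacle, is $\omega^{(2)}|\gamma_\theta||\partial_a^2\widetilde\Psi|$. It has only $t^{-1}$ decay. I use
\[
t\,\frac{a^2}{1+a^2}|\partial_a^2\widetilde\Psi|\lesssim \varepsilon_0+\|(a^{-3}+a^3)\gamma_a\|_{L^1}+t^{-1/2}\mathcal M
\]
together with $(1+a^{-2})\omega^{(2)}\lesssim\omega^{(1)}$, so that the weight converts into $\omega^{(1)}|\gamma_\theta|$ with the same $d$. This is exactly why $P_1,P_2$ were chosen this way.

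The factor $\|(a^{-3}+a^3)\gamma_a\|$ is itself a $\gamma_a$-norm. I bound it through the weights $a^{\pm3}\in\mathcal I$, which have $d=0$ after adjusting the extra factor of $a$ from $\omega^{(2)}$. Its bootstrap bound is $\varepsilon_1t^{\delta}$. I would prove the $d=0$ case of the $\gamma_a$ bound first; there this factor is at worst of size $\varepsilon_1 t^\delta$. The term then contributes
\[
t^{-1}\bigl(\varepsilon_0+\varepsilon_1t^{\delta}\bigr)\cdot\frac{\varepsilon_1(\varepsilon_0+\varepsilon_1)}{\delta}t^{d\delta},
\]
using the already improved $\gamma_\theta$ bound at the same $d$. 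Integrating produces $t^{(d+1)\delta}/\delta^2$. This is the source of the $\delta^{-2}$ and the loss of one extra power of $t^\delta$ in the $\gamma_a$ estimate.

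\textbf{Closing the induction.} The delicate bookkeeping is to order the induction correctly: $\gamma_\theta$ at level $d$, then $\gamma_a$ at level $d$, with the $d=0$ $\gamma_a$ weights $a^{\pm3}$ handled self-consistently, using Gronwall where the same norm reappears. Throughout, every use of $\mathcal M\lesssim\varepsilon_1t^{10\delta}$ must leave integrable powers; this is where $\delta\le\frac1{44}$ enters. The condition $C\varepsilon_0\le\delta$ absorbs the $t^{C\varepsilon_0}$ growth of $|\theta|^8$ moments into $t^{\delta}$.
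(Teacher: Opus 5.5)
Your plan follows the paper's architecture, but the step that gives the improved bound $t^{d\delta}$ for $\gamma_\theta$ fails as written.

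\textbf{The gap.} Consider the term $|\gamma_\theta|\,\partial_\theta\omega^{(1)}\,\partial_a\widetilde\Psi$. The pointwise bound of Proposition~\ref{proposition_estimates_psi_first_derivative} contains the piece $t^{-1}\|\gamma\|_{L^1}|\theta|$, and $|\theta|\,|\partial_\theta\omega^{(1)}_{d,n,q}|=d\,\omega^{(1)}_{d,n,q}$. So one of the resulting norms is the very norm you are estimating, not one of order $d-1$; the list displayed before the proposition ends with $+\omega^{(1)}$.

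If you bound this term by the bootstrap, as you propose for all of them, you get $\varepsilon_0\varepsilon_1t^{-1+(d+1)\delta}$. Its time integral is of size $\varepsilon_0\varepsilon_1t^{(d+1)\delta}/\delta$, so the gain from $t^{(d+1)\delta}$ to $t^{d\delta}$ is lost. That gain is essential. In the $\gamma_a$ equation, the $\partial_a^2\widetilde\Psi$ term has coefficient of size $\varepsilon_1t^{-1+\delta}$. With only $\|\omega^{(1)}\gamma_\theta\|_{L^1}\lesssim t^{(d+1)\delta}$, you would get $t^{(d+2)\delta}$ growth for $\gamma_a$, and the bootstrap would not close.

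\textbf{The repair.} This is what the paper does. Keep the same-level term as $C\varepsilon_0t^{-1}\|\omega^{(1)}\gamma_\theta\|_{L^1}$; its coefficient is $\|\gamma\|_{L^1}\lesssim\varepsilon_0$ by Proposition~\ref{proposition_bootstrap_gamma} (indeed by conservation of mass), not $\varepsilon_1$. Then apply Gronwall to
\[
\frac{\mathrm{d}}{\mathrm{d}t}\|\omega^{(1)}\gamma_\theta\|_{L^1}\lesssim \varepsilon_1(\varepsilon_0+\varepsilon_1)\,t^{-1+d\delta}+C\varepsilon_0\,t^{-1}\|\omega^{(1)}\gamma_\theta\|_{L^1}.
\]
The resulting factor $t^{C\varepsilon_0}$ is what the hypothesis $C\varepsilon_0\le\delta$ is there to absorb into $t^{d\delta}$ for $d\ge1$. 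For $d=0$ the term is absent.

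You instead attribute that hypothesis to the $|\theta|^8$ moments, but these need nothing of the kind: the bootstrap already bounds them by $\varepsilon_1t^\delta$. You also place the ``same norm reappears'' Gronwall in the $d=0$ weights of the $\gamma_a$ estimate, where the bootstrap bound alone suffices. In the $\gamma_a$ equation the analogous same-level term, also missing from your count, is harmless, because the target there already carries $t^{(d+1)\delta}$.

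With this repair, the rest of your argument matches the paper: ordering $\gamma_\theta$ before $\gamma_a$, converting $\omega^{(2)}$ into $\omega^{(1)}$ against $\partial_a^2\widetilde\Psi$, and the origin of the $\delta^{-2}$.

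\textbf{A minor point.} The $\partial_a\partial_\theta\widetilde\Psi$ term is $\lesssim\varepsilon_1^2t^{-3/2+(d+11)\delta}$ with $d$ up to $9$. The relevant integrability condition is therefore $20\delta<\frac12$, not $10\delta<\frac12$. This is where the smallness of $\delta$ enters; the paper uses $\delta\le\frac1{44}$ to bound this term by $t^{-\frac54+d\delta}$.
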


\begin{proof}
    The estimates of $\gamma$ follow directly from Proposition \ref{proposition_bootstrap_gamma}. For the derivatives, let us begin by considering $\omega=\omega_{d,n,q}\in \mathcal{I}$. By \eqref{equation_non_linear_omega_1} and \eqref{equation_estimates_omega_a}, we have
    \begin{equation}
        \label{equation_proof_derivative_omega_gamma_theta}
        \begin{array}{rl}
         \displaystyle\frac{\mathrm{d}}{\mathrm{d}t}\|\omega^{(1)}\gamma_\theta\|_{L^1}\lesssim& \displaystyle(\|a^{-1}\partial_\theta\widetilde{\Psi}\|_{L^\infty}+\|\partial_\theta\partial_a\widetilde{\Psi}\|_{L^\infty})\|\omega^{(1)}\gamma_\theta\|_{L^1} + \|(1+a^{-2})\partial_\theta^2\widetilde{\Psi}\|_{L^\infty}\|\omega^{(2)}\gamma_a\|_{L^1}\\
        \displaystyle &\displaystyle+\iiint |\gamma_\theta\omega^{(1)}_{d-1,n,q}\partial_a\widetilde{\Psi}|\mathrm{d}a\mathrm{d}\theta\mathrm{d}\ell.
        \end{array}
    \end{equation}
    The first two terms can be readily estimated using Propositions \ref{proposition_estimates_psi_first_derivative}--\ref{proposition_estimates_psi_second_derivatives} and the bootstrap assumption. Indeed, we find
    \begin{equation*}
        \|a^{-1}\partial_\theta\widetilde{\Psi}\|_{L^\infty}\|\omega^{(1)}\gamma_\theta\|_{L^1}\lesssim \varepsilon_0\varepsilon_1 t^{(d+2)\delta-\frac{3}{2}},\quad \|\partial_\theta\partial_a\widetilde{\Psi}\|_{L^\infty}\|\omega^{(1)}\gamma_\theta\|_{L^1}+\|(1+a^{-2})\partial_\theta^2\widetilde{\Psi}\|_{L^\infty}\|\omega^{(2)}\gamma_a\|_{L^1} \lesssim \varepsilon_1^2t^{(d+11)\delta-\frac{3}{2}}.
    \end{equation*}
    Hence, since $\delta \leq \frac{1}{44}$, these terms can be bounded by $C\varepsilon_1(\varepsilon_0+\varepsilon_1)t^{-\frac{5}{4}+d\delta}$. When $d=0$, the last term of the inequality \eqref{equation_proof_derivative_omega_gamma_theta} vanishes, and we directly obtain the result by integrating. When $d\neq0$, by \eqref{equation_estimate_Psi_first_derivative}, we have
    \begin{align*}
        \iiint |\gamma_\theta\omega^{(1)}_{d-1,n,q}\partial_a\widetilde{\Psi}|\mathrm{d}a\mathrm{d}\theta\mathrm{d}\ell\lesssim &~t^{-1}\|\gamma\|_{L^1}\|(\omega^{(1)}_{d-1,n,q-3}+\omega^{(1)}_{d-1,n,q+3}+\omega^{(1)}_{d-1,n+2,q})\gamma_\theta\|_{L^1}+t^{-1}\|\gamma\|_{L^1}\|\omega^{(1)}_{d,n,q}\gamma_\theta\|_{L^1}\\
        &+t^{-1}\big(\|a^{-2}\gamma\|_{L^1}+t^{-\frac{1}{4}}\|(a^{-5}+a^9+|\theta|^3+\ell^6)\gamma\|_{L^1}\big)\|\omega^{(1)}_{d-1,n,q}\gamma_\theta\|_{L^1}.
    \end{align*}
    With the bootstrap assumption and Proposition \ref{proposition_bootstrap_gamma}, this implies 
    \begin{equation*}
        \frac{\mathrm{d}}{\mathrm{d}t}\|\omega^{(1)}\gamma_\theta\|_{L^1}\lesssim (\varepsilon_0+\varepsilon_1)\varepsilon_1 t^{-1+d\delta}+\varepsilon_0t^{-1}\|\omega^{(1)}\gamma_\theta\|_{L^1}.
    \end{equation*}
    Hence, by Gronwall's inequality
    \begin{equation*}
        \|\omega^{(1)}\gamma_\theta\|_{L^1}\lesssim \varepsilon_0+\frac{\varepsilon_1(\varepsilon_0+\varepsilon_1)}{\delta}(t^{d\delta}+t^{C\varepsilon_0}),
    \end{equation*}
    where $C$ depends only on $m$ and $T_0$. Provided $C\varepsilon_0\leq \delta$, we obtain the estimate for  $ \|\omega^{(1)}\gamma_\theta\|_{L^1}$.\\
    Then, by \eqref{equation_non_linear_omega_2}, we have
    \begin{align*}
        \frac{\mathrm{d}}{\mathrm{d}t}\|\omega^{(2)}\gamma_a\|_{L^1}\lesssim&~\big(\|a^{-1}\partial_\theta\widetilde{\Psi}\|_{L^1}+\|\partial_\theta\partial_a\widetilde{\Psi}\|_{L^1}\big)\|\omega^{(2)}\gamma_a\|_{L^1}+\|(1+a^{-2})^{-1}\partial_a^2\widetilde{\Psi}\|_{L^\infty}\|\omega^{(1)}\gamma_\theta\|_{L^1}\\
        &+\iiint|\gamma_a\omega_{d-1,n,q}^{(2)}\partial_a\widetilde{\Psi}|\mathrm{d}\theta\mathrm{d}a\mathrm{d}\ell.
    \end{align*}
    Again, by Propositions \ref{proposition_estimates_psi_first_derivative}--\ref{proposition_estimates_psi_second_derivatives} and the bootstrap assumption, the first two terms can be estimated by the following
    \begin{equation*}
        \big(\|a^{-1}\partial_\theta\widetilde{\Psi}\|_{L^1}+\|\partial_\theta\partial_a\widetilde{\Psi}\|_{L^1}\big)\|\omega^{(2)}\gamma_a\|_{L^1}\lesssim t^{-\frac{3}{2}}\mathcal{M}\|\omega^{(2)}\gamma_a\|_{L^1}\lesssim t^{-\frac{5}{4}+d\delta}.
    \end{equation*}
    Then, using the estimate we obtained on $\|\omega^{(1)}\gamma_\theta\|_{L^1}$, we find
    \begin{equation*}
        \|(1+a^{-2})^{-1}\partial_a^2\widetilde{\Psi}\|_{L^\infty}\|\omega^{(1)}\gamma_\theta\|_{L^1}\lesssim t^{-1+(d+1)\delta}\frac{1}{\delta}\varepsilon_1(\varepsilon_0+\varepsilon_1)+\frac{1}{\delta}t^{-\frac{3}{2}+(d+10)\delta}\varepsilon_1(\varepsilon_0+\varepsilon_1)\lesssim t^{-1+(d+1)\delta}\varepsilon_1(\varepsilon_0+\varepsilon_1).
    \end{equation*}
    For the last term, we have
    \begin{align*}
        \iiint |\gamma_a\omega^{(2)}_{d-1,n,q}\partial_a\widetilde{\Psi}|\mathrm{d}a\mathrm{d}\theta\mathrm{d}\ell\lesssim &~t^{-1}\|\gamma\|_{L^1}\|(\omega^{(2)}_{d-1,n,q-3}+\omega^{(2)}_{d-1,n,q+3}+\omega^{(2)}_{d-1,n+2,q})\gamma_a\|_{L^1}+t^{-1}\|\gamma\|_{L^1}\|\omega^{(2)}_{d,n,q}\gamma_a\|_{L^1}\\
        &+t^{-1}\big(\|a^{-2}\gamma\|_{L^1}+t^{-\frac{1}{4}}\|(a^{-5}+a^9+|\theta|^3+\ell^6)\gamma\|_{L^1}\big)\|\omega^{(2)}_{d-1,n,q}\gamma_a\|_{L^1}.
    \end{align*}
    Using the bootstrap assumption, we find
    \begin{equation*}
        \iiint |\gamma_a\omega^{(2)}_{d-1,n,q}\partial_a\widetilde{\Psi}|\mathrm{d}a\mathrm{d}\theta\mathrm{d}\ell\lesssim \frac{1}{\delta}\varepsilon_1(\varepsilon_0+\varepsilon_1)t^{-1+(d+1)\delta}.
    \end{equation*}
    By combining the last estimates and integrating, we derive
    \begin{equation*}
        \|\omega^{(2)}\gamma_a\|_{L^1}\lesssim \varepsilon_0+\frac{\varepsilon_1(\varepsilon_0+\varepsilon_1)}{\delta^2}t^{(d+1)\delta}.
    \end{equation*}
\end{proof}
We can then use Propositions \ref{proposition_bootstrap_gamma}--\ref{proposition_bootstrap_gamma_derivatives}, together with a bootstrap argument, to deduce the existence of a global solution provided $\varepsilon_0$ and $\varepsilon_1$ are small enough. 
\begin{corollary}
    Let $\varepsilon>0$ small enough, and $\gamma$ be a local solution to \eqref{equation_non_linear_gamma} with initial data $\gamma_0$. Assume that $\gamma_0$ satisfies, for all $\omega\in \mathcal{I}$,
    \begin{equation*}
        \|(a^{-30}+a^{30}+|\theta|^{10}+\ell^{20})\gamma_0\|_{L^1}+\|\omega^{(1)}\partial_\theta\gamma_0\|_{L^1}+\|\omega^{(2)}\partial_a\gamma_0\|_{L^1}\leq \varepsilon.
    \end{equation*}
    Then $\gamma$ is a global solution. Moreover, it satisfies the estimates given in Propositions \ref{proposition_bootstrap_gamma}--\ref{proposition_bootstrap_gamma_derivatives}.
\end{corollary}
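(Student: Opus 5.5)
The corollary follows from a standard continuity (bootstrap) argument built on Propositions \ref{proposition_bootstrap_gamma}--\ref{proposition_bootstrap_gamma_derivatives}. Fix $\delta=\frac{1}{44}$ and let
\begin{equation*}
\mathcal{N}(t):=\max\Big(t^{-\delta}\|(a^{-24}+a^{24}+|\theta|^{8}+\ell^{16})\gamma(t)\|_{L^1},\ \max_{\omega_{d,n,q}\in\mathcal{I}}t^{-(d+1)\delta}\big(\|\omega^{(1)}_{d,n,q}\gamma_\theta(t)\|_{L^1}+\|\omega^{(2)}_{d,n,q}\gamma_a(t)\|_{L^1}\big)\Big).
\end{equation*}

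\textbf{Local theory and continuation.} First recall (or sketch) a local existence result for \eqref{equation_non_linear_gamma} in the weighted space determined by these norms. The field $\widetilde{\Psi}$ is controlled by moments of $\gamma$ through Lemmas \ref{lemma_estimate_total_mass}--\ref{lemma_estimate_charge_density}, so a standard iteration on characteristics works. This gives a continuation criterion: the solution extends as long as $\mathcal{N}(t)$ remains finite. Moreover, $\mathcal{N}$ is continuous in time, since $\gamma\in C^0_tW^{1,1}$ and the weighted norms are propagated by the transport structure. Propagating from time $0$ to $T_0$ is harmless because $T_0$ depends only on $m$. On $[0,T_0]$, a Gronwall argument with constants depending on $T_0$ gives $\mathcal{N}(T_0)\le C_{T_0}\varepsilon$. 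Using \eqref{equation_upperbound_moments_with_omega}, the hypothesis implies the initial assumption of Proposition \ref{proposition_bootstrap_gamma_derivatives} with $\varepsilon_0:=C_{T_0}\varepsilon$. The stronger $a^{\pm30}$ weights also cover Proposition \ref{proposition_bootstrap_gamma} with $q=8$ or $q=10$, noting $3q\geq 24$ and $2q\geq16$.

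\textbf{Closing the bootstrap.} Set $\varepsilon_1:=K\varepsilon_0$ with $K$ large, and let $T^*$ be the supremum of times $T$ with $\mathcal{N}\le \varepsilon_1$ on $[T_0,T]$. Since $\mathcal{N}(T_0)\le\varepsilon_0<\varepsilon_1$, continuity gives $T^*>T_0$. On $[T_0,T^*)$, Proposition \ref{proposition_bootstrap_gamma_derivatives} applies as soon as $C\varepsilon_0\le\delta$. It yields
\begin{equation*}
\mathcal{N}(t)\le C'\Big(\varepsilon_0+\frac{K\varepsilon_0(1+K)\varepsilon_0}{\delta^2}\Big),
\end{equation*}
where the $|\theta|^8$ term is handled by $t^{C\varepsilon_0}\le t^\delta$. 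With $K=4C'$ and $\varepsilon_0$ small enough that $\frac{C'K(1+K)\varepsilon_0}{\delta^2}\le C'$, this gives $\mathcal{N}\le \varepsilon_1/2$. By continuity $T^*=\infty$, and the continuation criterion then gives global existence together with the stated estimates.

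\textbf{Main obstacle.} The delicate point is the order of the constants. $K$ must be fixed before $\varepsilon_0$, and the implicit constants in Propositions \ref{proposition_bootstrap_gamma}--\ref{proposition_bootstrap_gamma_derivatives} must be independent of $\varepsilon_1$ and $T$. This requires checking that the bootstrap hypotheses are only used through products such as $\varepsilon_1^2$ and $\varepsilon_0\varepsilon_1$, never through $\varepsilon_1$ alone. A second point is that the $\delta<\frac14$ and $\delta\le\frac{1}{44}$ conditions must hold simultaneously. I would also verify the continuity of the weighted norms, which follows from the local theory.
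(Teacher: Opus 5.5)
Your route is the same as the paper's: its one-line ``standard breakdown criterion'' proof is exactly a continuity argument with $\varepsilon_1=K\varepsilon_0$ on the quantities of Proposition \ref{proposition_bootstrap_gamma_derivatives}. However, your closing step does not follow from the propositions as stated. For the angular moment, Propositions \ref{proposition_bootstrap_gamma}--\ref{proposition_bootstrap_gamma_derivatives} only give $\||\theta|^8\gamma(t)\|_{L^1}\lesssim(\varepsilon_0+\varepsilon_1)t^{C\varepsilon_0}$, which is linear in $\varepsilon_1$. With $\varepsilon_1=K\varepsilon_0$ this yields only $t^{-\delta}\||\theta|^8\gamma(t)\|_{L^1}\le C'(1+K)\varepsilon_0$, which is not the bound you display. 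It is also not below $\varepsilon_1/2=K\varepsilon_0/2$ for large $K$ unless the implicit constant happens to satisfy $C'<1/2$. Your remark that this term is ``handled by $t^{C\varepsilon_0}\le t^\delta$'' absorbs only the time growth, not the factor $\varepsilon_1$. This is precisely the danger you name in your last paragraph, and it does occur in the stated conclusions.

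To fix it, reopen the Gronwall step in the proof of Proposition \ref{proposition_bootstrap_gamma}. With $X=\||\theta|^q\gamma\|_{L^1}$, that proof gives
\[
\frac{\mathrm{d}X}{\mathrm{d}t}\lesssim \varepsilon_0^2t^{-1}+\varepsilon_0\varepsilon_1t^{-\frac{5}{4}+\delta}+\big(\varepsilon_0t^{-1}+\varepsilon_1t^{-\frac{5}{4}+\delta}\big)X .
\]
Since $\delta<\frac14$, the $\varepsilon_1$-terms are integrable in time, so for $\varepsilon_1\le 1$
\[
X(t)\le\varepsilon_0\big(1+C\varepsilon_0\log t+C\varepsilon_1\big)e^{C\varepsilon_1}t^{C\varepsilon_0}\lesssim\varepsilon_0\,t^{(C+1)\varepsilon_0},
\]
with a constant independent of $K$. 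With this bound and $(C+1)\varepsilon_0\le\delta$, every component of $\mathcal{N}$ is at most $C'\varepsilon_0$ plus terms quadratic in $(\varepsilon_0,\varepsilon_1)$, and your choice of $K$ then closes the argument.

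Separately, drop the detour from $t=0$ to $T_0$. The field bounds of Section \ref{section_potential_derivatives} are only proved for $t\ge T_0$: they rest on \eqref{equation_lower_bound_R_tilde_bulk}, on $\hbar\propto t^{-4}$, and on the cutoff $\chi^1$. Your ``Gronwall on $[0,T_0]$'' would therefore need estimates the paper does not provide. The paper instead places the data at $T_0$ and recovers $t=0$ at the very end, by time translation of the autonomous equation \eqref{equation_VM_pc_radial_case} in Section \ref{section_going_back}.
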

\begin{proof}
    By a standard breakdown criterion, this follows from Propositions \ref{proposition_bootstrap_gamma}--\ref{proposition_bootstrap_gamma_derivatives} and \eqref{equation_upperbound_moments_with_omega}.
\end{proof}

\section{Asymptotics}
\label{section_asymptotics}

When we defined $\gamma$, we already composed with the linear flow. Consequently, to establish a linear scattering result, we may directly try to prove the convergence of $\gamma$ towards a scattering state. To do so, it is enough to show that $\partial_t\gamma$ is integrable in time. However, using Propositions \ref{proposition_estimates_psi_first_derivative} and \ref{proposition_bootstrap_gamma_derivatives}, we find
\begin{equation*}
    \|\partial_t\gamma\|_{L^1}\leq \|\partial_a\gamma\partial_\theta\widetilde{\Psi}\|_{L^1} +\|\partial_\theta\gamma\partial_a\widetilde{\Psi}\|_{L^1}\lesssim (\varepsilon_0+\varepsilon_1)t^{-\frac{5}{4}}+t^{-1}(\varepsilon_0+\varepsilon_1)t^{\delta}\lesssim (\varepsilon_0+\varepsilon_1)t^{-1+\delta}.
\end{equation*}
Here, the lack of integrability arises from the electric field $\partial_a\widetilde{\Psi}$, which decays as $t^{-1}$. We focus on finding the asymptotic limit of $t\partial_a\widetilde{\Psi}$ and consider a modification to the linear characteristics to absorb the leading-order term of \eqref{equation_non_linear_gamma}. In order to give an idea of the expected asymptotic behavior of the field, let us first recall \eqref{equation_def_derivatives_psi} and Proposition \ref{proposition_derivatives_R_tilde}, so that
\begin{equation*}
    \partial_a\widetilde{\Psi}=-\partial_a\widetilde{R}\frac{M(t,\widetilde{R})}{\widetilde{R}^2}=-\bigg[t\frac{m^2}{(a^0)^3}\partial_\theta\widetilde{R}+(\partial_a R)(\theta+t\widehat a,a)\bigg]\frac{M(t,\widetilde{R})}{\widetilde{R}^2}.
\end{equation*}
Moreover, we have $(\theta,a)\in\mathcal{B}_t$ provided that $t$ is sufficiently large. Then, recall that, in the bulk, $\widetilde{R}(\theta,a)\sim t\widehat{a}$ as well as 
\begin{equation*}
   \partial_\theta\widetilde{R}\xrightarrow[t\rightarrow +\infty]{} 1,\qquad (\partial_a R)(\theta+t\widehat a,a)\frac{M(t,\widetilde{R})}{\widetilde{R}^2}\lesssim t^{-\frac{6}{5}}.
\end{equation*}
Hence, heuristically, we find that for large times
\begin{equation*}
    \partial_a\widetilde{\Psi}\sim -t\frac{m^2}{(a^0)^3}\frac{M(t,t\widehat a)}{(t\widehat a)^2}.
\end{equation*}
Furthermore, using similar arguments, 

\begin{equation*}
    M(t,t\widehat a)=4\pi^2\iiint \1_{\{\widetilde{R}(\theta,\alpha)\leq t\widehat a\}}\gamma(t,\theta,\alpha,\ell)\mathrm{d}\theta\mathrm{d}\alpha\mathrm{d}\ell\sim4\pi^2\iiint \1_{\{t\widehat\alpha\leq t\widehat a\}}\gamma(t,\theta,\alpha,\ell)\mathrm{d}\theta\mathrm{d}\alpha\mathrm{d}\ell=:\mathcal{E}(t,a).
\end{equation*}
We will then prove that $\mathcal{E}(t,a)$ converges to $\mathcal{E}_\infty(a)$ as $t$ goes to infinity. This will allow us to derive
\begin{equation*}
    \partial_a\widetilde{\Psi}\sim -\frac{1}{t}\frac{m^2}{a^0 a^2}\mathcal{E}_\infty(a).
\end{equation*}
In the following, we aim at proving this rigorously.

\subsection{Asymptotic behavior}
     We begin by defining a smaller version of the bulk. Let
    \begin{equation}
        \mathcal{B}_t^*:=\{(\theta,a,\ell)\,|\,|\theta|\leq t^{\frac{1}{4}},\, a\leq t^{\frac{1}{12}},\, \widehat a\geq t^{-\frac{1}{4}},\, \sqrt{\ell}\leq t^{\frac{1}{4}}\}\subset \mathcal{B}_t.
    \end{equation}
    In this subset, we know that, for large times, $\widetilde{R}$ does not deviate too far from $t\widehat a$.
    \begin{lemma}
        \label{lemma_difference_R_tilde_ta}
        Let $t\geq T_0$ be sufficiently large. Then
        \begin{equation}
            \1_{\mathcal{B}_t^*}|\widetilde{R}(\theta,a)-t\widehat a|\lesssim t^{\frac{1}{2}}\log(t).
        \end{equation}
    \end{lemma}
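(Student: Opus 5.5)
The plan is to use the explicit formula $\widetilde{R}(\theta,a)=pH_a\big(\tfrac{|\theta+t\widehat a|}{p}\big)+\kappa p$ from \eqref{equation_definition_action_angle_R_U}, together with the large-argument expansion of $H_a$ from Proposition \ref{proposition_asymptotic_expansion_G_H}. Throughout, $(\theta,a,\ell)\in\mathcal{B}_t^*$. Since $|\theta|\leq t^{1/4}$ and $t\widehat a\geq t^{3/4}$, we have $\theta+t\widehat a>0$ for $t$ large, so the absolute value can be dropped. Set $x:=(\theta+t\widehat a)/p$. Then
\begin{equation*}
\widetilde{R}-t\widehat a=\theta+\kappa p+p\big(H_a(x)-x\big).
\end{equation*}
The proof then consists of bounding each of the three terms on the right.

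\textbf{The terms $\theta$ and $\kappa p$.} The first term satisfies $|\theta|\leq t^{1/4}$ directly. For the second, $\kappa p=a^0/a^2$. In $\mathcal{B}_t^*$ we have $a\geq mt^{-1/4}$ (since $\widehat a\geq t^{-1/4}$) and $a\leq t^{1/12}$, so $\kappa p\lesssim t^{1/2}$. This is the worst power and it is exactly $t^{1/2}$. Similarly,
\begin{equation*}
p=\frac{\sqrt{m^2+\ell a^2}}{a^2}\lesssim \frac{1}{a^2}+\frac{\sqrt{\ell}}{a}\lesssim t^{1/2}+t^{1/4}\,t^{1/4}\lesssim t^{1/2}.
\end{equation*}

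\textbf{The term $p(H_a(x)-x)$.} The estimates \eqref{equation_estimates_G_a_H_a} give only $x/2\leq H_a(x)\leq x+1$, which is too weak on its own. Instead, I would use the exact relation $G_a(H_a(x))=x$:
\begin{equation*}
x-H_a(x)=\sqrt{H_a^2-1}-H_a+\Big(\kappa-\frac{1}{a^0p}\Big)\arcosh(H_a).
\end{equation*}
Here $|\sqrt{y^2-1}-y|\leq 1$ for $y\geq1$, and $0\leq \kappa-\frac{1}{a^0p}\leq 1$. Also $\arcosh(H_a)\leq \log(2H_a)\leq \log(2x+2)$. Multiplying by $p$ gives
\begin{equation*}
p|H_a(x)-x|\lesssim p+p\log(2+x)=p+p\log\Big(2+\frac{t\widehat a+\theta}{p}\Big).
\end{equation*}
Since $p\lesssim t^{1/2}$, it remains to bound the logarithm. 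We have $p\geq m/a^2\gtrsim t^{-1/6}$ because $a\leq t^{1/12}$, and $t\widehat a+\theta\lesssim t$. Hence $x\lesssim t^{7/6}$ and $\log(2+x)\lesssim\log t$. This yields $p|H_a(x)-x|\lesssim t^{1/2}\log t$.

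Collecting the three bounds gives the claim. The main obstacle is this last term: one must avoid the uniform-in-$x$ remainders of \eqref{equation_DL_G_H_infinity}, since those require $x$ large and we do not control how large $x$ is relative to $p$. Using the exact inverse relation handles this for all $x$. It also shows that the $\log t$ factor is genuine: it comes from the $\arcosh$ (Coulomb logarithm) term multiplied by $p$, which can be as large as $t^{1/2}$ at the edges of $\mathcal{B}_t^*$. The only further point to check is that $T_0$ is large enough for $\theta+t\widehat a>0$ to hold, which depends only on $m$.
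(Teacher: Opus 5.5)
Your proof is correct, and its skeleton is the paper's. Both arguments write $\widetilde{R}(\theta,a)=pH_a(|\theta+t\widehat a|/p)+\kappa p$, reduce to $\widetilde{R}-t\widehat a=\theta+\kappa p+p\big(H_a(x)-x\big)$, and use $p,\kappa p\lesssim t^{1/2}$ and $x\lesssim t^{7/6}$ in $\mathcal{B}_t^*$.

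The only difference is how $p\big(H_a(x)-x\big)$ is controlled. The paper uses the large-argument expansion \eqref{equation_DL_G_H_infinity}, which isolates the leading term $-p\big(\kappa-\frac{1}{a^0p}\big)\log(2x)$ and shows the logarithm is genuinely there. You instead use the exact identity $G_a(H_a(x))=x$ together with $0\le y-\sqrt{y^2-1}\le 1$, $\arcosh y\le\log(2y)$ and $H_a(x)\le x+1$. This gives $|H_a(x)-x|\le 1+\log(2x+2)$ for every $x\ge 0$. Your route is more elementary and self-contained: it has explicit constants and does not rely on the uniformity in $(a,\ell)$ of the $O(1/x)$ remainder asserted in Proposition \ref{proposition_asymptotic_expansion_G_H}.

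One remark in your last paragraph is inaccurate, though. The expansion is in fact applicable here: in $\mathcal{B}_t^*$ one has $|\theta+t\widehat a|\ge\frac12 t^{3/4}$ and $p\le\frac{2}{m}t^{1/2}$. Hence $x\ge\frac{m}{4}t^{1/4}$, which tends to infinity uniformly, and this is exactly what the paper checks before invoking it. Avoiding the expansion is therefore a convenience, not a necessity.
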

    \begin{proof}
        Let $\vartheta:=\theta+t\widehat{a}$. Recall
        \begin{equation*}
            \widetilde{R}(\theta,a)=pH_a\left(\frac{|\vartheta|}{p}\right)+\kappa p.
        \end{equation*}
        In $\mathcal{B}_t^*$, we have 
        \begin{equation*}
             mt^{-\frac{1}{6}}\leq p\leq \frac{2}{m}t^\frac{1}{2},\qquad \frac{1}{2}t^\frac{3}{4}\leq|\vartheta|\leq 2t.
        \end{equation*}
        Hence, $\frac{|\vartheta|}{p}\geq \frac{m}{4}t^{\frac{1}{4}}$, and for $t$ sufficiently large, we may apply \eqref{equation_DL_G_H_infinity} and find, for $(\theta,a,\ell)\in \mathcal{B}_t^*$,
        \begin{equation*}
            \widetilde{R}(\theta,a)-t\widehat a=|\vartheta|-t\widehat{a}-p\left(\kappa-\frac{1}{a^0p}\right)\log\left(2\frac{|\vartheta|}{p}\right)+p^2\left(\kappa-\frac{1}{a^0p}\right)^2\frac{\log\left(2\frac{|\vartheta|}{p}\right)}{2|\vartheta|}+\kappa p+O_{t\rightarrow +\infty}\left(\frac{p}{|\vartheta|}\right).
        \end{equation*}
        Consequently, 
        \begin{equation*}
            \1_{\mathcal{B}_t^*}|\widetilde{R}(\theta,a)-t\widehat{a}|\lesssim \1_{\mathcal{B}_t^*}||\vartheta|-t\widehat{a}|+t^{\frac{1}{2}}\log(t)\lesssim \1_{\mathcal{B}_t^*}|\theta|+t^{\frac{1}{2}}\log(t)\lesssim t^{\frac{1}{2}}\log(t) 
        \end{equation*}
    \end{proof}
    
    We can also derive the asymptotics of $\partial_a\widetilde{\Psi}$ in $\mathcal{B}_t^*$.
\begin{proposition}
    \label{proposition_asymp_E_psi}
   Let 
   \begin{equation*}
       \mathcal{E}(t,a):=4\pi^2\iiint \1_{\{\alpha\leq a\}}\gamma(t,\theta,\alpha,\ell)\mathrm{d}\theta\mathrm{d}\alpha\mathrm{d}\ell.
   \end{equation*}
   Then, there exists $\mathcal{E}_\infty\in L^\infty((0,+\infty))$ such that 
   \begin{equation}
        \label{equation_asymptotics_mathcal_E}
       |\mathcal{E}(t,a)-\mathcal{E}_\infty(a)|\lesssim \varepsilon_1^2t^{-\frac{1}{4}},\qquad |\mathcal{E}_\infty(a)|\lesssim \varepsilon_0. 
   \end{equation}
   Moreover, in $\mathcal{B}_t^*$, we find
\begin{equation}
    \label{equation_asymptotics_psi}
    \1_{\mathcal{B}_t^*}\left|\partial_a\widetilde{\Psi}+\frac{1}{t}\frac{m^2}{a^0a^2}\mathcal{E}_\infty(a)\right|\lesssim \varepsilon_1 t^{-\frac{6}{5}}.
\end{equation}
\end{proposition}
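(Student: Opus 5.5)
The plan has two parts: first show that $\mathcal{E}(t,a)$ converges, then compare $\partial_a\widetilde{\Psi}$ with its heuristic limit inside $\mathcal{B}_t^*$.

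\textbf{Convergence of $\mathcal{E}$.} Since $\mathcal{E}(t,a)=4\pi^2\iiint \1_{\{\alpha\leq a\}}\gamma\,\mathrm{d}\theta\mathrm{d}\alpha\mathrm{d}\ell$, we use equation \eqref{equation_non_linear_gamma} to write
\begin{equation*}
\partial_t\mathcal{E}(t,a)=4\pi^2\iiint \1_{\{\alpha\leq a\}}\{\widetilde{\Psi},\gamma\}_{\theta,\alpha}\,\mathrm{d}\theta\mathrm{d}\alpha\mathrm{d}\ell .
\end{equation*}
We then integrate by parts. The bracket is a divergence, $\{\widetilde{\Psi},\gamma\}=\partial_\theta(\gamma\partial_\alpha\widetilde{\Psi})-\partial_\alpha(\gamma\partial_\theta\widetilde{\Psi})$. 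The $\partial_\theta$ part integrates to zero in $\theta$, and $\partial_\alpha$ falling on the indicator produces a boundary term. This gives
\begin{equation*}
\partial_t\mathcal{E}(t,a)=-4\pi^2\iint \gamma(t,\theta,a,\ell)\,\partial_\theta\widetilde{\Psi}(t,\theta,a)\,\mathrm{d}\theta\mathrm{d}\ell ,
\end{equation*}
and the key point is that the bad field $\partial_a\widetilde{\Psi}$ has disappeared. By Proposition \ref{proposition_estimates_psi_first_derivative} and the bootstrap bounds, $|\partial_\theta\widetilde{\Psi}|\lesssim a\,t^{-3/2}\varepsilon_1 t^{\delta}$. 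The trace $\int|a\gamma(\theta,a,\ell)|\,\mathrm{d}\theta\mathrm{d}\ell$ at fixed $a$ is controlled by $\|\partial_a(a\gamma)\|_{L^1}\lesssim\mathcal{M}\lesssim\varepsilon_1 t^{\delta}$, using the same one-dimensional Sobolev trick as in the bound for $\rho^2_{O,0}$. Hence $|\partial_t\mathcal{E}|\lesssim\varepsilon_1^2t^{-3/2+2\delta}$, uniformly in $a$. Integrating from $t$ to $\infty$ yields the existence of $\mathcal{E}_\infty$ and the rate $\varepsilon_1^2t^{-1/4}$. The bound $|\mathcal{E}_\infty|\lesssim\|\gamma\|_{L^1}\lesssim\varepsilon_0$ follows from mass conservation.

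\textbf{Asymptotics of $\partial_a\widetilde{\Psi}$ on $\mathcal{B}_t^*$.} We start from
\begin{equation*}
\partial_a\widetilde{\Psi}=-\Big[t\frac{m^2}{(a^0)^3}\partial_\theta\widetilde{R}+(\partial_aR)(\theta+t\widehat a,a)\Big]\frac{M(t,\widetilde{R})}{\widetilde{R}^2}
\end{equation*}
and compare it term by term with $-\frac1t\frac{m^2}{a^0a^2}\mathcal{E}_\infty(a)=-t\frac{m^2}{(a^0)^3}\frac{\mathcal{E}_\infty(a)}{(t\widehat a)^2}$. The errors to control are the following.

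\emph{(i) The $\partial_aR$ term.} On $\mathcal{B}_t^*$ we have $|\partial_aR|\lesssim t^{1/4}\log t$ and $M/\widetilde{R}^2\lesssim\varepsilon_0t^{-2}\widehat a^{-2}$, with $a\geq mt^{-1/4}$ roughly, so $\widehat a\gtrsim t^{-1/4}$. This should give at most $t^{-5/4+}$ up to powers; I would check the exponents carefully, possibly using moments to gain more.

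\emph{(ii) Replacing $\partial_\theta\widetilde{R}$ by $1$.} By \eqref{equation_derivative_theta_R_lower_1_proof}, $1-\partial_\theta\widetilde{R}\lesssim \frac{a^0}{\widetilde{R}}(\frac{\sqrt\ell}{a}+\frac1{a^2})$, which is $O(t^{-1/4})$-small times $t^{-1}\varepsilon_0$.

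\emph{(iii) Replacing $\widetilde{R}^{-2}$ by $(t\widehat a)^{-2}$.} Lemma \ref{lemma_difference_R_tilde_ta} gives a relative error $t^{-1/2}\log t/\widehat a\lesssim t^{-1/4}\log t$.

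\emph{(iv) Replacing $M(t,\widetilde{R})$ by $\mathcal{E}(t,a)$ (main obstacle).} The difference is bounded by
\begin{equation*}
4\pi^2\iiint|\1_{\{\widetilde{R}(\vartheta,\alpha)\leq\widetilde{R}(\theta,a)\}}-\1_{\{\alpha\leq a\}}|\,\gamma\,\mathrm{d}\vartheta\mathrm{d}\alpha\mathrm{d}\ell .
\end{equation*}
I would split $(\vartheta,\alpha,\ell)$ into $\mathcal{B}_t^{*}$ and its complement. The complement decays like $t^{-k}$ times moments, since $\1_{(\mathcal{B}^*_t)^c}\lesssim t^{-1/4\cdot k}(\dots)$; this forces using the $|\theta|^8,\ell^{16},a^{\pm24}$ moments, and enough decay should be available. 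Inside $\mathcal{B}^*_t$, Lemma \ref{lemma_difference_R_tilde_ta} applied to both points shows the indicators can differ only when $|t\widehat\alpha-t\widehat a|\lesssim t^{1/2}\log t$. Since $\partial_a\widehat a=m^2/(a^0)^3\gtrsim t^{-1/4}$ there, this means $|\alpha-a|\lesssim t^{-1/4}\log t$. The mass of $\gamma$ in an $\alpha$-strip of width $h$ is $\lesssim h\sup_\alpha\int\gamma\,\mathrm{d}\vartheta\mathrm{d}\ell\lesssim h\,\|\partial_a\gamma\|_{L^1}$ (trace bound again), with weights in $a$ handled via the $\omega^{(2)}$ moments. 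This gives $\varepsilon_1t^{-1/4+\delta}\log t$.

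\textbf{Conclusion.} Finally we replace $\mathcal{E}(t,a)$ by $\mathcal{E}_\infty(a)$ using the first part of the proof. Each error is $t^{-1}\cdot t^{-1/4+O(\delta)}\log t$ times $\varepsilon_1$, and choosing $\delta$ small this is $\lesssim\varepsilon_1t^{-6/5}$. The hardest bookkeeping is step (iv), specifically controlling the strip mass uniformly in $a$ with the available weighted $L^1$ bounds on $\gamma_a$ and accepting the small $t^{\delta}$ losses.
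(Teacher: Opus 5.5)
Your two structural choices differ from the paper and are sound. Integrating by parts in $\alpha$ as well as in $\theta$ reduces $\partial_t\mathcal{E}(t,a)$ to a boundary term at $\alpha=a$ that involves only $\partial_\theta\widetilde{\Psi}$; the paper integrates only in $\theta$ and uses the bound on $\partial_\theta\partial_a\widetilde{\Psi}$ from Proposition \ref{proposition_estimates_psi_second_derivatives}. In (iv), you compare $\1_{\{\widetilde{R}(\vartheta,\alpha)\leq \widetilde{R}(\theta,a)\}}$ with $\1_{\{\alpha\leq a\}}$ by applying Lemma \ref{lemma_difference_R_tilde_ta} at both points. Within this proof, that avoids the paper's mean value step $M(t,\widetilde{R})\to M(t,t\widehat{a})$ and hence Lemma \ref{lemma_estimate_charge_density}.

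The gap is the bookkeeping at small $a$. On $\mathcal{B}_t^*$ you only know $\widehat{a}\geq t^{-1/4}$, so $a$ can be of size $mt^{-1/4}$, and both $\widehat{a}^{-2}$ and $\frac{m^2}{a^0a^2}$ can be as large as $t^{1/2}$. In (i), the bound $|\partial_aR|\lesssim t^{1/4}\log t$ is wrong. Proposition \ref{proposition_estimates_R} gives $\big(\frac{1}{a}+\frac{\sqrt{\ell}}{a^2}+\frac{1}{a^3}\big)\log\langle \widetilde{R}/p\rangle$, which is $t^{3/4}\log t$ on $\mathcal{B}_t^*$. This size is attained, e.g. for $\ell=0$, through the term $\partial_a(\kappa p-\frac{1}{a^0})\arcosh(\frac{R}{p}-\kappa)$. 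Combined with your bound $M/\widetilde{R}^2\lesssim\varepsilon_0t^{-2}\widehat{a}^{-2}\lesssim\varepsilon_0t^{-3/2}$, this only gives $t^{-3/4}\log t$. The same lossy bound cannot justify your claim in (ii)--(iii) that the main term $t\frac{m^2}{(a^0)^3}\frac{M(t,\widetilde{R})}{\widetilde{R}^2}$ is $O(t^{-1})$.

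The missing ingredient is Lemma \ref{lemma_estimate_total_mass}. Only particles with $\widetilde{R}(\vartheta,\alpha)\leq\widetilde{R}(\theta,a)$ enter $M$, so $M(t,r)/r^k\lesssim t^{-k}$ times moments such as $\|a^{-k}\gamma\|_{L^1}$, uniformly in $r$. The small-$a$ loss is thereby transferred to moments of $\gamma$ in the integration variable. With $k=2$ (or $k=\frac{59}{30}$ as in the paper, to absorb the logarithm) this gives $t\,M(t,\widetilde{R})/\widetilde{R}^2\lesssim\varepsilon_1t^{-1+\delta}$, and (i)--(iii) then close at $\varepsilon_1t^{-5/4+\delta}\log t$. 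In (iv) the prefactor is $\frac{m^2}{ta^0a^2}$, so the factor $a^{-2}$ must likewise be moved into the integral. On the strip $\widehat{a}\simeq\widehat{\alpha}$, so it becomes $\alpha^{-2}$ and is absorbed by $\sup_\alpha\iint\alpha^{-2}|\gamma|\,\mathrm{d}\vartheta\mathrm{d}\ell\lesssim\|a^{-3}\gamma\|_{L^1}+\|a^{-2}\gamma_a\|_{L^1}$, as in the paper's treatment of $\mathcal{N}_3^I$.

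The final replacement $\mathcal{E}(t,a)\to\mathcal{E}_\infty(a)$ does not close as stated either. It carries the same prefactor $\frac{m^2}{ta^0a^2}\lesssim t^{-1/2}$. A bound on $|\mathcal{E}(t,a)-\mathcal{E}_\infty(a)|$ that is uniform in $a$ (your $\varepsilon_1^2t^{-1/2+2\delta}$, or the paper's $\varepsilon_1^2t^{-1/4}$) therefore gives only $t^{-1+2\delta}$ (resp. $t^{-3/4}$) for $a\sim t^{-1/4}$. The paper's one-line conclusion from \eqref{equation_asymptotics_mathcal_E} passes over this too.

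Your boundary formula repairs it. First, $\iint|\gamma(t,\theta,a,\ell)|\,\mathrm{d}\theta\mathrm{d}\ell\leq a^{2}\|\partial_a(a^{-2}\gamma)\|_{L^1}\lesssim a^2\big(\|a^{-3}\gamma\|_{L^1}+\|a^{-2}\gamma_a\|_{L^1}\big)$, and $a^{-2}=\omega^{(2)}_{0,0,-3}$ with $\omega_{0,0,-3}\in\mathcal{I}$. Together with $|\partial_\theta\widetilde{\Psi}|\lesssim a\,\varepsilon_1 t^{-3/2+\delta}$, this gives $|\partial_t\mathcal{E}(t,a)|\lesssim a^3\varepsilon_1^2t^{-3/2+2\delta}$. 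Hence $\frac{m^2}{a^0a^2}|\mathcal{E}(t,a)-\mathcal{E}_\infty(a)|\lesssim\frac{m^2a}{a^0}\varepsilon_1^2t^{-1/2+2\delta}\lesssim\varepsilon_1^2t^{-1/2+2\delta}$ uniformly in $a$, and this last error is $O(\varepsilon_1^2t^{-3/2+2\delta})$. With these corrections your argument proves the proposition.
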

\begin{proof}
    We begin by considering $\partial_t \mathcal{E}(t,a)$. Using an integration by parts in $\theta$, we find
    \begin{equation*}
        \partial_t \mathcal{E}(t,a)=4\pi^2\iiint\1_{\{\alpha\leq a\}}\{\gamma,\widetilde{\Psi}\}\mathrm{d}\theta\mathrm{d}\alpha\mathrm{d}\ell=4\pi^2\iiint\1_{\{\alpha\leq a\}}\gamma_a\partial_\theta\widetilde{\Psi}\mathrm{d}\theta\mathrm{d}\alpha\mathrm{d}\ell+4\pi^2\iiint\1_{\{\alpha\leq a\}}\gamma\partial_\theta\partial_a\widetilde{\Psi}\mathrm{d}\theta\mathrm{d}\alpha\mathrm{d}\ell.
    \end{equation*}
    Using the estimates given in Propositions \ref{proposition_estimates_psi_first_derivative}--\ref{proposition_estimates_psi_second_derivatives}, and Propositions \ref{proposition_bootstrap_gamma}--\ref{proposition_bootstrap_gamma_derivatives}, this implies
    \begin{equation*}
        |\partial_t \mathcal{E}(t,a)|\lesssim \|a\gamma_a\|_{L^1}\|a^{-1}\partial_\theta\widetilde{\Psi}\|_{\infty}+\|\gamma\|_{L^1}\|\partial_\theta\partial_a\widetilde{\Psi}\|_\infty\lesssim \varepsilon_1^2 t^{-\frac{5}{4}}.
    \end{equation*}
    This implies the first estimate of \eqref{equation_asymptotics_mathcal_E}. Moreover, by Proposition \ref{proposition_bootstrap_gamma}, we directly derive $|\mathcal{E}(t,a)|\lesssim \varepsilon_0$ and then $|\mathcal{E}_\infty(a)|\lesssim \varepsilon_0$. We now prove \eqref{equation_asymptotics_psi}. First, write
    \begin{equation*}
        \partial_a\widetilde{\Psi}=-t\frac{m^2}{(a^0)^3}\frac{M(t,\widetilde{R})}{\widetilde{R}^2}+\mathcal{N}_1,
    \end{equation*}
    where 
    \begin{equation*}
        \mathcal{N}_1:=\frac{M(t,\widetilde{R})}{\widetilde{R}^2}\left[t\frac{m^2}{(a^0)^3}(1-\partial_\theta\widetilde{R})-(\partial_aR)(\theta+t\widehat a,a)\right].
    \end{equation*}
    Let us prove that $\1_{\mathcal{B}_t}|\mathcal{N}_1|\lesssim \varepsilon_1 t^{-\frac{6}{5}}$. First, since $\theta+t\widehat{a}\geq 0$ in $\mathcal{B}_t$,  by Proposition \ref{proposition_writing_derivatives_R}, we have
    \begin{equation*}
        \1_{\mathcal{B}_t}(1-\partial_\theta\widetilde{R})= \frac{\1_{\mathcal{B}_t}}{a^2(\widetilde{R}-\frac{1}{a^0})\left(\widetilde{R}-\frac{1}{a^0}+\sqrt{(\widetilde{R}-p\kappa)^2-p^2}\right)}\left(2\frac{m^2\widetilde{R}}{a^0}-\frac{m^2}{(a^0)^2}+\ell\right).
    \end{equation*}
    
    Hence, by \eqref{equation_lower_bound_R_a0_proof}
    \begin{equation*}
        \1_{\mathcal{B}_t}|1-\partial_\theta\widetilde{R}|\lesssim  \1_{\mathcal{B}_t}\frac{a^0}{\widetilde{R}a^2 \left(a^0-\frac{1}{\widetilde{R}}\right)}\left(\frac{1}{\left(a^0-\frac{1}{\widetilde{R}}\right)}+\frac{\ell}{\left(\widetilde{R}-\frac{1}{a^0}\right)}\right) \lesssim \1_{\mathcal{B}_t}\frac{1}{\widetilde{R}a^2}(a^0+\sqrt{\ell}a).
    \end{equation*}
    Thus, since $\widetilde{R}\geq\frac{t\widehat{a}}{4}$ holds in the bulk,
    \begin{equation*}
        \1_{\mathcal{B}_t}|1-\partial_\theta\widetilde{R}|\lesssim t^{-1}\left(\frac{1}{a}+\frac{1}{a^3}+\sqrt{\ell}+\frac{\sqrt{\ell}}{a^2}\right).
    \end{equation*}
    Consequently, by \eqref{equation_estimate_first_derivatives_R}, since $\sqrt{\ell}\leq t^\frac{1}{4},\,a^{-1}\leq \frac{1}{m}t^\frac{1}{4}$ in the bulk, 
    \begin{align*}
        |\1_{\mathcal{B}_t}\mathcal{N}_1|&\lesssim \1_{\mathcal{B}_t}\frac{M(t,\widetilde{R})}{\widetilde{R}^2}\left(\frac{1}{a}+\frac{1}{a^3}+\sqrt{\ell}+\frac{\sqrt{\ell}}{a^2}\right)\log\left\langle\frac{\widetilde{R}}{p}\right\rangle\lesssim \1_{\mathcal{B}_t}t^\frac{3}{4}\frac{M(t,\widetilde{R})}{\widetilde{R}^2}\log\langle\widetilde{R}a^2\rangle\lesssim t^\frac{1}{60}t^\frac{3}{4}\frac{M(t,\widetilde{R})}{\widetilde{R}^{\frac{59}{30}}}.
    \end{align*}
    Since $\mathcal{B}_t^*\subset \mathcal{B}_t$, the same upper bound holds for $\1_{\mathcal{B}_t^*}\mathcal{N}_1$. Applying Lemma \ref{lemma_estimate_total_mass} with $k=\frac{59}{30},\sigma=0$, and Proposition \ref{proposition_bootstrap_gamma}, we derive the estimate for $\1_{\mathcal{B}_t^*}\mathcal{N}_1$. Next, we write
    \begin{equation*}
        \partial_a\widetilde{\Psi}=-t^{-1}\frac{m^2}{(a^0)^3}\frac{M(t,t\widehat a)}{(t\widehat{a})^2}+\mathcal{N}_2+\mathcal{N}_1,
    \end{equation*}
    where 
    \begin{equation*}
        \mathcal{N}_2:=t\frac{m^2}{(a^0)^3}\left[\frac{M(t,\widetilde{R})}{\widetilde{R}^2}-\frac{M(t,t\widehat a)}{(t\widehat{a})^2}\right].
    \end{equation*}
    By the mean value theorem and Lemma \ref{lemma_difference_R_tilde_ta}, we find
    \begin{align*}
        \1_{\mathcal{B}_t^*}\left|\frac{M(t,\widetilde{R})}{\widetilde{R}^2}-\frac{M(t,t\widehat a)}{(t\widehat{a})^2}\right|\leq \sup_{r>0}\left(\frac{|\rho(t,r)|}{r^2}+2\frac{M(t,r)}{r^3}\right)\1_{\mathcal{B}_t^*}|\widetilde{R}(\theta,a)-t\widehat{a}|\lesssim \sup_{r>0}\left(\frac{|\rho(t,r)|}{r^2}+2\frac{M(t,r)}{r^3}\right)t^{\frac{1}{2}}\log(t).
    \end{align*}
    Then, using Lemmas \ref{lemma_estimate_total_mass}--\ref{lemma_estimate_charge_density} and Propositions \ref{proposition_bootstrap_gamma}--\ref{proposition_bootstrap_gamma_derivatives}, we derive
    \begin{equation*}
        \1_{\mathcal{B}_t^*}|\mathcal{N}_2|\lesssim \varepsilon_1t^{-\frac{3}{2}}t^{11\delta}\log(t)\lesssim\varepsilon_1t^{-\frac{6}{5}}.
    \end{equation*}
    Finally, 
    \begin{equation*}
        \partial_a\widetilde{\Psi}=-t\frac{m^2}{(a^0)^3}\frac{1}{\widehat{a}^2}\mathcal{E}(t,a)+\mathcal{N}_3+\mathcal{N}_2+\mathcal{N}_1,
    \end{equation*}
    with 
    \begin{equation*}
        \mathcal{N}_3:=t^{-1}\frac{4\pi^2m^2}{(a^0)^3}\frac{1}{(\widehat{a})^2}\iiint(\1_{\{t\widehat{\alpha}\leq t\widehat{a}\}}-\1_{\{\widetilde{R}(\vartheta,\alpha)\leq t\widehat{a}\}})\gamma(\vartheta,\alpha,\ell)\mathrm{d}\vartheta\mathrm{d}\alpha\mathrm{d}\ell.
    \end{equation*}
    Now, we split the integral over the restricted bulk $\mathcal{B}_t^*$ and its complement. First, consider
    \begin{equation*}
        \mathcal{N}_3^O:=\frac{1}{(\widehat{a})^2}\iiint(\1_{\{t\widehat{\alpha}\leq t\widehat{a}\}}-\1_{\{\widetilde{R}(\vartheta,\alpha)\leq t\widehat{a}\}})\1_{(\mathcal{B}_t^*)^c}\gamma\mathrm{d}\vartheta\mathrm{d}\alpha\mathrm{d}\ell.
    \end{equation*}
    Moreover, in the bulk $(\widehat{a})^{-2}\leq t^\frac{1}{2}$. Similarly, for any $k\geq 0$
    \begin{equation*}
        \1_{(\mathcal{B}_t^*)^c}\lesssim t^{-k}(a^{-4k}+a^{12k}+|\theta|^{4k}+\ell^{2k}).
    \end{equation*}
    Hence, for $k=\frac{3}{4}$, using Proposition \ref{proposition_bootstrap_gamma}, we find
    \begin{equation*}
       \1_{\mathcal{B}_t^*} |\mathcal{N}_3^O|\lesssim t^{-\frac{1}{4}}\|(a^{-3}+a^{9}+|\theta|^{3}+\ell^{\frac{3}{2}})\gamma\|_{L^1}\lesssim \varepsilon_1t^{-\frac{1}{5}}.
    \end{equation*}
    Then, we consider 
     \begin{align*}
        \mathcal{N}_3^I:=~&\frac{1}{(\widehat{a})^2}\iiint(\1_{\{t\widehat{\alpha}\leq t\widehat{a}\}}-\1_{\{\widetilde{R}(\vartheta,\alpha)\leq t\widehat{a}\}})\1_{\mathcal{B}_t^*}\gamma\mathrm{d}\vartheta\mathrm{d}\alpha\mathrm{d}\ell\\
        =~&\frac{1}{(\widehat{a})^2}\iiint(\1_{\{\widetilde{R}(\vartheta,\alpha)>t\widehat{a},\,\,\alpha\leq a\}}-\1_{\{\widetilde{R}(\vartheta,\alpha)\leq t\widehat{a},\,\, \alpha>a\}})\1_{\mathcal{B}_t^*}\gamma\mathrm{d}\vartheta\mathrm{d}\alpha\mathrm{d}\ell.
    \end{align*}
    Here, by Lemma \ref{lemma_difference_R_tilde_ta}, we have
    \begin{equation*}
        \mathcal{B}_t^*\cap\left(\{\widetilde{R}(\vartheta,\alpha)>t\widehat{a},\,\,\alpha\leq a\}\cup\{\widetilde{R}(\vartheta,\alpha)\leq t\widehat{a},\,\, \alpha>a\}\right)\subset \{|\widehat{a}-\widehat{\alpha}|\lesssim t^{-\frac{1}{2}}\log(t)\}.
    \end{equation*}
    Hence, since $(\widehat{a})^{-2}\leq t^{\frac{1}{2}}$ in the bulk,
    \begin{equation*}
        \1_{\mathcal{B}_t^*} |\mathcal{N}_3^I|\lesssim \1_{\mathcal{B}_t^*}\frac{1}{\widehat{a}^2}\iiint \1_{\{|\widehat{a}-\widehat{\alpha}|\lesssim t^{-\frac{1}{2}}\log(t)\}}\1_{\mathcal{B}_t^*}\gamma\mathrm{d}\vartheta\mathrm{d}\alpha\mathrm{d}\ell\lesssim  \1_{\mathcal{B}_t^*}\iiint \1_{\{|\widehat{a}-\widehat{\alpha}|\lesssim t^{-\frac{1}{2}}\log(t)\}}\1_{\mathcal{B}_t^*}\frac{\log(t)}{\widehat{\alpha}^2}\gamma\mathrm{d}\vartheta\mathrm{d}\alpha\mathrm{d}\ell.
    \end{equation*}
    Moreover, since $a,\alpha\leq t^{\frac{1}{12}}$ in the restricted bulk, using the mean value theorem, we obtain
    \begin{equation*}
        \{|\widehat{a}-\widehat{\alpha}|\lesssim t^{-\frac{1}{2}}\log(t)\}\subset \{|a-\alpha|\lesssim t^{-\frac{1}{4}}\log(t)\}.
    \end{equation*}
    We also have, using the fundamental theorem of calculus, 
    \begin{equation*}
        \iint(1+\alpha^{-2})\gamma(t,\vartheta,\alpha,\ell)\mathrm{d}\theta\mathrm{d}\ell \lesssim \|(1+a^{-2})\gamma_a\|_{L^1}+\|a^{-3}\gamma\|_{L^1}.
    \end{equation*}
    Combining these properties and Propositions \ref{proposition_bootstrap_gamma}--\ref{proposition_bootstrap_gamma_derivatives}, we derive
    \begin{equation*}
        \1_{\mathcal{B}_t^*} |\mathcal{N}_3^I|\lesssim \varepsilon_1t^\delta\log(t)\int_0^{+\infty} \1_{\{|a-\alpha|\lesssim t^{-\frac{1}{4}}\log(t)\}}\mathrm{d}\alpha\lesssim \varepsilon_1t^{-\frac{1}{5}}.
    \end{equation*}
    This implies
    \begin{equation*}
        \1_{\mathcal{B}_t^*}|\mathcal{N}_3|\lesssim \varepsilon_1t^{-\frac{6}{5}}.
    \end{equation*}
    The final estimate \eqref{equation_asymptotics_psi} follows from \eqref{equation_asymptotics_mathcal_E}.
\end{proof}


\subsection{Modified scattering}

Now that we have expressed the higher-order term in the asymptotic expansion of $\partial_a\widetilde{\Psi}$, we can define the modification of the action characteristic and establish the convergence to a scattering state.

\begin{theorem}
    Let $\mu$ be defined as 
    \begin{equation*}
        \mu(t,\theta,a,\ell):= \gamma\left(t,\theta-\log(t)\frac{m^2}{a^0a^2}\mathcal{E}_\infty(a),a,\ell\right).
    \end{equation*}
    Then, there exists $\gamma_\infty\in L^1$ such that
    \begin{equation*}
        \|\mu(t)-\gamma_\infty\|_{L^1}\lesssim \varepsilon_1^2 t^{-\frac{1}{6}}.
    \end{equation*}
\end{theorem}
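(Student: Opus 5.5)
We will show that $\partial_t\mu$ is integrable in time in $L^1$, with $\|\partial_t\mu(t)\|_{L^1}\lesssim \varepsilon_1^2 t^{-7/6}$ (up to a slightly smaller power if needed). Then $\mu(t)$ is Cauchy in $L^1$; calling its limit $\gamma_\infty$ and integrating from $t$ to $\infty$ gives the stated rate. Write $\Phi(t,a):=\log(t)\frac{m^2}{a^0a^2}\mathcal{E}_\infty(a)$, so that $\mu(t,\theta,a,\ell)=\gamma(t,\theta-\Phi,a,\ell)$. Note that $\Phi$ does not depend on $\theta$. The chain rule together with \eqref{equation_non_linear_gamma} gives
\begin{equation*}
\partial_t\mu=\Big[\partial_\theta\widetilde{\Psi}\,\gamma_a-\Big(\partial_a\widetilde{\Psi}+\frac{1}{t}\frac{m^2}{a^0a^2}\mathcal{E}_\infty(a)\Big)\gamma_\theta\Big](t,\theta-\Phi,a,\ell).
\end{equation*}
For fixed $(a,\ell)$ the map $\theta\mapsto\theta-\Phi$ is a translation, so it preserves the $L^1$ norm. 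This lets us estimate the bracket evaluated at $(t,\theta,a,\ell)$ directly. The fact that $\mathcal{E}_\infty$ is only $L^\infty$ (and not differentiable in $a$) causes no difficulty, because we never differentiate $\mu$ in $a$.

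The first term is controlled by Proposition \ref{proposition_estimates_psi_first_derivative} and Proposition \ref{proposition_bootstrap_gamma_derivatives}:
\begin{equation*}
\|\partial_\theta\widetilde\Psi\,\gamma_a\|_{L^1}\le\|a^{-1}\partial_\theta\widetilde\Psi\|_\infty\|a\gamma_a\|_{L^1}\lesssim\varepsilon_1^2t^{-3/2+\delta},
\end{equation*}
which is integrable.

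For the second term we split into $\mathcal{B}_t^*$ and its complement. On $\mathcal{B}_t^*$, \eqref{equation_asymptotics_psi} gives a pointwise bound $\varepsilon_1t^{-6/5}$, hence a contribution $\varepsilon_1t^{-6/5}\|\gamma_\theta\|_{L^1}\lesssim\varepsilon_1^2t^{-6/5+\delta}$, which is integrable since $\delta\le1/44$. On $(\mathcal{B}_t^*)^c$ we bound each piece separately. By \eqref{equation_estimate_Psi_first_derivative}, $|\partial_a\widetilde\Psi|\lesssim \varepsilon_1t^{-1}(a^{-3}+a^3+\ell^2+|\theta|)$, and by \eqref{equation_asymptotics_mathcal_E}, $|t^{-1}\frac{m^2}{a^0a^2}\mathcal{E}_\infty|\lesssim\varepsilon_0t^{-1}(a^{-2}+a^{-3})$. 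We then use $\1_{(\mathcal{B}_t^*)^c}\lesssim t^{-k}(a^{-4k}+a^{12k}+|\theta|^{4k}+\ell^{2k})$ with $k$ a small positive number, for instance $k=1/6$, which gains $t^{-k}$ at the price of moments $a^{\pm 2}$, $|\theta|^{2/3}$, $\ell^{1/3}$. Multiplying out, the complement contribution is bounded by $\varepsilon_1 t^{-1-k}$ times weighted norms $\|\omega\gamma_\theta\|_{L^1}$, where the weights $\omega$ are products of powers of $a^{\pm1}$, $|\theta|$, $\ell$ of moderate order. Each such norm is bounded by $\sum_{\omega\in\mathcal I}\|\omega^{(1)}\gamma_\theta\|_{L^1}\lesssim\varepsilon_1 t^{10\delta}$ via Young's inequality. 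This yields $\varepsilon_1^2t^{-1-1/6+10\delta}$.

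The main obstacle is this bookkeeping on $(\mathcal{B}_t^*)^c$. We must choose $k$ so that the extra moments (roughly $a^{-5}$, $a^{5}$, $|\theta|^{2}$, $\ell^{3}$) fit inside the weights $|\theta|^{9-j}a^{\pm3 i}\ell^{i_3}$ allowed by $\mathcal I$. At the same time, the loss $t^{10\delta}$ from the $|\theta|$ powers in the bootstrap bounds must not destroy integrability. If the exponent $-7/6+10\delta$ is too weak to give exactly $t^{-1/6}$, we rebalance $k$ against $\delta$: we can take $\delta$ smaller, which is allowed since $C\varepsilon_0\le\delta$ only requires $\varepsilon_0$ small. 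Alternatively, we can use the sharper estimate $\|\gamma_\theta\omega^{(1)}_{0,n,q}\|\lesssim\varepsilon_1$, which has no $t^\delta$ loss when $d=0$. With either choice the final integration gives $\|\mu(t)-\gamma_\infty\|_{L^1}\lesssim\varepsilon_1^2t^{-1/6}$.
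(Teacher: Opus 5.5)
Your proposal is correct and is essentially the paper's proof. It uses the same formula for $\partial_t\mu$, the same direct bound on $\partial_\theta\widetilde{\Psi}\,\gamma_a$, and the same split of the $\gamma_\theta$ term: on $\mathcal{B}_t^*$ via the asymptotics of $\partial_a\widetilde{\Psi}$, and on its complement via the polynomial bound on $\1_{(\mathcal{B}_t^*)^c}$. It then integrates $\|\partial_t\mu\|_{L^1}\lesssim\varepsilon_1^2t^{-7/6}$.

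One small correction to your bookkeeping. The complement term unavoidably involves weights with $d\geq 1$, since the bound on $\partial_a\widetilde{\Psi}$ already contains $|\theta|$, so the $d=0$ estimate alone does not remove the $t^{c\delta}$ loss. The clean fix is your first option: take $k$ somewhat larger than $\frac16$, e.g.\ $k=\frac14$. This needs at most $|\theta|^2$-weights and loses only $t^{2\delta}$ by the improved bootstrap bounds, giving $t^{-5/4+2\delta}\leq t^{-7/6}$ since $\delta\leq\frac1{44}$.
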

\begin{proof}
    First, note that 
    \begin{equation*}
        \partial_t \mu(t,\theta,a,\ell)=\partial_a\gamma\partial_\theta\widetilde{\Psi}-\partial_\theta \gamma\left(\partial_a\widetilde{\Psi}+\frac{1}{t}\frac{m^2}{a^0a^2}\mathcal{E}_\infty\right),
    \end{equation*}
    where the terms on the right-hand side are evaluated at $\left(t,\theta-\log(t)\frac{m^2}{a^0a^2}\mathcal{E}_\infty(a),a,\ell\right)$. Here, by Proposition \ref{proposition_estimates_psi_first_derivative} and \ref{proposition_bootstrap_gamma_derivatives}, we already have
    \begin{equation*}
        \|\partial_a\gamma\partial_\theta\widetilde{\Psi}\|_{L^1}\lesssim \varepsilon_1 t^{-\frac{4}{3}}\|a\partial_a\gamma\|_{L^1} \lesssim \varepsilon_1^2t^{-\frac{6}{5}}.
    \end{equation*}
    Using \eqref{equation_asymptotics_psi}, we also know that, in the restricted bulk, 
    \begin{equation*}
        \left\|\1_{\mathcal{B}_t^*}\partial_\theta \gamma\left(\partial_a\widetilde{\Psi}+\frac{1}{t}\frac{m^2}{a^0a^2}\mathcal{E}_\infty\right)\right\|_{L^1}\lesssim \varepsilon_1t^{-\frac{6}{5}}\|\partial_\theta \gamma\|_{L^1}\lesssim \varepsilon_1^2 t^{-\frac{7}{6}}.
    \end{equation*}
    It remains to study the integral in the complement set. In that case, we already know that, for any $k\geq 0$
    \begin{equation*}
        \1_{(\mathcal{B}_t^*)^c}\lesssim t^{-k}(a^{-4k}+a^{12k}+|\theta|^{4k}+\ell^{2k}).
    \end{equation*}
    Consequently, by Propositions \ref{proposition_estimates_psi_first_derivative} and \ref{proposition_bootstrap_gamma_derivatives}, we obtain
    \begin{equation*}
        \|\1_{(\mathcal{B}_t^*)^c}\partial_\theta \gamma\partial_a\widetilde{\Psi}\|_{L^1}\lesssim \varepsilon_1^2 t^{-\frac{7}{6}}.
    \end{equation*}
    Similarly,
    \begin{equation*}
        \left\|\1_{(\mathcal{B}_t^*)^c}\partial_\theta \gamma\frac{1}{t}\frac{m^2}{a^0a^2}\mathcal{E}_\infty\right\|_{L^1}\lesssim \varepsilon_1^2t^{-\frac{7}{6}}.
    \end{equation*}
    Hence, 
    \begin{equation*}
        \|\partial_t \mu\|_{L^1}\lesssim \varepsilon_1^2 t^{-\frac{7}{6}}, 
    \end{equation*}
    and we derive the result by integrating.
\end{proof}

\subsection{Going from action-angle to real coordinates}
\label{section_going_back}

Now that we proved the existence of a global solution for the non-linear problem in action-angle variables \eqref{equation_non_linear_gamma}, we would like to obtain a similar result for the Vlasov-Maxwell system with a point charge \eqref{equation_VM_pc_radial_case}. We already know that if $\gamma$ is a solution to \eqref{equation_non_linear_gamma}, then
\begin{equation}
    \label{equation_definition_f_from_gamma}
    f(t,r,u,\ell):=\gamma(t,\Theta(r,u)-t\widehat{\mathcal{A}}(r,u),\mathcal{A}(r,u),\ell)
\end{equation}
solves \eqref{equation_VM_pc_radial_case}. Conversely, given a solution $f$ to \eqref{equation_VM_pc_radial_case}, we recover a solution to \eqref{equation_non_linear_gamma} by considering $\gamma$ defined in  \eqref{equation_def_gamma}. 
\begin{proof}[Proof of Theorem \ref{main_theorem}]
    Consider
    \begin{equation*}
        \gamma_0(\theta,a,\ell):=f_0(R(\theta,a),U(\theta,a),\ell).
    \end{equation*}
    If we prove that $\gamma_0$ satisfies the hypothesis of Theorem \ref{main_theorem_action_angle}, we recover a global solution $\gamma$ to \eqref{equation_non_linear_gamma}. We then obtain a solution to \eqref{equation_VM_pc_radial_case} with initial data $f_0$ by considering $f$ as in \eqref{equation_definition_f_from_gamma}. Let us then prove that $\gamma_0$ satisfies the assumption of Theorem \ref{main_theorem_action_angle}. First, note that
    \begin{equation*}
        \partial_a\gamma_0=\partial_aR(\partial_rf_0)(R,U,\ell)+\partial_aU(\partial_uf_0)(R,U,\ell),\qquad \partial_\theta\gamma_0=\partial_\theta R(\partial_rf_0)(R,U,\ell)+\partial_\theta U(\partial_uf_0)(R,U,\ell).
    \end{equation*}
    We then use action-angle variables to recover estimates in the original variables. For instance, we have
    \begin{equation*}
        \|(a^{30}+a^{-30}+|\theta|^{10}+\ell^{20})\partial_aR(\partial_rf_0)(R,U,\ell)\|_{L^1_{\theta,a,\ell}}=\|(\mathcal{A}^{30}+\mathcal{A}^{-30}+|\Theta|^{10}+\ell^{20})(\partial_aR)(\Theta,\mathcal{A})\partial_rf_0\|_{L^1_{r,u,\ell}}.
    \end{equation*}
   It remains to estimate the different quantities, namely $\mathcal{A},\Theta$, $\partial_\theta R(\Theta,\mathcal{A}),\partial_aR(\Theta,\mathcal{A})$, $\partial_\theta U(\Theta,\mathcal{A}),\partial_aU(\Theta,\mathcal{A})$, in terms of $r,u,\ell$. From the definitions of $\mathcal{A}$ and $\Theta$ in \eqref{equation_definition_action_angle_A_Theta}, we have
    \begin{equation}
        \label{equation_bounds_A_Theta_proof}
        \frac{1}{r}\leq \mathcal{A}(r,u)\lesssim 1+|u|+\frac{\sqrt{\ell}}{r}+\frac{1}{r},\qquad |\Theta|(r,u)\leq 2r.
    \end{equation}
    Hence, 
    \begin{equation*}
        \mathcal{A}^{30}+\mathcal{A}^{-30}+|\Theta|^{10}+\ell^{20}\lesssim |u|^{30}+\ell^{40}+r^{-60}+r^{30}.
    \end{equation*}
    Moreover, since $\mathcal{A}\geq \frac{1}{r}$, using the expression of $\partial_\theta R$ in Proposition \ref{proposition_writing_derivatives_R} and \eqref{equation_estimate_first_derivatives_R}, we find
    \begin{equation}
        \label{equation_proof_end_estimate_derivatives_R}
        |\partial_\theta R|(\Theta,\mathcal{A})=\frac{\mathcal{A}^0}{\mathcal{A}}\frac{U(\Theta,\mathcal{A})}{\mathcal{A}^0-\frac{1}{r}}\leq 1,\qquad |\partial_aR|(\Theta,\mathcal{A})\lesssim \frac{1}{\mathcal{A}}\log\langle r\mathcal{A}^2\rangle+ \frac{p(\mathcal{A},\ell)}{\mathcal{A}}\log\left\langle \frac{r}{p(\mathcal{A,\ell})}\right\rangle\lesssim \sqrt{r}+r^2.
    \end{equation}
    Then, recalling the expression of $U$ from \eqref{equation_definition_action_angle_R_U}, we find
    \begin{align*}
        \partial_\theta U(\theta,a)&=\frac{\theta}{|\theta|}\frac{\frac{\partial_\theta R}{R^2}\left(a^0-\frac{1}{R}\right)+\frac{\ell}{R^3}\partial_\theta R}{\sqrt{\left(a^0-\frac{1}{R}\right)^2-m^2-\frac{\ell}{R^2}}}.
    \end{align*}
    Then note that, by \eqref{equation_bounds_A_Theta_proof}, $\mathcal{A}^0-\frac{1}{r}\lesssim 1+|u|+\frac{\sqrt{\ell}}{r}+\frac{1}{r}$. Consequently, since $r\geq \frac{\mathcal{A}^0}{\mathcal{A}}$, $r\geq \frac{\sqrt{\ell}}{a}$, using \eqref{equation_lower_bound_R_a0_proof} and \eqref{equation_proof_end_estimate_derivatives_R}, we obtain
    \begin{align*}
         |\partial_\theta U|(\Theta,\mathcal{A})\leq \frac{1}{r^2}\left(\frac{\mathcal{A}^0}{\mathcal{A}}+\frac{\ell}{\mathcal{A}}\frac{1}{r-\frac{1}{\mathcal{A}^0}}\right)\leq 2\frac{\mathcal{A}}{r}\lesssim \frac{1}{r}+\frac{|u|}{r}+\frac{\sqrt{\ell}}{r^2}+\frac{1}{r^2}.
    \end{align*}
    For the derivative with respect to $a$, using the expression of $\partial_\theta R$ in Proposition \ref{proposition_writing_derivatives_R}, we note that
    \begin{equation}
        \label{equation_new_def_U_proof}
        U(\theta,a)=\frac{\theta}{|\theta|}\frac{pa}{R}\sqrt{\left(\frac{R}{p}-\kappa\right)^2-1}=\frac{\theta}{|\theta|}\frac{pa}{R}\sqrt{H_a^2\left(\frac{|\theta|}{p}\right)-1}.
    \end{equation}
    Consequently, since $R=pH_a\left(\frac{|\theta|}{p}\right)+\kappa p$, using \eqref{equation_derivatives_G_a_H_a}, we compute
    \begin{align}
        \notag \partial_a U(\theta,a)&=\frac{\theta}{|\theta|}\partial_a\left(\frac{p a}{R}\right)\sqrt{H_a^2\left(\frac{|\theta|}{p}\right)-1}-\frac{\theta}{|\theta|}\frac{a}{R}\frac{\partial_a p}{p}\frac{|\theta|H_a'\left(\frac{|\theta|}{p}\right)H_a\left(\frac{|\theta|}{p}\right)}{\sqrt{H_a^2\left(\frac{|\theta|}{p}\right)-1}}+\frac{\theta}{|\theta|}\frac{p a}{R}\frac{(\partial_a H_a)\left(\frac{|\theta|}{p}\right)H_a\left(\frac{|\theta|}{p}\right)}{\sqrt{H_a^2\left(\frac{|\theta|}{p}\right)-1}}\\
       \notag &=\left(\frac{1}{a}-\frac{\partial_a p}{p}-\frac{\partial_a R}{R}\right)U-\frac{a\partial_a p}{R}\frac{\theta H_a\left(\frac{|\theta|}{p}\right)}{R-\frac{1}{a^0}}-\frac{\theta}{|\theta|}\left(\kappa p-\frac{1}{a^0}\right)\frac{a}{R}\arcosh\left(H_a\left(\frac{|\theta|}{p}\right)\right)\frac{p}{R-\frac{1}{a^0}}.
    \end{align}
    For the first term, by \eqref{equation_estimates_derivatives_kappa_p} and Proposition \ref{proposition_writing_derivatives_R}, we have
    \begin{equation*}
        \left(\frac{1}{a}+\frac{|\partial_a p|}{p}+\frac{|\partial_a R|}{R}\right)|U|\lesssim \left(\frac{1}{a}+\frac{1}{R a}\log\langle Ra^2\rangle +\frac{p}{R a}\log \left\langle \frac{R}{p}\right\rangle\right)|U|\lesssim |U|\left(\frac{1}{a}+\frac{1}{\sqrt{R}}\right)\lesssim |U|(R+R^{-\frac{1}{2}}).
    \end{equation*}
    Similarly, since $|\theta|\leq 2R$ and $H_a\left(\frac{|\theta|}{p}\right)\leq \frac{R}{p}$, we find, using \eqref{equation_lower_bound_R_a0_proof},
    \begin{equation*}
        \frac{a|\partial_a p|}{R}\frac{|\theta| H_a\left(\frac{|\theta|}{p}\right)}{R-\frac{1}{a^0}}=\frac{aa^0|\partial_a p|}{R^2}\frac{|\theta| H_a\left(\frac{|\theta|}{p}\right)}{a^0-\frac{1}{R}}\leq 2aa^0\frac{|\partial_a p|}{p}\frac{1}{a^0-\frac{1}{R}}\lesssim a^0.
    \end{equation*}
    Finally, for the last term, since $\kappa p-\frac{1}{a^0}=\frac{m^2}{a^2a^0}$ and $\arcosh(x)\leq \sqrt{x^2-1}$ for $x\geq 1$, we derive
    \begin{equation*}
        \left(\kappa p-\frac{1}{a^0}\right)\frac{a}{R}\arcosh\left(H_a\left(\frac{|\theta|}{p}\right)\right)\frac{p}{R-\frac{1}{a^0}}\leq \frac{m^2}{R a a^0}p \frac{\sqrt{H_a^2\left(\frac{|\theta|}{p}\right)-1}}{R-\frac{1}{a^0}}\leq \frac{m}{Ra}|\partial_\theta R|\leq m.
    \end{equation*}
    The last three estimates imply
    \begin{equation*}
        |\partial_a U|(\Theta,\mathcal{A})\lesssim 1+(r^{-\frac{1}{2}}+r)|u|+\mathcal{A}^0\lesssim 1+(r^{-\frac{1}{2}}+r)|u|+\frac{\sqrt{\ell}}{r}+\frac{1}{r}.
    \end{equation*}
    Consequently,
    \begin{align*}
        \|(a^{30}+a^{-30}+|\theta|^{10}+\ell^{20})(\gamma+|\partial_a\gamma|+|\partial_\theta\gamma|)\|_{L^1}\lesssim&~ \|(r^{-65}+r^{35}+|u|^{35}+\ell^{45})(f_0+|\partial_r f_0|+|\partial_u f_0|)\|_{L^1}.
    \end{align*}
    In consequence, if the term on the right-hand side is sufficiently small, we obtain the smallness assumption \eqref{equation_smallness_condition_idea} and derive a global solution $\gamma$ to \eqref{equation_non_linear_gamma}. It also satisfies the scattering property \eqref{equation_scattering_gamma_idea}. By composing with the action-angle variables, we exhibit the modified scattering behavior of $f$
    \begin{equation}
        \label{equation_exact_modified_scattering_f}
        f\left(t,R\left(\Theta+t\widehat{\mathcal{A}}-\frac{m^2}{\mathcal{A}^2\mathcal{A}^0}\mathcal{E}_\infty(\mathcal{A})\log(t),\mathcal{A}\right), U\left(\Theta+t\widehat{\mathcal{A}}-\frac{m^2}{\mathcal{A}^2\mathcal{A}^0}\mathcal{E}_\infty(\mathcal{A})\log(t),\mathcal{A}\right), \ell\right)\xrightarrow[t\rightarrow+\infty]{L^1}\gamma_\infty(\Theta,\mathcal{A},\ell).
    \end{equation}
    Finally, we recover a solution to \eqref{equation_VM_pc_radial_case} defined for $t\geq 0$ by considering
    \begin{equation*}
        \widetilde{f}(t,r,u,\ell):=f(t+T_0,r,u,\ell).
    \end{equation*}
\end{proof}


\printbibliography

@Article{Pausader_Widmayer_2021,
author={Pausader, Benoit
and Widmayer, Klaus},
title={Stability of a Point Charge for the Vlasov--Poisson System: The Radial Case},
journal={Communications in Mathematical Physics},
year={2021},
month={Aug},
day={01},
volume={385},
number={3},
pages={1741-1769}
}

@Article{Horst_1990,
author={Horst, E.},
title={Symmetric plasmas and their decay},
journal={Communications in Mathematical Physics},
year={1990},
month={Jan},
day={01},
volume={126},
number={3},
pages={613-633}
}

@misc{kepka_widmayer_2025,
      title={Modified scattering dynamics in the Vlasov-Poisson equation near an attractive point mass}, 
      author={Bernhard Kepka and Klaus Widmayer},
      year={2025},
      eprint={2511.04363},
      archivePrefix={arXiv},
      primaryClass={math.AP},
      url={https://arxiv.org/abs/2511.04363}, 
}

@book{Griffiths_intro_EM,
    author = {Griffiths, David J},
    title = {Introduction to electrodynamics},
    publisher ={Pearson} ,
    year ={1981} 
}

@article{Glassey_Strauss_1987,
  title = {Absence of Shocks in an Initially Dilute Collisionless Plasma},
  author = {Glassey, Robert T. and Strauss, Walter A.},
  year = 1987,
  month = jun,
  journal = {Communications in Mathematical Physics},
  volume = {113},
  number = {2},
  pages = {191--208}
}

@misc{Breton_2025_absence_linear,
      title={A note on the non-$L^1$ asymptotic completeness of the Vlasov-Maxwell system}, 
      author={Emile Breton},
      year={2025},
      eprint={2509.04025},
      archivePrefix={arXiv},
      primaryClass={math.AP},
      url={https://arxiv.org/abs/2509.04025}, 
}

@article{elskens_Kiessling_2020,
  title = {Microscopic {{Foundations}} of {{Kinetic Plasma Theory}}: {{The Relativistic Vlasov}}--{{Maxwell Equations}} and {{Their Radiation-Reaction-Corrected Generalization}}},
  shorttitle = {Microscopic {{Foundations}} of {{Kinetic Plasma Theory}}},
  author = {Elskens, Y. and Kiessling, M. K.-H.},
  year = 2020,
  month = sep,
  journal = {Journal of Statistical Physics},
  volume = {180},
  number = {1},
  pages = {749--772}
}

@incollection{kiessling_2012,
  title = {On the {{Motion}} of {{Point Defects}} in {{Relativistic Fields}}},
  booktitle = {Quantum {{Field Theory}} and {{Gravity}}: {{Conceptual}} and {{Mathematical Advances}} in the {{Search}} for a {{Unified Framework}}},
  author = {Kiessling, Michael K.-H.},
  year = 2012,
  pages = {299--335},
  publisher = {Springer}
}

@book{klimontovich_1967,
  title={The Statistical Theory of Non-equilibrium Processes in a Plasma},
  author={Klimontovich, I.U.L.},
  lccn={67007161},
  series={International series in natural philosophy},
  year={1967},
  publisher={M.I.T. Press}
}

@article{caprino_marchioro_2010,
  title = {On the Plasma-Charge Model},
  author = {Caprino, Silvia and Marchioro, Carlo},
  year = {2010},
  journal = {Kinetic and Related Models},
  volume = {3},
  number = {2},
  pages = {241--254}
}

@article{Wang_2022,
  title = {Propagation of {{Regularity}} and {{Long Time Behavior}} of the {{3D Massive Relativistic Transport Equation II}}: {{Vlasov}}--{{Maxwell System}}},
  shorttitle = {Propagation of {{Regularity}} and {{Long Time Behavior}} of the {{3D Massive Relativistic Transport Equation II}}},
  author = {Wang, Xuecheng},
  year = {2022},
  month = {01},
  journal = {Communications in Mathematical Physics},
  volume = {389},
  number = {2},
  pages = {715--812}
}

@article{Bigorgne_sharp_2020,
  title = {Sharp {{Asymptotic Behavior}} of {{Solutions}} of the 3d {{Vlasov}}--{{Maxwell System}} with {{Small Data}}},
  author = {Bigorgne, L{\'e}o},
  year = {2020},
  month = {06},
  journal = {Communications in Mathematical Physics},
  volume = {376},
  number = {2},
  pages = {893--992}
}

@article{wei_yang_2021,
  title = {On the {{3D Relativistic Vlasov-Maxwell System}} with {{Large Maxwell Field}}},
  author = {Wei, Dongyi and Yang, Shiwu},
  year = {2021},
  month = {05},
  journal = {Communications in Mathematical Physics},
  volume = {383},
  number = {3},
  pages = {2275--2307}
}

@article{pankavich_ben-artzi_2025,
  title = {Modified Scattering of Solutions to the Relativistic {{Vlasov}}--{{Maxwell}} System inside the Light Cone},
  author = {Pankavich, Stephen and {Ben-Artzi}, Jonathan},
  year = {2025},
  journal = {Journal of the London Mathematical Society},
  volume = {112},
  number = {5}
}

@article{breton_modified_2026,
  title = {Modified {{Scattering}} for {{Small Data Solutions}} to the {{Vlasov}}--{{Maxwell System}}: {{A Short Proof}}},
  shorttitle = {Modified {{Scattering}} for {{Small Data Solutions}} to the {{Vlasov}}--{{Maxwell System}}},
  author = {Breton, Emile},
  year = {2026},
  month = {06},
  journal = {Asymptotic Analysis},
  volume = {148},
  number = {2},
  pages = {707--725}
}

@article{bigorgne_modified_2025,
  title = {Global Existence and Modified Scattering for the Solutions to the {{Vlasov}}--{{Maxwell}} System with a Small Distribution Function},
  author = {Bigorgne, L{\'e}o},
  year = {2025},
  month = {03},
  journal = {Analysis \& PDE},
  volume = {18},
  number = {3},
  pages = {629--714}
}

@misc{bigorgne_ScatteringMap_2023,
  title = {Scattering Map for the {{Vlasov-Maxwell}} System around Source-Free Electromagnetic Fields},
  author = {Bigorgne, L{\'e}o},
  year = {2023},
  month = {12},
  number = {arXiv:2312.12214},
  eprint = {2312.12214},
  primaryclass = {math.AP},
  publisher = {arXiv},
  doi = {10.48550/arXiv.2312.12214}
}

@article{pankavich_2021,
  title = {Exact {{Large Time Behavior}} of {{Spherically Symmetric Plasmas}}},
  author = {Pankavich, Stephen},
  year = {2021},
  month = {01},
  journal = {SIAM Journal on Mathematical Analysis},
  volume = {53},
  number = {4},
  pages = {4474--4512}
}

@article{pankavichAsymptoticDynamicsDispersive2022,
  title = {Asymptotic {{Dynamics}} of {{Dispersive}}, {{Collisionless Plasmas}}},
  author = {Pankavich, Stephen},
  year = {2022},
  month = {12},
  journal = {Communications in Mathematical Physics},
  volume = {391},
  number = {2},
  pages = {455--493}
}

@online{bigorgneHomeomorphicModifiedWave2026,
  title = {Homeomorphic Modified Wave Operators for the {{Vlasov-Poisson}} System},
  author = {Bigorgne, Léo},
  date = {2026-06-04},
  eprint = {2606.06488},
  eprinttype = {arXiv},
  eprintclass = {math.AP}
}

@article{choiModifiedScatteringVlasov2016,
  title = {Modified Scattering for the {{Vlasov}}–{{Poisson}} System},
  author = {Choi, Sun-Ho and Kwon, Soonsik},
  date = {2016-08},
  journaltitle = {Nonlinearity},
  shortjournal = {Nonlinearity},
  volume = {29},
  number = {9},
  pages = {2755},
  publisher = {IOP Publishing}
}

@article{flynnScatteringMapVlasov2023,
  title = {Scattering {{Map}} for the {{Vlasov}}–{{Poisson System}}},
  author = {Flynn, Patrick and Ouyang, Zhimeng and Pausader, Benoit and Widmayer, Klaus},
  date = {2023-09-01},
  journaltitle = {Peking Mathematical Journal},
  shortjournal = {Peking Math J},
  volume = {6},
  number = {2},
  pages = {365--392},
  issn = {2524-7182}
}

@article{ionescuAsymptoticBehaviorSolutions2022,
  title = {On the {{Asymptotic Behavior}} of {{Solutions}} to the {{Vlasov}}–{{Poisson System}}},
  author = {Ionescu, Alexandru D and Pausader, Benoit and Wang, Xuecheng and Widmayer, Klaus},
  date = {2022-05-17},
  journaltitle = {International Mathematics Research Notices},
  shortjournal = {Int Math Res Notices},
  volume = {2022},
  number = {12},
  pages = {8865--8889}
}

@article{choiAsymptoticBehaviorNonlinear2011,
  title = {Asymptotic {{Behavior}} of the {{Nonlinear Vlasov Equation}} with a {{Self-Consistent Force}}},
  author = {Choi, Sun-Ho and Ha, Seung-Yeal},
  date = {2011-01},
  journaltitle = {SIAM Journal on Mathematical Analysis},
  shortjournal = {SIAM J. Math. Anal.},
  volume = {43},
  number = {5},
  pages = {2050--2077},
  publisher = {{Society for Industrial and Applied Mathematics}}
}

@article{caprinoAttractivePlasmaChargeSystem2012,
  title = {On the {{Attractive Plasma-Charge System}} in 2-d},
  author = {Caprino, S. and Marchioro, C. and Miot, E. and Pulvirenti, M.},
  date = {2012-07-01},
  journaltitle = {Communications in Partial Differential Equations},
  volume = {37},
  number = {7},
  pages = {1237--1272}
}

@article{marchioroCauchyProblem3D2011,
  title = {The {{Cauchy Problem}} for the 3-{{D Vlasov}}–{{Poisson System}} with {{Point Charges}}},
  author = {Marchioro, Carlo and Miot, Evelyne and Pulvirenti, Mario},
  date = {2011-07-01},
  journaltitle = {Archive for Rational Mechanics and Analysis},
  shortjournal = {Arch Rational Mech Anal},
  volume = {201},
  number = {1},
  pages = {1--26}
}

@article{desvillettesPolynomialPropagationMoments2015,
  title = {Polynomial Propagation of Moments and Global Existence for a {{Vlasov}}–{{Poisson}} System with a Point Charge},
  author = {Desvillettes, Laurent and Miot, Evelyne and Saffirio, Chiara},
  date = {2015-04-01},
  journaltitle = {Annales de l'Institut Henri Poincaré C},
  volume = {32},
  number = {2},
  pages = {373--400}
}

@article{wuPolynomialPropagationMoments2021,
  title = {Polynomial Propagation of Moments for a {{Plasma-Charge}} Model with Large Data},
  author = {Wu, Jingpeng and Zhang, Xianwen},
  date = {2021-04-01},
  journaltitle = {Applied Mathematics Letters},
  shortjournal = {Applied Mathematics Letters},
  volume = {114},
  pages = {106890}
}

@article{wuPlasmachargeModelConvex2024,
  title = {The Plasma-Charge Model in a Convex Domain},
  author = {Wu, Jingpeng},
  date = {2024-03},
  journaltitle = {Nonlinearity},
  shortjournal = {Nonlinearity},
  volume = {37},
  number = {5},
  pages = {055003},
  publisher = {IOP Publishing}
}

@online{wuPlasmaChargeModelBoundary2026,
  title = {The {{Plasma-Charge Model}}: {{Boundary Effects}} and {{Global Well-posedness}}},
  shorttitle = {The {{Plasma-Charge Model}}},
  author = {Wu, Jingpeng},
  date = {2026-04-13},
  eprint = {2604.11371},
  eprinttype = {arXiv},
  eprintclass = {math.AP}
}

@article{pausaderStabilityPointCharge2024,
  title = {Stability of a Point Charge for the Repulsive {{Vlasov}}–{{Poisson}} System},
  author = {Pausader, Benoît and Widmayer, Klaus and Yang, Jiaqi},
  date = {2024-08-30},
  journaltitle = {Journal of the European Mathematical Society},
  volume = {28},
  number = {7},
  pages = {2751--2848}
}

@online{chaturvediLinearNonlinearPhase2026,
  title = {Linear and Nonlinear Phase Mixing for the Gravitational {{Vlasov-Poisson}} System under an External {{Kepler}} Potential},
  author = {Chaturvedi, Sanchit and Luk, Jonathan},
  date = {2026-05-04},
  eprint = {2409.14626},
  eprinttype = {arXiv},
  eprintclass = {math.AP}
}

@article{onemSolutionsClassicalRelativistic1998,
  title = {The {{Solutions}} of the {{Classical Relativistic Two-Body Equation}}},
  author = {ÖNEM, Coşkun},
  date = {1998-01-01},
  journaltitle = {Turkish Journal of Physics},
  volume = {22},
  number = {2},
  pages = {107--114}
}

@Article{Luk_Strain_14,
author={Luk, Jonathan
and Strain, Robert M.},
title={Strichartz Estimates and Moment Bounds for the Relativistic Vlasov--Maxwell System},
journal={Archive for Rational Mechanics and Analysis},
year={2016},
month={01},
day={01},
volume={219},
number={1},
pages={445-552}
}

\end{document}